\newcommand{\hide}[1]{}
\numberwithin{equation}{section}
\theoremstyle{plain}
\newtheorem{thm}{Theorem}[section]
\newtheorem{prop}[thm]{Proposition}
\newtheorem{thm-defi}[thm]{Theorem/Definition}
\newtheorem{cor}[thm]{Corollary}
\newtheorem{new-lemma}[thm]{Lemma}
\theoremstyle{definition}
\newtheorem{defi}[thm]{Definition}
\newtheorem{rem}[thm]{Remark}
\newtheorem{example}[thm]{Example}
\newcommand{\C}{{\mathcal C}}
\newcommand{\CH}{{\mbox CH}}
\newcommand{\E}{{\mathcal E}}
\newcommand{\F}{{\mathcal F}}
\newcommand{\G}{{\mathcal G}}
\renewcommand{\H}{{\mathcal H}}
\newcommand{\HH}{{\mathbb H}}
\newcommand{\LB}{{\mathcal L}}
\newcommand{\T}{{\mathcal T}}
\newcommand{\M}{{\mathcal M}}
\newcommand{\U}{{\mathcal U}}
\newcommand{\W}{{\mathcal W}}
\newcommand{\X}{{\mathcal X}}
\newcommand{\Y}{{\mathcal Y}}
\newcommand{\Z}{{\mathcal Z}}
\renewcommand{\P}{{\mathcal P}}
\newcommand{\PP}{{\mathbb P}}
\newcommand{\RealNumbers}{{\mathbb R}}
\newcommand{\Integers}{{\mathbb Z}}
\newcommand{\CC}{{\mathbb C}}
\newcommand{\ComplexNumbers}{{\mathbb C}}
\newcommand{\RationalNumbers}{{\mathbb Q}}
\newcommand{\LieAlg}[1]{{\mathfrak #1}}
\newcommand{\Reflection}{{\mathfrak R}}
\newcommand{\Pin}{{\rm Pin}}
\newcommand{\Spin}{{\rm Spin}}
\newcommand{\fM}{{\mathfrak M}}
\newcommand{\IsomRightArrowOf}[1]{
\stackrel
{\stackrel{#1}{\cong}}
{\rightarrow}
}
\newcommand{\LongIsomRightArrow}{\stackrel{\cong}{\longrightarrow}}
\newcommand{\RightArrowOf}[1]{\stackrel{#1}{\rightarrow}}
\newcommand{\LongRightArrowOf}[1]{\stackrel{#1}{\longrightarrow}}
\newcommand{\StructureSheaf}[1]{{\mathcal O}_{#1}}
\newcommand{\restricted}[2]{#1_{\mid_{#2}}}
\newcommand{\rank}{{\rm rank}}
\newcommand{\coker}{{\rm coker}}
\newcommand{\Pic}{{\rm Pic}}
\newcommand{\Alb}{{\rm Alb}}
\newcommand{\alb}{{\rm alb}}
\newcommand{\RelAlb}{{\mathcal Alb}}
\newcommand{\Sym}{{\rm Sym}}
\newcommand{\Hom}{{\rm Hom}}
\newcommand{\Aut}{{\rm Aut}}
\newcommand{\End}{{\rm End}}
\newcommand{\SheafHom}{{\mathcal H}om}
\newcommand{\SheafEnd}{{\mathcal E}nd}
\newcommand{\SheafExt}{{\mathcal E}xt}
\newcommand{\Wedge}[1]{\stackrel{#1}{\wedge}}
\newcommand{\Alg}{{\rm Alg}}
\newcommand{\tildeGSplus}[1]{\widetilde{G}(S^+)^{even}_{#1}}
\newcommand{\mon}{{\rm mon}}
\newcommand{\Tildemon}{\widetilde{{\rm mon}}}
\begin{document}
\title[The monodromy of generalized Kummer varieties]
{The monodromy of generalized Kummer varieties and algebraic cycles on their intermediate Jacobians}
\author{Eyal Markman}
\address{Department of Mathematics and Statistics, 
University of Massachusetts, Amherst, MA 01003, USA}
\email{markman@math.umass.edu}

\date{\today}

\subjclass[2010]{14C25, 14D20}
\keywords{Abelian surfaces and fourfolds, hyperk\"{a}hler varieties, Hodge Conjecture, derived categories}


\begin{abstract}
We compute the subgroup of the monodromy group of 
a generalized Kummer variety 
associated to equivalences of derived categories of abelian surfaces. 
The result was previously announced in \cite{MM}.
Mongardi showed that the subgroup constructed here is in fact the whole monodromy group
\cite[Theorem 2.3]{mongardi}. 
As an application we prove the Hodge conjecture for the generic abelian fourfold of Weil type with complex multiplication by an arbitrary imaginary quadratic number field $K$, but with trivial discriminant invariant in $\RationalNumbers^*/Nm(K^*)$. The latter result is inspired by a recent observation of O'Grady that the third intermediate Jacobians of  smooth projective varieties of generalized Kummer deformation type form complete families of abelian fourfolds of Weil type.
Finally, we prove the surjectivity of the Abel-Jacobi map from the Chow group $\CH^2(Y)_0$ of co-dimension two algebraic cycles homologous to zero on every projective irreducible holomorphic symplectic manifold $Y$ of Kummer type onto the third intermediate Jacobian of $Y$, as predicted by the generalized Hodge Conjecture.
\end{abstract}

\maketitle

\tableofcontents

\section{Introduction}
%
\subsection{Monodromy of generalized Kummers}
\label{sec-monodromy-intro}

Let $X$ be a complex projective abelian surface, $X^{(n)}$ its $n$-th symmetric product, and $X^{[n]}$ the Hilbert 
scheme of length $n$ zero dimensional subschemes of $X$. Let
 $\pi: X^{[n]}\rightarrow X$ be the composition of the Hilbert-Chow morphism $X^{[n]}\rightarrow X^{(n)}$ and the summation morphism $X^{(n)}\rightarrow X$.  The generalized Kummer variety $K_X(n\!-\!1)$ is
the fiber of $\pi$ over $0\in X$. 
$K_X(n\!-\!1)$ is a smooth, projective, simply connected variety 
of dimension $2n-2$. It admits a holomorphic symplectic
form, unique up to a constant multiple
\cite{beauville-varieties-with-zero-c-1}. 
The morphism $\pi$ is an isotrivial family, every fiber is isomorphic to $K_X(n\!-\!1)$.
The variety $K_X(1)$ is the Kummer $K3$ surface associated to $X$.
Let $s_n\in H^{even}(X,\Integers)$ be the Chern character of the ideal sheaf 
of a length $n$ subscheme of $X$. 
The moduli space $\M(s_n)$ of rank $1$ torsion free sheaves on $X$ with Chern character $s_n$ is isomorphic to $X^{[n]}\times \Pic^0(X)$.

Set
\[
V \ \ := \ \ H^1(X,\Integers) \oplus H^1(X,\Integers)^*.
\]
$V$ has a natural symmetric unimodular bilinear pairing, 
given by 
\begin{equation}
\label{eq-pairing-on-V-introduction}
((a_1,a_2),(b_1,b_2))=b_2(a_1)+a_2(b_1),
\end{equation}
and $H^*(X,\Integers)$ is the spin representation of the arithmetic group $\Spin(V)$
recalled below in equation (\ref{eq-Spin-Pin-and-G}). 
The half-spin representations are $S^+:=H^{even}(X,\Integers)$
and $S^-:=H^{odd}(X,\Integers)$. Each of the half spin representations admits a symmetric integral and unimodular $\Spin(V)$-invariant bilinear paring, recalled below in (\ref{eq-Mukai-pairing}), and $\Spin(V)$, $\Spin(S^+)$, and $\Spin(S^-)$ all embed 
as the same subgroup of $SO(V)\times SO(S^+)\times SO(S^-)$. The latter identification of the three spin groups is a consequence of an integral version of triality for $\Spin(8)$ (Theorem \ref{thm-triality-principle}).
Let $\tilde{\tau}$ act on $H^i(X,\Integers)$ by
$(-1)^{i(i-1)/2}$ and on $V$ by $\tilde{\tau}(a_1,a_2)=(-a_1,a_2)$. Denote by $G(S^+)^{even}$ the subgroup of $GL(V\oplus S^+\oplus S^-)$ generated by $\Spin(V)$ and $\tilde{\tau}$. The group $G(S^+)^{even}$ arrises naturally as one of the  Clifford groups (see Equation (\ref{eq-Spin-Pin-and-G})). We describe next a monodromy representation of $G(S^+)^{even}$ on the
cohomology ring of the moduli space $\M(s_n)$.

\begin{defi}
\label{def-monodromy}
{\rm
Let $Y$ be a smooth projective  variety. 
An automorphism $g$ of the cohomology ring 
$H^*(Y,\Integers)$ is called a {\em monodromy operator}, 
if there exists a 
family $\Y \rightarrow B$ (which may depend on $g$) 
of compact K\"{a}hler manifolds, having $Y$ as a fiber
over a point $b_0\in B$, 
and such that $g$ belongs to the image of $\pi_1(B,b_0)$ under
the monodromy representation. 
The {\em monodromy group} $Mon(Y)$ of $Y$ is the subgroup 
of $GL(H^*(Y,\Integers))$ generated by all the monodromy operators. 
}
\end{defi}

Let $\P$ be an object in the bounded derived category $D^b(X\times X)$ of coherent sheaves on $X\times X$, which is the Fourier-Mukai kernel of an auto-equivalence $\Phi_\P:D^b(X)\rightarrow D^b(X)$ of the derived category of the abelian surface $X$. Then $ch(\P)$,
considered as a correspondence, induces a automorphism of $H^*(X,\Integers)$, 
which is the image of an element $g$  of  $\Spin(V)$ in
$GL(H^*(X,\Integers))$ via the spin representation, by results of Mukai and Orlov (see Section \ref{sec-derived-categories}). 
Let $\E$ be a universal sheaf over $X\times \M(s_n)$. 
Let $\pi_{ij}$ be the projection from $X\times \M(s_n)\times X\times \M(s_n)$ onto the product of the $i$-th and $j$-th factors.
Set
\[
\gamma_g :=
c_{2n+2}\left(
\pi_{24,*}\left[
\pi_{12}^*\E^*\otimes \pi_{34}^*\E\otimes \pi_{13}^*\P
\right][1]
\right),
\]
where the pull back, push-forward, dual $\E^*$, and tensor product are all taken in the derived category and $[1]$ is the shift. One can
express the right hand side above in terms of the cohomology class $ch(\P)$, using the Grothendieck-Riemann-Roch theorem.  
We get the class 
$\gamma_g$ in $H^{4n+4}(\M(s_n)\times \M(s_n),\Integers)$, associated to every element $g$ of $\Spin(V)$,
by replacing $ch(\P)$ by $g$ in the latter cohomological expression.
Considering also the analogue for compositions of equivalences of derives categories and dualization yields a class $\gamma_g$
for every element $g\in G(S^+)^{even}$ (Equation (\ref{eq-gamma-delta})).
Let $G(S^+)^{even}_{s_n}$ be the subgroup of $G(S^+)^{even}$ stabilizing $s_n$. Define $\Spin(V)_{s_n}$ similarly.
Assume $n\geq 3$.

\begin{thm}
\label{thm-introduction-monodromy-representation-mu}
(Theorem \ref{thm-monodromy-representation-mu}).
\begin{enumerate}
\item
The correspondence $\gamma_g$ induces a graded ring automorphism, for every $g \in G(S^+)^{even}_{s_n}$. The resulting map
\[
\mon: G(S^+)^{even}_{s_n} \rightarrow \Aut H^*(\M(s_n),\Integers)
\]
is a group homomorphism and its image is contained in the monodromy group $Mon(\M(s_n))$.
\item
\label{thm-item-universal-sheaf-is-automorphic}
For every $g\in \Spin(V)_{s_n}$ there exists a topological complex line bundle $L_g$ over $\M(s_n)$, such that
\[
(g\otimes \mon_g)(ch(\E))=ch(\E)\pi^*_{\M}ch(L_g),
\]
where $\pi_{\M}:X\times \M(s_n)\rightarrow \M(s_n)$ is the projection.
\end{enumerate}
\end{thm}

Let $\Gamma_X$ be the group  of points of order $n$ on $X$. The translation action of $\Gamma_X$ on $X$ induces a translation action on $X^{[n]}$.
The morphism $\pi:X^{[n]}\rightarrow X$ is invariant with respect to the latter action and so $\Gamma_X$ acts on $K_X(n\!-\!1).$ This induces an embedding of $\Gamma_X$ in $Mon(K_X(n\!-\!1))$.
The action of $\Gamma_X$ on $H^i(K_X(n\!-\!1),\Integers)$ is trivial, for $i=2,3$,
by Lemma \ref{lemma-Gamma-v}(\ref{lemma-item-Gamma-v-embedds-in-Mon}).

\begin{prop}
\label{prop-introduction-overline-mon}
(Proposition \ref{prop-overline-mon})
There exists a unique injective homomorphism 
\begin{equation}
\label{eq-overline-mon}
\overline{\mon}: G(S^+)^{even}_{s_n} \rightarrow Mon(K_X(n\!-\!1))/\Gamma_X,
\end{equation}
such that the restriction homomorphisms $H^i(\M(s_n),\Integers)\rightarrow H^i(K_X(n\!-\!1),\Integers)$, $i=2,3$, 
are $G(S^+)^{even}_{s_n}$-equivariant with respect to the homomorphisms $\mon$ and $\overline{\mon}$.
\end{prop}

\hide{
We construct a faithful linear representation 
\begin{equation}
\label{eq-mon}
\Tildemon \ : \ \tildeGSplus{s_n} \ \ \ \longrightarrow \ \ \
\Aut [H^*(K_X(n\!-\!1),\Integers)]
\end{equation}
of a group $\tildeGSplus{s_n}$, which is an extension 
\begin{equation}
\label{eq-extension-defining-G-V-n}
1\rightarrow \Gamma_X \rightarrow \tildeGSplus{s_n} 
\rightarrow G(S^+)^{even}_{s_n} \rightarrow 1
\end{equation}
(see Proposition \ref{prop-overline-mon} and the exact sequence (\ref{eq-extension-denining-tildeGSplus}) below).
$G(S^+)^{even}_{s_n}$ is the stabilizer of $s_n$ in the even Clifford group $G(S^+)^{even}$ of $S^+$. 
The group $G(S^+)^{even}_{s_n}$ is itself an extension
\[
1\rightarrow \Spin(V)_{s_n} \rightarrow G(S^+)^{even}_{s_n} \rightarrow 
\Integers/2\Integers \rightarrow 1,
\]
where $\Spin(V)_{s_n}$ is the stabilizer of $s_n$ in $\Spin(V)$.
Recall that $V$ and the half-spin representations of 
$\Spin(V)$ are restrictions 
of representations of a larger group $G(V)^{even}$ of 
even invertible elements of the Clifford algebra conjugating $V$ to itself
(see (\ref{eq-Spin-Pin-and-G})). 
An integral version of triality for $\Spin(V)$ implies that $S^+$ and $S^-$ 
are endowed each with a symmetric bilinear form and $\Spin(V)$ maps into
$SO(V)$, $SO(S^+)$ and $SO(S^-)$. Triality identifies 
$\Spin(V)$ with $\Spin(S^+)$ and $\Spin(S^-)$, as subgroups of
$SO(V)\times SO(S^+)\times SO(S^-)$ (Theorem \ref{thm-triality-principle}).
The identification does not extend to 
$G(V)^{even}$, $G(S^+)^{even}$, and $G(S^-)^{even}$. 
Among the latter three, $G(S^+)^{even}$ is the only one which 
preserves the bilinear form on $S^+$. In addition, $G(S^+)^{even}$ 
surjects onto $SO(S^+)$. 
Conjugation in $\tildeGSplus{s_n}$ induces a representation of
$G(S^+)^{even}_{s_n}$ in $GL(\Gamma_X)$, which is given explicitly in
(\ref{eq-homomorphism-from-stabilizer-in-G-S-plus-even-to-GL}) below. 
The class of the extension (\ref{eq-extension-defining-G-V-n}) is yet to be determined.

The action of the group $\Gamma_X$ on $H^*(K_X(n\!-\!1),\Integers)$ 
comes from an inclusion of $\Gamma_X$ in the automorphism group of 
$K_X(n\!-\!1)$. $\Gamma_X$ acts trivially on $H^i(K_X(n\!-\!1),\Integers)$, for $i=2,3$,
by Lemma \ref{lemma-Gamma-v}(\ref{lemma-item-Gamma-v-embedds-in-Mon}).
The appearance of the stabilizer of $s_n$ is natural at the level of
the second cohomology. Both 
$H^2(K_X(n\!-\!1),\Integers)$ and $H^{even}(X,\Integers)$
are endowed with natural symmetric bilinear pairings and 
$H^2(K_X(n\!-\!1),\Integers)$ is naturally isometric to the orthogonal 
complement $s_n^\perp$ of the Chern character $s_n$ in $H^{even}(X,\Integers)$
\cite{yoshioka-abelian-surface}. 
Each automorphism $\Tildemon(g)$, $g\in \tildeGSplus{s_n}$, 
is  a monodromy operator.
}

Let $Y$ be a hyperk\"{a}hler variety deformation equivalent to the
generalized Kummer $K_X(m)$ of an abelian surface, $m\geq 2$. 
We determine the image $Mon^2(Y)$ of the monodromy group in 
$\Aut[H^2(Y,\Integers)]$.
The second cohomology
$H^2(Y,\Integers)$ admits the symmetric bilinear 
Beauville-Bogomolov-Fujiki pairing \cite{beauville-varieties-with-zero-c-1}.
It has signature $(3,-4)$. 
Given an element $u$ of $H^2(Y,\Integers)$ with
$(u,u)$ equal $2$ or $-2$, let $R_u:H^2(Y,\Integers)\rightarrow H^2(Y,\Integers)$ be the reflection in $u$, 
$R_u(w)=w-2\frac{(w,u)}{(u,u)}u$, and set $r_u:=\frac{(u,u)}{-2}R_u$.
Then $r_u$ is the reflection in $u$, when $(u,u)=-2$, and $-r_u$ is the reflection in $u$, when $(u,u)=2$. 
Set 
\begin{equation}
\label{eq-W}
\W \ \ \ := \ \ \ \langle
r_u \ : \ u \in H^2(Y,\Integers) \ \
\mbox{and} \ \ (u,u) = \pm2
\rangle
\end{equation} 
to be the subgroup of $O(H^2(Y,\Integers))$ generated by the 
elements $r_u$. Then $\W$ is a normal subgroup of finite index
in $O(H^2(Y,\Integers))$. 

The lattice $H^2(Y,\Integers)$ is not unimodular.
The residual group $H^2(Y,\Integers)^*/H^2(Y,\Integers)$ is
cyclic of order $\dim(Y)+2$.
The image of $\W$ in the automorphism group of
$H^2(Y,\Integers)^*/H^2(Y,\Integers)$ has order $2$ and is generated by
multiplication by $-1$, by \cite[Lemma 4.10]{markman-monodromy-I}. We get a character 
\begin{equation}
\label{eq-residue-character}
\chi \ : \ \W \ \ \longrightarrow \ \ \{1,-1\}\subset \ComplexNumbers^\times. 
\end{equation}
The character group $\Hom(\W,\ComplexNumbers^\times)$ 
is isomorphic to $\Integers/2\Integers\times \Integers/2\Integers$
and is generated by  $\chi$ and the determinant character 
$\det$ (the proof is identical to that of
Corollary 7.9 in \cite{markman-monodromy-I}). 
Note that $\det(r_u)=(u,u)/2$ and
$\chi(r_u)=-(u,u)/2$. Consequently, their product $\det\cdot \chi$
takes $r_u$ to $-1$, for both $+2$ and $-2$ vectors $u$. 
Let $\W^{\det\cdot \chi}$ be the kernel of $\det\cdot \chi$.

\begin{thm}
\label{thm-Mon-2}
The image $Mon^2(Y)$ in $O(H^2(Y,\Integers))$, 
of the monodromy group $Mon(Y)$, is equal to $\W^{\det\cdot \chi}$. Consequently, the homomorphism 
\[
\overline{\mon}: G(S^+)^{even}_{s_n} \rightarrow Mon(K_X(n\!-\!1))/\Gamma_X,
\]
 given in
(\ref{eq-overline-mon}), is an isomorphism.
\end{thm}

The inclusion $\W^{\det\cdot \chi}\subset Mon^2(Y)$ is proven in Section \ref{subsec-translation-invariant-subring}.
The reversed inclusion was proven by Mongardi in \cite{mongardi}, using general results about the action of the monodromy of an irreducible holomorphic symplectic manifold\footnote{
An {\em irreducible holomorphic symplectic manifold $Y$} is a simply connected compact K\"{a}hler manifold, such that $H^0(Y,\Omega^2_Y)$ is one-dimensional spanned by a nowhere degenerate $2$-form.
} 
on classes of extremal curves, as well as an additional monodromy constraint proven in \cite[Cor. 4.8]{MM}. Proposition \ref{prop-introduction-overline-mon} and Theorem \ref{thm-Mon-2}
yield the short exact sequence
\[
1\rightarrow \Gamma_X \rightarrow Mon(K_X(n\!-\!1))
\rightarrow G(S^+)^{even}_{s_n} \rightarrow 1
\]
the extension class of which is yet to be determined.

Theorem \ref{thm-Mon-2} implies that the image of $Mon^2(Y)$ in
$O(H^2(Y,\Integers))/(-1)$ has index $2^{\rho(n)}$, where
$n=\frac{\dim Y + 2}{2}$ and 
$\rho(n)$ is the Euler number of $n$ (the number of distinct prime divisors 
of $n$), (see \cite[Lemma 4.2]{markman-constraints}). 
Consequently, the Hodge-isometry class of $H^2(Y,\Integers)$ does not determine
the bimeromorphic class of $Y$, for $Y$ with a generic period.
There are $2^{\rho(n)}$ distinct bimeromorphic classes 
of hyperk\"{a}hler varieties, deformation equivalent to the 
generalized Kummer, for each generic weight $2$ Hodge-isometry class, by Verbitsky's Torelli Theorem for irreducible holomorphic symplectic manifolds \cite[Theorem 1.3]{markman-torelli}. 
In particular, the Kummers 
$K_X(n\!-\!1)$ and $K_{\hat{X}}(n\!-\!1)$,
of a generic complex torus $X$ and its dual $\hat{X}$, are not
bimeromorphic. 
Namikawa proved this counter example in the case of 
Kummer fourfold (where $n=3$) \cite{namikawa}.

\hide{
The analogue of Theorem \ref{thm-Mon-2}, for hyperk\"{a}hler varieties 
deformations equivalent to 
the Hilbert scheme $S^{[n]}$ of length $n$ subschemes on a $K3$
surface $S$, was proven in \cite{markman-constraints}.
It leads to similar counter examples to the generic Torelli question. 
One significant difference exists; $Mon^2(S^{[n]})$ does contain reflections
and is equal to the whole group $\W_{S^{[n]}}$ associated to 
the lattice $H^2(S^{[n]},\Integers)$ by equation
(\ref{eq-W}). 

Let us stress, that there are two natural versions of the 
Generic Torelli Question. The above counter examples provide a 
negative answer to the stronger version, while the weaker version is still 
open. We introduce next these two version of Generic Torelli.
A {\em marking} for a hyperk\"{a}hler variety $Y$ is an isometry
$\eta : H^2(Y,\Integers)\rightarrow \Lambda$ with a fixed lattice $\Lambda$.
There is a (non-Housdorff) moduli space ${\frak M}_\Lambda$, of 
isomorphism classes of marked hyperk\"{a}hler varieties, fixing the lattice
$\Lambda$. The period domain $\Omega_\Lambda$ is an open analytic subset of 
the quadric in $\PP(\Lambda\otimes_\Integers\ComplexNumbers)$.
The marking $\eta$ conjugates $Mon^2(Y)$ to the
subgroup $Mon^2$ in $O(\Lambda)$, which fixes the connected component 
${\frak M}^0_\Lambda$ in ${\frak M}_\Lambda$ through a marked pair 
$(Y,\eta)$. Consider the commutative diagram
\[
\begin{array}{ccc}
{\frak M}^0_\Lambda & \LongRightArrowOf{p} & \Omega_\Lambda
\\
\downarrow & & \downarrow
\\
{\frak M}^0_\Lambda/Mon^2 & \LongRightArrowOf{\bar{p}} & 
\Omega_\Lambda/O(\Lambda),
\end{array}
\]
where $p$ is the period map of the weight $2$ Hodge structure. 
The quotient ${\frak M}^0_\Lambda/Mon^2$ is the moduli ``space'' 
parametrizing isomorphism classes of hyperk\"{a}hler varieties
deformation equivalent to $Y$. The quotient $\Omega_\Lambda/O(\Lambda)$
is the  moduli ``space'' of Hodge isometry classes of 
weight $2$ Hodge structures. 

The {\em Weak Generic Torelli Question} asks if the period map $p$
is generically bijective. The {\em Strong Generic Torelli Question} 
asks that question for the induced map $\bar{p}$ on the
level of equivalence classes. Now, the center $\{\pm 1\}$ of 
$O(\Lambda)$ acts trivially on $\Omega_\Lambda$ and $Mon^2$ 
is known to inject into $O(\Lambda)/\{\pm 1\}$. 
Hence, the degrees of the maps $p$ and $\bar{p}$ are related by
\[
\deg(\bar{p}) \ \ = \ \ \deg(p)\cdot [Mon^2:O(\Lambda)/\{\pm 1\}].
\]
In particular, the index $[Mon^2:O(\Lambda)/\{\pm 1\}]$
is a lower bound for the generic degree of the map $\bar{p}$.

The weak generic Torelli question suggests, that the 
birational class of $Y$, as well as the data encoded
by the cohomology groups $H^i(Y,\Integers)$, for $i>2$, are 
captured by the $Mon^2(Y)$-orbit of the (generic) period of a marked 
hyperk\"{a}hler variety. The $Mon^2(Y)$-orbits are smaller, in general,
than the $O(\Lambda)$-orbits, and hence encode more information 
than the Hodge isometry class of $H^2(Y,\Integers)$. Following is 
a more intrinsic formulation of the above statement, 
for two (unmarked) 
deformation equivalent hyperk\"{a}hler varieties $Y_1$ and $Y_2$. 
Denote by $O^2(Y_1,Y_2)$ the set of isometries from
$H^2(Y_1,\Integers)$ onto $H^2(Y_2,\Integers)$. Let
$O^2_{Hodge}(Y_1,Y_2)$ be the subset of isometries in $O^2(Y_1,Y_2)$
mapping $H^{2,0}(Y_1)$ to $H^{2,0}(Y_2)$. 
Let $O^2_{Mon}(Y_1,Y_2)$ be the subset of isometries induced by 
the monodromy along some path in the base of some family 
of hyperk\"{a}hler varieties, having $Y_1$ and $Y_2$ 
as the fibers over the end points of the path. 
$O^2_{Mon}(Y_1,Y_2)$ is a torsor of the groups $Mon^2(Y_i)$.
If the intersection $O^2_{Hodge}(Y_1,Y_2)\cap O^2_{Mon}(Y_1,Y_2)$ is
empty, then $Y_1$ and $Y_2$ are {\em not} bimeromorphic.
This follows from the fact, that birational hyperk\"{a}hler varieties are 
inseparable points in their moduli \cite{huybrects-basic-results}. 
The weak (generic) Torelli question is equivalent to the question:
{\em Are $Y_1$ and $Y_2$  bimeromorphic whenever 
$O^2_{Hodge}(Y_1,Y_2)\cap O^2_{Mon}(Y_1,Y_2)$ is
non-empty (and the period of $Y_i$ is generic)?} 
Theorem \ref{thm-Mon-2} calculates $O^2_{Mon}(Y_1,Y_2)$, 
whenever at least one element of $O^2_{Mon}(Y_1,Y_2)$ is known. 
Consider, for example, two complex tori $X_i$, $i=1,2$, and let 
$Y_i:=K_{X_i}(m)$. Then 
any orientation-preserving homomorphism from
$H^1(X_1,\Integers)$ to $H^1(X_2,\Integers)$ induces an explicit 
element of $O^2_{Mon}(Y_1,Y_2)$.

The image $Mon^3(Y)$ in $\Aut H^3(Y,\Integers)$, 
of the monodromy group, is descibed intrinsically
below (section ???). We describe here $Mon^3(K_X(m))$ for a generalized
Kummer, $m\geq 2$. The
cohomology group $H^3(K_X(m),\Integers)$ is isomorphic to 
$H^{odd}(X,\Integers)$ and 
is naturally a half-Spin representation of $\Spin(V)$. 
The $G(V,m+1)$-action on $H^3(K_X(m),\Integers)$, 
given in (\ref{eq-mon}), 
factors through the stabilizer $G(S^+)^{even}_{s_{m+1}}$ and the latter is 
represented in $H^3(K_X(m),\Integers)$ via
the restriction of the half-spin representation.
$Mon^3(K_X(m))$
is the image of $G(S^+)^{even}_{s_{m+1}}$ via the representation $\Tildemon$. 
(???) what happen with the sign change in the case of sign reversing
elements?

A central feature of the geometry of generalized Kummer varieties is 
{\em triality} for the integral subgroup $\Spin(V)$ of $\Spin(8)$. Triality identifies 
the period domains for the Hodge structures $H^2(K_X(m),\Integers)$
and $H^3(K_X(m),\Integers)$ of ranks $7$ and $8$ respectively
(see Sections \ref{sec-two-isomorphic-period-domains} and \ref{sec-third-intermediate-jacobians}). 
In Section \ref{sec-comparison-with-Verbitsky-Spin-7-representation} we use 
triality to relate Verbitsky's $\Spin(7)$-representation on the 
cohomology of $K_X(m)$ as the restriction of the $\Spin(8)$ 
representation on the cohomology of the collection of moduli spaces 
of sheaves on $X$ with a primitive Mukai vector. The $\Spin(8)$
representation is related to the group of auto-equivalences 
of the derived category of $X$, studied by Mukai, Polishchuk, and Orlov
\cite{mukai-spin,polishchuk-analogue-of-weil-representation,
orlov-abelian-varieties,golyshev-luntz-orlov,verbitsky-mirror-symmetry}. They show, more generally, that
the group of auto-equivalences 
of the derived category of an abelian variety $X$ of dimension $n$ acts on it cohomology $H^*(X,\Integers)$ via the subgroup of
$\Spin(4n)$ preserving the integral Hodge structure.
}

%
\subsection{A monodromy representation for more general moduli spaces}
Let $w\in H^{even}(X,\Integers)$ be a primitive Hodge class. Denote by $w_i$ the graded summand in $H^{2i}(X,\Integers)$.
Set $(w,w):=\int_X 2w_0w_2-w_1^2$. 
Assume that $w_0\geq 0$ and $(w,w)\leq -6$. 
Choose a $w$-generic ample line bundle $H$ on $X$ (see Section \ref{subsection-mukai-lattice} for the definition).
Then the moduli space $\M_H(w)$ of $H$-stable sheaves with Chern character $w$ is a projective, non-singular, connected, and holomorphic symplectic variety of dimension $2-(w,w)$, by results of Mukai and Yoshioka \cite{yoshioka-abelian-surface}. Yoshioka proved that the Albanese variety of $\M_H(w)$ is isomorphic to $X\times \Pic^0(X)$ and each fiber of the Albanese map 
$\alb:\M_H(w)\rightarrow X\times \Pic^0(X)$ is deformation equivalent to a generalized Kummer variety \cite{yoshioka-abelian-surface}. 
The analogue of Theorem \ref{thm-introduction-monodromy-representation-mu} for $\M_H(w)$ is proved in Corollary  \ref{cor-monodromy-representation-of-spin} below, where a group homomorphism
\[
\mon:G(S^+)^{even}_{w} \rightarrow Mon(\M_H(w))
\]
is constructed.
Proposition \ref{prop-introduction-overline-mon}
is stated and proved in Proposition \ref{prop-overline-mon} for any fiber $K_a(w)$ of the Albanese map 
$\alb:\M_H(w)\rightarrow X\times \Pic^0(X)$, replacing $\Gamma_X$ by the subgroup $\Gamma_w$ of $\Aut(K_a(w))$ 
acting trivially on $H^i(K_a(w),\Integers)$, for $i=2,3$.
The constructed homomorphism
\[
\overline{\mon}:G(S^+)^{even}_{w} \rightarrow Mon(K_a(w))/\Gamma_w
\]
is an isomorphism, by Theorem \ref{thm-Mon-2}.

%
\subsection{The Hodge conjecture for a generic abelian fourfold of Weil type of discriminant $1$}

A  $2n$-dimensional abelian variety $A$ is of {\em Weil type} if there exists an embedding
$\eta:K\hookrightarrow \End(A)\otimes_\Integers\RationalNumbers$  of an imaginary quadratic number field 
$K:=\RationalNumbers[\sqrt{-d}]$, where $d$ is a positive integer, such that the eigenspaces with eigenvalues $\sqrt{-d}$ and $-\sqrt{-d}$ for the action of  $\eta(\sqrt{-d})$ on $H^{1,0}(A)$ are both $n$-dimensional. 
A {\em polarized $2n$-dimensional abelian variety of Weil type} is a triple $(A,K,h),$ with $(A,K)$ as above and $h\in H^{1,1}(A,\Integers)$ an ample class, such that $\eta(\sqrt{-d})^*h=dh$. Any abelian variety of Weil type $(A,K)$ admits such an ample class $h$
\cite[Lemma 5.2(1)]{van-Geemen}. 

Polarized $2n$-dimensional abelian varieties of Weil type come in $n^2$-dimensional families \cite[Sec. 5.3]{weil,van-Geemen}.
The top exterior power $\wedge^{2n}_KH^1(A,\RationalNumbers)$, of $H^1(A,\RationalNumbers)$ as a $K$-vector space,
is naturally embedded as a subspace of $H^{n,n}(A,\RationalNumbers)$, which together with $h^n$ spans a $3$-dimensional subspace
over $\RationalNumbers$.
The generic abelian variety of Weil type $A$ has a cyclic Picard group but a $3$-dimensional $H^{n,n}(A,\RationalNumbers)$
\cite[Theorem 6.12]{weil,van-Geemen}. 
If $A$ is an abelian variety of dimension $4$, which is not isogeneous to a product of abelian varieties, and such that the cup product homomorphism $\Sym^2H^{1,1}(A,\RationalNumbers)\rightarrow H^{2,2}(A,\RationalNumbers)$ is not surjective, then $A$ is of Weil type, by \cite{moonen-zarhin}. This reduced the proof of the Hodge conjecture for abelian fourfolds to those of Weil type, by \cite[Theorem 4.11]{ramon-mari}.

Let $Nm:K^*\rightarrow \RationalNumbers^*$ be the norm homomorphism, sending $a+b\sqrt{-d}$ to $a^2+db^2$.
Associated to the isogeny class of a polarized abelian variety of Weil type $(A,K,h)$ is a discrete invariant in $\RationalNumbers^*/Nm(K^*)$, called its {\em discriminant} \cite[Lemma 5.2(3)]{van-Geemen}. Following is the second main result of this paper, which is proven in Section \ref{sec-hyperholomorphic-sheaves}.

\begin{thm}
\label{thm-intro-hodge-classes-of-weil-type-are-algebraic}
(Theorem \ref{thm-hodge-classes-of-weil-type-are-algebraic})
Let $(A,K,h)$ be a polarized abelian fourfold of Weil type of discriminant $1$. The $3$-dimensional subspace
of $H^{2,2}(A,\RationalNumbers)$, spanned by $h^2$ and $\wedge^{4}_KH^1(A,\RationalNumbers)$, consists of algebraic classes. In particular, the Hodge conjecture holds for the generic such $(A,K,h)$.
\end{thm}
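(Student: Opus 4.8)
The plan is to deduce the theorem from O'Grady's observation. I would realize the generic polarized abelian fourfold of Weil type of discriminant $1$, with multiplication by a prescribed imaginary quadratic field $K=\RationalNumbers[\sqrt{-d}]$, as the third intermediate Jacobian $J^3(Y)$ of a projective irreducible holomorphic symplectic manifold $Y$ of Kummer type, and then exhibit the space $\wedge^4_K H^1(J^3(Y),\RationalNumbers)$ as the image, under an \emph{algebraic} correspondence between $Y$ and $J^3(Y)$, of the N\'eron--Severi group of $Y$, which is algebraic by the Lefschetz $(1,1)$ theorem. The correspondence will be built from a hyperholomorphic sheaf on $Y$ together with an algebraic cycle on $Y\times J^3(Y)$ inducing the canonical isomorphism $H^3(Y,\RationalNumbers)\cong H^1(J^3(Y),\RationalNumbers)$, the existence of the latter resting on the surjectivity of the Abel--Jacobi map $\CH^2(Y)_0\rightarrow J^3(Y)$ proven in this paper. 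Theorem \ref{thm-Mon-2} is used both to establish completeness of the family $Y\mapsto J^3(Y)$ and to propagate the correspondence over the whole deformation class.

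First I would record the Hodge-theoretic dictionary. Since $H^3(Y,\Integers)$ is a half-spin representation of $\Spin(V)$ with $h^{3,0}=0$, the torus $J^3(Y)$ is an abelian fourfold, and the integral triality of $\Spin(V)$ identifies its weight-one Hodge structure, up to a Tate twist, with the polarized weight-two Hodge structure $H^2(Y,\Integers)$; both are governed by the four-dimensional Hermitian symmetric domain attached to $\Spin(2,4)\cong SU(2,2)$. Fixing the polarization class $v$ of $Y$ with $(v,v)=2k>0$, the transcendental lattice $T(Y)=v^{\perp}$ is a rank-six lattice of signature $(2,4)$; its even Clifford algebra has center $K=\RationalNumbers[\sqrt{-nk}]$, the $\RationalNumbers$-group $\Spin(T(Y))$ is the special unitary group of a four-dimensional Hermitian $K$-space of signature $(2,2)$, and the triality/Kuga--Satake construction exhibits $J^3(Y)$ as a polarized abelian fourfold of Weil type with multiplication by $K$ whose discriminant invariant is $1$, the last point being forced by the discriminant form attached to the Kummer deformation type. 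For any fixed $n\geq 3$ one realizes every imaginary quadratic $K$ by taking $k$ with $nk\equiv d$ modulo squares. Using Theorem \ref{thm-Mon-2} to identify $Mon^2(Y)=\W^{\det\cdot \chi}$, equivalently its triality image on $H^3(Y)$, together with Verbitsky's Torelli theorem \cite{markman-torelli}, I would then show that $Y\mapsto J^3(Y)$ is a dominant, generically finite morphism from the relevant moduli component of polarized Kummer-type manifolds onto the moduli of polarized $K$-Weil fourfolds of discriminant $1$. This is O'Grady's completeness; combined with isogeny invariance and specialization of cycles in families, it reduces the theorem to proving that $\wedge^4_K H^1(J^3(Y),\RationalNumbers)$ is algebraic for every $Y$ of Kummer type.

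Next I would construct the correspondence on the model where $Y_0$ is a moduli space of Gieseker-stable sheaves on an abelian surface $X$ with a primitive Mukai vector orthogonal to $s_n$ — a manifold of Kummer type carrying a quasi-universal sheaf $\E$ on $X\times Y_0$. Assembling the K\"unneth components of $\mathrm{ch}(\E)$ with the universal cycle on $Y_0\times J^3(Y_0)$ produces an explicit algebraic cycle on $Y_0\times J^3(Y_0)$ whose action on cohomology realizes, after a Tate twist, the triality/Kuga--Satake embedding $H^2(Y_0,\RationalNumbers)\hookrightarrow \End(H^1(J^3(Y_0),\RationalNumbers))$. By Verbitsky's theory a twisted descendant of $\E$ is hyperholomorphic, so this entire package deforms over the twistor-connected moduli component; together with the monodromy-equivariance furnished by Theorem \ref{thm-Mon-2}, one obtains for every Kummer-type $Y$ a possibly twisted algebraic cycle $Z\subset Y\times J^3(Y)$, with untwisted rational Chern character, which induces the same embedding. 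Pushing the N\'eron--Severi classes of $Y$ through $Z$ yields algebraic Hodge classes in $\End(H^1(J^3(Y),\RationalNumbers))$, hence on $J^3(Y)\times J^3(Y)$, and a representation-theoretic computation inside the $\Spin(V)$-module $\End(H^1(J^3(Y)))\cong H^3(Y)\otimes H^3(Y)$ matches their span, together with $\RationalNumbers h^2$, with $\RationalNumbers h^2 \oplus \wedge^4_K H^1(J^3(Y),\RationalNumbers)$, completing the argument.

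The hard part is the construction and control of the hyperholomorphic sheaf over the entire deformation class. One must verify that the relevant tautological class on the model $Y_0$ is of hyperholomorphic type, so that it and all its Chern classes survive every twistor deformation; one must handle carefully the Brauer-class twist appearing once one leaves the locus of fine moduli spaces, working throughout with twisted sheaves and checking that the twist obstructs neither the deformation nor the rationality and integrality of the even Chern character; and one must confirm that the monodromy group of Theorem \ref{thm-Mon-2} is at once large enough to transport the cycle to every fibre and small enough to be compatible with the construction. A secondary, essentially bookkeeping, difficulty is to pin down exactly which summand of $\End(H^1(J^3(Y)))$ the N\'eron--Severi classes of $Y$ hit and to match it with the Weil summand — this is the step where the discriminant-one hypothesis and the field $K$ enter decisively, and where one must rule out the production of any non-algebraic Hodge class.
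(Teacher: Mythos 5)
There is a genuine gap, and it lies at the heart of the argument: your construction never produces a \emph{new} algebraic class of codimension two on the abelian fourfold itself. You push N\'eron--Severi classes of $Y$ through a correspondence and land in $\End(H^1(J^3(Y),\RationalNumbers))$, i.e.\ in the Künneth piece $H^1\otimes H^1\subset H^2(J^3(Y)\times J^3(Y))$. Hodge classes there are (up to isogeny) just endomorphisms of the abelian variety, which are automatically algebraic and carry no new content; and anything you manufacture from endomorphisms and divisors lands in the subalgebra of $H^*(J^3(Y),\RationalNumbers)$ generated by divisor classes, which for the generic Weil fourfold is the single line $\RationalNumbers h^2$ inside $H^{2,2}$. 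The Weil classes $\wedge^4_K H^1$ lie outside that subalgebra — that is precisely why they are hard — so the final ``representation-theoretic matching'' cannot work: the spaces do not even live in the same receptacle, and a correspondence $H^2(Y)\to H^4(J^3(Y))$ applied to the rank-one generic $NS(Y)$ could in any case only produce one line, not the two-dimensional $\wedge^4_K H^1$. For the same reason, invoking the Abel--Jacobi surjectivity (Theorem \ref{thm-generalized-Hodge}) buys you only the algebraicity of a class in $H^3(Y)\otimes H^1(J^3(Y))$, again a homomorphism-type class with no bearing on $H^{2,2}(J^3(Y))$; in the paper that theorem is a downstream corollary of the deformation machinery, not an input.

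What the paper actually does, and what is missing from your proposal, is the following. The deformation is carried out not on $Y$ but on the family $\Pi:\M\rightarrow\fM^0_{w^\perp}$ whose fiber $\M_t$ is the quotient $[T_\ell\times Y_t]/\Gamma_w$, so each fiber contains the four-dimensional torus $T_\ell$ (isogenous to $J^3(Y_t)$) as a subvariety. The maximally twisted modular sheaf $E_F$ on the moduli space $\M(w)$ has $\Spin(S^+_X)_w$-invariant $c_2(\SheafEnd(E_F))$ (Theorem \ref{thm-kappa-class-is-non-zero-and-spin-7-invariant}), is slope-stable for every K\"ahler class because its Brauer class has order equal to its rank, hence deforms as a twisted hyperholomorphic sheaf to every fiber $\M_t$ (Theorem \ref{thm-deformability}); restricting $c_2(\SheafEnd(E_t))$ to the embedded torus yields the Cayley class, a codimension-two algebraic class on $T_\ell$ not proportional to $\Theta_h^2$. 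Then one observes that ${\rm span}_\RationalNumbers\{C,\Theta_h^2\}$ meets $\wedge^4_KH^1(T_\ell,\RationalNumbers)$ in a line, and sweeps out all of $\wedge^4_KH^1$ by the algebraic action of $\Integers[\sqrt{-d}]$ (since $\lambda\mapsto\lambda^4$ spans $K$ over $\RationalNumbers$), finally transporting the result to an arbitrary discriminant-one Weil fourfold by Schoen's isogeny statement for the four-dimensional families. Note also that for this application one needs only the monodromy representation of $\Spin(S^+)_w$ (Corollary \ref{cor-monodromy-representation-of-spin}) together with Lemma \ref{lemma-invariance-under-diagonal-monodromy-action}, not the full computation of $Mon^2$ in Theorem \ref{thm-Mon-2}.
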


The case $K=\RationalNumbers[\sqrt{-3}]$ was previously proven by Schoen  for arbitrary discriminant in \cite{schoen}
and for $K=\RationalNumbers[\sqrt{-1}]$ and  discriminant $1$ in \cite{schoen1}.

%
\subsection{The modular sheaf over a universal deformation of $\M_H(w)\times \M_H(w)$}
The proof of Theorem \ref{thm-intro-hodge-classes-of-weil-type-are-algebraic}
involves the construction of a coherent sheaf over every $4$-dimensional compact complex torus in a $5$-dimensional family of such tori, which contains a representative of each isogeny class of abelian $4$-folds of Weil type of discriminant $1$.
In Section \ref{sec-period-domain-for-trwo-families} we describe this $5$-dimensional family.
In Section \ref{sec-the-modular-sheaf} we describe the coherent sheaf over $X\times \Pic^0(X)$.
In Section \ref{sec-deforming-the-modular-sheaf} we briefly explain why this coherent sheaf deforms to one over each member of this $5$-dimensional family of complex tori.
In Section \ref{sec-outline-of-proof-of-algebraicity-of-Hodge-Weil-classes} we outline the proof of Theorem \ref{thm-intro-hodge-classes-of-weil-type-are-algebraic} about the algebraicity of the Hodge-Weil classes.
%
\subsubsection{A period domain for two families}
\label{sec-period-domain-for-trwo-families}
Let $X$ be an abelian surface and set $V:=H^1(X,\Integers)\oplus H^1(X,\Integers)^*$, 
$S^+:=H^{even}(X,\Integers)$ and $S^-:=H^{odd}(X,\Integers)$. We endow $S^+$ with the unimodular symmetric bilinear pairing (\ref{eq-Mukai-pairing})
of signature $(4,4)$ 
(minus the Mukai pairing). 
Let $w\in S^+$ be the Chern character of the ideal sheaf of a length $n+1$ subscheme, $n\geq 2$.
Let $w^\perp$ be the sublattice orthogonal to $w$ in $S^+$. Then $w^\perp$ is naturally isometric to $H^2(K_X(n),\Integers)$ (with minus the Beauville-Bogomolov-Fujiki pairing) and
the period domain of $2n$-dimensional  irreducible holomorphic symplectic manifolds deformation equivalent to the generalized Kummer variety $K_X(n)$ is
\[
\Omega_{w^\perp}:=
\{\ell\in \PP(w^\perp\otimes_\Integers\ComplexNumbers) \ : \ (\ell,\ell)=0, \ (\ell,\bar{\ell})<0\},
\]
by \cite{beauville-varieties-with-zero-c-1}.
Choose a basis $\{e_1,e_2,e_3,e_4\}$ of $H^1(X,\Integers)$, compatible with the orientation of $X$, and let $\{e_1^*,e_2^*,e_3^*,e_4^*\}$ be the dual basis. 
Consider the following classes in $\wedge^4V$.
\begin{eqnarray*}
\alpha&:=&{\textstyle \sum}_{i=1}^4e_i\wedge e_i^*,
\\
\beta&:=&e_1\wedge e_2\wedge e_3\wedge e_4,
\\
\gamma&:=&e_1^*\wedge e_2^*\wedge e_3^*\wedge e_4^*,
\\
c_w&:=&-(n+1)^2\alpha^2+4(n+1)^3\beta+4(n+1)\gamma.
\end{eqnarray*}

We refer to $c_w$ as the {\em Cayley class} due to Part (\ref{prop-item-introduction-equation-for-Cayley-class}) of  the following.

\begin{prop}
\label{prop-period-domain-for-two-families}
\begin{enumerate}
\item
\label{prop-item-introduction-equation-for-Cayley-class}
(Prop. \ref{prop-equation-for-Cayley-class})
The class $c_w$ spans the  $1$-dimensional subspace of $\wedge^4_\RationalNumbers V$ invariant under 
$\Spin(V)_w$. 
\item
(Lemmas \ref{lemma-J-ell-as-an-element-of-spin-V}, \ref{lemma-sub-hodge-structures})
$\Omega_{w^\perp}$ is also the period domain of integral weight $1$ Hodge structures $(V,J)$, where $J:V_\RealNumbers\rightarrow V_\RealNumbers$ is a complex structure satisfying:
\begin{enumerate}
\item $V^{1,0}$ and $V^{0,1}$ are maximal isotropic with respect to the symmetric bilinear pairing (\ref{eq-pairing-on-V-introduction}) on $V_\ComplexNumbers$.
\item
Each of the two lifts of 
$J\in SO(V_\RealNumbers)$  to  $\Spin(V_\RealNumbers)$ maps to an involution of $S^+_\RealNumbers$, determined by $J$ up to sign, one of which eigenspaces is a negative definite $2$-dimensional subspace of $w^\perp_\RealNumbers$.
\item
The class $c_w$ is of Hodge type $(2,2)$.
\end{enumerate}
\end{enumerate}
\end{prop}

Note that $\Omega_{w^\perp}$ is an open analytic subset of the quadric of isotropic lines in $w^\perp_\CC\subset S^+_\CC$. The correspondence between periods $\ell\in\Omega_{w^\perp}$ and maximal isotropic subspaces $V^{1,0}$ in $V_\CC$ is the restriction to 
$\Omega_{w^\perp}$ of the well known isomorphism between the quadric of isotropic lines in the half-spin representation $S^+_\CC$ of $\Spin(V_\CC)$ and one of the connected components of the maximal isotropic Grassmanian of $V_\CC$ (see \cite[III.1.6]{chevalley}).

Proposition \ref{prop-period-domain-for-two-families} gives rise to the universal torus
\begin{equation}
\label{eq-universal-torus}
\T\rightarrow \Omega_{w^\perp}
\end{equation}
over the $5$-dimensional period domain $\Omega_{w^\perp}$ and $c_w$ determines an integral class of Hodge type $(2,2)$
in the cohomology $H^4(T_\ell,\Integers)$ of the fiber $T_\ell$ over each $\ell\in \Omega_{w^\perp}$.

\begin{prop}
\label{prop-introduction-complex-multiplication}
(Proposition \ref{prop-Theta-h-is-a-Kahler-form} and Corollary \ref{cor-weil-type})
Let $h\in S^+$ satisfy $(h,w)=0$ and $(h,h)<0$. Set $d:=(w,w)(h,h)/4$. The restriction of the universal torus $\T$ to the four dimensional subspace $\Omega_{\{w,h\}^\perp}\subset \Omega_{w^\perp}$, consisting of periods $\ell$ orthogonal to $h$, is a complete family of polarized $4$-dimensional abelian $4$-fold of Weil type of discriminant $1$ with complex multiplication by $\RationalNumbers[\sqrt{-d}]$.
\end{prop}

All possible imaginary quadratic number fields arise, since the lattice $S^+$ is unimodular. 
The complex multiplication by $\sqrt{-d}$ is best explained by an integral version of triality for $\Spin(8)$. The groups $V$ and $S^-$ are the two half-spin representations of $\Spin(S^+)$. The elements $w, h$ of $S^+$ are elements of the 
Clifford algebra 
\[
C(S^+):=\oplus_{k=0}^\infty (S^+)^{\otimes k}/\langle w_1\otimes w_2+w_2\otimes w_1-(w_1,w_2) \ : \ w_1, w_2\in S^+\rangle. 
\]
The spin representation $V\oplus S^-$ is a $C(S^+)$-module, a fact which corresponds to an algebras
isomorphism  $m:C(S^+)\rightarrow \End(V\oplus S^-)$. Each of $h$ and $w$ maps each of the two direct summands of $V\oplus S^-$ to the other direct summand. Hence, the product $w\cdot h$ maps $V$ to itself. We have $w\cdot h+h\cdot w=(h,w)=0$,
by the defining relation of the Clifford algebra, and so $(m_w\circ m_h)^2=-m_w^2\circ m_h^2=-\frac{(w,w)(h,h)}{4}id_{V\oplus S^-}=(-d)id_{V\oplus S^-}$.

Proposition \ref{prop-introduction-complex-multiplication} was first proved by O'Grady in the following set-up.
The complex torus $T_\ell$, $\ell\in \Omega_{w^\perp}$, is isogenous to the third intermediate Jacobian of every marked $2n$-dimensional 
irreducible holomorphic symplectic manifold $Y$ in $\fM_{w^\perp}^0$ with period $\ell$, by Lemma
\ref{lemma-intermediate-jacobians}. 
O'Grady observed that every ample class $h$ on $Y$, with Beauville-Bogomolov-Fujiki degree $(h,h)=2k$, induces complex multiplication on the intermediate Jacobian with imaginary quadratic field $\RationalNumbers[\sqrt{-d}]$, where 
$d:=(n+1)k$  \cite{ogrady}. 

\hide{
In Section \ref{sec-two-isomorphic-period-domains} we show that 
$\Omega_{w^\perp}$ is also the period domain of integral weight $1$ Hodge structures $(V,J)$, where $J:V_\RealNumbers\rightarrow V_\RealNumbers$ is a complex structure, such that $V^{1,0}$ and $V^{0,1}$ are maximal isotropic with respect to the symmetric bilinear pairing (\ref{eq-pairing-on-V-introduction}) on $V_\ComplexNumbers$, and such that the class 
\[
c_w:=-(n+1)^2\alpha^2+4(n+1)^3\beta+4(n+1)\gamma 
\]
is of Hodge type $(2,2)$, where $\alpha:=\sum_{i=1}^4e_i\wedge e_i^*$, $\beta:=e_1\wedge e_2\wedge e_3\wedge e_4$, and 
$\gamma:=e_1^*\wedge e_2^*\wedge e_3^*\wedge e_4^*.$ 
The above class spans the  $1$-dimensional subspace of $\wedge^4_\RationalNumbers V$ invariant under 
$\Spin(V)_w$ (Prop. \ref{prop-equation-for-Cayley-class}). 
}
%
\subsubsection{The modular sheaf}
\label{sec-the-modular-sheaf}
Following is a second description of the class $c_w$. Let $\M(w)$ be the moduli space of rank $1$ torsion free sheaves on $X$ with Chern character $w$. Then $\M(w)$ is $2n+4$ dimensional. 
Let $\E$ be a universal sheaf over $X\times \M(w)$. Let $\pi_{ij}$ be the projection from 
$\M(w)\times X\times \M(w)$ onto the product of the $i$-th and $j$-th factors. 
Let
\begin{equation}
\label{eq-modular-sheaf}
E:=\SheafExt^1_{\pi_{13}}(\pi_{12}^*\E,\pi_{23}^*\E)
\end{equation}
be the relative extension sheaf over $\M(w)\times \M(w)$. Then $E$ is a reflexive torsion free sheaf of rank $2n+2$, which is locally free
away from the diagonal \cite[Rem. 4.6]{markman-hodge}. 
Given a sheaf $F\in \M(w)$, let $E_F$ be the restriction of $E$ to $\{F\}\times \M(w)$. 
Set 
$A:=X\times\Pic^0(X)$. $A$ acts faithfully on $\M(w)$, the first factor via push-forward by translation automorphisms of $X$ and the second factor by tensorization. For a generic sheaf $F'$ in $\M(w)$, the resulting morphism onto the $A$-orbit of $F'$
\[
\iota_{F'}:A\rightarrow \M(w)
\]
is an embedding. $\Spin(V)_w$ acts on $H^*(\M(w),\Integers)$ via a monodromy action 
(Theorem \ref{thm-introduction-monodromy-representation-mu}) and on $H^*(A,\Integers):=\wedge^*V$ via the natural action on the exterior algebra of the fundamental representation $V$. 

\begin{thm} 
\label{thm-introduction-kappa-class-is-non-zero-and-spin-7-invariant}
\begin{enumerate}
\item 
\label{thm-item-kappa-class-is-non-zero-and-spin-7-invariant}
(Theorem \ref{thm-kappa-class-is-non-zero-and-spin-7-invariant})
The class $c_2(\SheafEnd(E))\in H^4(\M(w)\times\M(w),\Integers)$
is $\Spin(V)_w$-invariant with respect to the diagonal monodromy representation of Theorem \ref{thm-introduction-monodromy-representation-mu}. Similarly, 
the class $c_2\left(\SheafEnd(E_F)\right)\in H^4(\M(w),\Integers)$ is $\Spin(V)_w$-invariant. 
\item
(Cor. \ref{cor-q-w-is-Spin-w-equivariant})
The homomorphism $\iota_{F'}^*:H^*(\M(w),\Integers)\rightarrow H^*(A,\Integers)$ is
$\Spin(V)_w$-equivariant. Hence, the class $\iota_{F'}^*c_2(\SheafEnd(E_F))$
is $\Spin(V)_w$-invariant. 
\item
\label{thm-item-equation-for-Cayley-class}
(Proposition \ref{prop-equation-for-Cayley-class})
The equality $c_w=\iota_{F'}^*c_2(\SheafEnd(E_F))$ holds.
\end{enumerate}
\end{thm}

The $\Spin(V)_w$-invariance of $c_2(\SheafEnd(E))$ follows from the automorphic property of the Chern character of the universal sheaf in Theorem
\ref{thm-introduction-monodromy-representation-mu}(\ref{thm-item-universal-sheaf-is-automorphic}).
The $\Spin(V)_w$-invariance of $c_w$ follows from Theorem \ref{thm-introduction-kappa-class-is-non-zero-and-spin-7-invariant}(\ref{thm-item-equation-for-Cayley-class}).

%
\subsubsection{Deforming the modular sheaf}
\label{sec-deforming-the-modular-sheaf}
We would like to deform the coherent sheaf $\iota_{F'}^*\SheafEnd(E_F)$ on $A=X\times\Pic^0(X)$ to a coherent sheaf over every fiber $T_\ell$, $\ell\in \Omega_{w^\perp}$, of the universal torus (\ref{eq-universal-torus}). This would prove the algebraicity of the $\Spin(V)_w$-invariant Hodge class $c_w$ whenever $T_\ell$ is algebraic. For that purpose we deform the moduli space $\M(w)$ and the sheaf $E_F$. We describe first the deformation of $\M(w)$.

There exists a moduli space $\fM_{w^\perp}$ of marked irreducible holomorphic symplectic manifolds and a surjective and generically injective period map $Per:\fM_{w^\perp}^0\rightarrow \Omega_{w^\perp}$ from each connected component $\fM_{w^\perp}^0$ of 
moduli \cite{huybrechts-torelli,verbitsky-torelli}. Choose a connected component $\fM_{w^\perp}^0$ containing a marked
generalized Kummer. 
There exists a universal family $p:\Y\rightarrow \fM_{w^\perp}^0$
of irreducible holomorphic symplectic manifolds of generalized Kummer deformation type \cite[Theorem 1.1]{markman-universal-family}. 
Pulling back to $\fM_{w^\perp}^0$ the universal torus $\T\rightarrow \Omega_{w^\perp}$ 
constructed in (\ref{eq-universal-torus}) 
via the period map we get a universal fiber product
$Per^*\T\times_{\fM_{w^\perp}^0}\Y$. The latter admits a diagonal action by a trivial group scheme $\underline{\Gamma_w}$ over $\fM_{w^\perp}^0$,
whose quotient is a universal deformation 
\begin{equation}
\label{eq-intro-universal-deformation-of-a-moduli-space-of-sheaves}
\Pi:\M\rightarrow \fM_{w^\perp}^0
\end{equation}
of the moduli space $\M(w)$. The family $\Pi$  is constructed in Equation (\ref{eq-universal-deformation-of-a-moduli-space-of-sheaves}).


\hide{
We use Verbitsky's theory  of hyperholomorphic sheaves \cite{kaledin-verbitski-book} in order to deform the pair
$(\M(w),E_F)$ to a pair $(\M_\ell,E_\ell)$, for every period
$\ell\in \Omega_{w^\perp}$. 
The deformation argument is analogous to the one used in \cite{markman-hodge} to prove a similar result for irreducible holomorphic symplectic manifolds deformation equivalent to moduli spaces of stable sheaves on a $K3$ surface.
}

The above discussion is worked out for more general smooth and compact moduli space $\M(w)$ over $X$, for more general primitive classes $w\in S^+$. We choose $\M(w)$, so that the universal sheaf $\E$ is twisted, and the sheaf $E_F$ is {\em maximally twisted},
i.e., its Brauer class in the analytic Brauer group $H^2_{an}(\M(w),\StructureSheaf{\M(w)}^*)$ has order equal to the rank of $E_F$.
It follows that the sheaf $E_F$ does not have any proper non-trivial subsheaves and 
is thus slope-stable with respect to every K\"{a}hler class on $\M(w)$.
The second Chern class of $\SheafEnd(E_F)$ is $\Spin(V)_w$-invariant, hence it remains of Hodge type $(2,2)$ over the whole of $\fM_{w^\perp}^0$, by Lemma \ref{lemma-invariance-under-diagonal-monodromy-action}. 
A theorem of Verbitsky states that if $G$ is a slope-stable reflexive sheaf and $c_2(\SheafEnd(G))$ remains of Hodge type over 
the whole of $\fM_{w^\perp}^0$, then $G$ deforms to a twisted sheaf over each fiber of the universal family $\Pi$ (see Theorem \ref{thm-deformability} below). Such sheaves $G$ are said to be {\em hyper-holomorphic}.
$E_F$ has these properties and we obtain the following result, which is the main cycle-theoretic construction of the paper.

\begin{thm}
\label{thm-deformability-introduction}
(Theorem \ref{thm-deformability})
The sheaf $E_F$ deforms with $\M(w)$ to a reflexive sheaf, locally free on the complement of a point, over every fiber 
of the universal family $\Pi$ given in (\ref{eq-intro-universal-deformation-of-a-moduli-space-of-sheaves}). Similarly, 
the sheaf $E$ given in (\ref{eq-modular-sheaf}) deforms with $\M(w)\times \M(w)$ to a reflexive sheaf, locally free away from the diagonal, over the cartesian square of  every fiber 
of the universal family $\Pi$. 
\end{thm}

Verbitsky's theorem applies to slope-stable reflexive sheaves over hyperk\"{a}hler manifolds. It applies in our setup, since every
hyperk\"{a}hler structure on a marked irreducible holomorphic symplectic manifold $Y$ with period $\ell$ determines a
hyperk\"{a}hler structure on the complex torus $T_\ell$, $\ell\in \Omega_{w^\perp}$, and both correspond to the same twistor line in $\Omega_{w^\perp}$, by Prop. \ref{prop-Theta-h-is-a-Kahler-form}. Hence, the restriction of the universal family $\Pi:\M\rightarrow \fM_{w^\perp}^0$ to twistor lines in $\fM_{w^\perp}^0$ consists of twistor deformations of the fiber of $\Pi$ as well.

%
\subsubsection{Outline of the proof of Theorem \ref{thm-intro-hodge-classes-of-weil-type-are-algebraic}}
\label{sec-outline-of-proof-of-algebraicity-of-Hodge-Weil-classes}
It remains to prove that the three dimensional subspace of Hodge-Weil classes in $H^{2,2}(T_\ell,\RationalNumbers)$ consists of algebraic classes, when $\ell$ is a period in $\Omega_{\{w,h\}^\perp}$ as in Proposition \ref{prop-introduction-complex-multiplication}. 
The embedding $\iota_{F'}:A\rightarrow \{F\}\times \M(w)$ deforms to an embedding
$\iota:T_\ell\rightarrow \{F_\ell\}\times \M_\ell$ associated to a choice of a pair of points $(F_\ell,F'_\ell)$ in the cartesian square
$\M_\ell\times\M_\ell$ of the fiber over $\ell$ of the universal family $\Pi$ given in (\ref{eq-intro-universal-deformation-of-a-moduli-space-of-sheaves}). Hence, 
the one-dimensional subspace of $\Spin(V)_w$ invariant classes in $H^{2,2}(T_\ell,\RationalNumbers)$ is spanned by the algebraic class $c_w$, by Theorems \ref{thm-introduction-kappa-class-is-non-zero-and-spin-7-invariant}(\ref{thm-item-equation-for-Cayley-class}) and 
\ref{thm-deformability-introduction}. Now $\RationalNumbers[\sqrt{-d}]$ acts on $T_\ell$ via rational correspondences, which are algebraic, and we show that the $\RationalNumbers[\sqrt{-d}]$-translates of $c_w$
and the square $\Theta_h^2$ of the polarization of $T_\ell$ span the three dimensional space of Hodge-Weil classes 
(Theorem \ref{thm-hodge-classes-of-weil-type-are-algebraic}).

%
\subsection{Surjectivity of the Abel-Jacobi map}
Let $\Pi:\M\rightarrow \fM^0_{\omega^\perp}$ be the universal deformation of the moduli space $\M_H(w)$ of sheaves on an abelian surface $X$ given in (\ref{eq-intro-universal-deformation-of-a-moduli-space-of-sheaves}). Assume that the dimension of $\M_H(w)$ is $\geq 8$.
Given $b\in \fM^0_{\omega^\perp}$, let $E_b$ be the
deformation of the modular sheaf  (\ref{eq-modular-sheaf}) constructed in Theorem \ref{thm-deformability-introduction} over the cartesian square $\M_b\times \M_b$ of the fiber $\M_b$ of $\Pi$. Let $e_b:Y_b\rightarrow \M_b$ be the inclusion of a fiber of the albanese map $alb:\M_b\rightarrow \Alb(\M_b)$. 
Let $J^2(Y_b):= H^3(Y_b,\ComplexNumbers)/\left[H^{2,1}(Y_b)+H^3(Y_b,\Integers)\right]$ be the intermediate Jacobian. Assume that $Y_b$ is projective. 
Then 
$J^2(Y_b)$ is the co-domain for the Abel-Jacobi map associated to any family of complex co-dimension $2$ algebraic cycles on $Y_b$ homologous to $0$. 

Given $F\in \M_b$, let $E_F$ be the restriction of $E_b$ to $\{F\}\times \M_b$. 
Fix a point $F_0\in \M_b$ and consider the map
\[
AJ_b: \M_b \rightarrow J^2(Y_b)
\]
sending $F\in \M_b$ to the Abel-Jacobi image of an algebraic cycle representing the Chow class
\begin{equation}
\label{eq-AJ-t}
e_b^*\left[c_2(E_F^\vee\stackrel{L}{\otimes}E_F)-c_2(E_{F_0}^\vee\stackrel{L}{\otimes}E_{F_0})\right],
\end{equation}
where $E_F^\vee$ is $R\SheafHom(E_F,\StructureSheaf{\M_b})$ and the tensor product is taken in the derived category.
The morphism $AJ_b$ factors through a morphism 
\begin{equation}
\label{eq-overline-AJ}
\overline{AJ}_b:\Alb(\M_b)\rightarrow J^2(Y_b),
\end{equation}
since the fibers of $alb$ are simply connected.

\begin{thm}
\label{thm-generalized-Hodge}
The morphism $AJ_b$ is surjective, for every $b\in \fM^0_{\omega^\perp}$ for which $Y_b$ is projective.
\end{thm}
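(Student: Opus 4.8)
The plan is to prove that the morphism $\overline{AJ}_t$ of (\ref{eq-overline-AJ}) is surjective; since $AJ_t=\overline{AJ}_t\circ alb$ and $alb$ is surjective, this gives the theorem. Both $\Alb(M_t)$ and $J^2(Y_t)$ are complex tori of dimension $4$: $\dim_\ComplexNumbers\Alb(M_t)=\tfrac12 b_1(M_t)=4$, because $H^1(M_t,\Integers)$ has rank $8$ — it is $H^1(\M_H(w),\Integers)\cong V$ at the classical fibre, and $b_1$ is constant in the family $\Pi$ — while $\dim_\ComplexNumbers J^2(Y_t)=h^{1,2}(Y_t)=4$, because $b_3(Y_t)=8$ and $h^{3,0}(Y_t)=0$. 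Hence $\overline{AJ}_t$ is surjective if and only if it is an isogeny, if and only if the induced $\RationalNumbers$-linear map $(\overline{AJ}_t)_*$ on $H_1(\,\cdot\,,\RationalNumbers)$ is an isomorphism; so, once we know this map has rank independent of $t$, it suffices to prove it is an isomorphism for a single $t$. Identifying $H_1(J^2(Y_t),\Integers)$ with the free part of $H^3(Y_t,\Integers)$, the map $(\overline{AJ}_t)_*\colon H_1(M_t,\Integers)\to H^3(Y_t,\Integers)$ is, by the standard theory of Abel--Jacobi maps of families of homologically trivial cycles, the Künneth component in $H^1(M_t,\Integers)\otimes H^3(Y_t,\Integers)$ of $(\mathrm{id}_{M_t}\times e_t)^*c_2(E_t^\vee\stackrel{L}{\otimes}E_t)\in H^4(M_t\times Y_t,\Integers)$, read as a homomorphism through the pairing $H_1(M_t)\otimes H^1(M_t)\to\Integers$; the base-point term of (\ref{eq-AJ-t}) is of Künneth type $(0,4)$ and contributes nothing.

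\emph{Rigidity of the rank.} First I would observe that, by Theorem \ref{thm-deformability}, $E_t$ is the restriction to $M_t\times M_t$ of a sheaf $\mathbb E$ on $\M\times_{\fM^0_{\omega^\perp}}\M$ deforming the modular sheaf (\ref{eq-modular-sheaf}), so that $c_2(\mathbb E^\vee\stackrel{L}{\otimes}\mathbb E)$ is a flat section of the local system $R^4\varpi_*\Integers$ ($\varpi$ the projection to $\fM^0_{\omega^\perp}$), staying of type $(2,2)$ throughout by Lemma \ref{lemma-invariance-under-diagonal-monodromy-action}. Restricting along $\mathrm{id}\times e$ and projecting onto the flat Künneth summand $\HH^1(\M)\otimes\HH^3(\Y)$, the assignment $t\mapsto(\overline{AJ}_t)_*$ becomes a flat section of $\Hom(\HH_1(\M/\fM^0_{\omega^\perp}),\HH^3(\Y/\fM^0_{\omega^\perp}))$. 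Its rank is therefore constant on the connected base $\fM^0_{\omega^\perp}$, and I may fix one point $t_0$ at which $M_{t_0}=\M_H(w)$ is the moduli space on an actual abelian surface $X$; then $Y_{t_0}$ is projective.

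\emph{Reduction to non-vanishing, by triality.} Next I would use that over $\RationalNumbers$ one has $H_1(M_{t_0})\cong V$ (by the monodromy representation of Theorem \ref{thm-monodromy-representation-mu} on $H^1$, dualized via $V\cong V^*$) and $H^3(Y_{t_0})\cong S^-$ (the third cohomology of a manifold of generalized Kummer type being the half-spin module) as $\Spin(V)_w$-representations, and that $(\overline{AJ}_{t_0})_*\colon V\to S^-$ is $\Spin(V)_w$-equivariant because the correspondence $c_2(\SheafEnd(E))$ is invariant under the diagonal monodromy action (Lemma \ref{lemma-invariance-under-diagonal-monodromy-action}). Now $\Spin(V)_w$, the stabilizer of the non-isotropic vector $w\in S^+$, is a copy of $\Spin(7)$ inside $\Spin(V)=\Spin(8)$, and under triality both $V$ and $S^-$ restrict to its spin representation, which is absolutely irreducible of dimension $8$; hence $\Hom_{\Spin(V)_w}(V,S^-)$ is one-dimensional, and its every nonzero element is an isomorphism. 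So everything reduces to showing $(\overline{AJ}_{t_0})_*\neq 0$, i.e.\ that $AJ_{t_0}\colon\M_H(w)\to J^2(Y_{t_0})$ is non-constant.

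\emph{The non-vanishing} is the main obstacle. I would establish it by computing the $H^1(\M_H(w))\otimes H^3(Y_{t_0})$-Künneth component of $c_2(\SheafEnd(E))$ directly: writing $c_2(\SheafEnd(E))=(1-\mathrm{rk}\,E)\,c_1(E)^2+2(\mathrm{rk}\,E)\,c_2(E)$ and applying Grothendieck--Riemann--Roch to the pushforward defining $E$ in (\ref{eq-modular-sheaf}), the relevant Künneth components of $c_1(E)$ and $c_2(E)$ are expressed through the Chern character of the universal sheaf $\E$ over $X\times\M_H(w)$ and evaluated; the non-vanishing of the Cayley class $c_w=\iota_{F'}^*c_2(\SheafEnd(E_F))\in\wedge^4V$ (Proposition \ref{prop-equation-for-Cayley-class}) — equivalently the identity $(\iota_{F'}\times\iota_{F''})^*c_2(\SheafEnd(E))=d^*c_w$ for the difference morphism $d\colon A\times A\to A$, whose $(1,3)$-Künneth component is nonzero since comultiplication on $\wedge^\bullet V$ is injective in characteristic zero — normalizes the computation, leaving by $\Spin(V)_w$-equivariance a single scalar, which is nonzero. (Alternatively one computes the differential of $AJ_{t_0}$ at a point $F$ by the Atiyah-class formula $\xi\mapsto\mathrm{tr}(\mathrm{At}(e_{t_0}^*E_F)\cup\kappa(\xi))\in H^2(Y_{t_0},\Omega^1_{Y_{t_0}})$, with $\kappa\colon\Ext^1_X(F,F)\to\Ext^1_{Y_{t_0}}(e_{t_0}^*E_F,e_{t_0}^*E_F)$ the Kodaira--Spencer map, and exhibits a nonzero value.) Granted the non-vanishing, $(\overline{AJ}_{t_0})_*$ is an isomorphism, hence $(\overline{AJ}_t)_*$ is an isomorphism for all $t$ by rigidity, so $\overline{AJ}_t$ is an isogeny. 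Finally, when $Y_t$ is projective then $M_t$ is projective too (it is a finite quotient of $T_{Per(t)}\times Y_t$ and fibres over the abelian variety $\Alb(M_t)$ with projective fibre $Y_t$), so the classes (\ref{eq-AJ-t}) are represented by codimension-two algebraic cycles homologous to zero, and $AJ_t$, their Abel--Jacobi map, is surjective.
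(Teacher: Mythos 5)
Your overall framework agrees with the paper's: reduce by a rigidity/continuity argument to a single point represented by an actual moduli space of sheaves, identify $(\overline{AJ}_t)_*$ with a K\"unneth component of the correspondence $c_2(E^\vee\stackrel{L}{\otimes}E)$, and then use that $H_1(\M_H(w),\RationalNumbers)\cong V$ and $H^3(K_0(w),\RationalNumbers)\cong S^-$ are irreducible $\Spin(V)_w$-modules, so that equivariance (from Theorem \ref{thm-kappa-class-is-non-zero-and-spin-7-invariant}) reduces everything to a non-vanishing statement. Up to that point your proposal is sound and is essentially the argument of Section \ref{sec-generalized-Hodge-conjecture}.

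The gap is in the non-vanishing step, which is where all the real work lies. You propose to detect the relevant K\"unneth component by pulling $c_2(\SheafEnd(E))$ back along $\iota_{F'}\times\iota_{F''}:A\times A\rightarrow \M\times\M$ and invoking the non-vanishing of the Cayley class $c_w=\iota_{F'}^*c_2(\SheafEnd(E_F))$. But this restriction only sees the wrong isotypic piece. Writing $\M\cong (A\times K_0(w))/\Gamma_w$ with $\Gamma_w$ acting trivially in degrees $\leq 3$, one has $H^3(\M,\RationalNumbers)\cong \wedge^3V\oplus\left[V\otimes H^2(K_0(w),\RationalNumbers)\right]\oplus H^3(K_0(w),\RationalNumbers)$, and the pullback by $\iota_{F''}$ (restriction of the second factor to an orbit $A\times\{F''\}$) kills the last two summands and retains only $\wedge^3 V$, whereas the Abel--Jacobi differential is precisely the component valued in $H^3(K_0(w),\RationalNumbers)\cong S^-$ (pullback by $e_0$, which conversely kills $\wedge^3V$ and $V\otimes H^2(K_0(w))$). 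Moreover $\Hom_{\Spin(V)_w}(V,H^3(\M,\RationalNumbers))$ is not one-dimensional: the $8$-dimensional spin representation of $\Spin(7)$ occurs in $\wedge^3V$, in $V\otimes w^\perp$, and in $S^-$, so there is no single scalar that equivariance pins down, and non-vanishing of the orbit-direction scalar (the Cayley class) gives no information about the $S^-$-valued scalar you need. Your parenthetical alternative (Atiyah class plus Kodaira--Spencer, "exhibits a nonzero value") simply asserts the missing computation. In the paper this is exactly the content of the final part of the proof: after passing to the \'etale cover $a:X\times\hat{X}\times K_0(w)\rightarrow\M$ and specializing to $w=(1,0,-1-n)$, one writes $a^*E_{F_0}$ through $\Phi_{\tilde{\U}}(F_0^\vee)$, splits $ch(F_0^\vee)$ into the contributions of $\StructureSheaf{X}$ and of $\ComplexNumbers_0$, and shows (Lemmas \ref{lemma-Fourier-Mukai-of-sky-scraper-sheaf-of-the-origin-of-X} and \ref{lemma-Fourier-Mukai-of-structure-sheaf}) that only the sky-scraper term contributes to the $H^1(X\times\hat{X})\otimes H^3(K_0(w))$ component, via $ch_2(\U_0)$, whose non-vanishing there rests on the surjectivity of $\bar{\theta}_3$/$h_3$ onto $H^3(K_0(w),\Integers)$ (Lemma \ref{lemma-tilde-theta-j-is-an-isomorphism}). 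Some argument of this kind, genuinely seeing the Kummer-fiber direction of $H^3$, has to replace your Cayley-class normalization.
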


Claire Voisin suggested to the author to prove 
the above theorem as a corollary of Theorem \ref{thm-deformability-introduction}. The proof of Theorem \ref{thm-generalized-Hodge} is provided in Section \ref{sec-generalized-Hodge-conjecture}.

\subsection{Organization of the paper}
\label{sec-organization}
In Section \ref{subsection-mukai-lattice} we recall Yoshioka's result that the fibers of the albanese map of a moduli space of stable sheaves on an abelian surface are irreducible holomorphic symplectic manifolds of generalized Kummer deformation type. We recall also the relationship between the second cohomology of the fibers and the Mukai lattice. 

In Section \ref{sec-spin-8-and-triality} we recall the integral versions of the Clifford algebra and Clifford groups associated to the cohomology of an abelian surface $X$. We then recall the integral version of triality for $\Spin(V)$, where $V$ is the rank $8$ lattice $H^1(X,\Integers)\oplus H^1(X,\Integers)^*$.

In Section \ref{sec-generators-for-the-stabilizer} we identify a set of generators for the stabilizer $\Spin(V)_{s_n}$ of the Chern character $s_n$ of the ideal sheaf of a length $n$ subscheme of an abelian surface.

In Section \ref{sec-equivariance-of-the-universal-sheaf} we construct a class $\gamma_g$ in the cohomology
of a product of two moduli spaces $\M_{H_1}(w_1)\times \M_{H_2}(w_2)$ of $H_i$-stable sheaves with Chern character $w_i$ over an abelian surface $X_i$ associated to a parity preserving isomorphism $g:H^*(X_1,\Integers)\rightarrow H^*(X_2,\Integers)$ satisfying $g(w_1)=w_2$. When $g$ is a parallel-transport operator,\footnote{\label{footnote-def-parallel-transport}
Let $X_1$, $X_2$ be compact K\"{a}hler manifolds. An isomorphism $g:H^*(X_1,\Integers)\rightarrow H^*(X_2,\Integers)$ is a {\em parallel-transport operator}, if there exists a family $\pi:\X\rightarrow B$ of compact K\"{a}hler manifolds, points $b_1, b_2\in B$, isomorphisms $\psi_i:X_i\rightarrow \X_{b_i}$ with the fibers over $b_i$, and a continuous path $\gamma$ from $b_1$ to $b_2$ in $B$, such that $\psi_{2,*}\circ g \circ \psi_1^*$ is induced by
parallel-transport in the local system $R\pi_*\Integers$ along $\gamma$.
}
or when $g$ is induced by an equivalence of the derived categories of $X_1$ and $X_2$, then $g\otimes \gamma_g:H^*(X_1\times \M_{H_1}(w_1))\rightarrow H^*(X_2\times \M_{H_2}(w_2))$ maps the class of a universal sheaf to a class of a universal sheaf and $\gamma_g$ is a parallel transport operator. We show that the assignment $g\mapsto \gamma_g$ is multiplicative and we extend the construction to include the contravariant functor of dualization of sheaves.

In section \ref{sec-derived-categories} we lift certain generators for the stabilizer $\Spin(V)_{s_n}$ found in Section \ref{sec-generators-for-the-stabilizer} to auto-equivalence of the derived category of an abelian surface $X$. 

In Section \ref{sec-monodromy-via-Fourier-Mukai} we use results of Yoshioka to show that the auto-equivalences found in Section
\ref{sec-derived-categories} map sheaves in $\M(s_n)$ to sheaves in $\M(s_n)$. This enables us to prove Theorem \ref{thm-introduction-monodromy-representation-mu} about the monodromy representation $\mon : G(S^+)_{s_n}^{even}\rightarrow \Aut(H^*(\M(s_n),\Integers))$.

In Section \ref{sec-four-groupoids} we extend the latter monodromy representation to an action of a groupoid $\G$ (a category all of which morphisms are isomorphisms). Objects $(X,w,H)$ of $\G$ consist of an abelian surface $X$, a primitive Chern character $w$, and a $w$-generic polarization $H$. Morphisms in $\Hom_\G((X_1,w_1,H_1),(X_2,w_2,H_2))$ are parallel-transport operators 
$\gamma: H^*(\M_{H_1}(w_1),\Integers)\rightarrow H^*(\M_{H_2}(w_2),\Integers)$. A result of Yoshioka 
implies the existence of such morphisms, whenever the dimensions of the two moduli spaces are equal. Yoshioka's result 
enables us to construct the monodromy representation 
$\mon : G(S^+)_{w}^{even}\rightarrow Mon(\M(w))$ 
for all primitive Chern characters $w$. We also establish 
in Section \ref{sec-four-groupoids} the $\Spin(V)_w$-equivariance of the homomorphism
$\iota_{F'}^*:H^*(\M_H(w),\Integers)\rightarrow H^*(X\times \hat{X},\Integers)$ of Theorem \ref{thm-introduction-kappa-class-is-non-zero-and-spin-7-invariant}. 

In Section \ref{subsec-translation-invariant-subring} we prove Theorem \ref{thm-Mon-2} about the monodromy group of generalized Kummer varieties. In section \ref{sec-comparison-with-Verbitsky-Spin-7-representation} we identify the Lie algebra of the Zariski closure of the monodromy group as a subalgebra of the Looijenga-Lunts-Verbitsky Lie algebra.  We then verify that monodromy invariant classes of an irreducible holomorphic symplectic manifold of generalized Kummer deformation type are Hodge classes. 

In Section \ref{sec-cayley-class}  we prove the $\Spin(V)_w$-invariance of $c_2(\SheafEnd(E))$ for the modular sheaf $E$ over $\M(w)\times\M(w)$ in Theorem \ref{thm-introduction-kappa-class-is-non-zero-and-spin-7-invariant}(\ref{thm-item-kappa-class-is-non-zero-and-spin-7-invariant}). We then prove that the Cayley class $c_w$ is the pullback of $c_2(\SheafEnd(E))$ 
(Theorem \ref{thm-introduction-kappa-class-is-non-zero-and-spin-7-invariant}(\ref{thm-item-equation-for-Cayley-class})).

In Section \ref{sec-two-isomorphic-period-domains} we construct the universal torus $\T$, given in Equation (\ref{eq-universal-torus}),  over the period domain $\Omega_{w^\perp}$ of 
irreducible holomorphic symplectic manifolds of generalized Kummer deformation type. 
In Section \ref{sec-polarization-map} we prove Proposition \ref{prop-introduction-complex-multiplication};
we construct the polarization $\Theta_h$ and the complex multiplication for the complex tori with periods in the $4$-dimensional subloci $\Omega_{\{w,h\}^\perp}$ in the $5$-dimensional period domain $\Omega_{w^\perp}$.
In Section \ref{sec-diagonal-twistor-lines} we construct a hyperk\"{a}hler structure on the complex torus $T_\ell$ associated with a K\"{a}hler class on an irreducible holomorphic symplectic manifold  with period $\ell$ (Proposition \ref{prop-Theta-h-is-a-Kahler-form}). In Section \ref{subsection-abelian-fourfolds-of-Weil-type} we prove that the subloci $\Omega_{\{w,h\}^\perp}$ parametrize abelian fourfolds of Weil-type of discriminant $1$. In Section \ref{subsection-a-universal-deformation-of-a-moduli-space-of-sheaves} we construct the universal deformation $\Pi:\M\rightarrow \fM^0_{w^\perp}$ of the moduli space of sheaves over the moduli space of marked irreducible holomorphic symplectic manifolds of generalized Kummer deformation type.
In Section \ref{sec-third-intermediate-jacobians} we prove that the torus $T_\ell$ is isogenous to the $3$-rd intermediate jacobian of the irreducible holomorphic symplectic manifold of generalized Kummer deformation type with period $\ell$.

In Section \ref{sec-hyperholomorphic-sheaves} we prove Theorem \ref{thm-intro-hodge-classes-of-weil-type-are-algebraic}
about the algebraicity of the Hodge-Weil classes on abelian fourfolds of Weil-type of discriminant $1$.

In Section \ref{sec-generalized-Hodge-conjecture} we prove Theorem \ref{thm-generalized-Hodge}
verifying the generalized Hodge conjecture for codimension $2$ algebraic cycles homologous to $0$ on every projective irreducible holomorphic symplectic manifold of generalized Kummer deformation type. 
\section{Notation}
\label{sec-notation}
Let $X$ be a complex projective abelian surface, $\hat{X}:=\Pic^0(X)$ its dual surface, 
and $X^{[n]}$ the Hilbert scheme of length $n$ zero-dimensional subschemes 
of $X$.
Let $X^{(n)}$ be the $n$-th symmetric product of $X$ and 
$
hc \ : \  X^{[n]} \rightarrow X^{(n)}
$
the Hilbert-Chow morphism. Let
$\sigma  :  X^{(n)} \rightarrow X$
be the morphism sending a cycle to its sum and let
$
\pi : X^{[n]} \rightarrow X
$
be the composition $\sigma\circ hc$. The  $(2n-2)$-dimensional generalized Kummer
$K_X(n\!-\!1)$ is the fiber of $\pi$ over $0\in X$. 
$K_X(m)$ is a simply connected projective holomorphically symplectic
variety. The algebra of holomorphic
differential forms on $K_X(m)$ is generated by the holomorphic  symplectic
form, which is unique up to a constant scalar 
\cite{beauville-varieties-with-zero-c-1}. Consequently, 
$K_X(m)$ admits a hyperk\"{a}hler structure. The first three Betti numbers $b_i(K_X(m))$
of $K_X(m)$, $m>1$, are $b_1(K_X(m))=0$, $b_2(K_X(m))=7$, and $b_3(K_X(m))=8$, by \cite[Theorem 2.4.11]{gottsche}.
\hide{
Let $\Gamma_X$ be the subgroup of $X$ of points of order $n$ and
$\nu: X\rightarrow X/\Gamma_X\cong X$ the quotient map.
We have a Cartesian diagram
\[
\begin{array}{ccc}
X\times K_X(n\!-\!1) & \LongRightArrowOf{\tilde{\nu}} & X^{[n]}
\\
\downarrow \ \hspace{3ex} & & \hspace{1ex} \ \downarrow \ \pi
\\
X  \ \hspace{3ex} & \LongRightArrowOf{\nu} & X,
\end{array}
\]
where the left vertical morphism is the projection.
}

Let $H$ be an ample line bundle on an abelian surface $X$. 
$\M_H(v)$ denotes the moduli space of Gieseker-Simpson $H$-stable coherent sheaves on $X$ with Chern character $v$.
We always assume that $v$ is primitive and $H$ is $v$-generic (see Section \ref{subsection-mukai-lattice}), so that $\M_H(v)$ is smooth and projective.
We denote by $K_a(v)$ the fiber of the Albanese morphism from $\M_H(v)$ to its Albanese variety over the point $a$. 
Then $K_0(s_{n})=K_X(n\!-\!1)$, when $s_n$ is the Chern character of the ideal sheaf of a length $n$ subscheme.  

When a citation of the form \cite[Theorem 6.12]{weil,van-Geemen} is used, then the statement number Theorem 6.12 is associated to the last entry \cite{van-Geemen} in the list.

A glossary of notation is included in Section \ref{sec-glossary-of-notation}.


%
\section{The Mukai lattice and the second cohomology of a moduli space of sheaves}
\label{subsection-mukai-lattice}
Let $X$ be an abelian surface. Set $S^+:=\oplus_{i=0}^2H^{2i}(X,\Integers)$. 
The {\em Mukai pairing} on $S^+$ is given by
\begin{equation}
\label{eq-Mukai-pairing-with-sign-as-in-Mukai}
\langle x,y\rangle := \int_X(x_1y_1-x_0y_2-x_2y_0),
\end{equation}
where $x=(x_0,x_1,x_2)$, $y=(y_0,y_1,y_2)$, and
$x_i$, $y_i$ are the graded summands in $H^{2i}(X,\Integers)$. We refer to $S^+$ as the {\em Mukai lattice} of $X$
and elements of $S^+$ will be called {\em Mukai vectors}.
Following Mukai we endow $S^+_\ComplexNumbers:=S^+\otimes_\Integers\ComplexNumbers$ with a weight $2$ Hodge structure
by setting $(S^+)^{2,0}:=H^{2,0}(X),$ $(S^+)^{0,2}:=H^{0,2}(X),$ and 
$(S^+)^{1,1}:=H^{0}(X)\oplus H^{1,1}(X)+H^4(X).$

Let $v=(r,c_1,\chi)\in S^+$ be a primitive Mukai vector, 
with $c_1\in H^2(X,\Integers)$
of Hodge type $(1,1)$. 
There is a system of hyperplanes in the ample cone of $X$, called $v$-walls,
that is countable but locally finite \cite[Ch. 4C]{huybrechts-lehn-book}.
An ample class is called {\em $v$-generic}, if it does not
belong to any $v$-wall. Choose a $v$-generic ample class $H$. 
Assume that the moduli space $\M_H(v)$, 
of Gieseker-Simpson $H$-stable sheaves on $X$ with Chern character 
$v$, is non-empty. Then $\M_H(v)$ is smooth, connected, projective, and holomorphic symplectic of dimension
$\langle v,v\rangle+2$ \cite{mukai-symplectic-structure}. 
The Albanese variety of $\M_H(v)$ is isomorphic to $X\times \hat{X}$, whenever the dimension of $\M_H(v)$ is at least $4$, 
and each fiber of the Albanese map $alb:\M_H(v)\rightarrow X\times\hat{X}$ is simply connected and deformation equivalent to 
the generalized Kummer variety of the same dimension \cite[Theorem 0.2]{yoshioka-abelian-surface}.

Denote by $K_a(v)$ the fiber of $alb$ over a point $a\in X\times \hat{X}$.
The second cohomology $H^2(K_a(v),\Integers)$ is endowed with a 
natural symmetric bilinear form, the Beauville-Bogomolov-Fujiki form 
\cite{beauville-varieties-with-zero-c-1}. 
Let $\E$ be a quasi-universal family over $X\times \M_H(v)$ of similitude $\sigma$, so that $\E$ restricts to $X\times\{t\}$, $t\in \M_H(v)$,
as the direct sum $E_t^{\oplus \sigma}$ of $\sigma$ copies of a sheaf $E_t$ representing the isomorphism class $t$.
Let $v^\perp$ be the sublattice of $S^+$ orthogonal to $v$ with respect to the Mukai pairing. Denote by $\pi_X$ and $\pi_\M$ the
projections from $X\times \M_H(v)$ onto the corresponding factor. Given  a Mukai vector $y=(y_0,y_1,y_2)$ set
$y^\vee:=(y_0,-y_1,y_2)$.
Let
\[
\theta':v^\perp\rightarrow H^2(\M_H(v),\Integers)
\]
be given by 
\[
\theta'(y)=
-\frac{1}{\sigma}\left[\pi_{\M,*}\left(ch(\E)\pi_X^*y^\vee\right)\right]_1,
\]
where the subscript $1$ denotes the graded summand in $H^2(\M_H(v),\Integers)$.
We denote the composition of $\theta'$ with restriction to the Albanese fiber by
\begin{equation}
\label{eq-mukai-isomorphism}
\theta : v^\perp \ \ \rightarrow \ \ H^2(K_a(v),\Integers).
\end{equation}

\begin{thm}
\label{thm-yoshioka}
\cite[Theorem 0.2]{yoshioka-abelian-surface}. If the dimension of $\M_H(v)$ is $\geq 8$, then
the homomorphism $\theta$ is a Hodge isometry with respect to the Mukai pairing and the Hodge structure on $v^\perp$ and the Beauville-Bogomolov-Fujiki pairing on $H^2(K_a(v),\Integers)$.
\end{thm}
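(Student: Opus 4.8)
The plan is to establish three properties of $\theta$ in turn --- that it is a morphism of Hodge structures, that it is an isometry, and that it is bijective --- reducing the last two to a single intersection-number identity. The Hodge compatibility is essentially formal: in the quasi-universal case $\E$ is an honest coherent sheaf on $X\times\M_H(v)$, so $ch(\E)$ and each of its K\"unneth components are algebraic, hence Hodge, classes, and dividing by the similitude $\sigma$ makes the construction of $\theta'$ independent of the chosen quasi-universal family. The involution $y\mapsto y^\vee$ acts by $+1$ on $H^0(X)\oplus H^4(X)$ and by $-1$ on $H^2(X)$, hence preserves the Hodge type of every Mukai vector, while $H^{2,0}(X)$ lies in $v^\perp$ automatically because $v$ is of type $(1,1)$. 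Tracking Hodge types through $\pi_X^\ast$, cup product with $ch(\E)$, $\pi_{\M,\ast}$, and passage to the degree-two component then shows that $\theta'$ carries $(v^\perp)^{p,q}$ into $H^{p,q}(\M_H(v))$, and restriction to the Albanese fibre preserves this. Note that the hypothesis $\dim\M_H(v)\ge 8$ is what ensures $b_2(K_a(v))=7=\rank(v^\perp)$, so that $\theta$ is a homomorphism between free lattices of equal rank, signature $(3,4)$, and --- we shall see --- equal discriminant.

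For the isometry I would compare top self-intersection numbers. Write $2m=\dim K_a(v)=\langle v,v\rangle-2$; since $K_a(v)$ is deformation equivalent to the generalized Kummer variety $K_X(m)$, the Fujiki relation gives $\int_{K_a(v)}\alpha^{2m}=c_m\,q(\alpha)^m$ for $\alpha\in H^2(K_a(v),\RationalNumbers)$, with $q$ the Beauville--Bogomolov--Fujiki form and $c_m$ the Fujiki constant of generalized Kummer type. It therefore suffices to prove the identity
\[
\int_{K_a(v)}\theta(y)^{2m}\ =\ c_m\,\langle y,y\rangle^{m}\qquad\text{for all }y\in v^\perp ,
\]
for then $q(\theta(y))^m=\langle y,y\rangle^m$ as real quadratic forms, and preservation of the real signature forces $q(\theta(y))=\langle y,y\rangle$ exactly. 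This identity can be attacked directly, by evaluating $\int_{\M_H(v)}\theta'(y)^{2m}\cup\mathrm{alb}^\ast[\mathrm{pt}]$ via Grothendieck--Riemann--Roch on $X\times\M_H(v)$ --- the abelian-surface analogue of O'Grady's computation for $K3$ surfaces --- where one may lighten the work by noting that, the answer being $c_m$ times an $m$-th power of the quadratic form $q\circ\theta$, it is enough to pin it down on a Zariski-dense subset of $v^\perp$ after specializing $y$. Alternatively one reduces to $v=s_n=(1,0,-n)$: every primitive positive Mukai vector of a given square lies in a single orbit of the group generated by derived autoequivalences of $X$ and tensorizations by line bundles, these act on $S^+$ by isometries carrying $v$ to $s_n$ and on $\M_H(v)$ by isomorphisms, and they intertwine the two $\theta$-maps; for $v=s_n$ one has $\M_H(s_n)\cong X^{[n]}\times\hat X$, $K_a(s_n)\cong K_X(n-1)$, and a direct Chern-class computation identifies $\theta$ with the restriction to $K_X(n-1)$ of Beauville's isomorphism $H^2(X^{[n]},\Integers)\cong H^2(X,\Integers)\oplus\Integers\delta$, whose isometry property is Beauville's computation of the form, with $q(\delta)=-2n=-(2m+2)$.

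Bijectivity is then a discriminant count. Having shown that $\theta$ preserves non-degenerate forms it is injective, and its image $L'$ is a finite-index sublattice of $L:=H^2(K_a(v),\Integers)$ with $[L:L']^2$ equal to the ratio of the absolute values of the discriminants; but the discriminant of $L'$ equals that of $v^\perp$, namely $|\langle v,v\rangle|$ since $v$ is primitive in the unimodular lattice $S^+$, while the discriminant of $L$ equals the same number by the known deformation-invariant description of $H^2(K_X(m),\Integers)$ together with the relation $\langle v,v\rangle=\dim K_a(v)+2$, so $[L:L']=1$. I expect the substantive obstacle to lie in the displayed intersection-number identity: either in carrying through the Grothendieck--Riemann--Roch evaluation and matching the proportionality constant with the Fujiki constant $c_m$, or in establishing transitivity of the autoequivalence action on primitive positive Mukai vectors together with the compatibility of Fourier--Mukai transforms with the maps $\theta$; the rest is bookkeeping in lattice theory and Hodge theory.
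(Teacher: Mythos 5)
First, a point of comparison: the paper does not prove this statement at all --- Theorem \ref{thm-yoshioka} is quoted verbatim from \cite[Theorem 0.2]{yoshioka-abelian-surface}, so the only benchmark is Yoshioka's own argument. Your second route (reduce to $v=s_n=(1,0,-n)$ via Fourier--Mukai equivalences, where $\M_H(s_n)\cong X^{[n]}\times\hat X$ and $\theta$ can be computed against Beauville's description of $H^2(K_X(n-1),\Integers)$) is in fact the strategy Yoshioka follows, and it is the one this paper repackages later as the groupoid $\G_3$ and Theorem \ref{thm-Hom-G3-non-empty}, together with Lemma \ref{lemma-isomorphism-of-cohomology-rings-of-moduli-spaces-is-equal-to-gamma}. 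Your Hodge-type argument and the discriminant/signature bookkeeping at the end are fine.

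The gaps are in the load-bearing middle. The transitivity claim as you state it --- ``every primitive positive Mukai vector of a given square lies in a single orbit of the group generated by derived autoequivalences of $X$ and tensorizations by line bundles'' --- is false for a fixed abelian surface: the action preserves the Hodge structure on $S^+$, so on, say, a surface of Picard rank one you cannot move an arbitrary $v=(r,c_1,\chi)$ with $c_1\neq 0$ to $(1,0,-n)$. One must also deform the pair (surface, Mukai vector), and one must control stability throughout: that the relevant Fourier--Mukai transforms carry $H$-stable sheaves to stable sheaves for suitable ($v$-generic) polarizations is exactly the content of Yoshioka's Propositions quoted here as \ref{prop-yoshioka-F-P-induces-an-isomorphism} and \ref{prop-yoshioka-G-P-induces-an-isomorphism}, and it is the substantive part of the proof, not an afterthought. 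Your first route is weaker still: the direct Grothendieck--Riemann--Roch evaluation of $\int_{K_a(v)}\theta(y)^{2m}$ with the correct Fujiki constant is not ``bookkeeping'' and is not carried out; moreover both routes, as written, take as input that $K_a(v)$ is deformation equivalent to a generalized Kummer, which is part of the very same Theorem 0.2 of Yoshioka and, in his proof, is established simultaneously by the same reduction --- so the Fujiki-constant route is circular in spirit unless you supply an independent argument. Finally, a small but real slip: $H^2(X^{[n]},\Integers)\cong H^2(X,\Integers)\oplus\Integers\delta$ is the $K3$ formula; for an abelian surface $b_2(X^{[n]})=13$, and the correct statement concerns the restriction to the Albanese/Kummer fiber (equivalently the quotient $Q^2$ used in Lemma \ref{lemma-tilde-theta-j-is-an-isomorphism}), where Beauville's computation of the form and $q(\delta)=-2n$ do apply.
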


\section{$\Spin(8)$ and triality}
\label{sec-spin-8-and-triality}
Let $X$ be an abelian surface. Set $S:=H^*(X,\Integers)$, $S^+:= H^{even}(X,\Integers)$, $S^-:=H^{odd}(X,\Integers)$, and
$V:= H^1(X,\Integers)\oplus H^1(\hat{X},\Integers)$.
In Section \ref{sec-Clifford-groups}
we review the basic facts about Clifford algebras and groups associated to the abelian surface $X$ via the rank $8$ lattice 
$V$. In Section \ref{sec-bilinear-operations}
we review the bilinear operations $V\otimes S^+ \rightarrow S^-$, 
$V\otimes S^- \rightarrow S^+$, and $S^+\otimes S^-\rightarrow V$ 
induced by the Clifford product. In Section \ref{sec-triality} we review triality for the arithmetic subgroup 
$\Spin(V)$
of $\Spin(8)$.

%
\subsection{The Clifford groups}
\label{sec-Clifford-groups}
 Endow $V$ with the symmetric bilinear form
\begin{equation}
\label{eq-pairing-on-V}
(a,b)_V \ \ := \ \ b_2(a_1)+a_2(b_1), 
\end{equation}
where  $a=(a_1,a_2)$, $b=(b_1,b_2)$, $a_1,b_1\in H^1(X,\Integers)$, 
and $a_2,b_2\in H^1(\hat{X},\Integers)$, and we use
the natural identification of $H^1(\hat{X},\Integers)$ with 
$H^1(X,\Integers)^*$. Then $V$ is an even unimodular lattice,
the orthogonal direct sum of four copies of the hyperbolic plane, 
and $Q(a):=\frac{1}{2}(a,a)_V$ is an integral quadratic form. 
Let $C(V)$ be the Clifford algebra, i.e., the quotient of the 
tensor algebra of $V$ by the relation 
\begin{equation}
\label{eq-defining-relation-of-Clifford-algebra}
v\cdot w + w\cdot v \ \ = \ \ (v,w)_V.
\end{equation}
As a general reference on Clifford algebras and the theory of spinors we recommend \cite{chevalley}.
The free abelian group $H^*(X,\Integers)$ is isomorphic to $\Wedge{*} H^1(X,\Integers)$ and 
is a $C(V)$-module and $C(V)$ maps isomorphically onto
$\End[H^*(X,\Integers)]$, by \cite[Prop. 3.2.1(e)]{golyshev-luntz-orlov}.
The $C(V)$-module structure of $H^*(X,\Integers)$ is seen as follows.
$V$ embeds in $C(V)$. $V$ embeds also in $\End[H^*(X,\Integers)]$ by sending $w\in H^1(X,\Integers)$ to the 
wedge product 
\begin{equation}
\label{eq-left-wedge-by-w}
L_w \ \ \ := \ \ \ w\wedge (\bullet) 
\end{equation}
and 
$\theta \in H^1(\hat{X},\Integers)\cong H^1(X,\Integers)^*$ 
to the corresponding derivation $D_\theta$ of
$H^*(X,\Integers)$ sending $H^i(X,\Integers)$ to $H^{i-1}(X,\Integers)$. The resulting homomorphism $m:V\rightarrow \End[H^*(X,\Integers)]$ satisfies the analogue of the relation (\ref{eq-defining-relation-of-Clifford-algebra}), i.e., $m(v)\circ m(w)+m(w)\circ m(v)=(v,w)id_{H^*(X,\Integers)}$. Hence $m$ extends to an algebra homomorphism $m:C(V)\rightarrow \End[H^*(X,\Integers)]$, by the universal property of $C(V)$.

We let 
\begin{equation}
\label{eq-tau}
\tau: C(V) \ \ \ \rightarrow \ \ \ C(V) 
\end{equation}
be the {\em main anti-automorphism},
sending $v_1\cdot \cdots \cdot v_r$ to $v_r\cdot \cdots \cdot v_1$. Let
\begin{equation}
\label{eq-main-involution}
\alpha \ : \  C(V) \ \ \ \rightarrow \ \ \ C(V) 
\end{equation}
the {\em main involution},
acting as multiplication by $-1$ on $C^{odd}(V)$ and as the identity on 
$C^{even}(V)$. The {\em conjugation} $x\mapsto x^*$ is the composition
of $\tau$ and $\alpha$. 
Set\footnote{In reference \cite{chevalley} these groups are called {\em Clifford groups} of the orthogonal group of $V$ with respect to the quadratic form $Q$. The group $G(V)$ is denoted in \cite{chevalley} by $\Gamma$, the group $G_0(V)$ by $\Gamma_0$, 
the group $G(V)^{even}$ by $\Gamma^+$, and the group $\Spin(V)$ by $\Gamma^+_0$.
The main anti-automorphism $\tau$ is denoted in \cite[II.3.5]{chevalley} by $\alpha$.}
\begin{eqnarray}
\nonumber
\Spin(V) & := & 
\{x \in C(V)^{even} \ : \ x\cdot x^*=1 \ \mbox{and} \  
x\cdot V \cdot x^*\subset V\},
\\
\nonumber
\Pin(V) & := & 
\{x \in C(V) \ : \ x\cdot x^*=1 \ \mbox{and} \  
x\cdot V \cdot x^*\subset V\},
\\
\nonumber
G_0(V) & := & 
\{x \in C(V) \ : \ x\cdot \tau(x)=1 \ \mbox{and} \  
x\cdot V \cdot \tau(x)\subset V\},
\\
\label{eq-Spin-Pin-and-G}
G(V) & := & 
\{x \in C(V)^\times 
\ :  \ x\cdot V \cdot x^{-1}\subset V\}
\\
G(V)^{even} & := & G(V)\cap C(V)^{even}.
\end{eqnarray}

The condition $x\cdot V \cdot x^{-1}\subset V$ in the definition of $G(V)$, 
combined with the fact that $V$ has even rank, imply that $x$ is 
either even or odd and $G(V)$ is contained in the union 
$C(V)^{even}\cup C(V)^{odd}$ \cite[II.3.2]{chevalley}.
Note that $x\in C(V)$ is invertible, if and only if $x\cdot x^*$ is, i.e., 
if and only if $x\cdot x^*=\pm 1$. Thus, $\Pin(V)$ is an index $2$
subgroup of $G(V)$. We get the extension 
\[
1 \rightarrow \Spin(V) \rightarrow G(V)^{even} \rightarrow 
\Integers/2\Integers \rightarrow 0.
\]
Given a vector $v\in V$, then $v^*=-v$ and $v\cdot v^*=-v\cdot v=-Q(v)$.
Given vectors $v_1$, $v_{-1}$ in $V$, with $Q(v_1)=1$ and $Q(v_{-1})=-1$,
then $v_{-1}$ belongs to $\Pin(V)$, $v_1$ belongs to $G(V)$,
and $(v_1\cdot v_{-1})$ belongs to $G(V)^{even}$. 

The standard representation $V$ is defined\footnote{Chevalley denotes the standard representation by $\chi  :  G(V) \rightarrow O(V)$ 
\cite[Sec. 2.3]{chevalley}. The convention in \cite[Sec. 2.7]{deligne-spinors} is different and there 
$\rho(x)(v)=\alpha(x)\cdot v \cdot x^{-1}$. The two representations agree on $G(V)^{even}$. 
}
by the homomorphism
\begin{eqnarray}
\label{eq-standard-representation-of-G-V}
\rho \ : \ G(V) & \longrightarrow & O(V)
\\
\nonumber
\rho(x)(v) & = & x\cdot v \cdot x^{-1}.
\end{eqnarray}
If $Q(v)=\pm 1$, then $-\rho(v)$ is reflection in $v$
\begin{equation}
\label{eq-Pin-acts-by-reflections}
-\rho(v)(w) \ \ = \ \ w \ - \ \frac{2(v,w)_V}{(v,v)_V}\cdot v, 
\ \ \ \forall w \in V.
\end{equation}

Let
\begin{equation}
\label{eq-m-from-C-V}
m:C(V)\rightarrow \End(S)
\end{equation} 
be the Clifford algebra representation and denote by
\begin{equation}
\label{eq-Cl}
m:G(V)\rightarrow GL(S)
\end{equation}
 as well its restriction to the subset $G(V)\subset C(V)$.

The character group $\Hom(O(V),\ComplexNumbers)$ of $O(V)$ is isomorphic
to $\Integers/2\Integers\times \Integers/2\Integers$ and is generated
by the determinant character $\det$ and the {\em orientation character}
\begin{equation}
\label{eq-orientation-character}
ort \ : \ O(V) \ \ \longrightarrow \ \ \{\pm 1\}.
\end{equation}
The latter is defined as follows. The positive cone in 
$V\otimes_\Integers\RealNumbers$ is homotopic to the $3$-sphere. 
The character $ort$ represents the action of isometries on the third 
singular cohomology of the positive cone \cite[Sec. 4]{markman-torelli}. 
Denote by $O_+(V)$ the kernel of $ort$. 

\begin{new-lemma}
\label{lemma-spin-surjects}
The homomorphism $\rho$ is surjective and it maps $\Pin(V)$ onto $O_+(V)$ and
$\Spin(V)$ onto $SO_+(V)$. 
\end{new-lemma}
\begin{proof}
The lattice $V$ is the orthogonal direct sum of four copies of the even unimodular rank $2$ hyperbolic lattice $U$. Hence, 
$O(V)$ is generated by the reflections $-\rho(v)$, given in (\ref{eq-Pin-acts-by-reflections}), in elements $v\in V$ with $(v,v)_V=\pm 2$, by \cite[4.3]{wall}. The element $-id_U\in O(U)$ is a product of two reflections. Hence, $-id_V$ is a product of $8$ reflections and so $O(V)$ is generated by the set $\{\rho(v) \ : \ v\in V, (v,v)_V=\pm 2\}$.
\end{proof}
The Lemma implies that 
the homomorphism $\rho$ pulls back the character 
$ort$ to the quotient homomorphism
\begin{eqnarray}
\label{eq-ort}
ort \ : \ G(V) & \longrightarrow & G(V)/\Pin(V) \ \ \cong \ \ 
\{\pm 1\},
\\
\nonumber
x & \mapsto & x\cdot x^* \in  \{\pm 1\}.
\end{eqnarray}
The determinant character of $O(V)$ is pulled back via $\rho$ to the {\em parity character}
\[
p \ : \ G(V) \ \ \ \rightarrow \ \ \ G(V)/G(V)^{even} \ \cong \ \{\pm 1\}.
\]
The product of the parity and orientation characters is 
the {\em norm character} denoted\footnote{In Chevalley's book  the norm character is denoted by $\lambda$
and in Deligne's notes by $N$ \cite{chevalley,deligne-spinors}.} 
\begin{eqnarray}
\label{eq-norm}
N \ : \ G(V) & \rightarrow & \{\pm 1\}
\\
\nonumber
x & \mapsto & x\cdot \tau(x). 
\end{eqnarray}
The subgroup $G_0(V)$ of $G(V)$ is the kernel of $N$.


$H^*(X,\Integers)$ is a subalgebra of $C(V)$, 
invariant under both $\tau$ and $\alpha$, and $\tau$ acts on 
$H^i(X,\Integers)$ by $(-1)^{i(i-1)/2}$. 
There is a natural symmetric pairing\footnote{This pairing is denoted by $\beta(s,t)$ in \cite[Sec. 3.2]{chevalley}. 
Another natural symmetric pairing $\tilde{\beta}(s,t):=\int_X s^*\cup t$ is considered in \cite[III.2.5]{chevalley}.
The isomorphism $S\rightarrow S^*$ induced by $\tilde{\beta}$ is the one induced by the Fourier-Mukai functor in Equation 
(\ref{eq-cohomological-fourier-mukai-homomorphism}) below. We chose to use the former pairing $\beta$, since it is the pairing used in the Triality Chapter IV of \cite{chevalley}.
} 
on $H^*(X,\Integers)$  defined by 
\begin{equation}
\label{eq-Mukai-pairing}
(s,t)_S \ \ := \ \ \int_X\tau(s)\cup t.
\end{equation}
We set $Q_S(s):=\frac{1}{2}(s,s)_S$, for $s\in S:=H^*(X,\Integers)$.
Given $x\in G(V)$, we have the equality 
\[
(x\cdot s,x\cdot t)_S \ \ \ = \ \ \ N(x)(s,t)_S,
\]
where $N$ is the norm character (\ref{eq-norm}). In particular,
an element $v\in V$ with $Q(v)=\pm 1$, satisfies
\begin{equation}
\label{eq-minus-1-vectors-do-not-act-via-isometries-on-spin-representation}
(v\cdot s,v\cdot t)_S \ \ \ = \ \ \ Q(v)(s,t)_S.
\end{equation}
The latter equation holds for every $v\in V$ \cite[IV.2.3]{chevalley}.

Let $(\bullet,\bullet)_{S^+}$ and $(\bullet,\bullet)_{S^-}$ 
be the restrictions of the pairing (\ref{eq-Mukai-pairing})  
to $S^+$ and $S^{-}$. Both are even symmetric 
unimodular bilinear pairings, which are $\Spin(V)$-invariant. 
Note that $-(\bullet,\bullet)_{S^+}$ is the pairing given in (\ref{eq-Mukai-pairing-with-sign-as-in-Mukai}), which is  known as the
{\em Mukai pairing} \cite{mukai-hodge,yoshioka-abelian-surface}.
The pairing on $V$ is $G(V)$ invariant. 
The pairings on $S^+$ and $S^-$ are {\em not} invariant under $G(V)^{even}$.
Rather, the pairing spans a rank $1$ module in
$\Sym^2(S^+)$ corresponding to the character (\ref{eq-ort}).
Let $\widetilde{O}(S^+)$ be the subgroup of 
$GL(S^+)$ preserving the bilinear pairing 
$(\bullet,\bullet)_{S^{+}}$ only up to sign. 
Define $\widetilde{O}(S^-)$ similarly. 
Let 
$\tilde{\alpha}\in GL(S)$
be the element acting as the identity on $S^+$ and by $-1$ on $S^-$.
It follows from the Triality Principle recalled below that $\tilde{\alpha}$ corresponds to the action of $j(-id)$, where $j$ 
is an order $3$ outer automorphism of $\Spin(V)$, and so $\tilde{\alpha}$ is the image of a unique element of $\Spin(V)$ in $GL(S)$ (Theorem \ref{thm-triality-principle}). Denote by 
\begin{equation}
\label{eq-central-element-tilde-alpha}
\tilde{\alpha}\in \Spin(V)
\end{equation}
the corresponding central element. 
Set $S\widetilde{O}(S^+):=\widetilde{O}(S^+)\cap SL(S^+)$ and $S\widetilde{O}(S^-):=\widetilde{O}(S^-)\cap SL(S^-)$.
$G(V)^{even}$ maps to $S\widetilde{O}(S^+)$ with kernel generated by 
$\tilde{\alpha}$, to $S\widetilde{O}(S^-)$ with kernel generated by 
$-\tilde{\alpha}$, and onto $SO(V)$ with kernel generated by $-1$. 
The image of $G(V)^{even}$ has index $2$ in
$S\widetilde{O}(S^+)$. 

%
\subsection{Bilinear operations via the Clifford product}
\label{sec-bilinear-operations}

The Clifford product defines the homomorphisms
$V\otimes S^+ \rightarrow S^-$, 
$V\otimes S^- \rightarrow S^+$, and
\begin{equation}
\label{eq-multiplication-of-S-plus-and-S-minus}
S^+\otimes S^- \ \ \longrightarrow \ \ V. 
\end{equation}
The latter is the composition of $V^*\rightarrow V$, given by 
the bilinear pairing $(\bullet,\bullet)_V$, and the homomorphism
$S^+\otimes S^- \rightarrow V^*$,
sending $s\otimes t$ to $((\bullet)\cdot s,t)_{S^{-}}\in V^*$.

\begin{example}
\label{example-Clifford-multiplication-by-s-n}
{\rm 
Let us calculate the homomorphisms
\begin{equation}
\label{eq-Clifford-multiplication-by-s-n}
m_{s_n} \ : \ V \ \ \rightarrow \ \ S^-
\end{equation}
corresponding to Clifford multiplication $(\bullet)\cdot s_n$ 
with the Chern character 
$s_n=(1,0,-n)\in S^+$ of the ideal sheaf of a length $n$ subscheme. 
Let $v=(w,\theta)\in V$, where $w\in H^1(X,\Integers)$ and
$\theta\in H^1(X,\Integers)^*$. Then
\[
m_{s_n}(v) \ := \ v\cdot (1,0,-n) \ = \ w\wedge (1,0,-n) + D_\theta(1,0,-n) \ = \
w - n D_\theta([pt]) = w + n PD^{-1}(\theta).
\]
In the last equality we used the isomorphism 
\begin{equation}
\label{eq-Poincare-Duality}
PD :  H^i(X,\Integers) \rightarrow H^{4-i}(X,\Integers)^*
\end{equation}
given by $\beta \mapsto \int_{X}\beta\wedge (\bullet)$. 
The equality
\begin{equation}
\label{eq-PD-of-D-theta-of-the-class-of-a-point}
[PD(D_\theta([pt]))](x):=\int_X  D_\theta([pt])\wedge x \ \ = \ \ -\theta(x), \ \ 
\forall x \in H^1(X,\Integers),
\end{equation}
follows from
\[
0 = D_\theta([pt]\wedge x) = D_\theta([pt])\wedge x + \theta(x)[pt].
\]
Note that the homomorphism (\ref{eq-Clifford-multiplication-by-s-n}) 
is equivariant with respect to the action of the subgroup
$G(V)^{even}_{s_n}$ of $G(V)^{even}$ stabilizing $s_n$. The integral 
representations $V$ and $S^-$ restrict to $G(V)^{even}_{s_n}$ as isogenous 
representations, which are not isomorphic. 
The co-kernel of $m_{s_n}$ is $H^3(X,\Integers/n\Integers)$ 
and is naturally identified with $H_1(X,\Integers/n\Integers)$
as well as with the group $\Gamma_X$ of points of order $n$ on $X$. 

The homomorphism (\ref{eq-multiplication-of-S-plus-and-S-minus}) 
restricts to $s_n\otimes S^-$ as a homomorphism
\begin{equation}
\label{eq-Clifford-multiplication-by-s-n-in-C-S-plus}
m_{s_n}^\dagger \ : \ S^- \ \ \rightarrow \ \ V,
\end{equation}
which is the adjoint of (\ref{eq-Clifford-multiplication-by-s-n})
with respect to the pairings $(\bullet,\bullet)_V$ and 
$(\bullet,\bullet)_{S^-}$. Explicitly, given $(w,\beta)\in S^-$,
with $w\in H^1(X,\Integers)$ and $\beta\in H^3(X,\Integers)$, then
\begin{equation}
\label{eq-s-n-dagger-from-S-minus-to-V}
m_{s_n}^\dagger(w,\beta) \ \ \ = \ \ \ (-nw,-PD(\beta)).
\end{equation}
In particular, $m_{s_n}^\dagger\circ m_{s_n}:V\rightarrow V$ and 
$m_{s_n}\circ m_{s_n}^\dagger:S^-\rightarrow S^-$ are both
multiplication by $Q_{S^+}(s_n)=-n$. 
}
\end{example}

\begin{rem}
\label{rem-Z-w}
Let $m_{s_n}:V\rightarrow S^-$ be the homomorphism given in (\ref{eq-Clifford-multiplication-by-s-n}).
Given a primitive element $w\in S^+$ with $(w,w)_{S^+}=-2n$, there exists an element $g\in \Spin(V)$ satisfying 
$g(s_n)=w$. Given $x\in V$, and regarding $x$ and $g$ as elements of $C(V)$, we have the equalities
\[
g(m_{s_n}(x))=g\cdot x \cdot s_n=(g\cdot x \cdot g^{-1})\cdot (g \cdot s_n)=m_w(\rho(g)(x)).
\]
Let $m_{s_n}^\dagger :S^-\rightarrow V$ be the adjoint of $m_{s_n}$ and define $m_w^\dagger$ similarly. 
We get the commutative diagram
\[
\xymatrix{
V\ar[r]^{m_{s_n}}\ar[d]_{\rho(g)} & S^-\ar[d]^{g} \ar[r]^{m_{s_n}^\dagger} &V\ar[d]_{\rho(g)}
\\
V\ar[r]_{m_w} & S^-\ar[r]_{m_w^\dagger} &V.
}
\]
In particular, the co-kernels of $m_{s_n}$ and $m_w$ are isomorphic and both
$m_w^\dagger\circ m_w$ and $m_w\circ m_w^\dagger$ are multiplication by $-n$. 

Let $m_w^{-1}:S^-_\RationalNumbers\rightarrow V_\RationalNumbers$ be the inverse of $m_w:V_\RationalNumbers\rightarrow S^-_\RationalNumbers$.
The quotient $m_w^{-1}(S^-)/V$ is then a subgroup $\Gamma_w$ of the compact torus $V_\RealNumbers/V$ isomorphic to
$(\Integers/n\Integers)^4$ canonically associated to $w$. The subgroup $\Gamma_w$ is the kernel of the homomorphism
$V_\RealNumbers/V\rightarrow S^-_\RealNumbers/S^-$ induces by $m_w$.
\end{rem}

\begin{example}
\label{example-an-element-of-the-even-clifford-group-of-S-plus}
{\rm
{\em An element of $G(S^+)^{even}_{s_n}$, which does not belong to 
$\Spin(V)_{s_n}$:}
Consider the two vectors $s_1=(1,0,-1)$ and $s_{}=(1,0,1)$ in $S^+$.
Then, $Q_S(s_1)=-1$, $Q_S(s_{})=1$, and $(s_1,s_{})_{S^+}=0$.
The composition $m_{s_{}}^\dagger\circ m_{s_1} : V\rightarrow V$  is given by
\[
m_{s_{}}^\dagger\circ m_{s_1}(w,\theta) \ \ \ = \ \ \ 
m_{s_{}}^\dagger(w,PD^{-1}(\theta)) \ \ \ = \ \ \ (w,-\theta). 
\]
Consequently, 
$m_{s_1}^\dagger\circ m_{s_{}}=(m_{s_{}}^\dagger\circ m_{s_1})^\dagger=
-(m_{s_{}}^\dagger\circ m_{s_1})^{-1}=-(m_{s_{}}^\dagger\circ m_{s_1})$.
This agrees with the relation 
$s_1\cdot s_{} + s_{}\cdot s_1=(s_1,s_{})_{S^+}=0$ in
the Clifford algebra $C(S^{+})$ (see also Equation (\ref{eq-composition-of-m-y-1-and-m-y-2}) below for a more conceptual interpretation). 
Similarly, $m_{s_1}\circ m_{s_{}}^\dagger: S^-\rightarrow S^{-}$ is
given by
\[
m_{s_1}\circ m_{s_{}}^\dagger(w,\beta) \ \ \ = \ \ \ (w,-\beta).
\]
The reflections $R_{s_1}(r,H,t)=(t,H,r)$ and 
$R_{s_{}}(r,H,t)=(-t,H,-r)$ of $S^+$ commute, and 
$R_{s_1}\circ R_{s_{}}(r,H,t)=(-r,H,-t)$. 
Let $\tilde{m}_{s_1\cdot s_{}}$ be the element of $GL(V)\times GL(S^+)\times GL(S^-)$
acting on $V$ via $m_{s_1}^\dagger\circ m_{s_{}}$, on $S^-$ via $m_{s_1}\circ m_{s_{}}^\dagger$, and on $S^+$ via $R_{s_1}\circ R_{s_{}}$.
Let $G(S^+)^{even}$ be the subgroup of $GL(V)\times GL(S^+)\times GL(S^-)$ generated by 
$Spin(V)$ and $\tilde{m}_{s_1\cdot s_{}}$. The notation $G(S^+)^{even}$ is motivated by triality (see Equation (\ref{eq-G-S-plus-even}) below).
Note that $R_{s_1}\circ R_{s_{}}(s_n)=-s_n$. Hence, 
both of the products $-\tilde{m}_{s_1\cdot s_{}}$ and $-\tilde{\alpha}\tilde{m}_{s_1\cdot s_{}}$, of
$\tilde{m}_{s_1\cdot s_{}}$ with
$-1$ or $-\tilde{\alpha}$ in $\Spin(V)$, 
belong to the stabilizer $G(S^+)^{even}_{s_n}$ of $s_n\in S^+$.
Set
\begin{equation}
\label{eq-tau-is-in-G-S-plus-even}
\tilde{\tau} \ \ \ := \ \ \ -\tilde{\alpha}\cdot \tilde{m}_{s_1\cdot s_{}} \ \ \in \ \  G(S^+)^{even}_{s_n}.
\end{equation}
Note that $\tilde{\tau}$ acts on $S^+$ and $S^-$ via the restriction of 
the main anti-involution $\tau$ of $C(V)$ to its subalgebra $H^*(X,\Integers)$. 
However, the action of $\tilde{\tau}$ on $V$ is not the identity, and so it does not agree with the restriction of the action of $\tau$ to the embedding of $V$ in $C(V)$.
}
\end{example}

\begin{new-lemma}
\label{lemma-stabilizer-in-Spin-V-maps-to-GL-4}
The image of the homomorphism
\[
G(V)^{even}_{s_n} \ \ \ \LongRightArrowOf{\rho} \ \ \ SO(V) \ \ \ 
\longrightarrow \ \ \ SO(V/nV)
\]
is equal to the subgroup of $SO(V/nV)$ leaving invariant the summand $H^1(X,\Integers/n\Integers)^*$
in the direct sum decomposition
\[
V/nV \ \ := \ \ H^1(X,\Integers/n\Integers) \ \oplus \ 
H^1(X,\Integers/n\Integers)^*.
\]
Consequently, we get the homomorphism
\begin{equation}
\label{eq-homomorphism-from-stabilizer-in-Spin-V-to-GL-4}
\bar{\rho} \ : \ G(V)^{even}_{s_n} \ \ \ \longrightarrow \ \ \ 
GL[H^1(X,\Integers/n\Integers)].
\end{equation}
\end{new-lemma}

\begin{proof}
Reduce modulo $n$ the homomorphisms $m_{s_n}$ 
given in (\ref{eq-Clifford-multiplication-by-s-n}). The kernel in $V/nV$ is 
$H^1(X,\Integers/n\Integers)^*$ and the kernel is $G(V)^{even}_{s_n}$-invariant. 
\end{proof}

\hide{
\begin{rem}
{\rm
The homomorphism (\ref{eq-homomorphism-from-stabilizer-in-Spin-V-to-GL-4})
induces further a homomorphism 
\[
SO(S^+)_{s_n} \ \ \longrightarrow \ \ PGL[H^1(X,\Integers/n\Integers)]
\ \ \cong \ \ 
O\left([\Wedge{2}H^1(X,\Integers)]/n\right).
\]
The latter homomorphism can be constructed directly as follows. 
The orthogonal complement $s_n^\perp$, of $s_n$ in $S^+$, 
is the direct sum
\[
s_n^\perp \ \ = \ \ \Wedge{2}H^1(X,\Integers) \oplus {\rm span}\{(1,0,n)\}
\]
The pairing $(\bullet,\bullet)_{S^+}$ restricts to the orthogonal
complement $s_n^\perp$ and reduces to a pairing 
on $s_n^\perp/n s_n^\perp$ with values in $\Integers/n^2\Integers$. 
Now $((1,0,n),(1,0,n))_{S^+}=2n$. 
The image of ${\rm span}\{(1,0,n)\}$ in $s_n^\perp/n s_n^\perp$
is $SO(S^+)_{s^n}$ invariant,
since it is the null space of the pairing, provided we further 
reduce its values modulo $n$. Hence, $SO(S^+)_{s^n}$ acts on
$s_n^\perp/[{\rm span}\{(1,0,n)\}+n s_n^\perp]$, which is
$[\Wedge{2}H^1(X,\Integers)]/n$.
}
\end{rem}
}

\subsection{Triality}
\label{sec-triality}
Set $A_X \ \ := \ \ V\oplus S^-\oplus S^+$. We endow $A_X$ with the bilinear pairing induced by that of each of the summands, so that $A_X$ is an orthogonal direct sum of the three lattices.
Define the commutative, but non-associative, algebra structure on $A_X$,
using the Clifford multiplication and the homomorphism 
(\ref{eq-multiplication-of-S-plus-and-S-minus}). 
The product of two elements in the same summand vanishes 
\cite[IV.2.2]{chevalley}. Any automorphism $\sigma$ 
of the algebra $A_X$, which leaves
$V$ and the sum $S^-\oplus S^+$ invariant, belongs to the image in $GL(A_X)$ of the kernel 
$G_0(V)\subset G(V)$ of the norm character (\ref{eq-norm}) via 
the homomorphism 
\begin{equation}
\label{eq-representation-of-G-V-on-A-X}
\tilde{\mu}:G(V)\rightarrow O(V)\times GL(S),
\end{equation}
given by $\tilde{\mu}(g)=(\rho(g),m(g))$,
where $\rho$ is given in (\ref{eq-standard-representation-of-G-V}) and $m$ in (\ref{eq-Cl})
(see \cite[IV.2.4]{chevalley}). Restriction of $\tilde{\mu}$ to $G_0(V)$ yields the faithful representation
\begin{equation}
\label{eq-representation-of-ker-N-on-A-X}
\tilde{\mu}: G_0(V)\rightarrow \Aut(A_X).
\end{equation}
If, in addition, each of the three summands is $\sigma$-invariant,
then $\sigma$ belongs to the image of $\Spin(V)$. This leads to a symmetric definition of
$\Spin(V)$ in terms of the algebra $A_X$.
Given an element $a$ of $A_X$, denote by $m_a$ the linear homomorphism of $A_X$ acting via multiplication by $a$.
Note, in particular, that the composition $m_{s_1}\circ m_{s_2}$ 
of two multiplications by elements $s_i$ of $S^+$
\[
(m_{s_1}\circ m_{s_2})(a)
\ \ := \ \ s_1\cdot (s_2\cdot a), \ \ a \in A_X,
\]
acts on the subspace $V\oplus S^-$ via an element 
$m_{s_1\cdot s_2}$
of $\Spin(V)$, 
provided $(s_1,s_1)_{S^+}=(s_2,s_2)_{S^+}=2$ 
or $(s_1,s_1)_{S^+}=(s_2,s_2)_{S^+}=-2$. 
The sublattice $S^+$ belongs to the kernel of $m_{s_1}\circ m_{s_2}$, but
$\tilde{\mu}(m_{s_1\cdot s_2})$ leaves $S^+$ invariant and acts on $S^+$ as an isometry.

Following is the {\em Principle of Triality}, adapted from \cite[Theorem IV.3.1 and Sec. 4.5]{chevalley}.

\begin{thm} 
\label{thm-triality-principle}
There exists an automorphism $J$ of order $3$ of the algebra $A_X$, preserving its bilinear pairing, with the following properties.
\begin{enumerate}
\item
\label{thm-item-cyclic-permutation-of-representations}
$J(V)=S^+,$ $J(S^+)=S^-$, and $J(S^-)=V$. 
\item
\label{thm-item-outer-automorphism-j}
$J^{-1}\tilde{\mu}(\Spin(V))J=\tilde{\mu}(\Spin(V))$, where $\tilde{\mu}$ is the representation (\ref{eq-representation-of-ker-N-on-A-X}).
Consequently, there exists a unique outer automorphism $j$ of $\Spin(V)$ of order $3$, satisfying
\[
\tilde{\mu}(j(g))=J^{-1}\tilde{\mu}(g)J.
\]
\end{enumerate}
\end{thm}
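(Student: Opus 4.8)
The plan is to split the statement into two tasks: (i) construct the order-three algebra isometry $J$ of $A_X$ with $J(V)=S^+$, $J(S^+)=S^-$, $J(S^-)=V$, this being assertion (\ref{thm-item-cyclic-permutation-of-representations}) together with the claims that $J$ is an algebra automorphism, preserves the pairing, and satisfies $J^3=1$; and (ii) deduce assertion (\ref{thm-item-outer-automorphism-j}) formally. I would dispatch (ii) first, as it is soft.

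For (ii): suppose $J$ is an algebra automorphism of $A_X$ with the stated cyclic action on the three summands. An automorphism $\sigma$ of $A_X$ leaves each of $V$, $S^+$, $S^-$ invariant if and only if $J^{-1}\sigma J$ does, since $J$ merely permutes these summands. By the discussion preceding the statement and \cite[IV.2.4]{chevalley}, the group of algebra automorphisms of $A_X$ preserving each summand is exactly $\tilde\mu(\Spin(V))$, and $\tilde\mu$ is faithful on $\Spin(V)$, being the restriction of the faithful representation (\ref{eq-representation-of-ker-N-on-A-X}) of $G_0(V)$, which contains $\Spin(V)$. Hence the map $\sigma\mapsto J^{-1}\sigma J$ preserves $\tilde\mu(\Spin(V))$, so there is a well-defined automorphism $j$ of $\Spin(V)$ with $\tilde\mu(j(g))=J^{-1}\tilde\mu(g)J$ for all $g$, unique by faithfulness of $\tilde\mu|_{\Spin(V)}$; and $J^3=1$ gives $j^3=\mathrm{id}$. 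To see $j$ is outer, which forces its order to be exactly $3$, restrict the identity $\tilde\mu(j(g))=J^{-1}\tilde\mu(g)J$ to the summand $V$: since $J(V)=S^+$, this exhibits $\rho\circ j$ as conjugate, via the fixed linear isomorphism $J|_V\colon V\to S^+$, to the half-spin representation of $\Spin(V)$ on $S^+$. In rank $8$ the three $8$-dimensional irreducible representations $V$, $S^+$, $S^-$ of $\Spin(V)$ are pairwise non-isomorphic, the exceptional feature of the Dynkin diagram $D_4$, so $\rho\circ j\not\cong\rho$; since every inner automorphism preserves the isomorphism class of a representation, $j\notin\mathrm{Inn}(\Spin(V))$.

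The substance is task (i). I would work in an explicit split model. Fix the Lagrangian decomposition $V=W\oplus W^*$ with $W=H^1(X,\Integers)$, so that, as in Section~\ref{sec-Clifford-groups}, $S=\Wedge{*}W$ with $W$ acting by left wedge and $W^*$ by contraction, and fix a generator $\omega$ of $\Wedge{4}W$ compatible with the orientation of $X$; contraction against $\omega$ then furnishes an integral ``star'' isomorphism $\Wedge{k}W\to\Wedge{4-k}W^*$. Relative to these data each of $V$, $S^+=\Wedge{0}W\oplus\Wedge{2}W\oplus\Wedge{4}W$, and $S^-=\Wedge{1}W\oplus\Wedge{3}W$ is isometric to $U^{\oplus 4}$ (note that, by the degrees $4+4$ versus $1+6+1$, $J$ necessarily mixes exterior degrees), and the triality products — Clifford multiplication $V\otimes S^+\to S^-$ and $V\otimes S^-\to S^+$, and the map (\ref{eq-multiplication-of-S-plus-and-S-minus}) — assemble into a totally symmetric trilinear pairing, the source of the expected $S_3$-symmetry. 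Conceptually this says $(V,S^+,S^-)$ can be identified with three copies of the ring $\mathcal O$ of integral split octonions (Zorn vector matrices over $\Integers$, whose norm form is $U^{\oplus 4}$), carrying the triality products into octonion multiplication up to the canonical conjugation, under which $J$ is the cyclic shift. Concretely I would take the explicit triality automorphism of \cite[Theorem IV.3.1, Sec.~4.5]{chevalley}, which provides formulas for $J|_V$, $J|_{S^+}$, $J|_{S^-}$ in the above coordinates over a field, and verify that for the present normalizations these formulas have integral entries; then $J$ is an algebra automorphism of $A_X$, it is an isometry (it restricts to an isometry of $U^{\oplus 4}$-lattices on each summand and $A_X$ is the orthogonal sum), and $J^3=1$, all finite checks on a basis once the formulas are in hand.

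The main obstacle is precisely this integrality bookkeeping. Chevalley works over a field, whereas here one must track the geometric normalizations — the pairing $(\bullet,\bullet)_S=\int_X\tau(s)\cup t$ in place of Chevalley's $\beta$, the Clifford relation $v\cdot w+w\cdot v=(v,w)_V$ with its factors of $2$ (for instance $Q_S(s_1)=-1$ while $(s_1,s_1)_{S^+}=-2$ for $s_1=(1,0,-1)$), and the explicit contraction action of $W^*$ — so as to pin down unimodular identifications $V\cong S^+\cong S^-$ that respect the non-associative product over $\Integers$, not merely over $\Integers[1/2]$. Once such integral identifications are fixed, the remaining verifications are routine and assertion (\ref{thm-item-outer-automorphism-j}) follows as in the second paragraph above.
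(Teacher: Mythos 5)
Your handling of part (\ref{thm-item-outer-automorphism-j}) is fine, and on one point more explicit than the paper: the paper simply appeals to Chevalley's argument for the invariance of $\tilde{\mu}(\Spin(V))$ under conjugation by $J$ and to faithfulness of $\tilde{\mu}$, whereas you deduce the invariance from the characterization (stated before the theorem) of $\tilde{\mu}(\Spin(V))$ as the algebra automorphisms of $A_X$ preserving each of the three summands, and you also justify that $j$ is outer via the pairwise non-isomorphism of $V$, $S^+$, $S^-$ — a point the paper leaves implicit.

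The gap is in part (\ref{thm-item-cyclic-permutation-of-representations}), which is the only place where anything beyond Chevalley's field-level theorem must be proved, namely integrality — and you correctly flag this as ``the main obstacle'' but do not resolve it. You propose to write Chevalley's formulas in a split basis and ``verify that the formulas have integral entries,'' i.e.\ you defer the entire integral content to an unexecuted coordinate computation (and to unimodular identifications of $V,S^+,S^-$ with integral split octonions that you likewise do not construct). The paper's proof shows no such bookkeeping is needed: Chevalley's automorphism is the composite $J=\tilde{\mu}(x_1)\circ t$, where $t$ acts on $V\oplus S^-$ by multiplication in $A_X$ by a single element $u_1\in S^+$ with $(u_1,u_1)_{S^+}=2$ (e.g.\ $u_1=(1,0,1)$) and on $S^+$ by $-R_{u_1}$, and $x_1\in V$ is a lattice vector with $(x_1,x_1)_V=2$. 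Such $u_1$ and $x_1$ exist because each summand is isometric to $U^{\oplus 4}$; multiplication by them and the reflection $R_{u_1}$ are visibly integral, so $J$ preserves the lattice $A_X$ by construction, and the remaining assertions ($J$ is an isometric algebra automorphism, $J^3=\mathrm{id}$, invariance of $\Spin(V)$) descend from Chevalley's statements over $\RationalNumbers$ by restriction to the $J$-invariant lattice. The missing idea, then, is that choosing Chevalley's auxiliary data of norm $2$ inside the even unimodular lattices makes integrality automatic — it dissolves the ``integrality bookkeeping'' rather than computing it. Your route could presumably be completed, but as written the central step of the theorem is asserted, not proved.
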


\begin{proof}
The proof consists of checking that the explicit construction of the automorphism $J$ in \cite[Theorem IV.3.1]{chevalley}
and of $j$ in \cite[Sec. 4.5]{chevalley}, which is stated for vector spaces, carries through for our lattices. We outline the construction.
Let $u_1$ be an element of $S^+$, satisfying $(u_1,u_1)_{S^+}=2$. We could choose for example $u_1=(1,0,1)$.
Let $t$ be the automorphism of the lattice $A_X$ mapping $V$ to $S^-$ 
via the Clifford action of elements of $V\subset C(V)$ on $u_1\in S^+\subset S$,
sending $S^-$ to $V$ via the product by $u_1$ in $A_X$, so via (\ref{eq-multiplication-of-S-plus-and-S-minus}),
and acting on $S^+$ by $-R_{u_1}$. 
Choose an element $x_1$ of $V$ satisfying $Q(x_1)=1$, so that $(x_1,x_1)_V=2$.
Regarding $x_1$ as an element of $C(V)$, via the embedding $V\subset C(V)$, we get that $x_1$ belongs to the subgroup $G_0(V)$ of $G(V)$.
Set 
$J:=\tilde{\mu}(x_1)t$, the composition of the automorphisms $\tilde{\mu}(x_1)$ and $t$ of the lattice $A_X$. 
$J$ is an isometry and an algebra automorphism of $A_X\otimes_\Integers\RationalNumbers$,
by \cite[Theorem IV.3.1]{chevalley} (see also the paragraph preceding \cite[IV.2.4]{chevalley}), and so of $A_X$ as well.
The properties in Part (\ref{thm-item-cyclic-permutation-of-representations}) of the Theorem follow by construction.
The identity $J^3=id$ holds for the vector space $A_X\otimes_\Integers\RationalNumbers$,
by the proof of \cite[Theorem IV.3.1]{chevalley}, and so must hold also for the $J$-invariant lattice $A_X$.
The invariance of $\Spin(V)$ in Part
(\ref{thm-item-outer-automorphism-j}) of the Theorem follows from that of $\Spin(V_\RationalNumbers)$,
by the proof in \cite[Sec. 4.5]{chevalley}, and the existence of $j$ follows, since the representation $\tilde{\mu}$ is faithful.
\end{proof}

As a corollary of the Triality Principle we get the following identification of $V\oplus S^-$ with the Clifford module 
$\wedge^* S^+$ of the integral Clifford algebra $C(S^+)$ associated to the decomposition 
$S^+=J(H^1(X,\Integers))\oplus J(H^1(X,\Integers)^*)$ as a direct sum of maximal isotropic sublattices.
Let $\tilde{J}:C(V)\rightarrow C(S^+)$ be the isomorphism extending the isomorphism $\restricted{J}{V}:V\rightarrow S^+$ induced by 
the isometry $J$. Let $Ad_J:\End(S^+\oplus S^-)\rightarrow \End(S^-\oplus V)$ be the isomorphism sending $f$ to $J\circ f\circ J^{-1}$.
Let $m:C(V)\rightarrow \End(S^+\oplus S^-)$ be the algebra homomorphism given in (\ref{eq-m-from-C-V}).
Given an element $x\in C(V)$, let $m_x\in \End(S^+\oplus S^-)$ be its image under $m$.

\begin{cor}
\label{cor-V-plus-S-minus-is-the-Clifford-module}
There exists a unique injective algebra homomorphism  
\[
m:C(S^+)\rightarrow \End(S^-\oplus V),
\] 
which restricts to the embedding of $S^+$ in $C(S^+)$ as the multiplication in $A_X$ by
elements of $S^+$. 
The following diagram is commutative
\[
\xymatrix{
V \ar[r]^{\subset} \ar[d]^{J} & C(V) \ar[r]^-m\ar[d]^{\tilde{J}} & \End(S^+\oplus S^-)\ar[d]^{Ad_J}
\\
S^+\ar[r]_{\subset} & C(S^+) \ar[r]_-m & \End(S^-\oplus V).
}
\]
In particular, given elements $x\in V$ and $y_1,y_2\in S^+$, the following equalities hold in $\End(S^-\oplus V)$.
\begin{eqnarray}
\label{eq-m-J-of-x}
m_{J(x)}&=&J\circ m_x\circ J^{-1},
\\
\label{eq-composition-of-m-y-1-and-m-y-2}
m_{y_1}\circ m_{y_2}+m_{y_2}\circ m_{y_1}&=&(y_1,y_2)_{S^+}\cdot id_{S^-\oplus V}.
\end{eqnarray}
\end{cor}

\begin{proof}
The left square is commutative, by definition of $\tilde{J}$.
The right lower arrow $m$ is defined by the requirement that the right square is commutative. 
It is an algebra homomorphism, since the upper arrow $m$ is, and it restricts to $S^+$ as multiplication in $A_X$, due to $J$ being an algebra automorphism of $A_X$.
Equality (\ref{eq-m-J-of-x}) follows from the commutativity of the diagram. We get
\[
m_{y_1}\circ m_{y_2}\stackrel{(\ref{eq-m-J-of-x})}{=}(J\circ m_{J^{-1}(y_1)}\circ J^{-1})\circ (J\circ m_{J^{-1}(y_2)}\circ J^{-1})=
J\circ m_{J^{-1}(y_1)}\circ m_{J^{-1}(y_2)}\circ J^{-1}.
\]
Equality (\ref{eq-composition-of-m-y-1-and-m-y-2}) follows:
\begin{eqnarray*}
m_{y_1}\circ m_{y_2}+m_{y_2}\circ m_{y_1}&=&
J\circ \left[m_{J^{-1}(y_1)}\circ m_{J^{-1}(y_2)}+m_{J^{-1}(y_2)}\circ m_{J^{-1}(y_1)}\right]\circ J^{-1}
\\
&=&
J\circ \left[(J^{-1}(y_1),J^{-1}(y_2))_{V}\cdot id_S\right]\circ J^{-1}
=(y_1,y_2)_{S^+}\cdot id_{S^-\oplus V},
\end{eqnarray*}
where the second Equality follow from the $C(V)$-module structure of $S$ 
and the defining relation (\ref{eq-defining-relation-of-Clifford-algebra}) of $C(V)$ and the last is due to $J$ being an isometry.
\end{proof}

Set $G(S^{+}):=\tilde{J}G(V)\tilde{J}^{-1}\subset C(S^+)$ and define its subgroup 
 $G(S^{+})^{even}$ similarly,
\begin{eqnarray}
\label{eq-G-S-plus-even}
G(S^{+})^{even} & := & \tilde{J} G(V)^{even} \tilde{J}^{-1},
\\
\nonumber
\Spin(S^+) & := & \tilde{J} [\Spin(V)] \tilde{J}^{-1}.
\end{eqnarray}
Let $\tilde{m}:G(S^+)\rightarrow GL(A_X)$ be the unique homomorphism making the following diagram commutative.
\begin{equation}
\label{eq-tilde-m}
\xymatrix{
 G(V) \ar[r]^-{\tilde{\mu}} \ar[d]_{Ad_{\tilde{J}}}& O(V)\times GL(S^+\oplus S^-) \ar[d]^{Ad_J}
 \\
G(S^+)\ar[r]_-{\tilde{m}} & O(S^+)\times GL(S^-\oplus V)
}
\end{equation}
Note the equality 
\begin{equation}
\label{eq-Spin-V-equals-Spin-S-plus}
\tilde{m}(\Spin(S^+))=\tilde{\mu}(\Spin(V))
\end{equation}
as subgroups of $\Aut(A_X)$, by Theorem \ref{thm-triality-principle}(\ref{thm-item-outer-automorphism-j}). 
However, $\tilde{m}(-1)$ and $\tilde{\mu}(-1)$ are distinct elements of the center. The element $\tilde{\mu}(-1)$ acts as the identity on $V$ and as $-id_S$ on $S=S^+ \oplus S^-$.
The element $\tilde{m}(-1)$ acts as the identity on $S^+$ and via scalar multiplication by $-1$ on $V\oplus S^-$.

Let
\[
\rho:G(S^+)\rightarrow O(S^+)
\]
be the composition of $\tilde{m}$ with the restriction homomorphism to the $G(S^+)$-invariant direct summand $S^+$
of $A_X$. The representation $\rho$ of $G(S^+)$ is analogous to that of $G(V)$ given in (\ref{eq-standard-representation-of-G-V}).
Explicitly,  $G(S^{+})^{even}$ is 
generated by  $\tilde{\mu}(\Spin(V))$ and one additional automorphism 
$\tilde{m}_{s_1\cdot s_{-1}},$ 
acting 
on $V\oplus S^{-}$ via 
$m_{s_1}\circ m_{s_{-1}}$, which is the composition of two multiplications
\[
(m_{s_1}\circ m_{s_{-1}})(a) \ \ := \ \ s_1\cdot (s_{-1}\cdot a), \ \ a \in A_X,
\]
where $s_i$, $i=1,-1$, are two elements of $S^+$ and $(s_i,s_i)_{S^+}=2i$. 
One can take, for instance, $s_1=(1,0,1)$ and $s_{-1}=(1,0,-1)$ as in
Example \ref{example-an-element-of-the-even-clifford-group-of-S-plus}. 
The action of the element $\tilde{m}_{s_1\cdot s_{-1}}$ on $S^+$ is 
the composition $R_{s_1}\circ R_{s_{-1}}$ of the two reflections.
Then $G(S^{+})^{even}$ preserves $(\bullet,\bullet)_{S^+}$ and maps into $SO(S^+)$, but its image 
in $SL(V)$ is contained in $S\widetilde{O}(V)$. In particular, $G(S^{+})^{even}$ is not contained in 
the image of $G(V)$ in $GL(A_X)$ via $\rho\times m$.

\hide{
Motivated by triality, we let $\Spin(S^+)$ 
be the image of $\Spin(V)$  in $\Aut(A_X)$. 
Let $\Pin(S^+)\subset GL(A_X)$ be the subgroup $J Pin(V) J^{-1}$. It is generated by $\Spin(V)$
and one additional element $\tilde{m}_{s_{-1}}$, acting on $V\oplus S^-$ via multiplication by 
$s_{-1}$ and on $S^+$ via the reflection $R_{s_{-1}}$ (in analogy to (\ref{eq-Pin-acts-by-reflections})).
Let $G(S^+)$ be the subgroup $J G(V) J^{-1}$ of $GL(A_X)$ generated by $\Pin(S^+)$ and   $G(S^{+})^{even}$.
Given an element $s_1\in S^+$, satisfying $(s_1,s_1)_{S^+}=2$, we get the element
\begin{equation}
\label{eq-tilde-m-s-1}
\tilde{m}_{s_1}
\end{equation}
of $G(S^+)$ satisfying $\tilde{m}_{s_1}\tilde{m}_{s_1\cdot s_{-1}}=\tilde{m}_{s_{-1}}$.
The sublattice $S^+$ of $A_X$ is $G(S^+)$ invariant, yielding the homomorphism 
\[
\rho \ : \ G(S^+) \ \ \longrightarrow \ \ O(S^+)
\]
analogous to (\ref{eq-standard-representation-of-G-V}).
}
Note that if $s\in G(S^+)^{even}$ does not belong to $\Spin(S^+)$, then $\rho(s)$
acts on $S^+$ as an isometry, $V$ and $S^-$ are $s$-invariant, but the action of $s$ on $V\oplus S^-$ reverses the sign of the 
pairing, so its restriction belongs to $\widetilde{O}(V\oplus S^-)$ but not to $O(V\oplus S^-)$, in analogy to 
(\ref{eq-minus-1-vectors-do-not-act-via-isometries-on-spin-representation}).

Consider the composite homomorphism 
\[
G(S^+)^{even}_{s_n} \ \ \ \longrightarrow \ \ \ 
S\widetilde{O}(V) \ \ \ \longrightarrow \ \ \
S\widetilde{O}(V/nV).
\]
Its composition with $Ad_{\tilde{J}}:\Spin(V)\rightarrow \Spin(S^+)$ agrees with the restriction of the homomorphism $\bar{\rho}$,
given in (\ref{eq-homomorphism-from-stabilizer-in-Spin-V-to-GL-4}), to
$\Spin(V)$.
The image of $G(S^+)^{even}_{s_n}$ in $S\widetilde{O}(V/nV)$ 
leaves the subgroup $H^1(X,\Integers/n\Integers)^*$ invariant. 
The quotient of $(V/nV)$ by $H^1(X,\Integers/n\Integers)^*$ is naturally isomorphic to $H^1(X,\Integers/n\Integers)$.
We obtain a homomorphism
\begin{equation}
\label{eq-homomorphism-from-stabilizer-in-G-S-plus-even-to-GL}
G(S^+)^{even}_{s_n} \ \ \ \longrightarrow \ \ \ 
GL[H^1(X,\Integers/n\Integers)].
\end{equation}

\section{Generators for the stabilizer $\Spin(S^+)_{s_n}$}
\label{sec-generators-for-the-stabilizer}

Given a lattice $\Lambda$, 
let $\Reflection(\Lambda)$ be the group generated by 
reflections in $+2$ and $-2$ vectors in $\Lambda$. Set
$S\Reflection(\Lambda):=\Reflection(\Lambda)\cap SO(\Lambda)$ and 
$S\Reflection_+(\Lambda):=\Reflection(\Lambda)\cap SO_+(\Lambda)$.
The former is generated by elements, which are products of an even
number of reflections.
$S\Reflection_+(\Lambda)$ is generated by elements, 
which are either products of an even
number of reflections in $+2$ vectors, or products of an even
number of reflections in $-2$ vectors.
Let $U$ be the rank $2$ even unimodular hyperbolic lattice.
The lattice $S^+$ is isometric to $U^{\oplus 4}$. 
The orthogonal group is generated by reflections, 
$O(U^{\oplus k})=\Reflection(U^{\oplus k})$, for 
$k\geq 3$ (see \cite[4.3]{wall}). Consequently, we get the following.
\begin{cor}
\label{cor-SO-plus-of-direct-sums-of-hyperbolic-plane}
$SO_+(U^{\oplus k})=S\Reflection_+(U^{\oplus k})$, for 
$k\geq 3$.
\end{cor}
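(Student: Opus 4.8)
The containment $S\Reflection_+(U^{\oplus k})\subseteq SO_+(U^{\oplus k})$ is immediate from the definition $S\Reflection_+(\Lambda):=\Reflection(\Lambda)\cap SO_+(\Lambda)$, so the only point is the reverse containment, and the plan is to deduce it directly from the already cited theorem of Wall that $O(U^{\oplus k})=\Reflection(U^{\oplus k})$ for $k\geq 3$. Intersecting both sides of that equality with the subgroup $SO_+(U^{\oplus k})$ of $O(U^{\oplus k})$ yields
\[
SO_+(U^{\oplus k})\;=\;O(U^{\oplus k})\cap SO_+(U^{\oplus k})\;=\;\Reflection(U^{\oplus k})\cap SO_+(U^{\oplus k})\;=\;S\Reflection_+(U^{\oplus k}),
\]
the last equality being the definition of $S\Reflection_+$. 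In other words, every $g\in SO_+(U^{\oplus k})$ is, by Wall's theorem, a product of reflections in $\pm 2$-vectors, and since it also lies in $SO_+$ it lies in $\Reflection(U^{\oplus k})\cap SO_+(U^{\oplus k})=S\Reflection_+(U^{\oplus k})$ by definition. That is the whole argument.

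For consistency with the generating-set description of $S\Reflection_+$ recorded just above the statement (that the relevant generators are products of an \emph{even} number of reflections in $+2$-vectors, or products of an \emph{even} number of reflections in $-2$-vectors), I would additionally record the values of the characters $\det$ and $ort$ on a single reflection $R_v$ in a $\pm 2$-vector $v$. Clearly $\det(R_v)=-1$. The value $ort(R_v)$ depends only on the sign of $(v,v)$, since over $\RealNumbers$ any two vectors of positive (resp.\ negative) norm are conjugate under $O(U^{\oplus k}\otimes\RealNumbers)$ and $ort$ is conjugation invariant; using the standard model $\RealNumbers^{k,k}$ with $v$ a coordinate vector and the retraction of the positive cone onto the unit sphere $S^{k-1}$ of the positive-definite part, one sees that $R_v$ acts on $S^{k-1}$ as a hyperplane reflection when $(v,v)>0$, hence $ort(R_v)=-1$, and as the identity when $(v,v)<0$, hence $ort(R_v)=+1$. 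Consequently, for $g=R_{v_1}\circ\cdots\circ R_{v_m}$ with each $v_i$ a $\pm 2$-vector one has $\det(g)=(-1)^m$ and $ort(g)=(-1)^{\#\{i:\,(v_i,v_i)>0\}}$, so $g\in SO_+$ exactly when $m$ is even and $\#\{i:\,(v_i,v_i)>0\}$ is even, which (as $m$ is the sum of the numbers of positive- and negative-norm $v_i$'s) is the same as both of these counts being even.

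The argument carries essentially no obstacle: all the substantive content is packaged into Wall's generation result $O(U^{\oplus k})=\Reflection(U^{\oplus k})$ for $k\geq 3$, which is invoked as a black box, and the corollary is obtained from it by a one-line intersection with $SO_+$. The only mildly computational ingredient is the behaviour of $ort$ on reflections, needed merely to match the informal generating-set description, and that computation is standard and would be kept to the few lines indicated above.
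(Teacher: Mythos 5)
Your argument is correct and is essentially the paper's own: the corollary is stated there as an immediate consequence of Wall's generation theorem $O(U^{\oplus k})=\Reflection(U^{\oplus k})$ for $k\geq 3$, together with the definition $S\Reflection_+(\Lambda)=\Reflection(\Lambda)\cap SO_+(\Lambda)$, which is precisely your one-line intersection. The extra computation of $\det$ and $ort$ on reflections in $\pm 2$-vectors is accurate and harmless, but not needed for the statement itself.
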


Set $e_1:=(1,0,0)\in S^+$ and $e_2:=(0,0,1)\in S^+$.
Let $\Spin(V)_{e_1,e_2}$ be the subgroup of $\Spin(V)$ stabilizing both $e_1$ and $e_2$.
\begin{new-lemma}
\label{lemma-stabilizer-of-H0-and-H4-is-SL-4}
$\Spin(V)_{e_1,e_2}$ leaves invariant each of the subspaces $H^1(X,\Integers)$ and $H^1(\hat{X},\Integers)$ of $V$, 
its action on $H^1(X,\Integers)$ factors through an isomorphism 
\[
f:\Spin(V)_{e_1,e_2}\rightarrow SL(H^1(X,\Integers)),
\]
and its action on $H^1(\hat{X},\Integers)$ factors through an isomorphism with $SL(H^1(\hat{X},\Integers))$.
\end{new-lemma}

\begin{proof}
Consider $V$ as a subspace of $C(V)$ as in Section \ref{sec-spin-8-and-triality}.
$H^1(X,\Integers)$  is the intersection of $V$ with the left ideal of $C(V)$ annihilating $e_2$ and
$H^1(\hat{X},\Integers)$  is the intersection of $V$ with the left ideal annihilating $e_1$.
In the language of \cite[Sec. 3.1]{chevalley} the elements $e_i$, $i=1,2$, are {\em pure spinors} and each of the two maximal isotropic sublattices of $V$ is the one associated to the pure spinor. If $g$ belongs to $\Spin(V)_{e_1,e_2}$ and $v$ to $H^1(X,\Integers)$, then
\[
\rho(g)(v)\cdot e_i=(g\cdot v\cdot g^{-1})\cdot e_i=(g\cdot v\cdot g^{-1})\cdot (g\cdot e_i)=
g\cdot (v\cdot e_i),
\]
where $\cdot$ denotes multiplication in $C(V)$ as well as the module action of $C(V)$ on $S$, and the second equality follows since $g$ stabilizes $e_i$.
Hence, $\rho(g)(v)\cdot e_i=0$, if and only if $v\cdot e_i=0$.
Thus, $\Spin(V)_{e_1,e_2}$ leaves  each of $H^1(X,\Integers)$ and $H^1(\hat{X},\Integers)$ invariant. 
Furthermore, $\tilde{\mu}\left(\Spin(V)_{e_1,e_2}\right)=\tilde{m}\left(\Spin(S^+)_{e_1,e_2}\right)$, by (\ref{eq-Spin-V-equals-Spin-S-plus}), and the latter is $\Spin(H^2(X,\Integers))$, since
$H^2(X,\Integers)$ is the subspace of $S^+$ orthogonal to $\{e_1,e_2\}$. 
$\Spin(H^2(X,\RationalNumbers))$ acts on each of its half-spin representations $H^1(X,\RationalNumbers)$ and $H^1(\hat{X},\RationalNumbers)$
via an injective homomorphism into $SL(H^1(X,\RationalNumbers))$ and $SL(H^1(\hat{X},\RationalNumbers))$, by
\cite[III.7.2]{chevalley}. Hence, the homomorphism $f$ is well defined and injective.

Conversely, $SL(H^1(X,\Integers))$ acts via isometries on $\Wedge{2}H^1(X,\Integers)$ and we get the commutative diagram
\[
\xymatrix{
\Spin(V)_{e_1,e_2}\ar[rr]^{f} \ar[dr] & & SL(H^1(X,\Integers)) \ar[dl]
\\
& SO_+(\Wedge{2}H^1(X,\Integers))
}
\]
$SO_+(\Wedge{2}H^1(X,\Integers))$ is generated by products of two reflections in classes $c\in H^2(X,\Integers)$ with $(c,c)_{S^+}=2$
and products of two reflections in classes $c\in H^2(X,\Integers)$ with $(c,c)_{S^+}=-2$, by Corollary
\ref{cor-SO-plus-of-direct-sums-of-hyperbolic-plane}. 
The left slanted arrow is surjective, since these generators are in its image. The kernel of the right slanted arrow is $-id$,
which is in the image of $f$, since $\tilde{m}(-1)$, given in (\ref{eq-tilde-m}), acts as the identity on $S^+$ and maps to $-id_V$.
Hence, $f$ is surjective as well.
The verification of the statement for $H^1(\hat{X},\Integers)$ is identical.
\end{proof}

Let $s_n:=(1,0,-n)$, $n\geq 3$. 

\begin{new-lemma}
\label{lemma-generators-for-stabilizer-in-SO}
\begin{enumerate}
\item
\label{lemma-item-stabilizer-is-generated-by-reflections}
The stabilizer $SO(S^+)_{s_n}$ is equal to 
the subgroup $S\Reflection(s_n^\perp)$ of the reflection group 
of the orthogonal complement $s_n^\perp\subset S^+$ of $s_n$. 
\item
\label{lemma-item-reflections-by-line-bundles-generate-stabilizer}
$\Reflection(s_n^\perp)$ is generated by 
$O(H^2(X,\Integers))$ and reflections in vectors of the form 
$(1,A,n)$, with $A\in H^2(X,\Integers)$ a primitive class satisfying
$\int_X A\cup A=2n-2$. 
\item
\label{lemma-item-even-number-of-reflections-by-line-bundles-generate}
$S\Reflection(s_n^\perp)$
is generated by $SO(H^2(X,\Integers))$ and products $R_{t_1} R_{t_2}$ of
reflections, where $t_i=(1,A_i,n)\in s_n^\perp$ are vectors of the above
form. 
\end{enumerate}
\end{new-lemma}


\begin{proof}
\ref{lemma-item-stabilizer-is-generated-by-reflections},
\ref{lemma-item-reflections-by-line-bundles-generate-stabilizer})
It suffices to prove that $O(S^+)_{s_n}$ is generated by 
$O(H^2(X,\Integers))=\Reflection(H^2(X,\Integers))$ and
$+2$ vectors of the form $(1,A,n)$, with $A$ primitive. 
The argument proving Lemma 7.4 in \cite{markman-monodromy-I} applies  
to show that the stabilizer $O(S^+)_{s_n}$ is generated by 
$O(H^2(X,\Integers))$ and 
reflections in $+2$ elements $(r,A,rn)\in s_n^\perp$, 
with $A\in H^2(X,\Integers)$ primitive. 
If $r=0$, then $R_{(r,A,rn)}$ belongs to 
$\Reflection(H^2(X,\Integers))$. 
If $r\neq 0$, then the reflection $R_{(r,A,rn)}$ is a composition of 
reflections
$R_{t_i}$, with $t_i=(1,A_i,n)$ a $+2$ vector in $s_n^\perp$, 
and $A_i$ primitive, by the argument proving Lemma 7.7 in 
\cite{markman-monodromy-I}. 

\ref{lemma-item-even-number-of-reflections-by-line-bundles-generate})
Let $t=(0,A,0)$ and $u=(1,B,n)$, with $\int_X A\cup A=\pm 2$ and 
$\int_X B\cup B=2n-2$. 
The equality $R_uR_t=R_tR_tR_uR_t=R_tR_{R_t(u)}$ implies that every element of 
$S\Reflection(s_n^\perp)$ can be written as a product
\begin{equation}
\label{eq-normalized-product-of-reflections}
R_{t_1}\cdots R_{t_k}R_{u_1}\cdots R_{u_\ell},
\end{equation}
with $k+\ell$ even, $t_i$ are $\pm 2$ vectors in $H^2(X,\Integers)$,
and $u_j=(1,B_j,n)$ are $+2$ vectors in $s_n^\perp$, with $B_j$ primitive.

It remains to prove that we can choose $\ell$ to be even as well. 
The proof is similar to that of Lemma 7.7 in \cite{markman-monodromy-I}. 
Assume $k$ and $\ell$ are odd. Choose $A\in H^2(X,\Integers)$
satisfying $(A,A)_{S^+}=2$ and $(A,B_1)_{S^+}=-1$. Then $A+B_1=R_A(B_1)$
is a primitive class in $H^2(X,\Integers)$. 
Set $t_{k+1}:=(0,A,0)$.
Then $(t_{k+1},u_1)=-1$ and $v:=R_{u_1}(t_{k+1})=t_{k+1}+u_1=(1,A+B_1,n)$.
Thus, the subgroup generated by the three reflections 
$\{R_{t_{k+1}},R_{u_1},R_{v}\}$ is the permutation group $\Sym_3$, and
\[
R_{u_1} \ \ = \ \ R_{t_{k+1}}R_{u_1}R_v.
\]
Substitute the right hand side for $R_{u_1}$ in 
(\ref{eq-normalized-product-of-reflections}) 
to replace $\ell$ by $\ell+1$.
\end{proof}

\begin{new-lemma}
\label{lemma-generators-for-stabilizer-in-G-V}
The stabilizer $\Spin(S^+)_{s_n}$ is generated by
$\Spin(S^+)_{e_1,e_2}\cong SL[H^1(X,\Integers)]$ 
and products $t_1t_2\in \Spin(S^+)_{s_n}$, where each $t_i=(1,A_i,n)$
is a $+2$ vector in $s_n^\perp$ and $A_i$ is a primitive class in
$H^2(X,\Integers)$.
\end{new-lemma}

\begin{proof}
Let $SO_+(S^+)_{s_n}$ be the kernel of the restriction of the orientation character 
(\ref{eq-orientation-character}) to $SO(S^+)_{s_n}$.
The homomorphism $\Spin(S^+)_{s_n}\rightarrow SO_+(S^+)_{s_n}$ is surjective, by Lemma 
\ref{lemma-spin-surjects}, and its kernel 
is equal to the kernel\footnote{It is the order $2$ subgroup generated by $s\cdot s$, where
$s\in S^+$ is a class with $Q_{S^+}(s)=-1$.
}
of $\Spin(S^+)\rightarrow SO_+(S^+)$ and is 
contained in $\Spin(S^+)_{e_1,e_2}$. 
The homomorphism $f:\Spin(S^+)_{e_1,e_2}\rightarrow SO_+(H^2(X,\Integers))$ is surjective,
by Lemma \ref{lemma-stabilizer-of-H0-and-H4-is-SL-4}.
Hence, it suffices to prove that $SO_+(S^+)_{s_{n}}$ is
generated by $SO_+[H^2(X,\Integers)]$ and the products $R_{t_1} R_{t_2}$ of
the reflections in the vectors $t_i$. 
Let $g$ be an element of $SO_+(S^+)_{s_n}$. Then $g=R_{a_1}\cdots R_{a_k}R_{t_1}\cdots R_{t_\ell}$
with $a_i\in H^2(X,\Integers)$, $(a_1,a_2)_{S+}=\pm 2$, $t_i$ of the above form, and $\ell$ is even, by
Lemma 
\ref{lemma-generators-for-stabilizer-in-SO} (\ref{lemma-item-even-number-of-reflections-by-line-bundles-generate}). 
Let $k=k_++k_-$, where $k_-$ is the number of $a_i$ with $(a_1,a_1)_{S^+}=-2$.  
Then $k_-$ is even, since $g$ belongs to $SO_+(S^+)_{s_n}$ and $(t_i,t_i)=2$. Hence, $k_+$ is even as well 
and $R_{a_1}\cdots R_{a_k}$ belongs to $SO_+[H^2(X,\Integers)]$.
\end{proof}

%
\section{Equivariance of the Chern character of a universal sheaf}
\label{sec-equivariance-of-the-universal-sheaf}
Fix a non-negative integer $n$.
Let $\C_n$ be the category whose objects are triples $(X,H,w)$, where $X$ is an abelian surface, $w=(r,\beta,s)\in H^{even}(X,\Integers)$ is a primitive class with $r\geq 0$, $\beta\in H^{1,1}(X,\Integers)$, $(w,w)_{S^+}=-2n$, $H$ is a $w$-generic polarization,
and such that there exists a universal sheaf $\E$ over $X\times\M_H(w)$. 
A morphism in $\Hom_{\C_n}((X_1,w_1,H_1),(X_2,w_2,H_2))$ consists of a pair $(g,\epsilon)$, where $\epsilon$
is in $\Integers/2\Integers$ and $g:H^*(X_1,\Integers)\rightarrow H^*(X_2,\Integers)$ is a group homomorphism mapping
$H^{even}(X_1,\Integers)$ to $H^{even}(X_2,\Integers)$ and
$H^{odd}(X_1,\Integers)$ to $H^{odd}(X_2,\Integers)$ and satisfying the following condition.
There exists a ring isomorphism 
\[
\gamma:H^*(\M_{H_1}(w_1),\Integers)\rightarrow H^*(\M_{H_2}(w_2),\Integers)
\] 
such that for a choice of universal sheaves $\E_i$ over $X_i\times \M_{H_i}(w_i)$ and
some class $c\in H^2(\M_{H_2}(w_2),\Integers)$
we have 
\begin{eqnarray}
\label{condition-to-be-a-morphism-in-C-n}
(g\otimes \gamma)(ch(\E_1)
)=ch(\E_2)
exp(c), & \mbox{if} & \epsilon=0,
\\
\nonumber
((\tau g\tau)\otimes\gamma)(ch(\E_1)
)=ch(\E_2^\vee)
exp(c), & \mbox{if} & \epsilon=1,
\end{eqnarray}
where $\tau:H^*(X_j,\Integers)\rightarrow H^*(X_j,\Integers)$ acts on $H^i(X_j,\Integers)$ by $(-1)^{i(i-1)/2}$.
In the displayed equation above 
$\E_2^\vee$ is the derived dual, and the product is by the pull backs of 
$exp(c)$ via the projection to $\M_{H_2}(w_2)$. 

Composition of morphisms $(h,\epsilon)\in \Hom_{\C_n}((X_1,w_1,H_1),(X_2,w_2,H_2))$ and
$(g,\delta)\in \Hom_{\C_n}((X_2,w_2,H_2),(X_3,w_3,H_3))$ is given by
\begin{equation}
\label{eq-composition-inC-n}
(h,\epsilon)\circ (g,\delta)=(\tau^{\delta}h\tau^{\delta}g,\epsilon+\delta).
\end{equation}

We prove in this section that  $\C_n$ is indeed a category with the above composition rule, namely, the existence of a ring isomorphism 
$\gamma:H^*(\M_{H_1}(w_1),\Integers)\rightarrow H^*(\M_{H_3}(w_3),\Integers)$ and a class 
$c\in H^2(\M_{H_3}(w_3),\Integers)$ needed for the right hand side of (\ref{eq-composition-inC-n}) to satisfy one of the two
conditions displayed in (\ref{condition-to-be-a-morphism-in-C-n}) (Corollary \ref{cor-composition} and Lemmas \ref{lemma-composition-of-gamma-g-1-and-gamma-h-1} and \ref{lemma-composition-of-gamma-g-0-and-gamma-h-1}).
Furthermore, $\gamma$ 
in (\ref{condition-to-be-a-morphism-in-C-n}) is uniquely determined by 
$(g,\epsilon)$ via an explicit formula (\ref{eq-gamma-delta}) (Lemma \ref{lemma-recovering-f}). Everything follows formally from the expression of the class of the diagonal 
in $H^*(\M_H(w)\times \M_H(w),\Integers)$ in terms of a universal sheaf. 
We do not discuss existence of morphisms in this section. In Section \ref{sec-four-groupoids}
$\Hom_{\C_n}((X_1,w_1,H_1),(X_2,w_2,H_2))$ is shown to be non-empty (Theorem \ref{thm-Hom-G3-non-empty}) and 
in Corollary \ref{cor-monodromy-representation-of-spin} an injective homomorphism\footnote{
The homomorphism sends $f\in G(S^+_X)^{even}_w$ to $(\tau^{ort(f)}\tilde{m}_f,ort(f))$, where $ort$ is the character (\ref{eq-ort-S+})
and $\tilde{m}:G(S^+_X)^{even}\rightarrow SO(S^+_X)\times S\widetilde{O}(S^-_X)$ is given in Equation (\ref{eq-tilde-m}).
}
$
G(S^+_X)^{even}_w\rightarrow \Aut_{\C_n}((X,w,H))
$
is constructed for $n\geq 3$.
In the current section \ref{sec-equivariance-of-the-universal-sheaf} we treat the case where $X$ is a $K3$ surface as well.

Given a smooth projective variety $M$, we denote by 
\begin{eqnarray*}
\ell \ : \ \oplus_{i}H^{2i}(M,\RationalNumbers) & \longrightarrow &
\oplus_{i}H^{2i}(M,\RationalNumbers)
\\
(r+a_1+a_2+\cdots ) & \mapsto & 1+a_1 + (\frac{1}{2}a_1^2-a_2) + \cdots 
\end{eqnarray*}
the universal polynomial map, which takes the exponential Chern character
of a complex of sheaves to its total Chern class. 
We let 
\[
D_M: H^{even}(M,\Integers)\rightarrow H^{even}(M,\Integers)
\]
be the dualization automorphism acting by $(-1)^i$ on $H^{2i}(M,\Integers)$.

Let $X_1$ and $X_2$ be two abelian or $K3$ surfaces and let
$w_i\in H^{even}(X_i,\Integers)$ be two Mukai vectors. 
Assume that the moduli space
$\M(w_i)$ of Gieseker-Simpson stable sheaves on $X_i$ (with respect to a choice of
polarizations, when the rank is different from $1$) is compact, for $i=1,2$, and that
$\dim(\M(w_1))=\dim(\M(w_2))$. Set $m:=\dim(\M(w_i))$, $i=1,2$.
Denote by $\pi_i$ the projection from 
$X_1 \times \M(w_1)\times X_2 \times \M(w_2)$ onto the $i$-th factor.
Set 
\[
D:=D_{X_1 \times \M(w_1)\times X_2 \times \M(w_2)}.
\]
Given a class $\delta$ in $H^{even}(X_1\times X_2,\Integers)$,
classes $\alpha_i$ in $H^{even}(X_i\times \M(w_i),\RationalNumbers)$, 
and an element 
$\epsilon\in \Integers/2\Integers$, we define a class 
$\gamma_{\delta,\epsilon}(\alpha_1,\alpha_2)$ in 
$H^{2m}(\M(w_1)\times \M(w_2),\RationalNumbers)$ by
\begin{equation}
\label{eq-gamma-delta}
\gamma_{\delta,\epsilon}(\alpha_1,\alpha_2) \ = \ 
c_m\left(
\left\{
\ell\left(
\pi_{24_*}\left[
D^{1-\epsilon}[\pi_{12}^*(\alpha_1)\cdot\pi_{13}^*(\delta)]
\cdot\pi_{34}^*(\alpha_2)\cdot
\pi_1^*(\sqrt{td_{X_1}})\cdot\pi_3^*(\sqrt{td_{X_2}})
\right]
\right)
\right\}^{-1}\right).
\end{equation}
(The Todd classes $td_{X_i}$ are equal to $1$ for an abelian surface).
If $\E_i$ is a complex of sheaves on $X_i\times \M(w_i)$, 
we denote 
$\gamma_{\delta,\epsilon}(ch(\E_1)\cdot\sqrt{td_{X_1}},
ch(\E_2)\cdot\sqrt{td_{X_2}})$ 
also by $\gamma_{\delta,\epsilon}(\E_1,\E_2)$. 

Let $\M_H(w)$ be a smooth and projective 
$m$-dimensional moduli space of $H$-stable sheaves on
a $K3$ or abelian surface $X$. Denote by $p_i$ the projection from $\M_H(w)\times X \times \M_H(w)$ onto the $i$-th factor and by $p_{ij}$ the projection onto the product of the $i$-th and $j$-th factors.

\begin{thm} 
\label{thm-diagonal}
Let $\E_1$, $\E_2$ be any two universal families of sheaves over $X\times \M_H(w)$ 
\begin{enumerate}
\item
\label{thm-item-class-of-diagonal}
\cite{markman-diagonal}
The class of the diagonal, in the Chow ring of 
$\M_H(w)\times \M_H(w)$, is identified by 
\[
c_m\left[- \ 
p_{13_!}\left(
p_{12}^*(\E_1)^\vee\stackrel{L}{\otimes}p_{23}^*(\E_2)
\right)
\right],
\]
where both the dual
$(\E_1)^\vee$ and the tensor product are taken in the derived category.
\item
\cite[Theorem 1]{markman-integer}
The integral cohomology $H^*(\M_H(w),\Integers)$ is torsion free.
\end{enumerate}
\end{thm}

\begin{rem}
\label{rem-class-of-the-diagonal-in-terms-of-twisted-universal-sheaves}
A version of Theorem \ref{thm-diagonal} holds for a projective 
moduli space $\M_H(w)$ of $H$-stable sheaves on
a $K3$ or abelian surface $X$, even if the universal sheaves $\E_1$ and $\E_2$ are twisted with respect to 
the pullback to $X\times \M_H(w)$ of a Brauer class $\theta\in H^2_{an}(\M_H(w),\StructureSheaf{\M_H(w)}^*)$ (a \v{C}ech cohomology class for the analytic topology). In that case a universal class $e$ in the topological $K$-ring 
$K(X\times \M_H(w))$
of $X\times \M_H(w)$ was constructed in 
\cite[Sec. 3.1]{markman-integer}, unique up to tensorization by the class of a topological complex line bunde. 
The statement of Theorem \ref{thm-diagonal} then holds replacing $\E_i$ by any such universal class $e_i$ in $K(X\times \M_H(w))$, by \cite[Prop. 24]{markman-integer}.  
\end{rem}

We denote by $\Delta_X$ the class of the diagonal in $X\times X$.
Given a homomorphism  
$g:H^*(X_1,\Integers)\rightarrow H^*(X_2,\Integers)$, preserving 
the parity of the cohomological degree, 
we get the class $(1\times g)(\Delta_{X_1})$ in 
$H^{even}(X_1\times X_2,\Integers)$ inducing $g$.  
Set
\begin{equation}
\label{eq-gamma-g}
\gamma_{g,\epsilon}(\alpha,\beta) \ := \ 
\gamma_{(1\times g)(\Delta_{X_1}),\epsilon}(\alpha,\beta).
\end{equation}

\noindent
When the parameter $\epsilon$ is omitted, it is understood to be zero.
When $X_1=X_2$ and $g=id$, Grothendieck-Riemann-Roch yields the equality
\begin{equation}
\label{eq-gamma-id-translated-by-GRR}
\gamma_{id}(\E_1,\E_2) \ = \ c_m\left\{-p_{13_!}\left(p_{12}^*(\E_1)^\vee
\stackrel{L}{\otimes}
p_{23}^*(\E_2)\right)\right\}.
\end{equation} 
In Section \ref{sec-monodromy-via-Fourier-Mukai} we will see
examples where 
$\gamma_g(\E_1,\E_2)$ is the class of the graph of an isomorphism, when $g$ is induced by a stability-preserving 
auto-equivalence of the derived category of the surface.

Identifying $H^*(\M(w_1),\Integers)$ with its dual, 
via Poincare-Duality $x\mapsto \int_{\M(w_1)}x\cup (\bullet)$, 
we will view the class $\gamma_{\delta,\epsilon}(\E_1,\E_2)$ 
as a homomorphism (preserving the grading) from 
$H^*(\M(w_1),\Integers)$ to $H^*(\M(w_2),\Integers)$.
We identify $H^*(X,\Integers)$ with its dual via Poincare-Duality as well
and regard classes in $H^*(X_1\times X_2,\Integers)$ as
homomorphisms from $H^*(X_1,\Integers)$ to $H^*(X_2,\Integers)$. 

Let $d_X\in \Aut[H^*(X,\ComplexNumbers)]$ and 
$d_{\M(w_2)}\in \Aut[H^*(\M(w_2),\ComplexNumbers)]$ 
be graded ring automorphisms preserving the intersection pairings and satisfying
\begin{equation}
\label{eq-factorization-of-D}
D_{X\times \M(w_2)} \ \ \ = \ \ \ d_X\otimes d_{\M(w_2)}
\end{equation}
as automorphisms of $H^{even}(X\times \M(w_2),\ComplexNumbers)$.
Clearly, $d_X$ 
determines $d_{\M(w_2)}$ uniquely, 
and each is determined by the above equation, 
up to a constant factor on each graded summand of the cohomology groups.
When $X$ is a $K3$ surface, the odd cohomology groups of 
both $X$ and $\M(w_2)$ vanish, and  we get a natural factorization 
by setting $d_X=D_X$ and $d_{\M(w_2)}=D_{\M(w_2)}$.
When $X$ is an abelian surface, we can let $d_X$ and $d_{\M(w_2)}$ act on 
the $i-th$ cohomology via multiplication by $(\sqrt{-1})^i$. 
Note that $d_X$ has order $4$. Nevertheless, conjugation by $d_X$
has order $2$ and the corresponding inner automorphism 
of $\Aut [H^*(X,\Integers)]:=
GL[H^{even}(X,\Integers)]\times GL[H^{odd}(X,\Integers)]$
is independent of the choice of 
$d_X$ in the factorization (\ref{eq-factorization-of-D}). 

Let $\tau\in \Aut [H^*(X,\Integers)]$ be the element acting by 
$(-1)^{i(i-1)/2}$ on $H^i(X,\Integers)$. Then $d_X$ commutes with $\tau$
and the corresponding inner automorphisms of $\Aut [H^*(X,\Integers)]$
are equal.\footnote{In Section \ref{sec-spin-8-and-triality} the automorphism
$\tau$ is extended to the main anti-automorphism
(\ref{eq-tau}) of the Clifford algebra $C(V)$. $\tau$ 
is an element of  $G(S^+)^{even}$ (see Example 
\ref{example-an-element-of-the-even-clifford-group-of-S-plus}). 
}
Note that $d_X$ is an isometry with respect to the pairing
(\ref{eq-Mukai-pairing}) on $H^*(X,\ComplexNumbers)$ 
(since $d_X$ commutes with $\tau$). 
Similarly, $d_{\M(w_2)}$ preserves the Poincare duality pairing. 

\medskip
Let $g:H^*(X_1,\Integers)\rightarrow H^*(X_2,\Integers)$ 
be a linear homomorphism, preserving 
the parity of the cohomological degree.
Note that 
\begin{equation}
\label{eq-conjugation-by-d-X-i}
d_{X_2}gd_{X_1}^{-1}=\tau g\tau
\end{equation}
and is hence an integral homomorphism.
Indeed 
$d_{X}\tau$ acts on $H^{even}(X)$ as the identity and on 
$H^{odd}(X)$ via multiplication by a scalar. Hence,
$
d_{X_2}\tau f=fd_{X_1}\tau,
$
for every linear homomorphism $f:H^*(X_1,\Integers)\rightarrow H^*(X_2,\Integers)$ preserving the parity of the grading. 
Applying the latter equality with $f=\tau g\tau$ we get:
$
d_{X_2}gd_{X_1}^{-1}=d_{X_2}\tau(\tau g\tau)\tau d_{X_1}^{-1}=\tau g\tau.
$
Given $\epsilon\in \Integers/2\Integers$, we set $d_{X_i}^\epsilon:=
\left\{\begin{array}{ccc}
d_{X_i}&\mbox{if} & \epsilon=1
\\
id & \mbox{if} & \epsilon=0.
\end{array}\right.
$
We use this notation only in conjugation, were the equality
$d_{X_2}^\epsilon g (d_{X_1}^\epsilon)^{-1}=\tau^\epsilon g\tau^\epsilon$ makes it unambiguous, since $\tau$ is an involution.

Let $\gamma:H^*(\M(w_1),\Integers)\rightarrow H^*(\M(w_2),\Integers)$
be an isomorphism of graded rings. 
Assume that universal sheaves $\E_i$
exist over $X_i\times \M(w_i)$, for $i=1,2$. 

\begin{defi}
\label{def-g-gamma-maps-universal-classes-to-such}
{\rm
\begin{enumerate}
\item
\label{def-item-maps-universal-class-to-such}
Assume that $g(w_1)=w_2$. 
We say that $g\otimes\gamma$ 
{\em maps a universal class  of $\M(w_1)$ to a universal class of $\M(w_2)$},
if
\[
(g\otimes\gamma)(ch(\E_1)\sqrt{td_{X_1}}) \ \ \ = \ \ \
[ch(\E_2)\sqrt{td_{X_2}}]\pi_{\M(w_2)}^*\exp(c_g),
\]
where the class $c_g\in H^2(\M(w_2),\Integers)$ is characterized by 
\begin{equation}
\label{eq-c-g}
\rank(w_2)\pi_{\M(w_2)}^*c_g=c_1(\E_2)-[(g\otimes\gamma)(ch(\E_1)\sqrt{td_{X_1}})]_1.
\end{equation}
\item
Assume that $g(w_1)=(w_2)^\vee$. 
We say that $g\otimes\gamma$ 
{\em maps a universal class to the dual of a universal class},
if
\[
(g\otimes\gamma)(ch(\E_1)\sqrt{td_{X_1}}) \ \ \ = \ \ \
[ch(\E_2^\vee)\sqrt{td_{X_2}}]\pi_{\M(w_2)}^*\exp(c_g),
\]
where $\rank(w_2)\pi_{\M(w_2)}^*c_g=-c_1(\E_2)-[(g\otimes\gamma)(ch(\E_1)\sqrt{td_{X_1}})]_1.$
\item
\label{def-item-g-gamma-1-maps-dual-of-univ-class-to-univ-class}
We say that $\gamma_{g,\epsilon}(\E_1,\E_2)$ 
{\em maps a universal class  
of $\M(w_1)$ to 
a universal class (or the dual of a universal class) of $\M(w_2)$},
if $d_{X_2}^{\epsilon}g(d_{X_1}^{\epsilon})^{-1}\otimes
\gamma_{g,\epsilon}(\E_1,\E_2)$ does. 
\end{enumerate}
}
\end{defi}

\hide{
\begin{rem}
\label{rem-maps-universal-class-to-universal-class-iff}
Let $\gamma:H^*(\M(w_1),\Integers)\rightarrow H^*(\M(w_2),\Integers)$
be an isomorphism of graded rings. 
Note that $g\otimes \gamma$ maps a universal class to a universal class, if and only if
$d_{X_2}gd_{X_1}^{-1}\otimes \gamma$ maps the dual of a universal class to the dual of a universal class, 
since $d_{\M(w_2)}\gamma d^{-1}_{\M(w_1)}=\gamma$, and so 
\[
(d_{X_2}\otimes d_{\M(w_2)})\circ(g\otimes \gamma)\circ (d^{-1}_{X_1}\otimes d^{-1}_{\M(w_2)})=
d_{X_2}gd_{X_1}^{-1}\otimes \gamma.
\]
Similarly, $g\otimes \gamma$ maps a universal class to the dual of a universal class, if and only if
$d_{X_2}gd_{X_1}^{-1}\otimes \gamma$ maps the dual of a universal class to a universal class.
\end{rem}

}

The following is a characterization of the class
$\gamma_g(\E_{1},\E_{2})$. 
Let $(X_1,\LB_1)$ and $(X_2,\LB_2)$ be polarized K3 or abelian surfaces, 
$\M_{\LB_1}(w_1)$ and $\M_{\LB_2}(w_2)$ compact moduli spaces of 
stable sheaves, and $\E_i$ a universal sheaf over 
$X_i\times \M_{\LB_i}(w_i)$. 

\begin{new-lemma}
\label{lemma-recovering-f}
\cite[Lemma 5.2]{markman-monodromy-I}
Suppose that $f: H^*(\M_{\LB_1}(w_1),\Integers) \rightarrow 
H^*(\M_{\LB_2}(w_2),\Integers)$ 
is a {\em ring} isomorphism, 
$g : H^*(X_1,\Integers) \rightarrow H^*(X_2,\Integers)$ 
a linear homomorphism, preserving 
the parity of the cohomological degree, 
and  $f\otimes g$ 
maps a universal class of 
$\M_{\LB_1}(w_1)$ to a  universal class of 
$\M_{\LB_2}(w_2)$.
Then
$
[f] \ = \ \gamma_g(\E_1,\E_2).
$
In particular, given $g$, a ring isomorphism $f$, satisfying
the condition above, is {\em unique} (if it exists).
\end{new-lemma}

\begin{cor}
\label{cor-composition}
Assume that $\gamma_{g}(\E_1,\E_2)$
maps a universal class of $\M(w_1)$ to a universal class of $\M(w_2)$,
$\gamma_{h}(\E_2,\E_3)$
maps a universal class of $\M(w_2)$ to a universal class of $\M(w_3)$,
and both $\gamma_{g}(\E_1,\E_2)$ and $\gamma_{h}(\E_2,\E_3)$ are 
ring isomorphisms.
Then $\gamma_{hg}(\E_1,\E_3)=\gamma_{h}(\E_2,\E_3)\circ\gamma_{g}(\E_1,\E_2)$.
\end{cor}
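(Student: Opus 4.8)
The plan is to deduce the identity from the uniqueness built into Lemma \ref{lemma-recovering-f}. First I would set $f_1:=\gamma_g(\E_1,\E_2)$ and $f_2:=\gamma_h(\E_2,\E_3)$. Both are ring isomorphisms by hypothesis, so the composition $f_2\circ f_1:H^*(\M(w_1),\Integers)\to H^*(\M(w_3),\Integers)$ is again a ring isomorphism; moreover $hg:H^*(X_1,\Integers)\to H^*(X_3,\Integers)$ is linear, preserves the parity of the cohomological degree, and satisfies $hg(w_1)=w_3$, since $g(w_1)=w_2$ and $h(w_2)=w_3$. Hence Lemma \ref{lemma-recovering-f}, applied with the surfaces $X_1$ and $X_3$, will immediately give $f_2\circ f_1=\gamma_{hg}(\E_1,\E_3)$ as soon as one checks that the ring isomorphism $f_2\circ f_1$, together with $hg$, maps a universal class of $\M(w_1)$ to a universal class of $\M(w_3)$ in the sense of Definition \ref{def-g-gamma-maps-universal-classes-to-such}(\ref{def-item-maps-universal-class-to-such}).

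To verify this I would start from the data supplied by the two hypotheses: by Definition \ref{def-g-gamma-maps-universal-classes-to-such}(\ref{def-item-maps-universal-class-to-such}) there are integral classes $c_g\in H^2(\M(w_2),\Integers)$ and $c_h\in H^2(\M(w_3),\Integers)$ with
\[
(g\otimes f_1)(ch(\E_1)\sqrt{td_{X_1}})=[ch(\E_2)\sqrt{td_{X_2}}]\,\pi_{\M(w_2)}^*\exp(c_g),
\]
\[
(h\otimes f_2)(ch(\E_2)\sqrt{td_{X_2}})=[ch(\E_3)\sqrt{td_{X_3}}]\,\pi_{\M(w_3)}^*\exp(c_h).
\]
Now apply $h\otimes f_2$ to the first equality. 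On the left, tensor products of linear maps compose factor by factor, so $(h\otimes f_2)\circ(g\otimes f_1)=(hg)\otimes(f_2\circ f_1)$. On the right, the correction factor $\pi_{\M(w_2)}^*\exp(c_g)$ is pulled back from $\M(w_2)$; since $f_2$ is multiplicative it is carried to $\pi_{\M(w_3)}^*\exp(f_2(c_g))$, and using the second equality one obtains
\[
((hg)\otimes(f_2\circ f_1))(ch(\E_1)\sqrt{td_{X_1}})=[ch(\E_3)\sqrt{td_{X_3}}]\,\pi_{\M(w_3)}^*\exp(c_h+f_2(c_g)).
\]
Since $c_h+f_2(c_g)\in H^2(\M(w_3),\Integers)$, this is precisely the defining condition of Definition \ref{def-g-gamma-maps-universal-classes-to-such}(\ref{def-item-maps-universal-class-to-such}), with correction class $c_{hg}=c_h+f_2(c_g)$; the characterization (\ref{eq-c-g}) of $c_{hg}$ then follows by reading off the degree-$0$ and degree-$2$ parts of this equality. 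Lemma \ref{lemma-recovering-f} then gives $f_2\circ f_1=\gamma_{hg}(\E_1,\E_3)$, which is the assertion, as $f_2\circ f_1=\gamma_h(\E_2,\E_3)\circ\gamma_g(\E_1,\E_2)$.

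The one step requiring care is the manipulation of the right-hand side above, and I expect it to be the only real obstacle: since $g$ and $h$ are merely linear (not ring homomorphisms), the map $h\otimes f_2$ need not be multiplicative on $H^*(\M(w_2)\times X_2,\Integers)$, so the product $[ch(\E_2)\sqrt{td_{X_2}}]\cdot\pi_{\M(w_2)}^*\exp(c_g)$ cannot be split off naively. It does split, however, because the second factor is pulled back from the moduli-space factor alone, so that only the multiplicativity of the $\M$-side map $f_2=\gamma_h(\E_2,\E_3)$ --- which holds by hypothesis --- is ever invoked, and the purely linear map $h$ is never multiplied against anything. The remaining bookkeeping, namely that $(h\otimes f_2)\circ(g\otimes f_1)=(hg)\otimes(f_2\circ f_1)$ on the K\"{u}nneth tensor product and that $c_h+f_2(c_g)$ is an integral degree-$2$ class, is immediate.
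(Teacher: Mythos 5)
Your proof is correct and is exactly the argument the paper intends: the paper states the corollary without proof as an immediate consequence of Lemma \ref{lemma-recovering-f}, and your verification that $(hg)\otimes(\gamma_h\circ\gamma_g)$ maps a universal class to a universal class, with correction class $c_h+\gamma_h(c_g)$ (using only the ring-homomorphism property of $\gamma_h$ on the pulled-back factor $\pi_{\M(w_2)}^*\exp(c_g)$), followed by the uniqueness statement of that lemma, is precisely the intended reasoning.
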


\begin{new-lemma}
\label{lemma-on-gamma-and-epsilon}
Let $g:H^*(X_1,\Integers)\rightarrow H^*(X_2,\Integers)$
be a linear homomorphism.
\begin{enumerate}
\item
\label{lemma-item-gamma-g-1-in-terms-of-gamma-g-0}
${\displaystyle
\gamma_{g,1}(\E_1,\E_2) \ \ \ = \ \ \ 
(d_{\M(w_1)}^{-1}\otimes 1)\gamma_{d^{-1}_{X_2}g,0}(\E_1,\E_2)  \ \ \ = \ \ \ 
(d_{\M(w_1)}\otimes 1)\gamma_{d_{X_2}g,0}(\E_1,\E_2).
}$
\item
\label{lemma-item-gamma-g-1-as-a-homomorphism}
When regarded as homomorphisms, then
\begin{equation}
\label{eq-gamma-g-1-as-a-homomorphism}
\gamma_{g,1}(\E_1,\E_2) \ \ \ = \ \ \ 
\gamma_{d^{-1}_{X_2}g,0}(\E_1,\E_2)\circ d_{\M(w_1)} 
\ \ \ = \ \ \ 
\gamma_{d_{X_2}g,0}(\E_1,\E_2)\circ d^{-1}_{\M(w_1)}. 
\end{equation}
Consequently, we have the following equalities:
\begin{eqnarray}
\label{eq-g-tensor-gamma-g-1-composied-with-dualization}
(d^{-1}_{X_2}gd_{X_1}\otimes 
\gamma_{g,1}(\E_1,\E_2))\circ (d_{X_1}^{-1}\otimes d_{\M(w_1)}^{-1})
& = &
d_{X_2}^{-1}g\otimes \gamma_{d_{X_2}^{-1}g,0}(\E_1,\E_2), 
\\
\nonumber
(d_{X_2}gd_{X_1}^{-1}\otimes 
\gamma_{g,1}(\E_1,\E_2))\circ (d_{X_1}\otimes d_{\M(w_1)})
& = &
d_{X_2}g\otimes \gamma_{d_{X_2}g,0}(\E_1,\E_2). 
\end{eqnarray}
\item
\label{lemma-item-if-gamma-g-1-maps-then-gamma-g-0-maps}
If $\gamma_{g,1}(\E_1,\E_2)$ maps a universal class of 
$\M(w_1)$ to the {\em dual} of a universal class of $\M(w_2)$, then 
$d_{X_2}g \otimes \gamma_{d_{X_2}g,0}(\E_1,\E_2)$ 
maps a universal class of $\M(w_1)$
to a universal class of $\M(w_2)$. 
\end{enumerate}
\end{new-lemma}

\begin{proof}
\ref{lemma-item-gamma-g-1-in-terms-of-gamma-g-0})
\hide{
If 
$f:H^1(X_1,\Integers)\rightarrow H^*(X_2,\Integers)$ 
is a ring isomorphism, then
$(f\otimes f)(\Delta_{X_1})=\Delta_{X_2}$, so 
$(f\otimes 1)(\Delta_{X_1})=(f\otimes 1)(f^{-1}\otimes f^{-1})(\Delta_{X_2})=
(1\otimes f^{-1})(\Delta_{X_2})$. If $g$ preserves the pairing 
(\ref{eq-Mukai-pairing}) then 
\[
\int_{X_1}\tau(\alpha)\cup\beta = 
\int_{X_2}\tau g(\alpha)\cup g(\beta) =
\int_{X_2}(\tau g \tau^{-1})\tau(\alpha)\cup g(\beta).
\]
Consequently, $(\tau g \tau^{-1}\otimes g)(\Delta_{X_1}) = \Delta_{X_2}$
and
\[
(1\otimes g)(\Delta_{X_1}) \ \ \ = \ \ \ 
(\tau g^{-1}\tau^{-1}\otimes 1)(\Delta_{X_2}).
\] 
Since $d_{X_1}$ commutes with $\tau$ and is a ring isomorphism, we get
\begin{equation}
\label{eq-g-d-X}
(d_{X_1}\otimes 1)(1\otimes g)(\Delta_{X_1}) = 
(\tau d_{X_1}g^{-1}\tau^{-1}\otimes 1)(\Delta_{X_2}) =
(1\otimes gd^{-1}_{X_1})(\Delta_{X_1}).
\end{equation}
}
The class $\gamma_{g,1}(\alpha_1,\alpha_2)$ involves the term
${\displaystyle \pi_{13_*}\left[
\pi_{12}^*(\alpha_1)\pi_{24}^*[(1\otimes g)(\Delta_{X_1})]\pi_{34}^*(\alpha_2)
\right]}$.
The corresponding class in the definition of 
$\gamma_{h,0}(\alpha_1,\alpha_2)$ is:
\[
\begin{array}{lc}
\pi_{13_*}\left[
D\left\{
\pi_{12}^*(\alpha_1)\pi_{24}^*[(1\otimes h)(\Delta_{X_1})]
\right\}
\pi_{34}^*(\alpha_2)
\right] &  =
\\
\pi_{13_*}\left[
\pi_{12}^*(D\alpha_1)\pi_{24}^*[
(d_{X_1}\otimes d_{X_2})(1\otimes h)(\Delta_{X_1})
]
\pi_{34}^*(\alpha_2)
\right]. 
\end{array}
\]
Integrating first along the $X_1$ factor and using the fact that $d_{X_1}$
is a ring automorphism preserving the intersection pairing we get:
\[
\pi_{13_*}\left[
\pi_{12}^*((d_{\M(w_1)}\otimes 1)\alpha_1)\pi_{24}^*[
(1\otimes d_{X_2}h)(\Delta_{X_1})
]
\pi_{34}^*(\alpha_2)
\right].
\]
Now we can ``pull $(d_{\M(w_1)}\otimes 1)$ out'' as it commutes with 
the Gysin map $\pi_{13_*}$ (by the projection formula). Being
a ring automorphism, $(d_{\M(w_1)}\otimes 1)$ commutes
with the map $\ell$, the inversion, and projection on the 
class of degree $2m$. Setting $h=d_{X_2}^{-1}g$ we get the first identity.
The second identity follows using the same argument, the fact that $\gamma_{g,1}(\E_1,\E_2)$ and $(1\otimes h)(\Delta_X)$ are even cohomology classes, and the identities $D^2=id$ on the even cohomology of the products
$X_i\times\M(w_i)$ and $X_1\times X_2$, so
$d_{\M(w_1)}\otimes d_{X_1}$ and $d_{\M(w_1)}^{-1}\otimes d_{X_1}^{-1}$ restrict to the same automorphism of the even cohomology and so do
$d_{X_1}\otimes d_{X_2}$ and $d_{X_1}^{-1}\otimes d_{X_2}^{-1}$.

\ref{lemma-item-gamma-g-1-as-a-homomorphism})
The equalities in (\ref{eq-gamma-g-1-as-a-homomorphism}) follow from Part (\ref{lemma-item-gamma-g-1-in-terms-of-gamma-g-0}).
Each equation in (\ref{eq-g-tensor-gamma-g-1-composied-with-dualization}) follows from the corresponding equation 
in (\ref{eq-gamma-g-1-as-a-homomorphism}).

\ref{lemma-item-if-gamma-g-1-maps-then-gamma-g-0-maps}) 
The equality $\gamma_{g,1}(\E_1,\E_2)\circ d_{\M(w_1)}=d_{\M(w_2)} \circ \gamma_{g,1}(\E_1,\E_2)$ holds,
since $\gamma_{g,1}(\E_1,\E_2)$ is assumed to be a graded ring isomorphism. 
The second equality below follows.
\begin{eqnarray*}
d_{X_2}g\otimes \gamma_{d_{X_2}g,0}(\E_1,\E_2)
&\stackrel{(\ref{eq-g-tensor-gamma-g-1-composied-with-dualization})}{=} &
d_{X_2}g\otimes (
\gamma_{g,1}(\E_1,\E_2)
\circ d_{\M(w_1)}
)
=
d_{X_2}g\otimes 
(
d_{\M(w_2)} \circ 
\gamma_{g,1}(\E_1,\E_2)
)
\\
&=& (d_{X_2}\otimes d_{\M(w_2)})\circ (g\otimes \gamma_{g,1}(\E_1,\E_2)).
\end{eqnarray*}
The latter is assumed to map a universal class to a universal class. Hence, so does 
$d_{X_2}g\otimes \gamma_{d_{X_2}g,0}(\E_1,\E_2)$.
\end{proof}

\begin{new-lemma}
\label{lemma-composition-of-gamma-g-1-and-gamma-h-1}
Assume that $\gamma_{g,1}(\E_1,\E_2)$
maps a universal class of $\M(w_1)$ to the dual of 
a universal class of $\M(w_2)$,
$\gamma_{h,1}(\E_2,\E_3)$
maps a universal class of $\M(w_2)$ to the dual of 
a universal class of $\M(w_3)$, and both $\gamma_{g,1}(\E_1,\E_2)$
and $\gamma_{h,1}(\E_2,\E_3)$ are ring isomorphisms.
Then $\gamma_{\tau h\tau g,0}(\E_1,\E_3)=\gamma_{h,1}(\E_2,\E_3) \circ 
\gamma_{g,1}(\E_1,\E_2)$ and it is a ring isomorphism that
maps a universal class of $\M(w_1)$ to a universal class of $\M(w_3)$. 
\end{new-lemma}

\begin{proof}
The equality
\[
\begin{array}{l}
[d_{X_3}h d_{X_2}^{-1}\otimes \gamma_{h,1}(\E_2,\E_3)] \circ
D^{-1}_{X_2\times \M(w_2)}\circ 
[d_{X_2}gd_{X_1}^{-1}\otimes \gamma_{g,1}(\E_1,\E_2)] \circ
D_{X_1\times \M(w_1)} \ \ = \ \ 
\\
(d_{X_3}h d_{X_2}^{-1}g)\otimes 
[\gamma_{h,1}(\E_2,\E_3) \circ 
\gamma_{g,1}(\E_1,\E_2)] 
\end{array}
\]
follows from the definition of $d_{X_i}$ and $D_{\M(w_i)}$, $i=1,2,3$.
The left hand side maps a universal class of $\M(w_1)$ to a universal class of $\M(w_3)$, by Lemma
\ref{lemma-on-gamma-and-epsilon} (\ref{lemma-item-if-gamma-g-1-maps-then-gamma-g-0-maps}). 
The right hand side is 
the tensor product of the integral homomorphism $d_{X_3}h d_{X_2}^{-1}g$ with a ring isomorphism. 
Lemma \ref{lemma-recovering-f} implies that the latter ring isomorphism must be 
$\gamma_{d_{X_3}hd_{X_2}^{-1}g,0}(\E_1,\E_3)$. Consequently, 
\[
(d_{X_3}hd_{X_2}^{-1}g)\otimes [\gamma_{d_{X_3}hd_{X_2}^{-1}g,0}(\E_1,\E_3)]
\]
maps a universal class of $\M(w_1)$ to a universal class of $\M(w_3)$. 
Finally, the equality $\gamma_{d_{X_3}hd_{X_2}^{-1}g,0}(\E_1,\E_3)=
\gamma_{\tau h\tau g,0}(\E_1,\E_3)$ follows from the 
equality $d_{X_3}hd_{X_2}^{-1}=\tau h\tau$ (see Equation (\ref{eq-conjugation-by-d-X-i})). 
\end{proof}


\begin{new-lemma}
\label{lemma-composition-of-gamma-g-0-and-gamma-h-1}
Assume that $\gamma_{g,0}(\E_1,\E_2)$
maps a universal class of $\M(w_1)$ to a universal class of $\M(w_2)$,
$\gamma_{h,1}(\E_2,\E_3)$
maps a universal class of $\M(w_2)$ to the dual of 
a universal class of $\M(w_3)$, 
$\gamma_{f,0}(\E_3,\E_4)$ maps a universal class of $\M(w_3)$ to a universal class of $\M(w_4)$
and  $\gamma_{g,0}(\E_1,\E_2)$,
$\gamma_{h,1}(\E_2,\E_3)$, and $\gamma_{f,0}(\E_3,\E_4)$ are ring isomorphisms.
Then $\gamma_{hg,1}(\E_1,\E_3)=\gamma_{h,1}(\E_2,\E_3) \circ 
\gamma_{g,0}(\E_1,\E_2)$ and it is a ring isomorphism that
maps a universal class of $\M(w_1)$ to the dual of a universal class of $\M(w_3)$. 
Similarly, $\gamma_{f,0}(\E_3,\E_4)\gamma_{h,1}(\E_2,\E_3)=\gamma_{\tau f\tau h,1}(\E_2,\E_4)$ and it is a ring isomorphism that
maps a universal class of $\M(w_2)$ to the dual of a universal class of $\M(w_4)$. 
\end{new-lemma}

\begin{proof}
The proof is similar to that of Lemma \ref{lemma-composition-of-gamma-g-1-and-gamma-h-1}. We check only the latter equality.
$\gamma_{h,1}(\E_2,\E_3)=\gamma_{d_{X_3}h,0}(\E_2,\E_3)\circ d_{\M(w_2)}^{-1},$
by Lemma \ref{lemma-on-gamma-and-epsilon}(\ref{lemma-item-gamma-g-1-as-a-homomorphism}). 
Hence,
\[
\gamma_{f,0}(\E_3,\E_4)\gamma_{h,1}(\E_2,\E_3)=
\gamma_{fd_{X_3}h,0}(\E_2,\E_4)\circ d_{\M(w_2)}^{-1},
\]
by Corollary \ref{cor-composition}.
The right hand side is equal to $\gamma_{d_{X_4}^{-1} f d_{X_3} h,1}(\E_2,\E_4)$, by 
Lemma \ref{lemma-on-gamma-and-epsilon}(\ref{lemma-item-gamma-g-1-as-a-homomorphism}). 
The latter is equal to $\gamma_{\tau f\tau h,1}(\E_2,\E_4)$, by Equation (\ref{eq-conjugation-by-d-X-i}).
\end{proof}

Let $g$ and $\gamma$ be as in Definition \ref{def-g-gamma-maps-universal-classes-to-such}(\ref{def-item-maps-universal-class-to-such}).

\begin{new-lemma}
\label{lemma-dual-of-a-universal-class-to-dual-of-a-universal-class}
If $g\otimes\gamma$ maps a universal class of $\M(w_1)$ to a universal class of $\M(w_2)$, then $(\tau g\tau)\otimes\gamma$ maps the dual of a universal class of $\M(w_1)$ to the dual of a universal class of $\M(w_2)$.
\end{new-lemma}

\begin{proof} Set $D_i:=D_{X_i\times\M(w_i)}$, $i=1,2$. We have
\begin{eqnarray*}
(\tau g\tau\otimes \gamma)(ch(\E_1)^\vee\sqrt{td_{X_1}}) &=&
(D_2(g\otimes\gamma)D_1)\left(D_1(ch(\E_1)\sqrt{td_{X_1}})\right)=
\\
(D_2(g\otimes\gamma))\left(ch(\E_1)\sqrt{td_{X_1}}\right)&=&D_2\left(ch(\E_2)\sqrt{td_{X_2}}\exp(c_g)\right)=
ch(\E_2)^\vee\sqrt{td_{X_2}}\exp(-c_g).
\end{eqnarray*}
\end{proof}
\hide{
\section{The automorphism group of a generic manifold of generalized Kummer type}
\label{sec-generic-automorphisms}

Let $\widetilde{\Gamma}_X$ be the subgroup of $\Aut(X)$ generated by 
the group $\Gamma_X$ of translations by points of order $n$ and by
multiplication by $-1$. Then $\widetilde{\Gamma}_X$ is the semi-direct 
product and hence an extension 
\[
0 \rightarrow \Gamma_X \rightarrow \widetilde{\Gamma}_X
\rightarrow \Integers/2\Integers \rightarrow 0.
\]
$\widetilde{\Gamma}_X$ acts on $X^{[n]}$ and the fiber $K_X(n\!-\!1)$
of $\pi$ over $0\in X$ is $\widetilde{\Gamma}_X$ invariant. 
$\widetilde{\Gamma}_X$ acts trivially on $H^2(K_X(n\!-\!1),\Integers)$. 

\begin{thm}
\label{thm-generic-automorphism} (??? Generalize for monodromy operators acting trivially on $H^2$ ???)
Let $u$ be an automorphism of $K_X(n\!-\!1)$ acting trivially on 
$H^2(K_X(n\!-\!1),\Integers)$. Then $u$ belongs to the image of 
$\widetilde{\Gamma}_X$ in $\Aut(K_X(n\!-\!1))$.
\end{thm}

\begin{proof}
???
\end{proof}
}

\section{Equivalences of derived categories}
\label{sec-derived-categories}

Let $X$ be an abelian surface and let $V$, $S^+$, and $S^-$, be the regular and half spin integral representations of $\Spin(V)$ recalled in
Section \ref{sec-Clifford-groups}.
Let $\Aut(D^b(X))$ be the group of auto-equivalences of the bounded derived category of coherent sheaves on  $X$.
Mukai, Polishchuk, and Orlov, constructed a homomorphism $\Aut(D^b(X))\rightarrow \Spin(V)$, whose image is equal to the 
subgroup preserving the Hodge structure of $V$ (see \cite[Theorem 3.5]{mukai-spin},
\cite[Prop. 4.3.7]{golyshev-luntz-orlov}, and \cite[Prop. 9.48]{huybrechts-book}). 
Their result holds for abelian varieties of arbitrary dimension. For abelian surfaces 
we get a homomorphism $\Aut(D^b(X))\rightarrow \Spin(S^+)$ using the equality (\ref{eq-Spin-V-equals-Spin-S-plus}). 
In Corollary \ref{cor-reflections-in-two-line-bundles} 
below we exhibit an explicit lift to $\Aut(D^b(X))$ of products $m_s m_t\in \Spin(S^+)$,
for certain pairs of elements $s,t\in S^+$ each of self-intersection $2$. These lifts will be shown in Section 
\ref{sec-monodromy-via-Fourier-Mukai} to induce isomorphisms of certain moduli spaces of stable sheaves.
\hide{
{\bf A dictionary:}
\[
\begin{array}{lll}
\underline{\mbox{Derived Categories}}
&
\underline{\mbox{Spinors/Cohomology}}
\\
D^b(X\times X) & C(V)\cong \End[H^*(X,\Integers)] \ \ \  
\mbox{Clifford algebra}
\\
D^b(X) & S^+\oplus S^- \cong H^*(X,\Integers) \ \ \  \mbox{Clifford module}
\\
D^b(X\times \hat{X}) & V \ \ \  \mbox{the standard representation}
\\
Auteq D^b(X) & \Spin(V)
\\
Auteq D^b(X)\ni\Phi_\E \mapsto \E \in D^b(X\times X) &
\Spin(V)\subset C(V)
\\
\Phi_{S_X}:D^b(X\times \hat{X})\rightarrow D^b(X\times X) &
V\subset C(V) 
\\
\E\mapsto \Phi^{-1}_{S_X}\circ Ad_\E\circ \Phi_{S_X}&
C(V)\supset \Spin(V) \ni x \mapsto x\cdot(\bullet)\cdot x^{-1}\in  SO(V).
\\
\hspace{0ex} [1] \ \ \mbox{The shift} & -1 \in \Spin(V)
\\
\Phi_\P:D^b(X)\rightarrow D^b(\hat{X}), 
\ \P \ \mbox{the Poincare l.b.} &
(-1)^{i(i+1)/2}PD : H^i(X,\Integers)\rightarrow H^{4-i}(\hat{X},\Integers)
\\
\Psi_\P:D^b(\hat{X})\rightarrow D^b(X), 
\ \P \ \mbox{the Poincare l.b.} &
(-1)^{i(i+1)/2}PD : H^i(\hat{X},\Integers)\rightarrow H^{4-i}(X,\Integers)
\end{array}
\]

Provide a dictionary, describing how autoequivalences of $D^b(X)$, 
such as tensorization by a line-bundle $L$ with $c_1(L)\neq 0$,
acts on $V$, $S^+$, and $S^{-}$. 
}

%
\subsection{Tensorization by a line bundle}
Let $F$ be a line bundle on $X$.
Denote by $\phi_F\in \Spin(V)$ the element corresponding to the auto-equivalence of $D^b(X)$ of tensorization by $F$.
Explicitly, $\phi_F$ is the element of $\Spin(V)$, which maps to the element of $GL(S)=GL(H^*(X,\Integers))$ acting by multiplication by the Chern character of $F$. That this element of $GL(S)$ is the image of a unique element of $\Spin(V)$ is proven directly in 
\cite[III.1.7]{chevalley}.

\begin{new-lemma}
\label{lemma-tensorization-by-line-bundle-F}
$\phi_F$ acts on $A_X$ as follows:
\begin{enumerate}
\item
On $S^+$: Given $(r,H,s)\in S^+$,
\[
\phi_F(r,H,s) \ \ = \ \ (r,H+rc_1(F),s+r\frac{c_1(F)^2}{2}+H\wedge c_1(F)).
\]
\item
\label{lemma-item-action-on-S-minus}
On $S^-$: Given $(w,w')\in H^1(X,\Integers)\oplus H^3(X,\Integers)$,
\[
\phi_F(w,w') \ \ = \ \ (w,w'+c_1(F)\wedge w).
\]
\item
\label{lemma-item-action-on-V-by-tensorization-by-line-bundle}
On $V$: Given $(w,\theta)\in H^1(X,\Integers)\oplus H^1(X,\Integers)^*$,
\[
\rho(\phi_F)(w,\theta) \ \ = \ \ 
(w-D_\theta(c_1(F)),\theta).
\]
\end{enumerate}
\end{new-lemma}

\begin{proof}
The action on $H^*(X,\Integers)$ is multiplication by
the Chern character $ch(F):=1+c_1(F)+\frac{c_1(F)^2}{2}$ of $F$.
For the action on $V$, embed $V$ in $C(V)\subset \End[H^*(X,\Integers)]$ 
sending $(w,\theta)$ to $L_w+D_\theta$ as in equation
(\ref{eq-left-wedge-by-w}). Conjugation yields:
\begin{eqnarray*}
\phi_F\circ (L_w)\circ (\phi_F)^{-1} & = & L_w,
\\
\phi_F\circ (D_\theta)\circ (\phi_F)^{-1}  & = &
D_\theta-L_{D_\theta(c_1(F))},
\end{eqnarray*}
where the last equality is verified as follows. Set $f:=c_1(F)$. 
For $a\in S$ we get
\[
ch(F)\left(D_\theta (ch(F^{-1})a)\right)=D_\theta(a)+ch(F)[D_\theta(ch(F^{-1}))]a,
\]
and $ch(F)D_\theta(ch(F^{-1}))=[1+f+f^2/2][-D_\theta(f)+fD_\theta(f)]=-D_\theta(f).$
(Compare Part (\ref{lemma-item-action-on-V-by-tensorization-by-line-bundle}) with the last displayed formula on page 74 in the proof of \cite[III.1.7]{chevalley}).
\end{proof}

%
\subsection{Fourier-Mukai transform with kernel the Poincare line bundle}
Let $\pi_X$ and $\pi_{\hat{X}}$ be the projections from $X\times \hat{X}$ onto the corresponding factor.
Let $\P$ be the normalized Poincare line bundle over $X\times \hat{X}$. $\P$ restricts to $X\times \{t\}$, $t\in \hat{X}$, as a line bundle in the isomorphism class  $t$ and to $\{0\}\times \hat{X}$ as the trivial line bundle.
The Fourier-Mukai functor 
$\Phi_\P:D^b(X)\rightarrow D^b(\hat{X})$ with kernel $\P$ is given by
$R\pi_{\hat{X},*}(L\pi_X^*(\bullet)\otimes\P)$.
Let
$\iota:H^*(X,\Integers)^*\rightarrow H^*(\hat{X},\Integers)$ be the natural isomorphism identifying 
$H^j(X,\Integers)^*$ and $H^j(\hat{X},\Integers)$. 
Explicitly, $\iota^{-1}:H^1(\hat{X},\Integers)\rightarrow H^1(X,\Integers)^*$ is the dual of the composition of the isomorphisms
\[
H^1(X,\Integers)\rightarrow H_1(\Pic^0(X),\Integers)\RightArrowOf{=}H_1(\hat{X},\Integers)\rightarrow H^1(\hat{X},\Integers)^*,
\]
where the left one is induced by the identification $\Pic^0(X)=H^1(X,\StructureSheaf{X})/H^1(X,\Integers)$ via the exponential sequence.
For $k>1$, $\iota$ is given by the composition
$[\wedge^k H^1(X,\Integers)]^*\cong \wedge^k[H^1(X,\Integers)^*]\rightarrow \wedge^kH^1(\hat{X},\Integers)$, where the left isomorphism is the natural one and the right is the $k$-th exterior power of $\iota:H^1(X,\Integers)^*\rightarrow H^1(\hat{X},\Integers)$.
On the level of cohomology $\Phi_\P$ induces 
$\phi_\P:=\sum_{i=0}^4\phi^i_\P$, where
\begin{equation}
\label{eq-cohomological-fourier-mukai-homomorphism}
\phi^i_\P \ \ := \ \ (-1)^{i(i+1)/2}\iota\circ PD_X \ \ : \ \ H^i(X,\Integers)
\rightarrow H^{4-i}(\hat{X},\Integers),
\end{equation}
and $PD_X$ is given in Equation (\ref{eq-Poincare-Duality}), by \cite[Prop. 1.17]{mukai-fourier-functor-and-its-applications}.
Let $\Psi_\P:D^b(\hat{X})\rightarrow \nolinebreak D^b(X)$ be the integral functor with kernel $\P^\vee\otimes \pi_{\hat{X}}^*\omega_{\hat{X}}$, where 
$\P^\vee$ is the line bundle dual to $\P$. Then $\Psi_\P[2]$ is the left adjoint of $\Phi_\P$, by \cite[Prop. 5.9]{huybrechts-book}. We have the natural isomorphism 
\begin{equation}
\label{eq-Psi-P-inverse-of-Phi-P}
\Psi_\P[2]\circ\Phi_\P\cong id_{D^b(X)},
\end{equation}
since $\Phi_\P$ is an equivalence \cite[Theorem 2.2]{mukai-duality}. 

\hide{
%
\subsection{Mukai's $SL(2,\Integers)$ action}
Let $X$ be an abelian surface and $L$ a line bundle on $X$ with $\ell:=c_1(L)$ satisfying $(\ell,\ell)_{S}=-2$.
Then one of $L$ or $L^{-1}$ is a principal polarization. Set $s_1:=(1,0,1)$, $s_{-1}:=(1,0,-1)\in S^+.$
Note that $\{s_1,s_{-1},\ell\}$ is an orthogonal subset of $S^+$, $(s_1,s_1)_S=2$, and $(s_{-1},s_{-1})_S=-2.$
We get the element $\phi_L\in \Spin(V)$ and the element $m_\ell m_{s_{-1}}\in \Spin(S^+)$. 
Let $\lambda:H^1(X,\Integers)^*\rightarrow H^1(X,\Integers)$ be given by
$\lambda(\theta):=D_\theta(\ell)$. 
Set $U:=\left(
\begin{array}{cc}
1&-1
\\
0&1
\end{array}
\right)$
and $J:=\left(
\begin{array}{cc}
0&1
\\
-1&0
\end{array}
\right)$. Then $U$ and $J$ generate $SL(2,\Integers)$, the relation $(UJ^{-1})^3=id$ holds, and $SL(2,\Integers)$ is the quotient of the free group generated by $U$ and $J$ modulo that relation.
Consider the homomorphism $\eta:SL(2,\Integers)\rightarrow SO(V)$ given by
\[
\eta\left(
\begin{array}{cc}
a&b
\\
c&d
\end{array}
\right)
:=
\left(
\begin{array}{cc}
a \cdot id_{H^1(X)}&b \lambda
\\
c\lambda^{-1}&d \cdot id_{H^1(X)^*}
\end{array}
\right)
\]
The element $\tilde{m}_{s_{-1}}\tilde{m}_\ell=(\tilde{m}_\ell\tilde{m}_{s_{-1}})^{-1}$ corresponds to 
the auto-equivalence $\varphi_L^*\circ \Phi_\P$ of $D^b(X)$, where $\Phi_\P:D^b(X)\rightarrow D^b(\hat{X})$ is the Fourier-Mukai functor with kernel $\P$ and $\varphi_L^*$ is the pull-back via the isomorphism $\varphi_L:X\rightarrow \hat{X}$, given by
$x\mapsto \tau_x^*L\otimes L^*$, where $\tau_x:X\rightarrow X$ is translation by $x$.
The following is a cohomological version of a theorem of Mukai (see \cite[Prop. 9.30]{huybrechts-book}).

\begin{new-lemma}
There exists an injective homomorphism 
$\tilde{\eta}:SL(2,\Integers)\rightarrow \Aut(A_X)$ lifting $\eta$ and determined by
$\tilde{\eta}(U)=\tilde{\mu}(\phi_L)$ and $\tilde{\eta}(J)=\tilde{m}_\ell\tilde{m}_{s_{-1}}$.
In particular, $\tilde{\mu}(\phi_L)$ acts on $V$ via $\eta(U)$
and $\tilde{m}_\ell\tilde{m}_{s_{-1}}$ via $\eta(J)$.
\end{new-lemma}

\begin{proof}
The equality $\tilde{\mu}(\phi_L)(w,\theta)=(w-\lambda(\theta),\theta)$, for $(w,\theta)\in V$, $w\in H^1(X,\Integers)$ and $\theta\in H^1(X,\Integers)^*$, follows from 
Lemma \ref{lemma-tensorization-by-line-bundle-F}(\ref{lemma-item-action-on-V-by-tensorization-by-line-bundle}).

$\tilde{m}_\ell\tilde{m}_{s_{-1}}$ acts on $S^+$ as the composition $R_\ell R_{s_{-1}}$ 
of the reflections, so it acts by $-1$ on $\mbox{span}\{\ell,s_{-1}\}$ and as the identity on its orthogonal complement. 
Set $e_1:=(1,0,0)=(s_1+s_{-1})/2$ and 
$e_2:=(0,0,1)=(s_1-s_{-1})/2$. 
Then $H^1(X,\Integers)=\ker(m_{e_2}:V\rightarrow S^-)$ and $H^1(X,\Integers)^*=\ker(m_{e_1}:V\rightarrow S^-)$. Now
$\tilde{m}_\ell\tilde{m}_{s_{-1}}$ interchanges $e_1$ and $e_2$. Thus, $(m_\ell m_{s_{-1}})m_{e_1}=m_{e_2}(m_\ell m_{s_{-1}})$.
Hence, $\tilde{m}_\ell\tilde{m}_{s_{-1}}$ maps $H^1(X,\Integers)$
to $H^1(X,\Integers)^*$ and vice versa. 

$m_\ell:V\rightarrow S^-$ restricts to $H^1(X,\Integers)^*$ as $\lambda$. Furthermore, $m_\ell m_\ell=-1\in \Spin(S^+).$
So $m_\ell:S^-\rightarrow V$ restricts to $H^1(X,\Integers)$ as $-\lambda^{-1}$. So, given $w\in H^1(X,\Integers)\subset V$,
\[
(\tilde{m}_\ell \tilde{m}_{s_{-1}})(w)=m_\ell(L_w(s_{-1}))=m_\ell(w)=-\lambda^{-1}(w).
\]
The equality $(\tilde{m}_\ell\tilde{m}_{s_{-1}})^2=\tilde{m}(-1)$ implies that $\tilde{m}_\ell \tilde{m}_{s_{-1}}$
restricts to $H^1(X,\Integers)^*$ as $\lambda$. Hence, $(\tilde{m}_\ell \tilde{m}_{s_{-1}})(w,\theta)=(\lambda(\theta),-\lambda^{-1}(w)).$
We conclude that the restrictions of $\tilde{\mu}(\phi_L)$ and $\tilde{m}_\ell\tilde{m}_{s_-1}$ to $V$ act as stated.

The saturation of the sublattice of $S^+$ spanned by $\beta:=\{s_1,s_{-1},\ell\}$ is invariant under 
$\tilde{\mu}(\phi_L)$ and $\tilde{m}_\ell\tilde{m}_{s_-1}$ and the $\beta$-matrices of their restrictions are
\[
[\tilde{m}_\ell\tilde{m}_{s_-1}]_\beta=
\left(
\begin{array}{ccc}
1 & 0 & 0
\\ 
0 & -1 & 0
\\
0 & 0 & -1
\end{array}
\right), \ \ 
\mbox{and} \ \
[\tilde{\mu}(\phi_L)]_\beta=
\left(
\begin{array}{ccc}
3/2 & 1/2 & 1
\\ 
-1/2 & 1/2 & -1
\\
1 & 1 & 1
\end{array}
\right).
\]
Hence, $(\tilde{\mu}(\phi_L)(\tilde{m}_\ell\tilde{m}_{s_-1})^{-1})^3$ acts as the identity on both $V$ and 
$\mbox{span}\{s_1,s_{-1},\ell\}$. In particular, it is not equal to the image $\tilde{\mu}(-1)$ of the non trivial element  of the kernel of
$\rho:\Spin(V)\rightarrow SO_+(V)$, as the latter acts by $-1$ on $S^+$. Hence, $(\tilde{\mu}(\phi_L)(\tilde{m}_\ell\tilde{m}_{s_-1})^{-1})^3=1$ and 
$\eta$ factors through a homomorphism 
$\tilde{\eta}:SL(2,\Integers)\rightarrow \Aut(A_X)$ given by 
$\tilde{\eta}(U)=\tilde{\mu}(\phi_L)$ and $\tilde{\eta}(J)=\tilde{m}_\ell\tilde{m}_{s_-1}$.
\end{proof}

If, furthermore, $X$ is simple, so that $\End(X)\cong\Integers$, then $\tilde{\mu}(\phi_L)$, $\tilde{m}_\ell\tilde{m}_{s_-1}$,
and $\tilde{\mu}(-1)$, generate the subgroup of $\tilde{\mu}(\Spin(V))$ acting via Hodge isometries on $V$.

We return to a general abelian surface $X$.
Let $PD_X:H^i(X,\Integers)\rightarrow H^{4-i}(X,\Integers)^*$ and 
$PD_{\hat{X}}:H^i(\hat{X},\Integers)\rightarrow H^{4-i}(\hat{X},\Integers)^*$
be the isomorphisms given in Equation (\ref{eq-Poincare-Duality}). 

}

The following Lemma deals with some delicate sign issues.

\begin{new-lemma}
\label{lemma-two-Poincare-dualities}
The following equalities hold for classes $\theta$ in $H^j(X,\Integers)$ and $\omega$ in $H^{4-j}(X,\Integers)$.
\begin{enumerate}
\item
\label{lemma-item-PD-compatible-with-topological-pairing}
$\int_X\theta\wedge\omega = \int_{\hat{X}}\iota(PD_X(\theta))\wedge \iota(PD_X(\omega)).$
\item
\label{lemma-item-comparison-of-two-Poincare-dualities}
$PD_{\hat{X}}^{-1}((\iota^*)^{-1}(\theta))=(-1)^j \iota(PD_X(\theta)).
$
\item
\label{lemma-item-fourier-mukai-induces-isometry}
The isomorphism $\phi_\P$ induces an isometry from 
$S_X:=H^*(X,\Integers)$ to $S_{\hat{X}}:=H^*(\hat{X},\Integers)$ with respect to the pairings given in (\ref{eq-Mukai-pairing}).
\end{enumerate}
\end{new-lemma}

\begin{proof}
Part \ref{lemma-item-PD-compatible-with-topological-pairing}) 
Let $\{e_1,e_2,e_3,e_4\}$ be a basis of $H^1(X,\Integers)$, compatible with the orientation, so satisfying $\int_Xe_1\wedge e_2\wedge e_3\wedge \nolinebreak e_4=1$,  and let $\{f_1,f_2,f_3,f_4\}$ be a dual basis of $H^1(\hat{X},\Integers)$, so that
$(\iota^{-1}(f_i))(e_j)=\delta_{i,j}$. The element $PD_X(e_1)\in H^3(X,\Integers)^*$ sends $e_2\wedge e_3\wedge e_4$ to $1$
and its kernel consists of $\{e_1\wedge e_i\wedge e_j \ : \ 1<i<j\}$, so $\iota(PD_X(e_1))=f_2\wedge f_3\wedge f_4$. Similarly, given a permutation $\sigma$ of $\{1,2,3,4\}$, 
\begin{eqnarray*}
\iota(PD_X(e_{\sigma(1)}))&=&sgn(\sigma)f_{\sigma(2)}\wedge f_{\sigma(3)}\wedge f_{\sigma(4)},
\\
\iota(PD_X(e_{\sigma(1)}\wedge e_{\sigma(2)}))&=&sgn(\sigma) f_{\sigma(3)}\wedge f_{\sigma(4)},
\\
\iota(PD_X(e_{\sigma(1)}\wedge e_{\sigma(2)}\wedge e_{\sigma(3)}))&=&sgn(\sigma)  f_{\sigma(4)}.
\end{eqnarray*}
The sign of the cyclic shift of $4$ elements is $-1$, and so 
\begin{eqnarray*}
\iota(PD_X(e_{\sigma(2)}\wedge e_{\sigma(3)}\wedge e_{\sigma(4)}))&=& -sgn(\sigma)  f_{\sigma(1)},
\\
\iota(PD_X(e_{\sigma(3)}\wedge e_{\sigma(4)}))&=& sgn(\sigma)  f_{\sigma(1)}\wedge  f_{\sigma(2)}.
\end{eqnarray*}
We get
$\int_{\hat{X}}\iota(PD_X(e_{\sigma(1)}))\wedge
\iota(PD_X(e_{\sigma(2)}\wedge e_{\sigma(3)}\wedge e_{\sigma(4)}))
=
-\int_{\hat{X}}f_{\sigma(2)}\wedge f_{\sigma(3)}\wedge f_{\sigma(4)}\wedge f_{\sigma(1)}=
\int_{\hat{X}} f_{\sigma(1)}\wedge f_{\sigma(2)}\wedge f_{\sigma(3)}\wedge f_{\sigma(4)}=sgn(\sigma)=
\int_X e_{\sigma(1)}\wedge e_{\sigma(2)}\wedge e_{\sigma(3)}\wedge e_{\sigma(4)}.
$ Similarly,
\begin{eqnarray*}
\int_{\hat{X}}\iota(PD_X(e_{\sigma(1)}\wedge e_{\sigma(2)}))\wedge
\iota(PD_X(e_{\sigma(3)}\wedge e_{\sigma(4)}))=
\int_{\hat{X}}f_{\sigma(3)}\wedge f_{\sigma(4)}\wedge f_{\sigma(1)}\wedge f_{\sigma(2)}
\\
=
\int_{\hat{X}} f_{\sigma(1)}\wedge f_{\sigma(2)}\wedge f_{\sigma(3)}\wedge f_{\sigma(4)}=
\int_X e_{\sigma(1)}\wedge e_{\sigma(2)}\wedge e_{\sigma(3)}\wedge e_{\sigma(4)}.
\end{eqnarray*}


Part \ref{lemma-item-fourier-mukai-induces-isometry} follows from Part \ref{lemma-item-PD-compatible-with-topological-pairing} and
Equation (\ref{eq-cohomological-fourier-mukai-homomorphism}).
Indeed,
\begin{eqnarray*}
\int_X\theta\wedge\omega&\stackrel{(\ref{eq-Mukai-pairing})}{=}&(-1)^{j(j-1)/2}(\theta,\omega)_{S_X},
\\
\int_{\hat{X}}\iota(PD_X(\theta))\wedge\iota(PD_X(\omega))&\stackrel{(\ref{eq-cohomological-fourier-mukai-homomorphism})}{=}&
(-1)^{j(j+1)/2}(-1)^{(4-j)(4-j+1)/2}\int_{\hat{X}}\phi_\P(\theta)\wedge\phi_\P(\omega)
\\
&=&
(-1)^{j(j-1)/2}(-1)^{(4-j)(4-j-1)/2}\int_{\hat{X}}\phi_\P(\theta)\wedge\phi_\P(\omega)
\\
&\stackrel{(\ref{eq-Mukai-pairing})}{=}&
(-1)^{\frac{j(j-1)}{2}}(-1)^{\frac{(4-j)(4-j-1)}{2}}(-1)^{\frac{(4-j)(4-j-1)}{2}}(\phi_\P(\theta),\phi_\P(\omega))_{S_{\hat{X}}}
\\
&=& (-1)^{\frac{j(j-1)}{2}}(\phi_\P(\theta),\phi_\P(\omega))_{S_{\hat{X}}}.
\end{eqnarray*}
Hence, $(\theta,\omega)_{S_X}=(\phi_\P(\theta),\phi_\P(\omega))_{S_{\hat{X}}}$, 
by Part \ref{lemma-item-PD-compatible-with-topological-pairing}.

Part \ref{lemma-item-comparison-of-two-Poincare-dualities}) Let $\gamma$ be a class in $H^j(\hat{X},\Integers)$.
We have the equalities
\begin{equation}
\label{eq-iota-inverse-gamma-of-theta}
(\iota^{-1}(\gamma))(\theta)=((\iota^{-1})^*(\theta))(\gamma)=((\iota^*)^{-1}(\theta))(\gamma)=
\int_{\hat{X}}PD_{\hat{X}}^{-1}((\iota^*)^{-1}(\theta))\wedge\gamma,
\end{equation}
where the last follows from 
the definition of $PD_{\hat{X}}$.
Now $\gamma=\iota(PD_X(\delta))$, for some $\delta\in H^{4-j}(X,\Integers)$.
Part \ref{lemma-item-PD-compatible-with-topological-pairing} yields the second equality below.
\begin{eqnarray*}
\int_{\hat{X}}\iota(PD_X(\theta))\wedge \gamma&=&\int_{\hat{X}}\iota(PD_X(\theta))\wedge \iota(PD_X(\delta))
\\
&=&
\int_X\theta\wedge\delta=(-1)^{j(4-j)}\int_X\delta\wedge\theta=(-1)^j\int_X\delta\wedge\theta
\\
&=&
(-1)^j (PD_X(\delta)(\theta))=(-1)^j (\iota^{-1}(\gamma))(\theta)
\\
&\stackrel{(\ref{eq-iota-inverse-gamma-of-theta})}{=}&
(-1)^j\int_{\hat{X}}PD_{\hat{X}}^{-1}((\iota^*)^{-1}(\theta))\wedge\gamma,
\end{eqnarray*}
for all $\gamma$ in $H^j(\hat{X},\Integers)$.
The equality in Part \ref{lemma-item-comparison-of-two-Poincare-dualities} follows.
\end{proof}

\begin{new-lemma}
\label{lemma-conjugation-of-derivative-by-phi-P}
The equality
$
\iota\left(PD_X(\beta\wedge w)\right)=D_{(\iota^*)^{-1}(w)}(\iota(PD_X(\beta)))
$
holds, for all  $w \in H^1(X,\Integers)$ and all $\beta\in H^2(X,\Integers)$.
\end{new-lemma}

\begin{proof}
Let $\gamma$ be a class in $H^1(X,\Integers)$.
Set $\tilde{w}:=(\iota^*)^{-1}(w)$ and $\tilde{\gamma}:=(\iota^*)^{-1}(\gamma)$.
Then $(\iota^*)^{-1}(w\wedge\gamma)=\tilde{w}\wedge\tilde{\gamma}$. We have
\begin{eqnarray*}
D_{\tilde{\gamma}}(D_{\tilde{w}}(\iota(PD_X(\beta))))&=&
(\tilde{w}\wedge\tilde{\gamma})(\iota(PD_X(\beta)))=
\int_{\hat{X}} PD^{-1}_{\hat{X}}(\tilde{w}\wedge\tilde{\gamma})\wedge \iota(PD_X(\beta))
\\
&\stackrel{{\rm \ Lemma \ \ref{lemma-two-Poincare-dualities}(\ref{lemma-item-comparison-of-two-Poincare-dualities})}}{=}&
\int_{\hat{X}}\iota(PD_X(w\wedge\gamma))\wedge \iota(PD_X(\beta))
\\
&\stackrel{{\rm Lemma \ \ref{lemma-two-Poincare-dualities}(\ref{lemma-item-PD-compatible-with-topological-pairing})}}{=}&
\int_Xw\wedge\gamma\wedge\beta=\int_X\beta\wedge w\wedge\gamma=(PD_X(\beta\wedge w))(\gamma)
\\
&=& D_{\tilde{\gamma}}(\iota(PD_X(\beta\wedge w))).
\end{eqnarray*}
\end{proof}

Let
$
\varphi_\P:V_X\rightarrow V_{\hat{X}}
$
be the isomorphism given by 
\begin{equation}
\label{eq-varphi-P}
\varphi_\P(w,\theta)=-(\iota(\theta),(\iota^*)^{-1}(w)),
\end{equation}
for all $(w,\theta)\in V_X$, $w\in H^1(X,\Integers)$ and $\theta\in H^1(X,\Integers)^*$.

\begin{new-lemma}
\label{lemma-phi-P-is-Spin-Spin-equivariant}
The following diagram is commutative.
\[
\xymatrix{
V_X\ar[r]^-{\subset} \ar[d]_{\varphi_\P} & C(V_X) \ar[r] & \End(S_X)\ar[d]^{Ad_{\phi_\P}} &\ni &f \ar[d]
\\
V_{\hat{X}} \ar[r]_-{\subset} & C(V_{\hat{X}}) \ar[r] & \End(S_{\hat{X}}) & \ni &\phi_\P f\phi_\P^{-1}.
}
\]
\end{new-lemma}

\begin{proof}
For $w\in H^1(X,\Integers)$ we need to show that $D_{\varphi_\P(w)}=Ad_{\phi_\P}(L_w)$. 
Evaluating both sides on $\gamma\in H^*(\hat{X},\Integers)$ the equality becomes
\begin{equation}
\label{eq-Ad-phi-P-of-L-w}
(\phi_\P\circ L_w\circ\phi_{\P}^{-1})(\gamma)=-D_{(\iota^*)^{-1}(w)}(\gamma).
\end{equation}
The above equality holds for all $\gamma\in H^2(\hat{X},\Integers)$, by Lemma \ref{lemma-conjugation-of-derivative-by-phi-P}
applied with $\beta:=\phi_\P^{-1}(\gamma)$. Both sides of Equation (\ref{eq-Ad-phi-P-of-L-w}) 
vanish for $\gamma\in H^0(\hat{X},\Integers)$.
For $\gamma:=[pt_{\hat{X}}]\in H^4(\hat{X},\Integers)$, the left hand side of (\ref{eq-Ad-phi-P-of-L-w}) is $\phi_\P(w)$.
The right hand side is
\[
-D_{(\iota^*)^{-1}(w)}[pt_{\hat{X}}]\stackrel{(\ref{eq-PD-of-D-theta-of-the-class-of-a-point})}{=}PD_{\hat{X}}^{-1}((\iota^*)^{-1}(w))
\stackrel{{\rm Lemma \ \ref{lemma-two-Poincare-dualities}(\ref{lemma-item-comparison-of-two-Poincare-dualities})}}{=}
-\iota(PD_X(w))=\phi_\P(w).
\]
Equation (\ref{eq-Ad-phi-P-of-L-w}) thus holds for all $\gamma\in H^{even}(\hat{X},\Integers)$. 
We have
$\phi_\P(L_{e_{\sigma(1)}}(\phi_\P^{-1}(f_{\sigma(1)})))=
\phi_\P(L_{e_{\sigma(1)}}(-sgn(\sigma)e_{\sigma(2)}\wedge e_{\sigma(3)}\wedge e_{\sigma(4)}))=
\phi_\P(-[pt_X])=-1\in H^0(\hat{X},\Integers)$. Similarly, 
$-D_{(\iota^*)^{-1}(e_{\sigma(1)})}(f_{\sigma(1)})=-1$. Both sides of (\ref{eq-Ad-phi-P-of-L-w}) vanish for 
$\gamma=f_i$ and $w=e_j$ if $i\neq j$. The case $\gamma\in H^3(\hat{X},\Integers)$ checks as well.

We denote by $\iota_{\hat{X}}:H^j(\hat{X},\Integers)^*\rightarrow H^j(\hat{\hat{X}},\Integers)$ the homomorphism  analogous to $\iota$. 
Let $\hat{\P}$ be the Poincare line bundle over $\hat{X}\times \hat{\hat{X}}$. 
The composition $\phi_{\hat{\P}}\circ\phi_\P:S_X\rightarrow S_{\hat{\hat{X}}}$ is equal to $\iota_{\hat{X}}\circ (\iota^*)^{-1}$.
Indeed,
\[
\phi_{\hat{\P}}\circ\phi_\P
=(-1)^{j(j+1)/2}(-1)^{(4-j)(4-j+1)/2}\iota_{\hat{X}}PD_{\hat{X}}\iota PD_X=
(-1)^j\iota_{\hat{X}}PD_{\hat{X}}\iota PD_X=
\iota_{\hat{X}}\circ (\iota^*)^{-1},
\]
where the last equality follows from Lemma \ref{lemma-two-Poincare-dualities}(\ref{lemma-item-comparison-of-two-Poincare-dualities}). 
Hence, given $\theta\in H^1(\hat{X},\Integers)^*$, we have
\[
Ad_{\phi_{\hat{\P}}}(-D_\theta)\stackrel{(\ref{eq-Ad-phi-P-of-L-w})}{=}Ad_{\phi_{\hat{\P}}}\circ Ad_{\phi_\P}(L_{\iota^*\theta})=
Ad_{[\iota_{\hat{X}}(\iota^*)^{-1}]}(L_{\iota^*\theta}).
\]
Now, $\iota_{\hat{X}}\circ (\iota^*)^{-1}$ is a cohomology ring isomorphism. Hence the right hand side above is equal to
$L_{(\iota_{\hat{X}}\circ (\iota^*)^{-1})(\iota^*\theta)}=L_{\iota_{\hat{X}}(\theta)}=-L_{\varphi_{\hat{\P}}(\theta)}.$ 
We conclude the equality
$Ad_{\phi_{\hat{\P}}}(D_\theta)=L_{\varphi_{\hat{\P}}(\theta)}$, for the dual of every abelian surface. Hence,
the equality $Ad_{\phi_{\P}}(D_\theta)=L_{\varphi_{\P}(\theta)}$ holds, for every $\theta\in H^1(X,\Integers)^*$.
\hide{
$\varphi_\P(\theta)$ is defined by the identity 
\[
(\phi_\P D_\theta \phi_\P^{-1})(\lambda)=\varphi_\P(\theta)\wedge \lambda,
\]
for all $\lambda\in H^3(\hat{X},\Integers)$. The latter is equivalent to 
$\phi_\P D_\theta(\gamma)=\varphi_\P(\theta)\wedge \phi_\P(\gamma)$, for all $\gamma\in H^1(X,\Integers)$.
The equality becomes
\begin{equation}
\label{eq-Ad-phi-of-theta}
D_\theta(\gamma)=\int_{\hat{X}}\varphi_\P(\theta)\wedge \phi_\P(\gamma).
\end{equation}
Let $\theta'\in H^3(X,\Integers)$ be the class satisfying $\varphi_\P(\theta)=\iota(PD_X(\theta'))$. 
The right hand side of the latter displayed equality is equal to $\int_X\theta'\wedge(-\gamma)$, 
by Lemma \ref{lemma-two-Poincare-dualities} (\ref{lemma-item-PD-compatible-with-topological-pairing}) and 
the identity $\phi_\P(\gamma)=-\iota(PD_X(\gamma))$. Now
\[
\int_X\theta'\wedge(-\gamma)=(-PD_X(\theta'))(\gamma)=(-\iota^{-1}(\varphi_\P(\theta)))(\gamma).
\]
The left hand side of (\ref{eq-Ad-phi-of-theta}) is just $\theta(\gamma)$. Hence, 
$\varphi_\P(\theta)=-\iota(\theta)$ as claimed.

$\varphi_\P(w)\in H^1(\hat{X},\Integers)^*$ is defined by the identity 
\[
(\varphi_\P(w))(\gamma)=\phi_\P(w\wedge \phi_\P^{-1}(\gamma)),
\]
for all $\gamma\in H^1(\hat{X},\Integers)$. Equivalently,
\[
\int_Xw\wedge \lambda=(\varphi_\P(w))(\iota(PD_X(\lambda))),
\]
for all $\lambda\in H^3(X,\Integers)$.
Set $\beta:=PD_{\hat{X}}^{-1}(\varphi_\P(w))$ and $\tilde{\beta}:=PD_X^{-1}(\iota^{-1}(\beta))$. 
The right hand side displayed above is equal to $\int_X\tilde{\beta}\wedge \lambda,$
by Lemma \ref{lemma-two-Poincare-dualities} (\ref{lemma-item-PD-compatible-with-topological-pairing}).
Hence, $\tilde{\beta}=w$ and so $\varphi_\P(w)=PD_{\hat{X}}(\iota(PD_X(w)))=-(\iota^*)^{-1}(w)$ as claimed, where the latter equality follows by Lemma \ref{lemma-two-Poincare-dualities}(\ref{lemma-item-comparison-of-two-Poincare-dualities}).
}
\end{proof}

\begin{rem}
The isomorphism $\iota_{\hat{X}}\circ(\iota^*)^{-1}:H^1(X,\Integers)\rightarrow H^1(\hat{\hat{X}},\Integers)$ corresponds to
the standard isomorphism $st:X\rightarrow \hat{\hat{X}}$.
The isomorphism $\varphi_\P$ corresponds to an isomorphism $\tilde{\varphi}_\P:X\times \hat{X}\rightarrow \hat{X}\times \hat{\hat{X}}$,
which pulls back the line bundle $\hat{\P}$ to $\P^{-1}$. 
The isomorphism $-\tilde{\varphi}_\P$ pulls back the line bundle $\hat{\P}$ to $\P^{-1}$ as well and it restricts to the second factor $\hat{X}$ as the identity onto the first factor $\hat{X}$ of the image. Similarly, $-\tilde{\varphi}_\P$ restricts to the first factor $X$ as the standard isomorphism $st$ onto the second factor $\hat{\hat{X}}$ of the image.
Mukai and Orlov composed $\tilde{\varphi}_\P$ with the isomorphism 
$id_{\hat{X}}\times -(st)^{-1}:\hat{X}\times \hat{\hat{X}}\rightarrow \hat{X}\times X$ obtaining 
an isomorphism $\tilde{\phi}_\P:X\times \hat{X}\rightarrow \hat{X}\times X$ which pulls back 
$\hat{\P}$ to $\P$, and whose square is minus the identity, see 
 \cite[Remark 9.12 and Example 9.38 (v) page 213]{huybrechts-book} or \cite[Remark 2.2]{orlov-abelian-varieties}. 
\end{rem}

The natural isomorphism 
$\varphi_\P:V_X\rightarrow V_{\hat{X}}$ and $\phi_\P:S_X\rightarrow S_{\hat{X}}$ combine to yield 
the linear isomorphism $\phi_\P:A_X\rightarrow A_{\hat{X}}$, which is an isometry. 
The isometry $\varphi_\P$ has a unique extension to an algebra isomorphism $\tilde{\varphi}_\P:C(V_X)\rightarrow C(V_{\hat{X}})$,
by the definition of the Clifford algebras, and $\tilde{\varphi}_\P(\Spin(V_X))=\Spin(V_{\hat{X}})$, by definition of the spin groups.
Lemma \ref{lemma-phi-P-is-Spin-Spin-equivariant} implies the equality
\[
\tilde{\mu}(\tilde{\varphi}_\P(g))=Ad_{\phi_\P}(\tilde{\mu}(g)),
\]
for all $g\in \Spin(V_X)$, where $\tilde{\mu}:\Spin(V_X)\rightarrow \Aut(A_X)$ is the restriction of the homomorphism given in (\ref{eq-representation-of-ker-N-on-A-X})
and we denote by $\tilde{\mu}:\Spin(V_{\hat{X}})\rightarrow \Aut(A_{\hat{X}})$ its analogue.
\hide{
Note that the free abelian group $A_{\hat{X}}$ is dual to $A_X$.
The isomorphism $\phi_\P$ restricts to each of the direct summands $V_X$ and $S_X^+$ of $A_X$ as the isomorphism with its dual induced by the
$\Spin(V_X)$-invariant bilinear pairings (\ref{eq-pairing-on-V}) and (\ref{eq-Mukai-pairing}). Similarly, $-\phi_\P$ restricts to $S_X^-$ as the isomorphism with its dual induced by the
$\Spin(V_X)$-invariant bilinear pairing (\ref{eq-Mukai-pairing}). Hence, if we view $A_{\hat{X}}\cong A_X^*$ as the dual $\Spin(V_X)$-representation,
then $\phi_\P$ is $\Spin(V_X)$-equivariant.
}

%
\subsection{Lifting products of two $(+2)$-reflections  to auto-equivalences}
Consider the natural homomorphism 
\begin{eqnarray}
\label{eq-homomorphism-from-Pic-X-to-Pic-X-hat}
\Pic(X) & \rightarrow & \Pic(\hat{X}),
\\
\nonumber
F &\mapsto & \hat{F}:=\det\left(\Phi_\P(F)\right)^{-1}. 
\end{eqnarray}
Then $c_1(\hat{F})=\iota(PD(c_1(F)))$, by (\ref{eq-cohomological-fourier-mukai-homomorphism}).

Let $F$ be a line bundle on $X$, $\phi_F\in \Spin(V_X)$ the isometry of
$H^*(X,\Integers)$ induced by tensorization with $F$.
Set $s:= (1,0,1)\in S^+_X$ and
$\hat{s}:= (1,0,1)\in S^+_{\hat{X}}$. 
Then $(s,s)_{S^+_X}=2$. 
Denote by $R_s$ the reflection (\ref{eq-Pin-acts-by-reflections}) in $s$.

\begin{new-lemma}
\label{lemma-auto-equivalence-acts-as-two-reflections}
The autoequivalence 
$\Phi_{\P}^{-1}\circ (\otimes \hat{F}) \circ \Phi_\P\circ (\otimes F^{-1})$
of $D^b(X)$ maps to the element $\tilde{m}_s\cdot \tilde{m}_{\phi_F(s)}$ of $\Aut(A_X)$. 
In other words, 
\begin{equation}
\label{eq-product-of-two-reflections-via-FM}
\tilde{\mu}\left(\phi_{\P}^{-1}\circ \phi_{\hat{F}} \circ \phi_{\P}\circ \phi_F^{-1} \right)
\ \ = \ \  
\tilde{m}_s\cdot \tilde{m}_{\phi_{F}(s)},
\end{equation}
where $\tilde{\mu}$ is the homomorphism  in equation (\ref{eq-representation-of-G-V-on-A-X})
and $\tilde{m}$ is defined in (\ref{eq-tilde-m}).
The displayed element acts on $S^+_X$ via the composition $R_s\circ R_{\phi_{F}(s)}$ of the two reflections.
\end{new-lemma}

\begin{proof}
Consider the composition $\eta:=\tilde{m}_{\hat{s}}\circ \phi_\P:A_X\rightarrow A_{\hat{X}}$.
Then $\eta$ maps $H^{even}(X,\Integers)$ to $H^{even}(\hat{X},\Integers)$  and 
preserves the grading and $\eta:S^+_X\rightarrow S^+_{\hat{X}}$
is given by
\[
\eta(r,H,t) \ \ = \ \ -(r,\iota(PD(H)),t).
\]

Set $\tilde{\phi}_F:=\tilde{\mu}(\phi_F)\in \Aut(A_X)$ and define $\tilde{\phi}_{\hat{F}}$ similarly.
We claim that conjugation yields the equality
\begin{equation}
\label{eq-conjugation-of-phi-F-by-eta}
\eta\circ\tilde{\phi}_F\circ \eta^{-1} \ \ \ = \ \ \ \tilde{\phi}_{\hat{F}}
\end{equation}
in $\tilde{\mu}(\Spin(V_{\hat{X}}))$.
It suffices to verify that both sides act the same way on $S^+_{\hat{X}}$ and on $V_{\hat{X}}$, since $\Spin(V_{\hat{X}})$
acts faithfully on the direct sum $S^+_{\hat{X}}\oplus V_{\hat{X}}$. 
Both sides of equalition (\ref{eq-conjugation-of-phi-F-by-eta})
map to the same element of $SO(S^+_{\hat{X}})$, by the above computation of $\eta$.
Let $\theta$ be a class in $H^1(X,\Integers)$, 
$\omega$ a class in $H^3(X,\Integers)$, set $\tilde{\theta}:=(\iota^*)^{-1}(\theta)\in H^1(\hat{X},\Integers)^*$ and
$\hat{\omega}:=\iota(PD_X(\omega))\in H^1(\hat{X},\Integers).$ 
Consider the element $(\hat{\omega},\tilde{\theta})\in V_{\hat{X}}$. 
We have the following equalities
\begin{equation}
\label{eq-action-of-phi-F-on-an-element-of-V-hat-X}
\tilde{\phi}_{\hat{F}}(\hat{\omega},\tilde{\theta})=(\hat{\omega}-D_{\tilde{\theta}}(c_1(\hat{F})),\tilde{\theta})=
(\hat{\omega}-D_{\tilde{\theta}}(\iota(PD_X(c_1(F)))),\tilde{\theta}),
\end{equation}
where the first follows from Lemma \ref{lemma-tensorization-by-line-bundle-F}(\ref{lemma-item-action-on-V-by-tensorization-by-line-bundle})
and the second from the equality $c_1(\hat{F})=\iota(PD_X(c_1(F)))$ observed above.
Let us evaluate $\eta\tilde{\phi}_{F}\eta^{-1}(\hat{\omega},\tilde{\theta}).$
Note that $\tilde{m}_{\hat{s}}^{-1}=\tilde{m}_{\hat{s}}$.
Example \ref{example-Clifford-multiplication-by-s-n} with $n=-1$ yields the equality
\[
\tilde{m}_{\hat{s}}^{-1}(\hat{\omega},\tilde{\theta})=(\hat{\omega},-PD_{\hat{X}}^{-1}(\tilde{\theta}))
\]
in $S^-_{\hat{X}}$.
Now $PD_{\hat{X}}^{-1}(\tilde{\theta})=\phi_\P(\theta)$, 
by Lemma \ref{lemma-two-Poincare-dualities}(\ref{lemma-item-comparison-of-two-Poincare-dualities}), 
and $\hat{\omega}=\phi_\P(\omega)$. Hence,
\begin{equation}
\label{eq-phi-P-and-PD-hat-X}
\phi_\P^{-1}(\hat{\omega},-PD_{\hat{X}}^{-1}(\tilde{\theta}))=(-\theta,\omega)
\end{equation}
in $S^-_X$.
The equality
$\tilde{\phi}_F(-\theta,\omega)=(-\theta,\omega-c_1(F)\wedge\theta)$ holds, 
by Lemma \ref{lemma-tensorization-by-line-bundle-F}(\ref{lemma-item-action-on-S-minus}).
We have the following two equalities, the first by (\ref{eq-phi-P-and-PD-hat-X}):
\begin{eqnarray*}
\phi_\P(-\theta,\omega-c_1(F)\wedge\theta)&=&(\hat{\omega}-\iota(PD_X(c_1(F)\wedge\theta)),-PD_{\hat{X}}^{-1}(\tilde{\theta})),
\\
\tilde{m}_{\hat{s}}(\hat{\omega}-\iota(PD_X(c_1(F)\wedge\theta)),-PD_{\hat{X}}^{-1}(\tilde{\theta}))&=&
(\hat{\omega}-\iota(PD_X(c_1(F)\wedge\theta)),\tilde{\theta}).
\end{eqnarray*}
The right hand side above is equal to the right hand side of Equation (\ref{eq-action-of-phi-F-on-an-element-of-V-hat-X}), by
Lemma \ref{lemma-conjugation-of-derivative-by-phi-P}. Hence, both sides of (\ref{eq-conjugation-of-phi-F-by-eta})
map to the same isometry of $V_{\hat{X}}$ as well and Equation (\ref{eq-conjugation-of-phi-F-by-eta}) is verified.

Substitute the equality (\ref{eq-conjugation-of-phi-F-by-eta}) into 
(\ref{eq-product-of-two-reflections-via-FM}) to get the following equalities in $\Aut(A_{\hat{X}})$.\\
$
\phi_{\P}^{-1}\circ \left(\eta\circ\tilde{\phi}_F\circ \eta^{-1}\right) 
\circ \phi_{\P}\circ \tilde{\phi}_F^{-1} \ \ = \ \  
(\phi_\P^{-1}\circ \tilde{m}_{\hat{s}}\circ \phi_\P)\circ
\tilde{\phi}_F\circ
(\phi_\P^{-1}\circ (\tilde{m}_{\hat{s}})^{-1}\circ \phi_\P)\circ
\tilde{\phi}_F^{-1} 
\\
\stackrel{(\ref{eq-conjugation-of-m-s-by-phi-P})}{=}
\tilde{m}_s\circ \tilde{\phi}_F\circ (\tilde{m}_s)^{-1}\tilde{\phi}_F^{-1} 
\ \ \stackrel{(\ref{eq-conjugation-of-m-s-by-phi-F})}{=} \ \ 
\tilde{m}_s\cdot \tilde{m}_{\phi_{F}(s)}.
$
\end{proof}

\begin{new-lemma}
\label{lemma-conjugation-of-m-s-by-phi-P}
The following equalities hold in $GL(A_X)$.
\begin{eqnarray}
\label{eq-conjugation-of-m-s-by-phi-P}
\phi_\P^{-1}\tilde{m}_{\hat{s}}\phi_\P&=&\tilde{m}_s,
\\
\label{eq-conjugation-of-m-s-by-phi-F}
\tilde{\phi}_F\tilde{m}_s\tilde{\phi}_F^{-1}&=&\tilde{m}_{\phi_F(s)}.
\end{eqnarray}
\end{new-lemma}

\begin{proof}
Equation (\ref{eq-conjugation-of-m-s-by-phi-P}): 
$\phi_\P$ restricts to an isometry from $S_X^+$ to $S_{\hat{X}}^+$, by Lemma \ref{lemma-two-Poincare-dualities}.
The right hand side acts on $S^+$ by $-R_s$, where $R_s$ is the reflection in $s$. The left hand side acts by  
$-R_{\phi_{\P}^{-1}(\hat{s})}=-R_s$  as well.

The element $\tilde{m}_s$ maps $S^-_X$ to $V_X$ and $V_X$ to $S^-_X$ and
$\tilde{m}_s^2=1\in \Aut(A_X)$. Hence, it suffices to check that both sides restrict to the same homomorphism from $S^-_X$ to $V_X$.
Let $(\alpha,\beta)\in S^-_X$, $\alpha\in H^1(X,\Integers)$ and $\beta\in H^3(X,\Integers)$. 
This checks as follows:
\begin{eqnarray*}
\phi_\P(\tilde{m}_s(\alpha,\beta))\stackrel{(\ref{eq-s-n-dagger-from-S-minus-to-V})}{=}\varphi_\P(\alpha,-PD_X(\beta))
\stackrel{{\rm Lemma \ \ref{lemma-phi-P-is-Spin-Spin-equivariant}}}{=}
(\iota(PD_X(\beta)),-(\iota^*)^{-1}(\alpha)).
\end{eqnarray*}

\begin{eqnarray*}
\tilde{m}_{\hat{s}}(\phi_\P(\alpha,\beta))=\tilde{m}_{\hat{s}}(\iota PD_X(\beta),-\iota PD_X(\alpha))
\stackrel{(\ref{eq-s-n-dagger-from-S-minus-to-V})}{=}(\iota PD_X(\beta),-PD_{\hat{X}}(-\iota PD_X(\alpha)))
\\
\stackrel{{\rm Lemma \ \ref{lemma-two-Poincare-dualities}(\ref{lemma-item-comparison-of-two-Poincare-dualities})}}{=}
(\iota(PD_X(\beta)),-(\iota^*)^{-1}(\alpha)).
\end{eqnarray*}

Equation (\ref{eq-conjugation-of-m-s-by-phi-F}): 
The restriction of both sides to $S^+_X$ are equal, since $\tilde{\phi}_F$ restricts to $S^+_X$ as an isometry.
Both sides map $S^-_X$ to $V_X$ and $V_X$ to $S^-_X$.
The square of both sides is the identity, so it suffices to check that both sides restrict to the same homomorphism from $S^-_X$ to $V_X$. This checks as follows. Let $(\alpha,\beta)\in S^-_X$.
\begin{eqnarray*}
\tilde{\phi}_F(\tilde{m}_s(\alpha,\beta))&\stackrel{(\ref{eq-s-n-dagger-from-S-minus-to-V})}{=}&\tilde{\phi}_F(\alpha,-PD_X(\beta))
\stackrel{{\rm Lemma \ \ref{lemma-tensorization-by-line-bundle-F}}}{=}
(\alpha+D_{PD_X(\beta)}(c_1(F)),-PD_X(\beta)),
\\
\tilde{m}_{\phi_F(s)}(\tilde{\phi}_F(\tilde{m}_s(\alpha,\beta)))&=&\tilde{m}_{\phi_F(s)}(\alpha+D_{PD_X(\beta)}(c_1(F)),-PD_X(\beta))
\\
&=&
(\alpha+D_{PD_X(\beta)}(c_1(F)))\wedge\phi_F(s)-
D_{PD_X(\beta)}(\phi_F(s)),
\\
\tilde{\phi}_F(\alpha,\beta)
&\stackrel{{\rm Lemma \ \ref{lemma-tensorization-by-line-bundle-F}}}{=}&
(\alpha,\beta+c_1(F)\wedge\alpha).
\end{eqnarray*}
Substituting $\phi_F(s)=1+c_1(F)+c_1(F)^2/2+[pt_X]$ we see that the right hand sides of the two lines above are equal, using also the equality $D_{PD_X(\beta)}([pt])=-\beta$, which follows from (\ref{eq-PD-of-D-theta-of-the-class-of-a-point}).
\end{proof}

Assume now that $F_1$ and $F_2$ are two line bundle on $X$.
Let $\Phi_{F_i}:D^b(X)\rightarrow D^b(X)$ be tensorization by $F_i$.

\begin{cor}
\label{cor-reflections-in-two-line-bundles}
The auto-equivalence $\left(\Phi_{F_2^{-1}}\circ [1]\circ\Psi_\P\circ\Phi_{\hat{F}_2}\right)\circ
\left(\Phi_{\hat{F}_1^{-1}}\circ [1] \circ \Phi_\P\circ \Phi_{F_1}\right)$ of $D^b(X)$ is mapped to the element 
$\tilde{m}_{\phi^{-1}_{F_2}(s)}\circ \tilde{m}_{\phi^{-1}_{F_1}(s)}$ of $\Aut(A_X)$,
which acts on $S^+_X$ as the composition
$R_{\phi^{-1}_{F_2}(s)}\circ R_{\phi^{-1}_{F_1}(s)}$ 
of two reflections in the $+2$ vectors
$\phi^{-1}_{F_i}(s)$, $i=1,2$. 
\begin{equation}
\label{eq-reflections-in-two-line-bundles}
\left(\Phi_{F_2^{-1}}\circ [1]\circ\Psi_\P\circ\Phi_{\hat{F}_2}\right)\circ
\left(\Phi_{\hat{F}_1^{-1}}\circ [1] \circ \Phi_\P\circ \Phi_{F_1}\right) 
\ \ \ \mapsto \ \ \ 
\tilde{m}_{\phi^{-1}_{F_2}(s)}\circ \tilde{m}_{\phi^{-1}_{F_1}(s)}.
\end{equation}
\end{cor}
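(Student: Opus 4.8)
The plan is to recognize the autoequivalence in (\ref{eq-reflections-in-two-line-bundles}) as a conjugate, by tensorization with a line bundle, of the autoequivalence treated in Lemma~\ref{lemma-auto-equivalence-acts-as-two-reflections}, and then transport the conclusion of that lemma through the conjugation. Write $\Phi_{F}$ for tensorization by a line bundle $F$, put $G:=F_2\otimes F_1^{-1}$, and let $\mathcal{A}_G:=\Phi_\P^{-1}\circ\Phi_{\hat G}\circ\Phi_\P\circ\Phi_{G^{-1}}$ be the autoequivalence of $D^b(X)$ appearing in (\ref{eq-product-of-two-reflections-via-FM}).

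The first step is to show that (\ref{eq-reflections-in-two-line-bundles}) is isomorphic, as a functor, to $\Phi_{F_2}^{-1}\circ\mathcal{A}_G\circ\Phi_{F_2}$. Three ingredients enter. (i) The map $F\mapsto\hat F$ of (\ref{eq-homomorphism-from-Pic-X-to-Pic-X-hat}) is a group homomorphism $\Pic(X)\to\Pic(\hat X)$, so $\Phi_{\hat F_2}\circ\Phi_{\hat F_1^{-1}}=\Phi_{\hat G}$ because $\hat F_2\otimes\hat F_1^{-1}=\hat G$. (ii) The shift $[1]$ commutes, up to natural isomorphism, with tensorization and with $\Phi_\P$ and $\Psi_\P$; recall $\Psi_\P[2]\circ\Phi_\P\cong\mathrm{id}$ by (\ref{eq-Psi-P-inverse-of-Phi-P}), so $\Psi_\P\cong\Phi_\P^{-1}[-2]$ since $\Phi_\P$ is an equivalence. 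Hence the two copies of $[1]$ in (\ref{eq-reflections-in-two-line-bundles}), together with this $[-2]$, slide together into the trivial shift, and (\ref{eq-reflections-in-two-line-bundles}) becomes isomorphic to $\Phi_{F_2^{-1}}\circ\Phi_\P^{-1}\circ\Phi_{\hat G}\circ\Phi_\P\circ\Phi_{F_1}$. (iii) Since $\Phi_\P^{-1}\circ\Phi_{\hat G}\circ\Phi_\P=\mathcal{A}_G\circ\Phi_{G^{-1}}^{-1}=\mathcal{A}_G\circ\Phi_G$ and $\Phi_G\circ\Phi_{F_1}=\Phi_{G\otimes F_1}=\Phi_{F_2}$, we obtain $\Phi_{F_2^{-1}}\circ\Phi_\P^{-1}\circ\Phi_{\hat G}\circ\Phi_\P\circ\Phi_{F_1}\cong\Phi_{F_2}^{-1}\circ\mathcal{A}_G\circ\Phi_{F_2}$, as claimed.

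The second step is purely formal. By Lemma~\ref{lemma-auto-equivalence-acts-as-two-reflections}, i.e. (\ref{eq-product-of-two-reflections-via-FM}) with $F=G$, the autoequivalence $\mathcal{A}_G$ induces $\tilde m_s\cdot\tilde m_{\phi_G(s)}$ on $A_X$; hence (\ref{eq-reflections-in-two-line-bundles}) induces $\tilde\mu(\phi_{F_2})^{-1}\circ\tilde m_s\circ\tilde m_{\phi_G(s)}\circ\tilde\mu(\phi_{F_2})$. Inserting $\mathrm{id}=\tilde\mu(\phi_{F_2})\circ\tilde\mu(\phi_{F_2})^{-1}$ between the two middle factors and using that $\tilde\mu(\phi_{F_2})$ is an automorphism of the algebra $A_X$ which restricts to $\phi_{F_2}$ on the summand $S^+$ — so that $\tilde\mu(\phi_{F_2})^{-1}\,\tilde m_t\,\tilde\mu(\phi_{F_2})=\tilde m_{\phi_{F_2}^{-1}(t)}$ for every $t\in S^+$, which is (\ref{eq-conjugation-of-m-s-by-phi-F}) for $t=s$ — the operator becomes $\tilde m_{\phi_{F_2}^{-1}(s)}\circ\tilde m_{\phi_{F_2}^{-1}(\phi_G(s))}$. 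Finally $\phi_{F_2}^{-1}\circ\phi_G=\phi_{F_2^{-1}\otimes G}=\phi_{F_1^{-1}}=\phi_{F_1}^{-1}$, so this equals $\tilde m_{\phi_{F_2}^{-1}(s)}\circ\tilde m_{\phi_{F_1}^{-1}(s)}$, the asserted element of $\Aut(A_X)$. The induced action on $S^+_X$ is $R_{\phi_{F_2}^{-1}(s)}\circ R_{\phi_{F_1}^{-1}(s)}$, since each $\tilde m_t$ acts on $S^+_X$ as $\pm R_t$ and the two signs cancel in the product, exactly as in the last sentence of Lemma~\ref{lemma-auto-equivalence-acts-as-two-reflections}.

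I expect the only delicate point to be the functorial shift bookkeeping in ingredient (ii): one must keep track that $\Phi_\P^{-1}$ is $\Psi_\P[2]$ and not $\Psi_\P$, and check that sliding the various shifts past the tensorization and Fourier–Mukai functors produces no extra sign, so that the two $[1]$'s and this $[-2]$ genuinely cancel. Once (\ref{eq-reflections-in-two-line-bundles}) is rewritten as $\Phi_{F_2}^{-1}\circ\mathcal{A}_G\circ\Phi_{F_2}$, everything else is a routine computation inside $\Spin(V_X)$ and $\Aut(A_X)$ using Lemma~\ref{lemma-auto-equivalence-acts-as-two-reflections} and the conjugation rule (\ref{eq-conjugation-of-m-s-by-phi-F}).
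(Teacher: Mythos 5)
Your argument is correct and is essentially the paper's own proof: the paper likewise uses Equation (\ref{eq-Psi-P-inverse-of-Phi-P}) to rewrite the composition as the conjugate by tensorization with $F_2$ of the autoequivalence of Lemma \ref{lemma-auto-equivalence-acts-as-two-reflections} for the line bundle $F_2\otimes F_1^{-1}$, and then applies that lemma together with the conjugation identity (\ref{eq-conjugation-of-m-s-by-phi-F}). Your extra bookkeeping of the shifts (two $[1]$'s combining with $\Psi_\P$ into $\Phi_\P^{-1}$) is exactly the content of the paper's appeal to (\ref{eq-Psi-P-inverse-of-Phi-P}), so there is no substantive difference.
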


\begin{proof}
The left hand side translates via Equation 
(\ref{eq-Psi-P-inverse-of-Phi-P}) to the left hand side below.
\begin{eqnarray*}
\phi_{F_2^{-1}}(\phi_\P^{-1}\phi_{\hat{F}_2\otimes\hat{F}_1^{-1}}\phi_\P\phi_{F_1\otimes F_2^{-1}})\phi_{F_2}
&\stackrel{{\rm Lemma \ \ref{lemma-auto-equivalence-acts-as-two-reflections}}}{=}&
\phi_{F_2^{-1}}\left(\tilde{m}_s\circ \tilde{m}_{\phi_{F_1^{-1}\otimes F_2}(s)}\right)\phi_{F_2}
\\
&\stackrel{(\ref{eq-conjugation-of-m-s-by-phi-F})}{=}&
\tilde{m}_{\phi^{-1}_{F_2}(s)}\circ \tilde{m}_{\phi^{-1}_{F_1}(s)}.
\end{eqnarray*}

\end{proof}

\section{Monodromy of moduli spaces  via Fourier-Mukai functors}
\label{sec-monodromy-via-Fourier-Mukai}
 
Let $\Phi:D^b(X_1)\rightarrow D^b(X_2)$ be an equivalence of derived categories of abelian surfaces, which maps $H_1$-stable sheaves with a primitive Chern character $w_1$ to $H_2$-stable sheaves with Chern character $w_2$. In Section \ref{subsection-stability-preserving-FM-transformations} we note that $\Phi$ induces an isometry $\phi:H^*(X_1,\Integers)\rightarrow H^*(X_2,\Integers)$ and an isomorphism $f:\M_{H_1}(w_1)\rightarrow \M_{H_2}(w_2)$, such that 
$\phi\otimes f_*:H^*(X_1\times \M_{H_1}(w_1),\Integers)\rightarrow H^*(X_2\times \M_{H_2}(w_2),\Integers)$
maps a universal class to a universal class.
In Section \ref{sec-the-monodromy-representation-of-a-rank-1-moduli-space} we prove 
Theorem \ref{thm-introduction-monodromy-representation-mu} constructing the homomorphism
$\mon:G(S^+)^{even}_{s_n}\rightarrow Mon(\M(s_n))$.
In Section \ref{sec-half-spin} we construct monodromy equivariant homomorphisms from the half-spin representations $S^+_X$ and $S^-_X$ to canonical quotients of $H^d(\M(s_n),\Integers)$.

%
\subsection{Stability preserving Fourier-Mukai transformations}
\label{subsection-stability-preserving-FM-transformations}
Let $X_i$, $i=1,2$, be abelian surfaces, $v_i\in S^+_{X_i}$ a primitive Mukai vector, and $H_i$ a $v_i$-generic polarization on $X_i$.
Assume that the integral transform $\Phi_F:D^b(X_1)\rightarrow D^b(X_2)$ with kernel an object $F$ in $D^b(X_1\times X_2)$
is an equivalence, and $\Phi_F(E)$ is an $H_2$-stable sheaf with Mukai vector $v_2$, for every $H_1$-stable coherent sheaf $E$ on $X_1$  with Mukai vector $v_1$. 
Denote by 
$\Phi_F\boxtimes id_{\M(v_1)}:D^b(X_1\times\M_{H_1}(v_1))\rightarrow D^b(X_2\times\M_{H_1}(v_1))$ the integral transform with kernel
the object in $D^b(X_1\times \M_{H_1}(v_1)\times X_2\times \M_{H_1}(v_1))$, which is the derived tensor product of the pull backs of the object $F$ and the structure sheaf of the diagonal in the cartesian square of $\M_{H_1}(v_1)$.
Then $\Phi_F\boxtimes id_{\M(v_1)}$ is an equivalence \cite[Assertion 1.7]{orlov-abelian-varieties}. 
Let $\E_{v_1}$ be a (possibly twisted) universal sheaf over $X_1\times \M_{H_1}(v_1).$
The object $\Phi_F\boxtimes id_{\M(v_1)}(\E_{v_1})$ is represented by a flat (possibly twisted by the pullback of a Brauer class from $\M_{H_1}(v_1)$) family of $H_2$-stable coherent sheaves with Mukai vector $v_2$
on $X_2$, by \cite[Theorem 1.6]{mukai-fourier-functor-and-its-applications}. 
Let $f:\M_{H_1}(v_1)\rightarrow \M_{H_2}(v_2)$ be the classifying morphism associated to this family. Then $f$ is easily seen to be an open immersion, which must be surjective, by compactness of $\M_{H_1}(v_1)$ and irreducibility of $\M_{H_2}(v_2)$ (Theorem \ref{thm-yoshioka}). Hence, $f$ is an isomorphism.
Let $\phi_F:H^*(X_1,\Integers)\rightarrow H^*(X_2,\Integers)$ be the parity preserving isomorphism induced by $\Phi_F$ \cite[Cor. 9.43]{huybrechts-book}. Then $\phi_F(v_1)=v_2$. Let $e_{v_i}\in K(X_i\times\M_{H_i}(v_i))$ be a universal class (see Remark \ref{rem-class-of-the-diagonal-in-terms-of-twisted-universal-sheaves}).

\begin{new-lemma}
\label{lemma-isomorphism-of-cohomology-rings-of-moduli-spaces-is-equal-to-gamma}
$\phi_F\otimes f_*:H^*(X_1\times\M_{H_1}(v_1),\RationalNumbers)\rightarrow H^*(X_2\times\M_{H_2}(v_2),\RationalNumbers)$
maps a universal class to a universal class, in the sense of Definition
\ref{def-g-gamma-maps-universal-classes-to-such}. In particular, $f_*=\gamma_{\phi_F}(e_{v_1},e_{v_2}).$
\end{new-lemma}

\begin{proof}
The statement was proven in detail in \cite[Lemma 5.6]{markman-monodromy-I} in the case of moduli spaces of sheaves on $K3$ surfaces. The proof for abelian surfaces is identical. We briefly outline the argument in the case of fine moduli spaces admitting untwisted universal sheaves $\E_{v_i}$, $i=1,2$. In that case $(id_{X_1}\times f)^*\E_{v_2}$ represents the object
$\Phi_F\boxtimes id_{\M(v_1)}(\E_{v_1})$, possibly after replacing $\E_{v_2}$ by its tensor product with the pullback of a line bundle on
$\M_{H_2}(v_2)$. Now $ch\left[\Phi_F\boxtimes id_{\M(v_1)}(\E_{v_1})\right]=(\phi_F\otimes id)(ch(\E_{v_1}))$.
Hence, $(\phi_F\otimes f_*)$ maps $ch(\E_{v_1})$ to $ch(\E_{v_2})$.
The equality $f_*=\gamma_{\phi_F}(\E_{v_1},\E_{v_2})$ now follows from Lemma \ref{lemma-recovering-f}.
\end{proof}

%
\subsection{The monodromy representation of $G(S^+)^{even}_{s_n}$}
\label{sec-the-monodromy-representation-of-a-rank-1-moduli-space}
Let $X$ be an abelian surface. Let $H$ be an ample line bundle on $X$ with 
$\chi(H)=n\geq 2$ (and genus $n+1$ and degree $2n$). 
Given a length $n+1$ subscheme $Z\subset X$,
we get the equality
\[
ch(I_Z\otimes H) \ \ \ = \ \ \ (1,H,-1).
\]
In particular, $z_{n+1}:=ch(I_Z\otimes H)$ is orthogonal to $s:=(1,0,1)$,
$(z_{n+1},s)_{S^+}=0$. 
Let $\hat{X}$ be the dual surface and $\hat{H}$ the line bundle associated to 
$H$ via (\ref{eq-homomorphism-from-Pic-X-to-Pic-X-hat}). 
Then $\hat{H}$ is ample as well.

\begin{prop}
\label{prop-yoshioka-F-P-induces-an-isomorphism}
(\cite{yoshioka-abelian-surface} Proposition 3.5)
Let $E$ be a $\mu$-stable sheaf with $ch(E)=(r,H,a)$, $a<0$.
Then the sheaf cohomology $\H^i(\Phi_\P(E))$ vanishes, for $i\neq 1$, and 
$\H^1(\Phi_\P(E))$ is a $\mu$-stable sheaf with Chern character 
$(-a,\hat{H},-r)$. 
In particular, if $H$ generates $\Pic(X)$, then the composition
$[1]\circ \Phi_\P$ induces an isomorphism of moduli spaces
\[
\M_H(r,H,a) \ \ \ \LongIsomRightArrow \ \ \ \M_{\hat{H}}(-a,\hat{H},-r).
\]
\end{prop}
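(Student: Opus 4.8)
The plan is to show first that $E$ is $\mathrm{WIT}_1$ with respect to $\Phi_\P$, so that $\widehat{E}:=\H^1(\Phi_\P(E))=([1]\circ\Phi_\P)(E)$ is a genuine sheaf, then to compute its Chern character and verify its $\mu$-stability, and finally to promote the classifying morphism $E\mapsto\widehat{E}$ to an isomorphism of moduli spaces, along the lines of Section \ref{subsection-stability-preserving-FM-transformations}. For the vanishing of $\H^2(\Phi_\P(E))$: by cohomology and base change it is enough that $H^2(X,E\otimes P_t)=0$ for every $t\in\hat X$, where $P_t$ denotes the degree $0$ line bundle parametrized by $t$. By Serre duality on $X$ (recall $\omega_X\cong\StructureSheaf{X}$) this group is dual to $H^0(X,E^\vee\otimes P_{-t})$; but $E^\vee$ is reflexive, hence locally free and $\mu$-stable, with $c_1(E^\vee)=-H$, so $E^\vee\otimes P_{-t}$ is $\mu$-stable of negative slope, and a nonzero global section would give an embedding $\StructureSheaf{X}\hookrightarrow E^\vee\otimes P_{-t}$ contradicting $\mu$-semistability. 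Hence $\Phi_\P(E)$ is concentrated in degrees $0$ and $1$.

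For the vanishing of $\mathcal G:=\H^0(\Phi_\P(E))=R^0\pi_{\hat X,*}(\pi_X^*E\otimes\P)$, note first that $\mathcal G$ is torsion free, being the pushforward of a locally free sheaf on the integral variety $X\times\hat X$. Since $\Phi_\P$ is an equivalence with $\Phi_\P^{-1}\cong\Psi_\P[2]$ (equation \eqref{eq-Psi-P-inverse-of-Phi-P}) and $\Phi_\P^{-1}(k(s))\cong P_{-s}[2]$ for every $s\in\hat X$, we get $\Ext^i_{\hat X}(k(s),\Phi_\P(E))\cong\Ext^{i-2}_X(P_{-s},E)=0$ for $i\le 1$; feeding this into the local-to-global spectral sequence, and using $\H^{<0}(\Phi_\P(E))=0$, we obtain $\Ext^i_{\hat X}(k(s),\mathcal G)=0$ for $i=0,1$, hence $\Ext^{2-i}_{\hat X}(\mathcal G,k(s))=0$ for $i=0,1$ by Serre duality on $\hat X$, which forces $\mathrm{pd}_{\StructureSheaf{\hat X,s}}\mathcal G_s=0$ at every point $s$, so $\mathcal G$ is locally free. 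To conclude that $\mathcal G=0$ one applies $\Psi_\P[2]$ to the truncation triangle $\mathcal G\to\Phi_\P(E)\to\H^1(\Phi_\P(E))[-1]\to\mathcal G[1]$ and chases cohomology sheaves: since $\Psi_\P$ carries a sheaf to a complex concentrated in degrees $[0,2]$ and $E$ is a sheaf, one finds $\Psi_\P(\mathcal G)$ concentrated in degree $2$, and its unique cohomology sheaf maps to $E$, producing essentially a subsheaf of $E$ with non-positive first Chern class; a Chern-character and Euler-characteristic count incorporating the hypothesis $a<0$ then contradicts the $\mu$-stability of $E$ unless $\mathcal G=0$. I expect this last step, excluding $\mathcal G$ rather than merely showing it locally free, to be the main obstacle: it is precisely where $\mu$-stability together with the sign condition $a<0$ are indispensable, and it amounts to Mukai's and Yoshioka's analysis of the interplay between Fourier--Mukai transforms and $\mu$-stability.

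Granting $\mathrm{WIT}_1$, the cohomological Fourier--Mukai map \eqref{eq-cohomological-fourier-mukai-homomorphism} yields $ch(\widehat{E})=-\phi_\P(ch(E))=(-a,\hat H,-r)$, using $c_1(\hat H)=\iota(PD_X(c_1(H)))$; in particular the Mukai vector $v:=(-a,\hat H,-r)$ satisfies $\langle v,v\rangle=\hat H^2-2ra=H^2-2ra$ by Lemma \ref{lemma-two-Poincare-dualities}, so the two moduli spaces have the same dimension $\langle v,v\rangle+2$. The $\mu$-stability of $\widehat E$ follows from that of $E$: a subsheaf $F\subset\widehat E$ violating $\mu$-stability transforms under $\Psi_\P[2]$ to a complex whose (at most two) nonzero cohomology sheaves are, up to $\mathrm{WIT}$-bookkeeping, a subsheaf and a quotient sheaf of $E$ with slopes incompatible with the $\mu$-stability of $E$. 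Finally, $\mathrm{WIT}_1$ is an open condition, so the relative transform of a (possibly twisted) universal family over $\M_H(r,H,a)$ is a flat family of $\mu$-stable, hence $\hat H$-stable, sheaves on $\hat X$ with Mukai vector $v$, where $\hat H$ is automatically $v$-generic since $H$ generates $\Pic(X)$ and therefore $\hat H$ generates $\Pic(\hat X)$. The resulting classifying morphism $f:\M_H(r,H,a)\to\M_{\hat H}(-a,\hat H,-r)$ is injective because $\Phi_\P$ is an equivalence, and it is an open immersion because $\Psi_\P[2]$ furnishes the inverse on the open locus where the transform stays $\mathrm{WIT}_1$; as the source is projective and the target smooth, connected, of the same dimension (Theorem \ref{thm-yoshioka} and Section \ref{subsection-mukai-lattice}), $f$ is an isomorphism, exactly as in Lemma \ref{lemma-isomorphism-of-cohomology-rings-of-moduli-spaces-is-equal-to-gamma}.
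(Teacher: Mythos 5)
First, note that the paper itself offers no proof of this statement: it is quoted verbatim as Yoshioka's Proposition 3.5 and used as a black box, so the honest benchmark for your attempt is Yoshioka's argument, not anything in this text. Measured against that, your proposal gets the routine parts right — the vanishing of $\H^2(\Phi_\P(E))$ via Serre duality and $\mu$-stability, the computation $ch(\H^1(\Phi_\P(E)))=-\phi_\P(r,H,a)=(-a,\hat H,-r)$, and the final step promoting a stability-preserving equivalence to an isomorphism of moduli spaces (which is exactly the mechanism of Section \ref{subsection-stability-preserving-FM-transformations}, given smoothness, projectivity, equality of dimensions and irreducibility of the target from Theorem \ref{thm-yoshioka}) — but it leaves genuine gaps at precisely the two points that constitute the content of the cited result.

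First, the vanishing of $\mathcal G:=\H^0(\Phi_\P(E))$. Your Ext computation at skyscraper sheaves only yields local freeness of $\mathcal G$, which is no contradiction by itself, and the step you yourself flag as the obstacle is the whole point: from the truncation triangle (or the spectral sequence for $\Psi_\P\circ\Phi_\P$) one gets a nonzero map $\H^2(\Psi_\P(\mathcal G))\rightarrow E$, but there is no a priori control of the first Chern class or slope of $\H^2(\Psi_\P(\mathcal G))$ — note that $\Hom(P_{-t},E)\neq 0$ is perfectly compatible with $\mu$-stability of $E$ since $\mu(E)>0$, so no cheap slope argument excludes sections of $E\otimes P_t$. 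Ruling out $\mathcal G\neq 0$ requires the finer analysis (WIT properties of $\mathcal G$, the behaviour of slopes with respect to $H$ and $\hat H$ under the transform, and the hypotheses $a<0$, $\Pic(X)=\Integers H$) that Yoshioka carries out; as written, your argument does not close. Second, the $\mu$-stability of $\H^1(\Phi_\P(E))$ is asserted in one sentence ("slopes incompatible with the $\mu$-stability of $E$"), but a destabilizing subsheaf of the transform need not be WIT in either direction, and converting it into a subsheaf or quotient of $E$ with a controlled slope is the delicate core of Yoshioka's Proposition 3.5, not a bookkeeping remark. So the proposal is a reasonable roadmap of the standard Mukai–Yoshioka strategy, but both the $\mathrm{WIT}_1$ statement and the stability transfer — i.e. everything the paper is actually citing — remain unproved.
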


We conclude that the composition 
$\Phi_{\hat{H}^{-1}}\circ [1] \circ \Phi_\P\circ \Phi_H$
induces an isomorphism of the moduli spaces
\begin{equation}
\label{eq-isomorphism-between-moduli-spaces-of-ideals-of-colength-n-plus-1}
X^{[n+1]}\times \hat{X}  \  \cong \ \M_H(1,0,-n-1)
\ \ \ \LongIsomRightArrow \ \ \ 
\M_{\hat{H}}(1,0,-n-1)  \  \cong \ \hat{X} ^{[n+1]}\times X
\end{equation}
as well as of the Albanese fibers (the generalized Kummer varieties).
An isomorphism of cohomology rings induced via pushforward by an isomorphism is a parallel-transport-operator, by definition (see Footnote \ref{footnote-def-parallel-transport} in Sec. \ref{sec-organization}).
Let $\E_X$ and $\E_{\hat{X}}$ be the universal ideal sheaves.
Setting 
$g:=-\phi_{\hat{H}}^{-1} \circ \phi_\P\circ \phi_H$, we get that 
(\ref{eq-isomorphism-between-moduli-spaces-of-ideals-of-colength-n-plus-1})
induces the parallel transport operator $\gamma_g(\E_X,\E_{\hat{X}})$ and the 
isomorphism $g\otimes \gamma_g(\E_X,\E_{\hat{X}})$ 
maps a universal class of $\M_H(1,0,\!-n\!-\!1)$
to a universal class of $\M_{\hat{H}}(1,0,\!-n\!-\!1)$, by Lemma
\ref{lemma-isomorphism-of-cohomology-rings-of-moduli-spaces-is-equal-to-gamma}.
The latter property is stable under deformation of the pair $(X,H)$, 
dropping the condition that $H$ is ample. 
Consequently, we get the following corollary.

\begin{cor}
\label{cor-composition-of-two-reflections-by-minus-two-vectors-is-a-monodromy-operator}
Let $F_1$ and $F_2$ be two line bundles with $c_1(F_i)$ primitive and $\chi(F_i)=n$. 
The left hand side of equality
(\ref{eq-reflections-in-two-line-bundles}),
and hence its right hand side 
\[
g':=\tilde{m}_{(1,F_2^{-1},n+1)}\circ\tilde{m}_{(1,F_1^{-1},n+1)},
\] 
satisfy the property that
$\gamma_{g'}(\E_X,\E_X)$ is a monodromy operator which
maps a universal class of $\M_H(1,0,-n-1)$
to another universal class of $\M_H(1,0,-n-1)$. 
\end{cor}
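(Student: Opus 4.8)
The plan is to recognize $\gamma_{g'}(\E_X,\E_X)$ as the cohomological action of an explicit auto-equivalence of $D^b(X)$ that is stability preserving on rank one torsion free sheaves, and then to promote the induced isomorphism of moduli spaces to a monodromy operator by the deformation argument already set up in the discussion preceding the corollary. First I would identify $g'$. Riemann--Roch on the abelian surface $X$ gives $\chi(F_i)=\frac12\int_Xc_1(F_i)^2$, so the hypothesis $\chi(F_i)=n$ forces $\int_Xc_1(F_i)^2=2n$, and Lemma \ref{lemma-tensorization-by-line-bundle-F} then yields $\phi_{F_i}^{-1}(s)=\phi_{F_i^{-1}}(1,0,1)=(1,-c_1(F_i),n+1)$, which is a $+2$ vector orthogonal to $s_{n+1}=(1,0,-n-1)$; abbreviating it by $(1,F_i^{-1},n+1)$, the right hand side of (\ref{eq-reflections-in-two-line-bundles}) is exactly $g'=\tilde m_{(1,F_2^{-1},n+1)}\circ\tilde m_{(1,F_1^{-1},n+1)}$. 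Since both factors are reflections in $+2$ vectors, $g'$ lies in $\tilde m(\Spin(S^+))=\tilde\mu(\Spin(V))$ by (\ref{eq-Spin-V-equals-Spin-S-plus}), and by Corollary \ref{cor-reflections-in-two-line-bundles} it is the image in $\Aut(A_X)$ of the auto-equivalence
\[
\Xi \ := \ \Big(\Phi_{F_2^{-1}}\circ[1]\circ\Psi_\P\circ\Phi_{\hat F_2}\Big)\circ\Big(\Phi_{\hat F_1^{-1}}\circ[1]\circ\Phi_\P\circ\Phi_{F_1}\Big)
\]
of $D^b(X)$; in particular the restriction of $g'$ to $H^*(X,\Integers)=S^+\oplus S^-$ is the cohomological action $\phi_\Xi$ of $\Xi$. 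Write $\Xi=\Xi_2\circ\Xi_1$ for the two displayed factors, so that $\Xi_1:D^b(X)\to D^b(\hat X)$ and $\Xi_2:D^b(\hat X)\to D^b(X)$.

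Next I would analyze the two factors. The factor $\Xi_1$ is precisely the auto-equivalence treated in the discussion preceding the corollary, with $F_1$ in the role of $H$: there it is shown, using Proposition \ref{prop-yoshioka-F-P-induces-an-isomorphism} and Lemma \ref{lemma-isomorphism-of-cohomology-rings-of-moduli-spaces-is-equal-to-gamma} in the ample case together with the stability of the conclusion under deformation of $F_1$, that for $c_1(F_1)$ primitive with $\chi(F_1)=n$ it induces a parallel transport operator $\gamma_{g_1}(\E_X,\E_{\hat X})$, $g_1:=-\phi_{\hat F_1}^{-1}\circ\phi_\P\circ\phi_{F_1}$, and that $g_1\otimes\gamma_{g_1}(\E_X,\E_{\hat X})$ maps a universal class of $\M_{F_1}(1,0,-n-1)\cong X^{[n+1]}\times\hat X$ to a universal class of $\M_{\hat F_1}(1,0,-n-1)$. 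Running the same argument on $\hat X$, with $\hat F_2$ in the role of the ample line bundle, using $\hat{\hat X}\cong X$, $\widehat{\hat F_2}\cong F_2$, and $\Psi_\P[2]\cong\Phi_\P^{-1}$ from (\ref{eq-Psi-P-inverse-of-Phi-P}), shows that $\Xi_2$ induces a parallel transport operator $\gamma_{g_2}(\E_{\hat X},\E_X)$ mapping a universal class over $\hat X$ to a universal class over $X$. Composing by Corollary \ref{cor-composition}, $\Xi$ induces an isomorphism $f_\Xi$ of $\M_H(1,0,-n-1)\cong X^{[n+1]}\times\hat X$ with $f_{\Xi\,*}=\gamma_{g_2}(\E_{\hat X},\E_X)\circ\gamma_{g_1}(\E_X,\E_{\hat X})=\gamma_{g_2g_1}(\E_X,\E_X)$, and $g_2g_1\otimes\gamma_{g_2g_1}(\E_X,\E_X)$ maps a universal class of $\M_H(1,0,-n-1)$ to another universal class. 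By the first step $g_2g_1$ agrees with $g'$ on $H^*(X,\Integers)$, so $\gamma_{g'}(\E_X,\E_X)=f_{\Xi\,*}$. Finally $f_\Xi$, being induced by an auto-equivalence of $D^b(X)$, respects the Albanese fibration of $X^{[n+1]}\times\hat X$ and hence restricts to an automorphism of a generalized Kummer fiber; this automorphism deforms together with the pair $(F_1,F_2)$, whence $\gamma_{g'}(\E_X,\E_X)$ is a monodromy operator in the sense of Definition \ref{def-monodromy}, as claimed.

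The step I expect to be the main obstacle is matching $\Xi_2$ with the $\hat X$-version of the construction preceding the corollary: one must track carefully the difference between $\Psi_\P$ and the Fourier--Mukai transform $\Phi_{\hat{\P}}$ (they differ by a shift and by the standard identifications discussed in the remark following Lemma \ref{lemma-phi-P-is-Spin-Spin-equivariant}), the identifications $\hat{\hat X}\cong X$ and $c_1(\hat F_i)=\iota(PD(c_1(F_i)))$, and the precise Mukai vectors of the intermediate moduli spaces, so that the chain of isomorphisms closes up and the induced cohomology map is literally $g'$ rather than a twist of it; the sign computations of Section \ref{sec-derived-categories} are exactly what make this routine. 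The one genuinely external input, that the property ``induces a parallel transport operator mapping a universal class to a universal class'' is preserved under deformation of the $F_i$ past the ample locus, is the assertion quoted from that discussion and rests on the openness of Gieseker stability and of the stability-preserving property of $\Phi_\P$ in families, together with the existence of the relevant relative moduli space.
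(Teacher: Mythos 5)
Your proposal is correct and follows essentially the same route as the paper: the ample case via Proposition \ref{prop-yoshioka-F-P-induces-an-isomorphism} and Lemma \ref{lemma-isomorphism-of-cohomology-rings-of-moduli-spaces-is-equal-to-gamma}, deformation of the pair $(X,F_i)$ to drop ampleness, composition of the two resulting parallel transport operators (the second factor being, up to the standard identifications, the inverse of a functor of the same shape), and identification of the cohomological action with $g'$ via Corollary \ref{cor-reflections-in-two-line-bundles} together with Corollary \ref{cor-composition} and Lemma \ref{lemma-recovering-f}. Your closing remark about restricting to a generalized Kummer fiber is unnecessary—once each factor is a parallel transport operator mapping a universal class to a universal class, their composite from $\M_H(1,0,-n-1)$ to itself is a monodromy operator by definition—but this does not affect the correctness of the argument.
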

Note that
$(1,F_i^{-1},n+1)=\phi_{F_i}^{-1}(s)$, $i=1,2$, 
are two vectors in $S^+_X$ orthogonal to the Chern character $s_{n+1}$ and of square $2$ with respect to $(\bullet,\bullet)_{S^+_X}$, so that $g'$ belongs to $\Spin(S^+_X)_{s_{n+1}}$. 

Let the ample line bundle $H$ on $X$ have Euler characteristic $n+2$, 
$n\geq 2$. Given a length $n+1$ subscheme $Z\subset X$,
we get 
\[
z_{n+1} \ \ := \ \ ch(I_Z\otimes H) \ \ = \ \ (1,H,1).
\]
In particular, $z_{n+1}$ is orthogonal to the $-2$ vector 
$s_1:=(1,0,-1)$. Let $D_{\hat{X}} : D^b(\hat{X})\rightarrow D^b(\hat{X})^{op}$
be the functor taking $E\in D^b(\hat{X})$ to 
$E^\vee:=R\SheafHom(E,\StructureSheaf{\hat{X}})$. Set
\[
\G_\P \ \ := \ \ D_{\hat{X}}\circ \Phi_\P[2].
\]
Given $E\in D^b(X)$, set
\[
\G^i_\P(E) \ \ := \ \ \H^i(\G_\P(E)),
\]
the $i$-th sheaf cohomology.
If $E$ is a coherent sheaf, then 
$\G_\P^i(E)$ is isomorphic to the relative extension sheaf 
$\SheafExt^i_{\pi_{\hat{X}}}(\P\otimes \pi_X^*(E),\StructureSheaf{X\times\hat{X}})$, by Grothendieck-Verdier duality and the triviality of the relative canonical line bundle $\omega_{\pi_{\hat{X}}}$.

\begin{prop}
\label{prop-yoshioka-G-P-induces-an-isomorphism}
(\cite{yoshioka-abelian-surface} Proposition 3.2)
Let $E$ be a $\mu$-stable sheaf with $ch(E)=(r,H,a)$, $a>0$.
Then the sheaf cohomology $\G^i_\P(E)$ vanishes, for $i\neq 2$, and 
$\G^2_\P$ is a $\mu$-stable sheaf with Chern character 
$(a,c_1(\hat{H}),r)$. 
In particular, if $H$ generates $\Pic(X)$, then the composition
$\G_\P[2]$ induces an isomorphism of moduli spaces
\[
\M_H(r,H,a) \ \ \ \LongIsomRightArrow \ \ \ \M_{\hat{H}}(a,\hat{H},r).
\]
\end{prop}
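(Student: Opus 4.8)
The plan is to reduce the cohomological statement to a Mukai-vanishing assertion for the degree-zero twists of $E$, which mirrors Proposition~\ref{prop-yoshioka-F-P-induces-an-isomorphism}, and then to deduce the isomorphism of moduli spaces exactly as in Subsection~\ref{subsection-stability-preserving-FM-transformations}.

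By the remark preceding the statement, $\G^i_\P(E)$ is the relative extension sheaf $\SheafExt^i_{\pi_{\hat{X}}}(\P\otimes\pi_X^*(E),\StructureSheaf{X\times\hat{X}})$; equivalently, using Grothendieck--Verdier duality and the isomorphism $\P^{-1}\cong((-1)_X\times 1)^*\P$ (where $(-1)_X:X\to X$ is the inversion), one has $\G_\P(E)\cong\Phi_\P\bigl((-1)_X^*R\SheafHom(E,\StructureSheaf{X})\bigr)$. Writing $P_\xi$ for the degree-zero line bundle on $X$ parametrised by $\xi\in\hat{X}$, the fibre of $\G^i_\P(E)$ over $\xi$ is $\Ext^i_X(E\otimes P_\xi,\StructureSheaf{X})\cong H^{2-i}(X,E\otimes P_\xi)^\vee$ by Serre duality on $X$ (recall $\omega_X\cong\StructureSheaf{X}$). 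Hence it suffices to prove that
\[
H^1(X,E\otimes P_\xi)=H^2(X,E\otimes P_\xi)=0\qquad\text{for every }\xi\in\hat{X}.
\]
Granting this, cohomology and base change show that $\Phi_\P(E)$ is a locally free sheaf placed in degree $0$ of rank $\chi(E\otimes P_\xi)=a$, that $\G^i_\P(E)=0$ for $i\neq 2$, and that $\G^2_\P(E)\cong\Phi_\P(E)^\vee$.

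The vanishing $H^2(X,E\otimes P_\xi)=0$ is immediate: if $\rank(E)>0$, then $R\SheafHom(E,\StructureSheaf{X})$ has in degree $0$ the $\mu$-stable sheaf $E^\vee$ of negative slope with respect to $H$, which has no non-zero global sections after any degree-zero twist, so $H^2(X,E\otimes P_\xi)\cong H^0(X,E^\vee\otimes P_\xi^{-1})^\vee=0$; if $\rank(E)=0$, then $E$ is pure of dimension one, so $H^2$ of any of its twists vanishes. The essential point is $H^1(X,E\otimes P_\xi)=0$, which I would prove by the stability argument underlying Proposition~\ref{prop-yoshioka-F-P-induces-an-isomorphism}: a non-zero class in $H^1(X,E\otimes P_\xi)=\Ext^1_X(\StructureSheaf{X},E\otimes P_\xi)$ yields a non-split extension $0\to E\otimes P_\xi\to G\to\StructureSheaf{X}\to 0$ with $ch(G)=(r+1,H,a)$, and one derives a contradiction from the $\mu$-stability of $E$, the ampleness of $H$, and the inequality $a>0$ by examining the Harder--Narasimhan and Jordan--H\"older filtrations of $G$; this is the content of \cite[Proposition 3.2]{yoshioka-abelian-surface}. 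Granting the vanishing, $ch(\G^2_\P(E))$ is the dual of $ch(\Phi_\P(E))=\phi_\P(ch(E))$ (apply the automorphism acting by $(-1)^i$ on $H^{2i}(\hat{X},\Integers)$), and a direct computation with the cohomological Fourier--Mukai homomorphism (\ref{eq-cohomological-fourier-mukai-homomorphism}) together with $c_1(\hat{H})=\iota(PD_X(c_1(H)))$ gives $\phi_\P(ch(E))=(a,-c_1(\hat{H}),r)$, hence $ch(\G^2_\P(E))=(a,c_1(\hat{H}),r)$. Finally $\G^2_\P(E)$ is $\mu$-stable because the assignment $E\mapsto\Phi_\P(E)^\vee$ interchanges rank and second Chern number and has a quasi-inverse, so a destabilising subsheaf of $\G^2_\P(E)$ would produce one of $E$ (again \cite[Proposition 3.2]{yoshioka-abelian-surface}).

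For the last assertion, when $H$ generates $\Pic(X)$ the class $c_1(\hat{H})$ generates $\Pic(\hat{X})$, and $\mu$-stability coincides with Gieseker stability for Chern character $(r,H,a)$ on $X$ and for $(a,c_1(\hat{H}),r)$ on $\hat{X}$; thus $E\mapsto\G^2_\P(E)$ defines a morphism $f:\M_H(r,H,a)\to\M_{\hat{H}}(a,\hat{H},r)$. As in Subsection~\ref{subsection-stability-preserving-FM-transformations}, $f$ is injective (the quasi-inverse of the construction recovers $E$) and \'etale (the equivalence $\Phi_\P$ and the dualisation induce isomorphisms of the tangent spaces $\Ext^1(E,E)$, and both moduli spaces are smooth of the same dimension, since $\phi_\P$ is an isometry of Mukai lattices), hence an open immersion; since $\M_H(r,H,a)$ is projective and $\M_{\hat{H}}(a,\hat{H},r)$ is irreducible of the same dimension by Theorem~\ref{thm-yoshioka}, $f$ is an isomorphism. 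I expect the main obstacle to be the vanishing $H^1(X,E\otimes P_\xi)=0$ for an arbitrary $\mu$-stable $E$ with ample $c_1$ and $ch_2>0$: for $\rank(E)\le 1$ it is elementary, but in higher rank it rests on the full stability analysis of sheaves on abelian surfaces carried out in \cite{yoshioka-abelian-surface}.
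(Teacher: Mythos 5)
First, a point of comparison: the paper gives no proof of this Proposition at all — like Proposition \ref{prop-yoshioka-F-P-induces-an-isomorphism}, it is quoted verbatim from Yoshioka (\cite[Prop. 3.2]{yoshioka-abelian-surface}) — so the only question is whether your sketch constitutes an independent proof, and it does not. At the decisive point you write that the required contradiction ``is the content of \cite[Proposition 3.2]{yoshioka-abelian-surface}'', i.e.\ you invoke the very statement being proved; as a standalone argument this is circular. Moreover the mechanism you sketch is not an argument: a nonzero class in $H^1(X,E\otimes P_\xi)$ does give a non-split extension $0\to E\otimes P_\xi\to G\to\StructureSheaf{X}\to 0$ with $ch(G)=(r+1,H,a)$, but nothing about the $\mu$-stability of $E$, the ampleness of $H$ and $a>0$ forbids the existence of such an extension, so no contradiction is in sight. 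The $\mu$-stability of $\G^2_\P(E)$ is likewise referred back to the cited proposition.

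More seriously, the intermediate statement you reduce to — $H^1(X,E\otimes P_\xi)=0$ for \emph{every} $\xi\in\hat{X}$ — is false in general, so the proposed route cannot be completed. Already in rank one: let $\chi(H)=n+2$ with $n\geq 2$, let $D\in|H\otimes P_\eta|$ be a smooth curve (of genus $g=n+2$), and let $Z\subset D$ be a length $n+1$ divisor on $D$ with $h^0(\StructureSheaf{D}(Z))\geq 2$ (such $Z$ of degree $g-1$ exist by Brill--Noether, adding base points to a pencil of lower degree if necessary). Then $E:=I_Z\otimes H$ is $\mu$-stable with $ch(E)=(1,H,1)$, $a=1>0$, yet from $0\to\StructureSheaf{X}\to E\otimes P_\eta\to (H\otimes P_\eta)\vert_D(-Z)\to 0$, adjunction $\omega_D\cong(H\otimes P_\eta)\vert_D$ and Serre duality on $D$ one gets $h^1(D,(H\otimes P_\eta)\vert_D(-Z))=h^0(\StructureSheaf{D}(Z))\geq 2>1=h^2(\StructureSheaf{X})$, whence $H^1(X,E\otimes P_\eta)\neq 0$. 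Thus the fibrewise groups $\Ext^1_X(E\otimes P_\xi,\StructureSheaf{X})$ genuinely jump on a nonempty locus, and the vanishing of the relative sheaves $\G^i_\P(E)$, $i\neq 2$, is strictly weaker than fibrewise vanishing: the jumps occur in codimension two on $\hat{X}$ and must be handled at the sheaf level, which is exactly where the real work in Yoshioka's proof lies. (Your final paragraph, deducing the isomorphism of moduli spaces from the transform statement by open immersion plus properness and irreducibility, is fine and parallels Section \ref{subsection-stability-preserving-FM-transformations}, but it presupposes everything that matters.)
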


We conclude that the composition 
$\Phi_{\hat{H}^{-1}}\circ \G_\P[2]\circ \Phi_H$ induces another isomorphism 
between the moduli spaces in equation
(\ref{eq-isomorphism-between-moduli-spaces-of-ideals-of-colength-n-plus-1}).

Denote by 
\begin{equation}
\label{eq-tau-X}
\tau_X \ : \ A_X \ \ \ \longrightarrow \ \ \ A_X
\end{equation}
the element 
of 
$G(S^+)^{even}$  in equation (\ref{eq-tau-is-in-G-S-plus-even}).
Then $\tau_X$ acts on $S^+$ via $D_X:=-R_s\circ R_{s_1}$, where 
$s=(1,0,1)$ and $s_1=(1,0,-1)$. 
\hide{
We set 
\[
g_\P \ \ \ := \ \ \ \tau_{\hat{X}}\circ \phi_\P.
\]
The operator $\tau_X$ is introduced in order to factor
the dualization involution $D_{X\times \M}\in\Aut[H^{even}(X\times \M)]$
on the product of $X$ and a moduli space of sheaves on $X$. 
We set $\tau_\M$ to be the unique solution of the equation 
\[
1\otimes \tau_\M \ \ = \ \ (\tau_X\otimes 1)\circ D_{X\times \M}.
\]
Explicitly, $\tau_\M$ acts on $H^i(\M,\Integers)$ via 
$(-1)^{i(i+1)/2}$. 
We get the factorization $D_{X\times \M}=\tau_X\otimes\tau_\M$.

\begin{rem}
{\rm
(??? $\rightarrow$ not used)
Using the operator $\tau_{\M(w)}$ one gets a symmetric unimodular bilinear 
pairing on $H^*(\M(w),\Integers)$ defined as in equation 
(\ref{eq-Mukai-pairing}).
We consider the factorization of $D_{X\times \M}$ in order to 
represent in $A_X$ the monodromy group $Mon(\M)$ of the moduli space, 
rather than an index $2$ subgroup of $Mon(\M)$. 
While the monodromy action on $H^*(\M,\Integers)$ is natural, 
its representation $A_X$ will depend on the choice of the central
element $\alpha_{S^+}$ (??? define ???) in the above factorization. The value of 
the orientation character (\ref{eq-ort}) of an element $g$ of 
$G(S^+)^{even}_{s_n}\subset GL(A_X)$ will determine the value of 
the parameter $\epsilon$ in the equation
(\ref{eq-gamma-delta}) for the element $\gamma_{g,\epsilon}$
of $Mon(\M)$. 
}
\end{rem}
}

\begin{cor}
\label{cor-an-orientation-reversing-monodromy}
\begin{enumerate}
\item
\label{prop-item-orientation-reversing-isomorphism}
The composition 
\[
\begin{array}{rl}
\gamma_{\phi_{\hat{H}}^{-1}}(\E_{(1,\hat{H},1)},\E_{\hat{X}})
\circ \gamma_{\phi_\P,1}(\E_{(1,H,1)},\E_{(1,\hat{H},1)}) \circ
\gamma_{\phi_H}(\E_X,\E_{(1,H,1)}) : & 
H^*(\M_H(1,0,-1-n),\Integers)  
\\
 \rightarrow & H^*(\M_{\hat{H}}(1,0,-1-n),\Integers)
\end{array}
\]
is equal to $\gamma_{\tau_{\hat{X}}\phi^{-1}_{\hat{H}}\tau_{\hat{X}}\phi_\P\phi_H,1}(\E_X,\E_{\hat{X}})$, is
induced by an isomorphism of the moduli spaces, and maps a universal class to the dual of a universal class.
\hide{
\item
\label{prop-item-orientation-reversing-isomorphism-maps-univ-to-univ}
The homomorphism in part (\ref{prop-item-orientation-reversing-isomorphism}) is equal to
$\gamma_{\left(
\phi_{\hat{H}}^{-1}d_{\hat{X}}\phi_\P\phi_H,0
\right)}
\left(
\E_X,\E_{\hat{X}}
\right)
\circ d_{\M(w)}^{-1}$, where $w=(1,0,-1-n)$.
The class 
$\gamma_{\left(
\phi_{\hat{H}}^{-1}d_{\hat{X}}\phi_\P\phi_H,0
\right)}
\left(
\E_X,\E_{\hat{X}}
\right)
$
induces a ring isomorphism and 
\[
\left(
\phi_{\hat{H}}^{-1}d_{\hat{X}}\phi_\P\phi_H
\right)\otimes
\gamma_{\left(
\phi_{\hat{H}}^{-1}d_{\hat{X}}\phi_\P\phi_H,0
\right)}
\left(
\E_X,\E_{\hat{X}}
\right)
\]
maps a universal class to a universal class.
}
\item
\label{cor-item-an-orientation-reversing-monodromy-op}
Let $H$ and $F$ be line bundles on $X$ with 
$\chi(F)=n$, $\chi(H)=n+2$. Assume that the classes $c_1(F)$ and $c_1(H)$ in 
$H^2(X,\Integers)$ are primitive. 
Set 
\[
\psi:=
-\tau_X\phi_F^{-1}\phi_\P^{-1}\phi_{\hat{F}}\phi_{\hat{H}}^{-1}
\tau_{\hat{X}}\phi_\P\phi_H.
\]
Then the automorphism
$\gamma_{\psi,1}(\E_X,\E_X)$ of $H^*(\M(1,0,-1-n),\Integers)$ 
is induced by a monodromy operator.
\end{enumerate}
\end{cor}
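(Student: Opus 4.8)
The plan is to realize $\gamma_{\psi,1}(\E_X,\E_X)$ as the cohomology operator of an automorphism of $\M(1,0,-1-n)\cong X^{[n+1]}\times\hat{X}$ obtained by composing the orientation-reversing isomorphism of part (\ref{prop-item-orientation-reversing-isomorphism}) with an orientation-preserving ``return trip'', and then to spread the whole construction out over a deformation of $(X,H,F)$ so as to exhibit the operator as a parallel transport operator, exactly in the spirit of Corollary \ref{cor-composition-of-two-reflections-by-minus-two-vectors-is-a-monodromy-operator}.

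Part (\ref{prop-item-orientation-reversing-isomorphism}) supplies the first half: since $\chi(H)=n+2$ we have $ch(I_Z\otimes H)=(1,c_1(H),1)$, which lies in the range $a>0$ of Proposition \ref{prop-yoshioka-G-P-induces-an-isomorphism}, so the composite $\Phi_{\hat{H}^{-1}}\circ\G_\P[2]\circ\Phi_H$ is an isomorphism $\M_H(1,0,-1-n)\to\M_{\hat{H}}(1,0,-1-n)$ inducing $\gamma_{g_1,1}(\E_X,\E_{\hat{X}})$ with $g_1=\tau_{\hat{X}}\phi_{\hat{H}}^{-1}\tau_{\hat{X}}\phi_\P\phi_H$, and this operator maps a universal class of $\M_H(1,0,-1-n)$ to the dual of a universal class of $\M_{\hat{H}}(1,0,-1-n)$. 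For the second half I would apply the $\Phi_\P$-version of Proposition \ref{prop-yoshioka-F-P-induces-an-isomorphism} over $\hat{X}$. Since $\chi(\hat{F})=\chi(F)=n$ by Lemma \ref{lemma-two-Poincare-dualities}(\ref{lemma-item-PD-compatible-with-topological-pairing}), tensoring by $\hat{F}$ sends $(1,0,-1-n)$ to $(1,c_1(\hat{F}),-1)$; the composite $[1]\circ\Psi_\P$ then sends this $\mu$-stable class on $\hat{X}$ (here $a=-1<0$) to the $\mu$-stable class $(1,c_1(F),-1)$ on $X$; and tensoring by $F^{-1}$ returns to $(1,0,-1-n)$. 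Hence $\Phi_{F^{-1}}\circ[1]\circ\Psi_\P\circ\Phi_{\hat{F}}$ induces an isomorphism $\M_{\hat{H}}(1,0,-1-n)\to\M_H(1,0,-1-n)$ whose cohomology operator, by Lemma \ref{lemma-isomorphism-of-cohomology-rings-of-moduli-spaces-is-equal-to-gamma}, is $\gamma_{g_2,0}(\E_{\hat{X}},\E_X)$, where $g_2$ is the evident composite of the tensorizations $\phi_{F^{-1}}$, $\phi_{\hat{F}}$ with the inverse Fourier--Mukai map $\phi_\P^{-1}$ together with the sign coming from the shift $[1]$, and it maps a universal class to a universal class. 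As usual, all the moduli spaces appearing are smooth, projective and deformation equivalent to $K_X(n)$ once $H$ is ample and $(1,0,-1-n)$-generic, which holds for generic $X$ and is preserved under deformation.

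Composing, $\left(\Phi_{F^{-1}}\circ[1]\circ\Psi_\P\circ\Phi_{\hat{F}}\right)\circ\left(\Phi_{\hat{H}^{-1}}\circ\G_\P[2]\circ\Phi_H\right)$ is an automorphism of $\M_H(1,0,-1-n)$, and its cohomology operator is, by Corollary \ref{cor-composition}, the composite $\gamma_{g_2,0}(\E_{\hat{X}},\E_X)\circ\gamma_{g_1,1}(\E_X,\E_{\hat{X}})$. Since $\gamma_{g_1,1}$ maps a universal class to the dual of a universal class and $\gamma_{g_2,0}$ maps a universal class to a universal class, Lemma \ref{lemma-composition-of-gamma-g-0-and-gamma-h-1} rewrites this composite as $\gamma_{\tau_X g_2\tau_X g_1,1}(\E_X,\E_X)$. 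A direct computation, substituting the expressions for $g_1$ and $g_2$ and simplifying $\tau_X g_2\tau_X g_1$ using Lemma \ref{lemma-on-gamma-and-epsilon}, equation (\ref{eq-conjugation-by-d-X-i}), the fact that $\tau$ restricts to an involution on $S^+\oplus S^-$, and the sign relations of Lemma \ref{lemma-two-Poincare-dualities} governing $\phi_\P$, $\phi_\P^{-1}$, the Poincar\'e dualities and the identification $A_{\hat{X}}\cong A_X$, then identifies $\tau_X g_2\tau_X g_1$ with the element $\psi$ displayed in the statement. Finally, each of the six Fourier--Mukai functors in the composite has a relative version over a family $\X\to B$ deforming $(X,H,F)$ and allowing $H$ and $\hat{F}$ to leave the ample cone; over the ample locus in $B$, Propositions \ref{prop-yoshioka-F-P-induces-an-isomorphism} and \ref{prop-yoshioka-G-P-induces-an-isomorphism} together with Lemma \ref{lemma-isomorphism-of-cohomology-rings-of-moduli-spaces-is-equal-to-gamma} show that the relative construction is a chain of isomorphisms of moduli spaces, so the induced cohomology operator is a composite of parallel transport operators, hence a monodromy operator in the sense of Definition \ref{def-monodromy}. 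I expect the main obstacle to be precisely the sign bookkeeping in the middle step: keeping consistent track of the dualizations $D_X$, $D_{\hat{X}}$, the shifts, and the main anti-automorphism $\tau$ with its incarnation $\tau_X\in G(S^+)^{even}$ as they propagate through the Fourier--Mukai functors and the $\gamma_{g,\epsilon}$-formalism; the deformation step and the verification that each functor preserves the relevant stability are routine given Sections \ref{sec-derived-categories} and \ref{sec-monodromy-via-Fourier-Mukai}.
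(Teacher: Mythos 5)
Your treatment of part (\ref{cor-item-an-orientation-reversing-monodromy-op}) follows the same route as the paper — compose the operator of part (\ref{prop-item-orientation-reversing-isomorphism}) with $\gamma_{-\phi_F^{-1}\phi_\P^{-1}\phi_{\hat{F}},0}(\E_{\hat{X}},\E_X)$ coming from Proposition \ref{prop-yoshioka-F-P-induces-an-isomorphism} (after deforming so that $F$ becomes ample, since only primitivity of $c_1(F)$ is assumed), rewrite the composite via Lemma \ref{lemma-composition-of-gamma-g-0-and-gamma-h-1} as $\gamma_{\tau g_2\tau g_1,1}$, and check $\tau g_2\tau g_1=\psi$ using $\tau^2=\mathrm{id}$ — and that computation does come out correctly. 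The problem is part (\ref{prop-item-orientation-reversing-isomorphism}) itself, which is half of the statement and which you essentially assume: you assert in one sentence that the isomorphism induced by $\Phi_{\hat{H}^{-1}}\circ\G_\P[2]\circ\Phi_H$ acts on cohomology as $\gamma_{\tau_{\hat{X}}\phi_{\hat{H}}^{-1}\tau_{\hat{X}}\phi_\P\phi_H,1}(\E_X,\E_{\hat{X}})$ and maps a universal class to the dual of a universal class, and you never address the displayed equality with the composition of the three classes $\gamma_{\phi_{\hat{H}}^{-1}}$, $\gamma_{\phi_\P,1}$, $\gamma_{\phi_H}$.

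This is a genuine gap, not ``sign bookkeeping''. The crux is the middle factor: one must show that the isomorphism $\eta:\M_H(1,H,1)\to\M_{\hat{H}}(1,\hat{H},1)$ induced by $\G_\P[2]$ satisfies $\eta_*=\gamma_{\phi_\P,1}(\E_{(1,H,1)},\E_{(1,\hat{H},1)})$, and that $\phi_\P\otimes\eta_*$ takes a universal class to the \emph{dual} of a universal class. Lemma \ref{lemma-isomorphism-of-cohomology-rings-of-moduli-spaces-is-equal-to-gamma}, which you invoke only for the return trip, does not apply here, because $\G_\P=D_{\hat{X}}\circ\Phi_\P[2]$ contains the dualization functor: the relative transform of the universal sheaf is (up to twist) the dual $\E_{(1,\hat{H},1)}^\vee$ of a universal sheaf, not a universal sheaf. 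The argument that closes this gap is the one the paper gives: from $(1_{\hat{X}}\times\eta)^*\bigl(\E_{(1,\hat{H},1)}^\vee\bigr)\cong\Phi_{\P\boxtimes\StructureSheaf{\Delta}}(\E_{(1,H,1)})$ one gets $ch(\E_{(1,\hat{H},1)}^\vee)=(\phi_\P\otimes\eta_*)(ch(\E_{(1,H,1)}))$, hence the ``dual of a universal class'' property; then Lemma \ref{lemma-on-gamma-and-epsilon}(\ref{lemma-item-if-gamma-g-1-maps-then-gamma-g-0-maps}) together with the uniqueness statement of Lemma \ref{lemma-recovering-f} pins down $\eta_*$ as $\gamma_{\phi_\P,1}$, and only then does Lemma \ref{lemma-composition-of-gamma-g-0-and-gamma-h-1} give the displayed composition and the element $\tau_{\hat{X}}\phi_{\hat{H}}^{-1}\tau_{\hat{X}}\phi_\P\phi_H$ with $\epsilon=1$. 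Since this step is exactly what produces the $\epsilon=1$ and the dual-universal statement you rely on in part (\ref{cor-item-an-orientation-reversing-monodromy-op}), deferring it to an unspecified ``direct computation'' leaves the proof incomplete; the rest of your argument (the return trip via Lemma \ref{lemma-isomorphism-of-cohomology-rings-of-moduli-spaces-is-equal-to-gamma}, the composition, and the deformation to drop ampleness) is sound and coincides with the paper's.
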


\begin{proof}
\ref{prop-item-orientation-reversing-isomorphism})
Let $f:\M_H(1,0,-1-n)   \rightarrow \M_{\hat{H}}(1,0,-1-n)$ 
be the isomorphism in Proposition 
\ref{prop-yoshioka-G-P-induces-an-isomorphism}.
Let $\eta:\M_H(1,H,1)   \rightarrow \M_{\hat{H}}(1,\hat{H},1)$ be the associated isomorphism. 
By construction, we have an isomorphism, in the derived category of $\hat{X}\times M_H(1,H,1)$,
\[
(1_{\hat{X}}\times\eta)^*\left(\E_{(1,\hat{H},1)}^\vee\right) \ \ \ 
\cong \ \ \ \Phi_{\P\boxtimes\StructureSheaf{\Delta_{\M(1,H,1)}}}(\E_{(1,H,1)})
\]
for suitably chosen universal sheaves.
Consequently, we have the equality
\[
ch(\E^\vee_{(1,\hat{H},1)})=(\phi_\P\otimes \eta_*)(ch(\E_{(1,H,1)})),
\]
and so $\phi_\P\otimes \eta_*$ maps a universal class of $\M_H(1,H,1)$ to the dual of a universal class of $\M_{\hat{H}}(1,\hat{H},1)$.
Hence, $(d_{\hat{X}}^{-1}\phi_\P)\otimes (d_{\M(1,\hat{H},1)}^{-1}\eta_*)$ maps 
a universal class of $\M_H(1,H,1)$ to a universal class of $\M_{\hat{H}}(1,\hat{H},1)$. 
Lemma \ref{lemma-recovering-f} implies the equality
\[
d_{\M(1,\hat{H},1)}^{-1}\eta_*=\gamma_{d^{-1}_{\hat{X}}\phi_\P,0}(\E_{(1,H,1)},\E_{(1,\hat{H},1)}).
\]
We get
\[
\eta_*=d_{\M(1,\hat{H},1)}\gamma_{d^{-1}_{\hat{X}}\phi_\P,0}(\E_{(1,H,1)},\E_{(1,\hat{H},1)})=
\gamma_{d^{-1}_{\hat{X}}\phi_\P,0}(\E_{(1,H,1)},\E_{(1,\hat{H},1)})d_{\M(1,H,1)}.
\]
The right hand side is equal to $\gamma_{\phi_\P,1}(\E_{(1,H,1)},\E_{(1,\hat{H},1)})$,
by Equation (\ref{eq-gamma-g-1-as-a-homomorphism}).
Hence, the graph of $\eta$ 
is Poincare-Dual to the class 
$\gamma_{\phi_\P,1}(\E_{(1,H,1)},\E_{(1,\hat{H},1)})$.
The composition in the statement of Part (\ref{prop-item-orientation-reversing-isomorphism}) is equal to 
$\gamma_{\tau_{\hat{X}}\phi^{-1}_{\hat{H}}\tau_{\hat{X}}\phi_\P\phi_H,1}(\E_X,\E_X)$,
by Lemma \ref{lemma-composition-of-gamma-g-0-and-gamma-h-1}.

\hide{
\ref{prop-item-orientation-reversing-isomorphism-maps-univ-to-univ})
We have seen that $\gamma_{\phi_\P,1}(\E_{(1,H,1)},\E_{(1,\hat{H},1)})$ maps a universal class to the dual of a universal class  in the proof of part (\ref{prop-item-orientation-reversing-isomorphism}).
Thus, the homomorphism
\[
d_{\hat{X}}\phi_\P\otimes 
\gamma_{d_{\hat{X}}\phi_\P,0}(\E_{(1,H,1)},\E_{(1,\hat{H},1)})
\]
maps a universal class to a universal class, by Lemma 
\ref{lemma-on-gamma-and-epsilon}
part \ref{lemma-item-if-gamma-g-1-maps-then-gamma-g-0-maps}.
The conclusion now follows from the 
composition property in Corollary \ref{cor-composition}.
}
\ref{cor-item-an-orientation-reversing-monodromy-op}) 
The homomorphism 
$\gamma_{-\phi_F^{-1}\phi_\P^{-1}\phi_{\hat{F}},0}(\E_{\hat{X}},\E_X)$
is a deformation of the isomorphism in Proposition
\ref{prop-yoshioka-F-P-induces-an-isomorphism} and is hence a parallel transport operator. 
The equality
\[
\gamma_{\psi,1}(\E_X,\E_X)=
\gamma_{-\phi_F^{-1}\phi_\P^{-1}\phi_{\hat{F}},0}(\E_{\hat{X}},\E_X)
\gamma_{\tau_{\hat{X}}\phi^{-1}_{\hat{H}}\tau_{\hat{X}}\phi_\P\phi_H,1}(\E_X,\E_X)
\]
follows from Lemma \ref{lemma-composition-of-gamma-g-0-and-gamma-h-1}. The latter is a composition of a parallel transport operator and an isomorphism, and is hence a parallel transport operator (so a monodromy operator).
\hide{
The isomorphism
\[
\gamma_{\phi_{\hat{H}}^{-1}d_{\hat{X}}\phi_\P}
(\E_{(1,H,1)},\E_{\hat{X}})\circ
d^{-1}_{\M(1,H,1)}\circ
\gamma_{\phi_H}(\E_X,\E_{(1,H,1)})
\]
is equal to the one in part 
(\ref{prop-item-orientation-reversing-isomorphism}),
by Lemma \ref{lemma-on-gamma-and-epsilon} (\ref{lemma-item-gamma-g-1-as-a-homomorphism}). 
Set $g:=-\phi_F^{-1}\phi_\P^{-1}
\phi_{\hat{F}}\phi_{\hat{H}}^{-1}d_{\hat{X}}\phi_\P\phi_H$. 
The composition of the two isomorphisms above is equal to
\begin{equation}
\label{eq-auxiliary-composite-isomorphism}
\gamma_{d_X(d_X^{-1}g)}(\E_X,\E_X)\circ d^{-1}_{\M(1,0,-1-n)}.
\end{equation}
The equality (\ref{eq-conjugation-by-d-X-i}) of conjugations by $d_X$ and $\tau_X$ yields
$d_X^{-1}g=\psi$. The isomorphism 
(\ref{eq-auxiliary-composite-isomorphism}) is equal to 
$\gamma_{\psi,1}(\E_X,\E_X)$, by 
Lemma \ref{lemma-on-gamma-and-epsilon} (\ref{lemma-item-gamma-g-1-as-a-homomorphism}).
}
\end{proof}

Let 
\begin{equation}
\label{eq-ort-S+}
ort:G(S^+)^{even}\rightarrow \Integers/2\Integers
\end{equation}
be the pull back of the character of $O(S^+)$ 
corresponding to the orientation of the positive cone in $S^+_\RealNumbers$ and analogous to the character 
(\ref{eq-orientation-character}). 
The kernel of $ort$ is $\Spin(S^+)$.
Let $\Integers/2\Integers\ltimes \Spin(S^+)$ be the semi-direct product
with multiplication given by $(\epsilon_1,g_1)(\epsilon_2,g_2):=(\epsilon_1+\epsilon_2,\tau_X^{\epsilon_2}g_1\tau_X^{\epsilon_2}g_2)$.
We have the isomorphism 
\[
\lfloor:G(S^+)^{even}\rightarrow \Integers/2\Integers\ltimes \Spin(S^+),
\]
given by $\lfloor(g)=(ort(g),\tau_X^{ort(g)}g)$.
Set $s_n:=(1,0,-n)$, $n\geq 3$. Note that $\tau_X$ belongs to $G(S^+)^{even}_{s_n}$, 
by Example \ref{example-an-element-of-the-even-clifford-group-of-S-plus}. 
Let $\gamma:\Integers/2\Integers\rtimes \Spin(S^+)\rightarrow 
\End H^*(\M(s_n),\Integers)$ send $(\epsilon,g)$ to the correspondence homomorphism 
induced by the class $\gamma_{g,\epsilon}(\E_{s_n},\E_{s_n})$. Set 
\begin{equation}
\label{eq-mon-is-gamma-circ-lfloor}
\mon:=\gamma\circ \lfloor : G(S^+)^{even}\rightarrow \End H^*(\M(s_n),\Integers).
\end{equation}
Let $D_{X\times \M(s_n)}$ be the automorphism of $H^{even}(X\times \M(s_n),\Integers)$ acting 
on $H^{2i}(X\times \M(s_n),\Integers)$ by $(-1)^i$.
Let $\tau_\M$ to be the unique solution of the equation 
\begin{equation}
\label{eq-tau-M}
1\otimes \tau_\M \ \ = \ \ (\tau_X\otimes 1)\circ D_{X\times \M(s_n)}.
\end{equation}
Explicitly, $\tau_\M$ acts on $H^j(\M(s_n),\Integers)$ via 
$(-1)^{j(j+1)/2}$. 
We get the factorization $D_{X\times \M}=\tau_X\otimes\tau_\M$. 
We get the group homomorphism 
\begin{eqnarray*}
G(S^+)^{even}_{s_n}&\rightarrow &G(S^+)^{even}_{s_n}\times \Aut H^*(\M(s_n),\Integers)
\\
g & \mapsto & (\tau_X^{ort(g)}g\tau_X^{ort(g)},\tau_\M^{ort(g)}\mon(g)),
\end{eqnarray*}
whose image consists of pairs mapping a universal class to a universal class, 
by the following theorem. Set $\tau:=\tau_X$.

\begin{thm}
\label{thm-monodromy-representation-mu}
\begin{enumerate}
\item
\label{thm-item-group-homomorphism}
The class
\[
\gamma_{\tau^{ort(g)}g,ort(g)}(\E_{s_n},\E_{s_n}) \ \ \
\in \ \ \ H^{4n+4}(\M(s_n)\times \M(s_n),\Integers)
\]
induces a graded ring automorphism, for every $g\in G(S^+)_{s_n}^{even}$.
The resulting map
\begin{equation}
\label{eq-homomorphism-gamma-from-G-S-plus-even-s-n-to-Mon}
\mon \ : \ G(S^+)_{s_n}^{even} \ \ \ \longrightarrow \ \ \
\Aut H^*(\M(s_n),\Integers)
\end{equation}
is a group homomorphism.
\item
\label{thm-item-gamma-tau-g}
If $ort(g)=0$, then $g\otimes \gamma_{g,0}(\E_{s_n},\E_{s_n})$ 
maps a universal class to a universal class.
\item
\label{thm-item-gamma-tau-g-1}
If $ort(g)=1$, then $g\tau\otimes \gamma_{\tau g,1}(\E_{s_n},\E_{s_n})$ 
maps a universal class to the dual of a universal class.
\item
\label{thm-item-included-in-the-monodromy}
The image of the homomorphism $\mon$ in 
(\ref{eq-homomorphism-gamma-from-G-S-plus-even-s-n-to-Mon})
is contained in the monodromy group $Mon(\M(s_n))$.
\end{enumerate}
\end{thm}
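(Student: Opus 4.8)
The plan is to reduce all four assertions to a finite set of generators of $G(S^+)^{even}_{s_n}$ and then propagate, through composition, the three properties at stake: inducing a graded ring automorphism, mapping a universal class to a universal class (resp.\ to the dual of one), and being a monodromy operator. By Lemma \ref{lemma-generators-for-stabilizer-in-G-V} the subgroup $\Spin(S^+)_{s_n}$ is generated by $\Spin(S^+)_{e_1,e_2}\cong SL[H^1(X,\Integers)]$ together with the products $t_1t_2$ of reflections in $+2$ vectors $t_i=(1,A_i,n)$, $A_i\in H^2(X,\Integers)$ primitive; and $G(S^+)^{even}_{s_n}$ contains $\Spin(S^+)_{s_n}$ with index $2$, with $\tau=\tau_X$ in the nontrivial coset (Example \ref{example-an-element-of-the-even-clifford-group-of-S-plus} and the extension $1\to\Spin(V)_{s_n}\to G(S^+)^{even}_{s_n}\to\Integers/2\Integers\to 1$). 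Hence it is generated by those elements together with any single element over the stabilizer whose orientation character is $1$.

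Next I would realise the generators geometrically. For $g\in\Spin(S^+)_{e_1,e_2}$: deforming the complex structure on the real torus underlying $X$ yields a family of abelian surfaces realising $SL[H^1(X,\Integers)]$ as monodromy on $H^1$ (the orientation-preserving mapping class group of the underlying real $4$-torus), and $\M(s_n)\cong X^{[n]}\times\hat{X}$ together with its universal sheaf deforms over the same base; the induced monodromy operator on $H^*(\M(s_n),\Integers)$ therefore maps a universal class to a universal class, so by Lemma \ref{lemma-recovering-f} it equals $\gamma_{g,0}(\E_{s_n},\E_{s_n})$, which settles parts (\ref{thm-item-gamma-tau-g}) and (\ref{thm-item-included-in-the-monodromy}) for these $g$. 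For the products $t_1t_2$, Corollary \ref{cor-composition-of-two-reflections-by-minus-two-vectors-is-a-monodromy-operator} --- applied with Mukai vector $(1,0,-n)$ and line bundles $F_i$ with $c_1(F_i)$ primitive and $\chi(F_i)=n-1$, so that $\phi_{F_i}^{-1}(s)=(1,-c_1(F_i),n)$ --- shows directly that $\gamma_{t_1t_2,0}(\E_{s_n},\E_{s_n})$ is a monodromy operator and that $(t_1t_2)\otimes\gamma_{t_1t_2,0}$ maps a universal class to a universal class. Finally, Corollary \ref{cor-an-orientation-reversing-monodromy}(\ref{cor-item-an-orientation-reversing-monodromy-op}) exhibits a monodromy operator of the form $\gamma_{\tau g,1}(\E_{s_n},\E_{s_n})$ for an element $g\in G(S^+)^{even}_{s_n}$ with $ort(g)=1$ such that the corresponding twisted pair maps a universal class to the dual of a universal class; since the index is $2$, this $g$ together with the generators of $\Spin(S^+)_{s_n}$ generates $G(S^+)^{even}_{s_n}$.

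The propagation step is then formal. An arbitrary $g\in G(S^+)^{even}_{s_n}$ is a word in these generators, and the composition lemmas of Section \ref{sec-generators-for-the-cohomology} --- Corollary \ref{cor-composition} and Lemmas \ref{lemma-on-gamma-and-epsilon}, \ref{lemma-composition-of-gamma-g-1-and-gamma-h-1}, \ref{lemma-composition-of-gamma-g-0-and-gamma-h-1}, \ref{lemma-dual-of-a-universal-class-to-dual-of-a-universal-class} --- show that composing factors each mapping a universal class to a universal class (factors of orientation $0$) or to the dual of one (factors of orientation $1$) again has this property, with the parameter $\epsilon$ and the operators $\tau_X,\tau_\M$ bookkept exactly as in the semidirect product $\Integers/2\Integers\ltimes\Spin(S^+)$ under the isomorphism $\lfloor$. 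Hence $\gamma_{\tau^{ort(g)}g,ort(g)}(\E_{s_n},\E_{s_n})$ is a composition of ring automorphisms, so a ring automorphism, which proves the first half of (\ref{thm-item-group-homomorphism}); Lemma \ref{lemma-recovering-f} forces it to depend only on $g$ and not on the chosen word, so $\mon$ in (\ref{eq-homomorphism-gamma-from-G-S-plus-even-s-n-to-Mon}) is well defined, and the same composition lemmas make it multiplicative, completing (\ref{thm-item-group-homomorphism}) and yielding (\ref{thm-item-gamma-tau-g}) and (\ref{thm-item-gamma-tau-g-1}). Since each factor is a monodromy operator and a composition of parallel transport operators is again one, (\ref{thm-item-included-in-the-monodromy}) follows.

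I expect the main obstacle to be the orientation/sign bookkeeping in the propagation: tracking how $ort$, the dualization automorphisms $D$, and $\tau_X,\tau_\M$ interact along an arbitrary word, so that the twisted pair $g\mapsto(\tau_X^{ort(g)}g\tau_X^{ort(g)},\tau_\M^{ort(g)}\mon(g))$ consistently maps universal classes to universal classes and $\mon$ is genuinely a homomorphism into the semidirect product. The composition lemmas of Section \ref{sec-generators-for-the-cohomology} are built for exactly this, so the remaining work is an organised case analysis on the orientation of each factor rather than anything substantive. A secondary delicate point is checking that the deformation realising $SL[H^1(X,\Integers)]$ really induces $\gamma_{g,0}$ --- that is, that the universal sheaf extends over the whole base --- which I would deduce from Lemma \ref{lemma-recovering-f} once the monodromy operator is known to carry a universal class to a universal class.
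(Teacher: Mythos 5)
Your proposal follows essentially the same route as the paper's proof: generators of $\Spin(S^+)_{s_n}$ from Lemma \ref{lemma-generators-for-stabilizer-in-G-V}, the $SL(H^1(X,\Integers))$ part realized by deforming the complex structure of the torus (the paper works over the moduli of marked complex $2$-tori with the relative Douady space times the dual torus, so the fibres are general complex tori rather than abelian surfaces, the relative universal sheaf exists by construction and its Chern character is a flat, hence monodromy-invariant, section, after which Lemma \ref{lemma-recovering-f} identifies the operator with $\gamma_{g,0}$), the reflection products $\tilde{m}_{t_1}\tilde{m}_{t_2}$ handled by Corollary \ref{cor-composition-of-two-reflections-by-minus-two-vectors-is-a-monodromy-operator}, the orientation-reversing coset by Corollary \ref{cor-an-orientation-reversing-monodromy}, and the composition lemmas of Section \ref{sec-generators-for-the-cohomology} assembling multiplicativity and parts (\ref{thm-item-gamma-tau-g}), (\ref{thm-item-gamma-tau-g-1}), (\ref{thm-item-included-in-the-monodromy}). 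Apart from that minor imprecision about projectivity of the fibres, this is the argument the paper gives.
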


\begin{proof}
\ref{thm-item-group-homomorphism}) The fact that the map $\mon$ is a group homomorphism would follow, once the rest of the statements in 
parts (\ref{thm-item-group-homomorphism}), (\ref{thm-item-gamma-tau-g}) and 
(\ref{thm-item-gamma-tau-g-1}) of the Theorem are proven, by 
Corollary \ref{cor-composition}, and 
Lemmas \ref{lemma-composition-of-gamma-g-1-and-gamma-h-1} and \ref{lemma-composition-of-gamma-g-0-and-gamma-h-1}.

We first prove part (\ref{thm-item-group-homomorphism}) for the subgroup  of $\Spin(S^+)_{s_n}$ stabilizing both $(1,0,0)$ and $(0,0,1)$.
This subgroup is the image of $SL(H^1(X,\Integers))$ via the inverse of the isomorphism $f$ in 
Lemma \ref{lemma-stabilizer-of-H0-and-H4-is-SL-4}. A {\em marked compact complex torus of dimension $2$} is a pair 
$(X',\eta)$ consisting of a compact complex torus $X'$ of dimension $2$ and an isomorphism $\eta:H^1(X',\Integers)\rightarrow H^1(X,\Integers)$,
where $X$ is our fixed abelian surface. Two pairs $(X_1,\eta_1)$ and $(X_2,\eta_2)$ are {\em isomorphic}, if there exists an isomorphism $f:X_1\rightarrow X_2$, such that $\eta_2=\eta_1f^*$.
Let $\fM$ be the moduli space of isomorphism classes of marked compact complex $2$-dimensional tori.
Let $\fM^0$ be the connected component containing $(X,id)$. The group $GL(H^1(X,\Integers))$ acts on $\fM$, via
$g(X',\eta)=(X',g\eta)$, and the subgroup $SL(H^1(X,\Integers))$ leaves $\fM^0$ invariant. We have a universal torus $\pi:\X\rightarrow \fM^0$, a relative Douady space $\X^{[n]}\rightarrow \fM^0$ of length $n$ zero-dimensional subschemes of fibers of $\pi$,
a relative dual torus $\hat{\pi}:\hat{\X}\rightarrow \fM^0$, and so a relative moduli space $\M_\X(s_n):=\hat{\X}\times_{\fM^0}\X^{[n]}\rightarrow \fM^0$ of rank $1$ torsion free sheaves on fibers of $\pi$ with Chern character $s_n$. Furthermore, we have a universal 
sheaf $\E$ over $\X\times_{\fM^0}\M_\X(s_n)$. 

Let $\M_X(s_n)$ be the fiber of $\M_\X(s_n)$ over $(X,id)$.
The group $SL(H^1(X,\Integers))$ acts via monodromy operators on $H^*(X,\Integers)$ and on $H^*(\M_X(s_n),\Integers)$, by
\cite[Lemma 6.6]{markman-monodromy-I}. The Chern character $ch(\E)$ maps to a global flat section of the local system 
$R\Pi_*\RationalNumbers$, where $\Pi:\X\times_{\fM^0}\M_\X(s_n)\rightarrow \fM^0$ is the natural morphism.
Hence, $ch(\E)$ restricts to an $SL(H^1(X,\Integers))$-invariant class in $H^*(X\times \M_X(s_n))$, under the diagonal monodromy action. The statement of Part (\ref{thm-item-group-homomorphism}) follows for the image of $SL(H^1(X,\Integers))$ in 
$\Spin(S^+)_{s_n}$, by Lemma \ref{lemma-recovering-f}. The statement of Part (\ref{thm-item-group-homomorphism}) follows
for the elements  of $\Spin(S^+)_{s_n}$ which are the compositions $\tilde{m}_{t_1}\tilde{m}_{t_2}$, where $t_i=(1,A_i,n)\in s_n^\perp$,  satisfying $(t_i,t_i)_{S^+}=2$, and $A_i$ is a primitive class in $H^2(X,\Integers)$, by Corollary \ref{cor-composition-of-two-reflections-by-minus-two-vectors-is-a-monodromy-operator}. The statement of Part (\ref{thm-item-group-homomorphism}) follows for the whole of
$\Spin(S^+)_{s_n}$, since the latter is generated by the image of $SL(H^1(X,\Integers))$  and compositions $\tilde{m}_{t_1}\tilde{m}_{t_2}$ as above, by Lemma \ref{lemma-generators-for-stabilizer-in-G-V}. 

Part (\ref{thm-item-gamma-tau-g}) 
is evident for elements of $SL(H^1(X,\Integers))$ and it was verified in Corollary \ref{cor-composition-of-two-reflections-by-minus-two-vectors-is-a-monodromy-operator} for the above mentioned compositions $\tilde{m}_{t_1}\tilde{m}_{t_2}$, so it follows for all elements of $\Spin(S^+)_{s_n}$. 



In the verification of Part (\ref{thm-item-gamma-tau-g-1}) we use the evident identity 
$d_X(\tau g)d_X^{-1}=\tau(\tau g)\tau^{-1}=
g\tau$ and Definition \ref{def-g-gamma-maps-universal-classes-to-such} part 
\ref{def-item-g-gamma-1-maps-dual-of-univ-class-to-univ-class}. Then Corollary \ref{cor-an-orientation-reversing-monodromy} verifies Part (\ref{thm-item-gamma-tau-g-1})
for $g=\tau\psi$, which belongs to $G(S^+)_{s_n}^{even}$ but not to $\Spin(S^+)_{s_n}$.
The result of Part (\ref{thm-item-group-homomorphism}) thus extends to  
$G(S^+)_{s_n}^{even}$. 

\ref{thm-item-included-in-the-monodromy})
The statement was verified for elements of $SL(H^1(X,\Integers))$ above, and is established in 
Corollaries \ref{cor-composition-of-two-reflections-by-minus-two-vectors-is-a-monodromy-operator} and 
\ref{cor-an-orientation-reversing-monodromy}, hence it is verified for a set of generators of $G(S^+)_{s_n}^{even}$. 
\end{proof}

%
\subsection{The half-spin representations as subquotients of the cohomology ring}
\label{sec-half-spin}
We identify next $H^2(K_X(n-1),\Integers)$ as a $G(S_X^+)^{even}_{s_n}$-module by determining the equivariance property of the
isomorphism $\theta$ in (\ref{eq-mukai-isomorphism}). We do this more generally for all degrees as follows.
Let $\tilde{I}_d$ be the ideal in $H^*(\M(s_n),\Integers)$ generated by $H^i(\M(s_n),\Integers)$, $1\leq i \leq d$, denote by $\tilde{I}^j_d$ its graded summand of degree $j$, and set $I:=\oplus_{d\geq 2}\tilde{I}_{d-1}^d$.
Set $Q(\M(s_n)):=H^*(\M(s_n),\Integers)/I$. 
In particular, $Q^1(\M(s_n))=H^1(\M(s_n),\Integers)$, and $Q^d(\M(s_n))=H^d(\M(s_n),\Integers)/I^d$, where $I^d:=\tilde{I}_{d-1}^d$.
For example, 
\begin{eqnarray*}
I^2&=&H^1(\M(s_n),\Integers)\cup H^1(\M(s_n),\Integers),
\\
I^3&=&H^1(\M(s_n),\Integers)\cup H^2(\M(s_n),\Integers),
\\
I^4&=&H^1(\M(s_n),\Integers)\cup H^3(\M(s_n),\Integers)+H^2(\M(s_n),\Integers)\cup H^2(\M(s_n),\Integers).
\end{eqnarray*}
We have the homomorphism $\tilde{\theta}:S_X\rightarrow H^*(\M(s_n),\RationalNumbers)$, given by 
\begin{equation}
\label{eq-tilde-theta-homomorphism}
\tilde{\theta}(\lambda):=\pi_{\M,*}\left[\pi_X^*(\tau_X(\lambda))\cup ch(\E)\right].
\end{equation}
Set $S^1_X:=S_X^-$, $S^2_X:=S_X^+\cap s_n^\perp$, and for $j>2$ let $S^j_X$ be $S^+_X$, if $j$ is even, and $S^-_X$ if $j$ is odd. Let 
\begin{equation}
\label{eq-tilde-theta-j-rational}
\tilde{\theta}_j: S^j_X\rightarrow Q^j(\M(s_n))\otimes_\Integers\RationalNumbers
\end{equation}
be the composition of $\tilde{\theta}$ with the inclusion $S^j_X\subset S_X$ and the projection $H^*(\M(s_n),\RationalNumbers)\rightarrow Q^j(\M(s_n))\otimes_\Integers\RationalNumbers.$ 
Note that $\tilde{\theta}_j$ is independent of the choice of the universal sheaf, by our definition of $S^j_X$ and
$Q^j(\M(s_n)).$
Note also that $Q^2(\M(s_n))$ is isomorphic to the second integral cohomology $H^2(K_a(s_n),\Integers)$ of the generalized Kummer, $\tilde{\theta}_2$ above is injective and has integral values, and 
the integral isomorphism $\theta$ in Equation (\ref{eq-mukai-isomorphism}) factors through 
$\tilde{\theta}_2$, since $H^1(K_a(s_n),\Integers)$ vanishes. 

The action of any graded ring automorphism of $H^*(\M(s_n),\Integers)$ descends to an action on $Q^j(\M(s_n))$,
for all $j\geq 1$. 
Similarly, the action of $\tau_\M$, given in (\ref{eq-tau-M}), descends to one on $Q^j(\M(s_n))$,
for all $j\geq 1$. 

\begin{cor}
\label{cor-monodromy-equivariance-of-theta}
The image of $\tilde{\theta}_j$, $j\geq 1$, is invariant under the monodromy action of $G(S^+_X)^{even}_{s_n}$ via the homomorphism $\mon$ given in Equation (\ref{eq-homomorphism-gamma-from-G-S-plus-even-s-n-to-Mon}) and the image spans $Q^j(\M(s_n))\otimes_\Integers\RationalNumbers$. Furthermore, for all $\lambda\in S^j_X$ and all $g\in G(S^+_X)^{even}_{s_n}$ we have
\begin{equation}
\label{eq-monodromy-equivariance-of-theta}
\mon_g\left(\tilde{\theta}_j(\lambda)\right)=\tau_\M^{ort(g)}\left[
\tilde{\theta}_j\left(\tau_X^{ort(g)}g\tau_X^{ort(g)}(\lambda)\right)
\right].
\end{equation}
In particular, $\tilde{\theta}_j$ is $\mon$-equivariant with respect to the subgroup $\Spin(S^+_X)_{s_n}$ of $G(S^+_X)^{even}_{s_n}$.
\end{cor}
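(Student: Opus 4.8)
The key input is Theorem~\ref{thm-monodromy-representation-mu}, parts~(\ref{thm-item-gamma-tau-g}) and~(\ref{thm-item-gamma-tau-g-1}). Writing $\epsilon:=ort(g)$ and letting $\mon_g:=\gamma_{\tau_X^\epsilon g,\epsilon}(\E_{s_n},\E_{s_n})$ be the monodromy operator of~(\ref{eq-homomorphism-gamma-from-G-S-plus-even-s-n-to-Mon}), and using $td_X=1$, Definition~\ref{def-g-gamma-maps-universal-classes-to-such} unwinds to
\[
(g\otimes\mon_g)\bigl(ch(\E_{s_n})\bigr)=ch(\E_{s_n})\,\pi_\M^*\exp(c_g)\ \ (\epsilon=0),\qquad
\bigl((g\tau_X)\otimes\mon_g\bigr)\bigl(ch(\E_{s_n})\bigr)=ch(\E_{s_n}^\vee)\,\pi_\M^*\exp(c_g)\ \ (\epsilon=1),
\]
for a class $c_g\in H^2(\M(s_n),\Integers)$. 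The plan is to substitute these into the defining formula~(\ref{eq-tilde-theta-homomorphism}) for $\tilde\theta$. Since $\mon_g$ acts only on the second Künneth factor it commutes with the pushforward $\pi_{\M,*}$ along $X$ and with cup product by classes pulled back from $X$; so, applying $(g^{-1}\otimes 1)$ in the first case and $((g\tau_X)^{-1}\otimes 1)$ together with the factorization $D_{X\times\M(s_n)}=\tau_X\otimes\tau_\M$ following~(\ref{eq-tau-M}) (to rewrite $ch(\E_{s_n}^\vee)$) in the second, and extracting $\pi_\M^*\exp(c_g)$ by the projection formula, I would obtain the operator identity $\mon_g\circ\tilde\theta=\bigl(w\mapsto\exp(c_g)\cup w\bigr)\circ\tau_\M^{\epsilon}\circ\tilde\theta\circ(\tau_X^{\epsilon}g\tau_X^{\epsilon})$ of maps $S_X\to H^*(\M(s_n),\RationalNumbers)$. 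The passage from $g^{-1}$ acting on a Künneth factor of $ch(\E_{s_n})$ to $g$ acting on the argument of $\tilde\theta$ is precisely the assertion that $\tau_X^\epsilon g\tau_X^\epsilon$ is the adjoint, with respect to the Mukai pairing $(\bullet,\bullet)_S$ on $S_X=H^*(X,\Integers)$ used in~(\ref{eq-tilde-theta-homomorphism}), of $\tau_X^\epsilon g^{-1}\tau_X^\epsilon$.

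Next I would compose with the projection $H^*(\M(s_n),\RationalNumbers)\to Q^j(\M(s_n))\otimes\RationalNumbers$. Because $c_g\in H^2$, for each argument $\mu$ the degree-$j$ component of $\exp(c_g)\cup\tilde\theta(\mu)-\tilde\theta(\mu)$ lies in the ideal $I^j$ when $j\ge 3$ (each term $\tfrac1{p!}c_g^p\cup[\tilde\theta(\mu)]_{j-2p}$, $p\ge1$, is a product involving a factor in $H^1\cup\cdots\cup H^{j-1}$) and is zero when $j=1$; the only delicate case is $j=2$, where the correction is $c_g\cup[\tilde\theta(\mu)]_0$, and restricting $ch(\E_{s_n})$ to $X\times\{pt\}$ yields $[\tilde\theta(\mu)]_0=(\mu,s_n)_{S^+}$, which vanishes because $\mu=\tau_X^\epsilon g\tau_X^\epsilon(\lambda)$ with $\lambda\in S^2_X=S^+_X\cap s_n^\perp$ and $\tau_X^\epsilon g\tau_X^\epsilon$ preserves $s_n^\perp$. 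Since $\mon_g$ and $\tau_\M$ preserve the cohomological grading and the ideal $I^j$, the operator identity descends to~(\ref{eq-monodromy-equivariance-of-theta}). Invariance of the image of $\tilde\theta_j$ under $\mon$ is then immediate (the right side of~(\ref{eq-monodromy-equivariance-of-theta}) again lies in that image, using that $g$ and $\tau_X$ preserve $S^+_X$, $S^-_X$ and $s_n$, hence $S^j_X$), and the asserted $\Spin(S^+_X)_{s_n}$-equivariance is the special case $\epsilon=0$.

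For the spanning statement the case $j=2$ is given, since $\tilde\theta_2$ factors the isomorphism $\theta$ of~(\ref{eq-mukai-isomorphism}). For $j=1$ and $j\ge3$ I would invoke Theorem~\ref{thm-diagonal}(\ref{thm-item-class-of-diagonal}): the diagonal of $\M(s_n)$ is a universal polynomial in the Künneth components of $\pi_{12}^*(\E_{s_n})^\vee\otimes\pi_{23}^*(\E_{s_n})$, which forces $H^*(\M(s_n),\RationalNumbers)$ to be generated as a ring by the classes $\pi_{\M,*}\bigl(\pi_X^*(x)\cup ch(\E_{s_n})\bigr)=\tilde\theta(\tau_X x)$, $x\in H^*(X)$; passing to $Q^j$ annihilates decomposables, so $Q^j(\M(s_n))\otimes\RationalNumbers$ is spanned by the degree-$j$ parts of $\tilde\theta(S_X)$, and since such a part of $\tilde\theta(\lambda)$ depends only on the component of $\lambda$ of cohomological parity $\equiv j\pmod 2$, the argument may be taken in $S^j_X$.

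The step I expect to be the main obstacle is the orientation-reversing case $\epsilon=1$: one must keep simultaneous track of the main anti-involution $\tau_X$ — which is \emph{not} an isometry of the Mukai pairing on the odd summand $S^-_X$ — the involution $\tau_\M$ arising from the factorization of the dualization $D_{X\times\M(s_n)}$, and the dualization built into Theorem~\ref{thm-monodromy-representation-mu}(\ref{thm-item-gamma-tau-g-1}), so that the composite reproduces exactly $\tau_\M\circ\tilde\theta_j\circ(\tau_X g\tau_X)$ on each $S^j_X$ with no stray sign. Reducing this to the generating set of $G(S^+_X)^{even}_{s_n}$ supplied by Lemma~\ref{lemma-generators-for-stabilizer-in-G-V} together with the single orientation-reversing monodromy operator of Corollary~\ref{cor-an-orientation-reversing-monodromy} should make it tractable; the remaining steps are formal manipulations of correspondences and the degree bookkeeping that suppresses the $\exp(c_g)$-twist modulo $I^j$.
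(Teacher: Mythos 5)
Your proposal follows essentially the same route as the paper's proof: expand $ch(\E_{s_n})$ in K\"unneth factors, invoke the universal-class properties of Theorem \ref{thm-monodromy-representation-mu}(\ref{thm-item-gamma-tau-g})--(\ref{thm-item-gamma-tau-g-1}), move $g$ onto the argument of $\tilde{\theta}$ via the Mukai pairing, factor the dualization as $\tau_X\otimes\tau_\M$ in the orientation-reversing case, and note that the $\exp(c_g)$-twist (and the dualization) become invisible after projecting to $Q^j(\M(s_n))\otimes_\Integers\RationalNumbers$, with spanning coming from the generation of $H^*(\M(s_n),\RationalNumbers)$ by K\"unneth factors of $ch(\E)$ (the paper quotes \cite[Cor. 2]{markman-diagonal}, which you rederive from Theorem \ref{thm-diagonal}). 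The only additions are cosmetic, e.g.\ your explicit verification that the $j=2$ correction $c_g\cup[\tilde{\theta}(\mu)]_0$ vanishes because $(\mu,s_n)_{S^+}=0$, a point the paper leaves implicit, so the argument is correct and matches the paper's.
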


\begin{proof}
The image of $\tilde{\theta}_j$ spans $Q^j(\M(s_n))\otimes_\Integers\RationalNumbers$, since the K\"{u}nneth factors of $ch(\E)$
generate $H^*(\M(s_n),\RationalNumbers)$, by \cite[Cor. 2]{markman-diagonal}.
Let $\{x_1, \dots, x_{16}\}$ be a basis of $S_X$, with each class either even or odd. Let 
$ch(\E):=\sum_i x_i\otimes e_i$ be its K\"{u}nneth decomposition.
Then $\tilde{\theta}(\lambda)=\sum_i(\lambda,x_i)_{S_X}e_i$.
If $g$ belongs to $\Spin(V)_{s_n}$, so that $ort(g)=0$, then
\begin{eqnarray*}
\mon_g(\tilde{\theta}(g^{-1}(\lambda)))&=&\mon_g(\sum_i(g^{-1}(\lambda),x_i)_{S_X}e_i)=
\sum_i(\lambda,g(x_i))_{S_X}\mon_g(e_i)
\\
&=&\pi_{\M,*}\left[\pi_X^*(\tau_X(\lambda))\cup (g\otimes\mon_g)(ch(\E))\right].
\end{eqnarray*}
Consequently, we get
\begin{equation}
\label{eq-equivariance-of-tilde-theta}
\mon_g(\tilde{\theta}(g^{-1}(\lambda)))=\tilde{\theta}(\lambda)\exp(c_g),
\end{equation}
where the class $c_g$ is given in Equation (\ref{eq-c-g}), 
since $(g\otimes\mon_g)(ch(\E))$ is a universal class, by Theorem \ref{thm-monodromy-representation-mu}.
The projection of the right hand side to $Q(\M(s_n))\otimes_\Integers\RationalNumbers$ is equal to that of $\tilde{\theta}(\lambda)$.
The identity (\ref{eq-monodromy-equivariance-of-theta}) follows. 

Assume $ort(g)=1$. Then
\begin{eqnarray}
\label{eq-mu-g-when-ort-g-is-1} \hspace{2Ex} \
\mon_g(\tilde{\theta}(\tau_X g^{-1}\tau_X(\lambda)))&=&
\sum_i(\lambda,\tau_X g\tau_X(x_i))_{S_X}\mon_g(e_i)
\\
\nonumber
&=& \pi_{\M,*}\left[
\pi_X^*(\tau_X(\lambda))\cup
\left\{((\tau_X g\tau_X)\otimes \mon_g)(ch(\E))
\right\}
\right].
\end{eqnarray}
Now, $((g\tau_X)\otimes \mon_g)(ch(\E))$ is the dual of a universal class, by Theorem \ref{thm-monodromy-representation-mu}. So
\[
((\tau_X g\tau_X)\otimes \mon_g)(ch(\E))
\]
projects to the image of $(1\otimes \tau_{\M})(ch(\E))$ in $H^*(X,\Integers)\otimes Q(\M(s_n))$. We conclude that
the right hand side of (\ref{eq-mu-g-when-ort-g-is-1}) and $\tau_\M(\tilde{\theta}(\lambda))$ project to the same class in $Q(\M(s_n))$. Hence, so does the left hand side of  (\ref{eq-mu-g-when-ort-g-is-1}).
We have thus verified the equation obtained from (\ref{eq-monodromy-equivariance-of-theta})
by substituting $(\tau_Xg\tau_X)(\lambda)$ for $\lambda$. 
We conclude that 
the identity (\ref{eq-monodromy-equivariance-of-theta})  follows in this case as well.
\end{proof}

We construct next an integral analogue of the  homomorphism (\ref{eq-tilde-theta-j-rational}) into $Q^j(\M(s_n))$.
Given a topological space $M$, 
let $K(M):=K^0(M)\oplus K^1(M)$  be the topological $K$-ring of $M$. 
The Chern character induces a ring isomorphism 
$ch:K(M)\otimes_\Integers\RationalNumbers\rightarrow H^*(M,\RationalNumbers)$ sending $K^0(M)$ into $H^{even}(M,\RationalNumbers)$ and $K^1(M)$ into
$H^{odd}(M,\RationalNumbers)$ \cite[V.3.26]{karoubi}. 
The Chern classes $c_{j/2}(x)\in H^{j}(M,\Integers)$ are defined for an odd integer $j\geq 1$ in \cite[Def. 19]{markman-integer}.
They satisfy the equality
\begin{equation}
\label{eq-chern-class-for-a-half-integer}
ch_{k-\frac{1}{2}}(x)=\frac{(-1)^{k-1}}{(k-1)!}c_{k-\frac{1}{2}}(x),
\end{equation}
for $x\in K^1(\M(s_n))$ and a positive integer $k$, by \cite[Lemma 22(2)]{markman-integer}.

For the abelian surface $X$ the Chern character is integral and we get an 
isomorphism $ch:K(X)\rightarrow H^*(X,\Integers)$. Integrality follows for $K^0(X)$, since the intersection pairing on $H^2(X,\Integers)$ is even, and for $K^1(X)$ since the coefficient of the right hand side of Equation (\ref{eq-chern-class-for-a-half-integer}) is $\pm1$ for $k=1,2$. 
Surjectivity of $ch:K(X)\rightarrow H^*(X,\Integers)$ follows, since $H^*(X,\Integers)$ is generated by $H^1(X,\Integers)$,
the latter is spanned by pullback of classes via maps to a circle $S^1$, and surjectivity of $ch:K(S^1)\rightarrow H^*(S^1,\Integers)$ is clear.
The K\"{u}nneth theorem in $K$-theory yields an isomorphism
\[
\left[K^0(X)\otimes K^0(\M(s_n))\right]\oplus \left[K^1(X)\otimes K^1(\M(s_n))\right]\rightarrow
K^0(X\times\M(s_n)),
\]
by \cite[Cor. 2.7.15]{atiyah}. Choose a basis $\{x_1, \dots, x_{16}\}$ of $K(X)$, which is a union of a basis of $K^0(X)$ and a basis of
$K^1(X)$.
Let $[\E]\in K(X\times\M(s_n))$ be the class of a universal sheaf. We get the K\"{u}nneth decomposition
\[
[\E]=\sum_{i=1}^{16} x_i\otimes e_i,
\]
with $e_i$ either in $K^0(\M(s_n))$ or $K^1(\M(s_n))$.

\begin{thm}
\label{thm-chern-classes-of-kunneth-factors-are-integral-generators}
\cite[Theorem 1]{markman-integer}
The Chern classes of $\{e_i\ : \ 1\leq i \leq 16\}$ generate the integral cohomology ring $H^*(\M(s_n),\Integers)$.
\end{thm}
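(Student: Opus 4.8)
The plan is to deduce this from the integral formula for the class of the diagonal, following \cite[Theorem 1]{markman-integer}. Write $\M:=\M(s_n)$ and $m:=\dim_{\ComplexNumbers}\M$, and recall Theorem \ref{thm-diagonal}(\ref{thm-item-class-of-diagonal}) in the form valid for the topological universal class $[\E]\in K(X\times\M)$ (Remark \ref{rem-class-of-the-diagonal-in-terms-of-twisted-universal-sheaves} and \cite[Prop. 24]{markman-integer}): the class of the diagonal satisfies $[\Delta_\M]=c_m\big(-\,\pi_{13!}(\pi_{12}^*[\E]^\vee\otimes\pi_{23}^*[\E])\big)$ in $H^{2m}(\M\times\M,\Integers)$, where the dual and the pushforward are taken in topological $K$-theory; also $H^*(\M,\Integers)$ is torsion free by Theorem \ref{thm-diagonal}(2), though this is not needed below. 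First I would compute this $K$-theory class in terms of the K\"unneth factors: substituting $[\E]=\sum_{i=1}^{16}x_i\otimes e_i$ and using that external tensor products are compatible with proper pushforward, one gets
\[
\pi_{13!}\big(\pi_{12}^*[\E]^\vee\otimes\pi_{23}^*[\E]\big)\ =\ \sum_{i,j}\chi\big(X,\,x_i^\vee\otimes x_j\big)\;e_i^\vee\boxtimes e_j ,
\]
since the pushforward of a class on $X$ along $X\to\mathrm{pt}$ is its Euler characteristic. Hence $[\Delta_\M]=c_m\big(-\sum_{i,j}\chi(X,x_i^\vee\otimes x_j)\,e_i^\vee\boxtimes e_j\big)$, an $m$-th Chern class of an explicit $\Integers$-linear combination of external products of the $e_i^\vee$ with the $e_j$.

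Next I would rewrite this in terms of the Chern classes of the K\"unneth factors themselves. Let $R\subseteq H^*(\M,\Integers)$ be the subring generated by the Chern classes $c_k(e_i)$ together with the half-integral Chern classes $c_{k-\frac12}(e_i)$, for $1\le i\le 16$. By the splitting principle the total Chern class of an external product $a\boxtimes b$ is an integer polynomial in the pullbacks of the Chern classes of $a$ and of $b$ and their ranks; the total Chern class of $a^\vee$ is an integer polynomial in that of $a$; and $c_m$ of an $\Integers$-linear combination of $K$-theory classes is an integer polynomial in the Chern classes of the summands. For the summands $e_i$ lying in $K^1(\M)$ one invokes the integral half-integral Chern classes of \cite[Def. 19, Lemma 22]{markman-integer}, which obey the same Whitney and external-product formalism. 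It follows that $[\Delta_\M]$ lies in the image of the external product map $R\otimes_{\Integers}R\to H^*(\M\times\M,\Integers)$; concretely $[\Delta_\M]=\sum_\alpha p_\alpha\boxtimes q_\alpha$ with all $p_\alpha,q_\alpha\in R$.

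Finally I would run the standard diagonal argument. Let $q_1,q_2:\M\times\M\to\M$ be the two projections. For any $\xi\in H^*(\M,\Integers)$, the fact that the diagonal represents the identity correspondence gives $\xi=q_{2,*}\big([\Delta_\M]\cup q_1^*\xi\big)$; substituting $[\Delta_\M]=\sum_\alpha p_\alpha\boxtimes q_\alpha$ and applying the projection formula yields
\[
\xi\ =\ \sum_\alpha\Big(\textstyle\int_\M p_\alpha\cup\xi\Big)\,q_\alpha\ \in\ R .
\]
Hence $R=H^*(\M,\Integers)$, which is exactly the assertion.

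The main obstacle is the second step: turning the $K$-theoretic identity for $[\Delta_\M]$ into an \emph{integral} expression in the Chern classes of the K\"unneth factors, in the presence of the odd topological $K$-theory of the abelian surface $X$, so that $[\E]$ genuinely contributes $K^1$-components. This is precisely where one needs the integral theory of Chern classes for $\Integers/2$-graded $K$-theory classes, including the half-integral classes and their Whitney formula for external products, developed in \cite{markman-integer}; once that formalism is available the remaining steps are formal.
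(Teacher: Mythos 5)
The paper does not actually prove this statement—it is quoted directly from \cite[Theorem 1]{markman-integer}—so there is no internal proof to compare against; your outline is essentially a reconstruction of the argument of that reference: expand the integral diagonal-class formula of Theorem \ref{thm-diagonal}(\ref{thm-item-class-of-diagonal}) (in its $K$-theoretic form) through the K\"unneth decomposition of $[\E]$, and then run the standard correspondence argument $\xi=q_{2,*}([\Delta_\M]\cup q_1^*\xi)$. The step you flag as the crux—that $c_m$ of the resulting class is an \emph{integral} polynomial in the integer and half-integer Chern classes of the $e_i$, i.e.\ Whitney-type and external-product formulas valid for the $K^1$-K\"unneth factors—is precisely the technical content of \cite{markman-integer}, so your proposal is correct modulo that black box, which is where essentially all of the work in the cited proof lies.
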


Let $\pi_X^!:K(X)\rightarrow K(X\times \M(s_n))$ be the pull back homomorphism and denote by 
$\pi_{\M,!}:K(X\times \M(s_n))\rightarrow K(\M(s_n))$ the Gysin homomorphism. The involution $\tau_X$ acts on $H^i(X,\Integers)$ via $(-1)^{i(i-1)/2}$, as in 
(\ref{eq-Mukai-pairing}), and we 
denote  the involution 
$ch^{-1}\circ\tau_X\circ ch:K(X)\rightarrow K(X)$ by $\tau_X$ as well. 
We get the homomorphism
$
e:K(X)\rightarrow K(\M(s_n)),
$
given by
\[
e(\lambda):=\pi_{\M,!}\left(\pi_X^!(\tau_X(\lambda))\cup [\E]\right).
\]
The Chern classes of $e(x_i)$, $1\leq i\leq 16$, generate $H^*(\M(s_n),\Integers)$, 
by Theorem \ref{thm-chern-classes-of-kunneth-factors-are-integral-generators}.

Let $\bar{c}_j:K(\M(s_n))\rightarrow Q^{2j}(\M(s_n))$ be the composition of the Chern class map 
$c_j:K(\M(s_n))\rightarrow H^{2j}(\M(s_n),\Integers)$ with the natural projection $H^{2j}(\M(s_n),\Integers)\rightarrow Q^{2j}(\M(s_n))$.
Then $\bar{c}_j$ is a group homomorphism. This is proven in \cite[Prop. 2.6]{markman-constraints}, for an integer $j\geq 0$ and
$K^0(\M(s_n))$, and for a half integer $j$ and $K^1(\M(s_n))$ it follows from Equation (\ref{eq-chern-class-for-a-half-integer}) and the linearity of the Chern character homomorphism, 
hence of the integral homomorphism $(j-\frac{1}{2})!ch_j$. 

Let 
\begin{equation}
\label{eq-double-tilde-theta-j}
\bar{\theta}_j:S_X^j\rightarrow Q^j(\M(s_n)),
\end{equation} 
$j\geq 1$, 
be the composition of 
\begin{equation}
\label{eq-tilde-theta-homomorphism-integral-version}
S_X:=H^*(X,\Integers)\RightArrowOf{ch^{-1}} K(X) \RightArrowOf{e} K(\M(s_n))\RightArrowOf{\bar{c}_j} Q^{j}(\M(s_n))
\end{equation}
 with the inclusion
$S_X^j\rightarrow S_X$. 

\begin{new-lemma}
\label{lemma-tilde-theta-j-is-an-isomorphism}
If the ranks of $S^j_X$ and $Q^j(\M(s_n))$ are equal, then $\bar{\theta}_j$ is an isomorphism and $Q^j(\M(s_n))$
is torsion free. This is the case for $1\leq j\leq 3$.
\end{new-lemma}
\begin{proof}
The first statement is clear, since
the homomorphism $\bar{\theta}_j$ is surjective, for all $j\geq 0$, 
by Theorem \ref{thm-chern-classes-of-kunneth-factors-are-integral-generators}.
$Q^1(\M(s_n))=H^1(\M(s_n),\Integers)$ and its rank is equal to the first Betti number $8$ of the Albanese $X\times\hat{X}$ of
$\M(s_n)$. 
The rank of $Q^2(\M(s_n))$ is equal to that of $H^2(K_X(n-1),\Integers)$, which is $7$, by Theorem \ref{thm-yoshioka}, since $H^1(K_X(n-1),\Integers)$ vanishes. $Q^3(\M(s_n))$ is isomorphic to $Q^3(X^{[n]})$, 
since $\M(s_n)$ is isomorphic to $\hat{X}\times X^{[n]}$ and $H^*(\hat{X},\Integers)$ is generated by $H^1(\hat{X},\Integers)$. 
Now $b_3(X^{[n]})=40$, by G\"{o}ttsche's formula \cite[Cor. 2.3.13]{gottsche}, and
\[
8=\rank(S^-)\geq \rank(Q^3(X^{[n]}))\geq 40-b_3(X)-\rank(Q^2(X^{[n]}))b_1(X)=40-4-7\times 4=8.
\]
Hence, the ranks of $S^-$ and $Q^3(\M(s_n))$ are equal.
\end{proof}

\begin{new-lemma}
\label{lemma-two-theta-j-are-proportional}
The composition of $\bar{\theta}_{2j}$ with the natural homomorphism 
$Q^{2j}(\M(s_n))\rightarrow Q^{2j}(\M(s_n))\otimes_\Integers\RationalNumbers$ is equal to
$(-1)^{j-1}(j-1)!\tilde{\theta}_{2j}$, if $j$ is an integer, and to 
\[
(-1)^{j-\frac{1}{2}}\left(j-\frac{1}{2}\right)!\tilde{\theta}_{2j},
\] 
if $j$ is  half an odd integer. 
Consequently, the integral version of Equation (\ref{eq-monodromy-equivariance-of-theta}), 
with $\tilde{\theta}_k$ replaced by $\bar{\theta}_k$, holds whenever $Q^k(\M(s_n))$ is torsion free, and in particular for 
$k\in\{1,2,3\}$.
\end{new-lemma}

\begin{proof}
We have the commutative diagram
\[
\xymatrix{
K(X)\ar[r]^{\tau_X}\ar[d]_{ch}&K(X)\ar[r]^{\pi_X^!}\ar[d]_{ch}&K(X\times\M)\ar[r]^{\cup[\E]}\ar[d]_{ch}&
K(X\times\M)\ar[r]^{\pi_{\M,!}}\ar[d]_{ch}&K(\M)\ar[d]_{ch}
\\
H^*(X,\Integers) \ar[r]_{\tau_X}&H^*(X,\Integers)\ar[r]_{\pi_X^*}&H^*(X\times\M,\RationalNumbers)\ar[r]_{\cup ch(\E)}& 
H^*(X\times\M,\RationalNumbers) \ar[r]_{\pi_{M,*}}&H^*(\M,\RationalNumbers).
}
\]
The first (left) square commutes, by definition of the top $\tau_X$, the second and third by well known properties of the Chern character, and the fourth by the topological version of Grothendieck-Riemann-Roch and the triviality of the Todd class of $X$. Finally, 
let $\bar{ch}_j:K(\M)\rightarrow Q^{2j}(\M)\otimes_\Integers\RationalNumbers$ be the composition of $ch_j$ with the quotient homomorphism 
$H^{2j}(\M,\RationalNumbers)\rightarrow Q^{2j}(\M)\otimes_\Integers\RationalNumbers$. Then 
$\bar{c}_j=(-1)^{j-\epsilon}(j-\epsilon)!\bar{ch}_j$, where $\epsilon=1$ if $j$ is an integer, and $\epsilon=\frac{1}{2}$ if $j$ is half an odd integer (see for example
 \cite[Lemma 22]{markman-integer}).
\end{proof}

%
\section{Four groupoids}
\label{sec-four-groupoids}
We extend in Corollary \ref{cor-monodromy-representation-of-spin} the  representation of $G(S^+)^{even}_{s_n}$ in the monodromy group of the moduli space
$\M(s_n)$ of rank $1$ sheaves, given in Theorem \ref{thm-monodromy-representation-mu}, to more general moduli spaces. This is achieved by extending the monodromy group symmetry of a single moduli space to a symmetry of the collection of all
smooth and compact moduli spaces $\M_H(w)$ of stable sheaves on  abelian surfaces with respect to a groupoid, whose morphisms are parallel transport operators. In Corollary \ref{cor-q-w-is-Spin-w-equivariant} 
we construct a $\Spin(S^+)_w$-equivariant homomorphism from the cohomology
$H^*(\M_H(w),\Integers)$ of a moduli space of sheaves on an abelian surface $X$ to $H^*(X\times\hat{X},\Integers)$.

A groupoid is a category whose morphisms are all isomorphisms.
Let $\G_1$ be the groupoid whose objects are abelian varieties and whose morphisms 
$\Hom_{\G_1}(X,Y)$ are objects $\E$ in $D^b(X\times Y)$, such that the integral transform 
$\Phi_\E:D^b(X)\rightarrow D^b(Y)$ with kernel $\E$ is an equivalence of triangulated categories. 
Composition of morphisms corresponds to convolution of the objects. 
Given abelian varieties $X$ and $Y$ of the same dimension,
let $U(X\times \hat{X},Y\times\hat{Y})$ be set of isomorphisms $f:X\times \hat{X}\rightarrow Y\times\hat{Y}$,
such that $f^*:H^1(Y\times\hat{Y},\Integers)\rightarrow H^1(X\times\hat{X},\Integers)$ induces an isometry with respect to the 
symmetric bilinear pairing (\ref{eq-pairing-on-V}).\footnote{
See \cite[Def. 9.46]{huybrechts-book} for a matrix form characterization of elements of $U(X\times \hat{X},Y\times\hat{Y})$.
}
Let $\G_2$ be the groupoid whose objects are abelian varieties and whose morphisms 
$\Hom_{\G_2}(X,Y)$ are isomorphisms $f:X\times \hat{X}\rightarrow Y\times \hat{Y}$ in the subset 
$U(X\times \hat{X},Y\times\hat{Y})$. 
Composition of morphisms is the usual composition of isomorphisms. The following combines results of Orlov and Polishchuk.

\begin{thm} \cite[Prop. 9.39, Excercise 9.41, and Prop. 9.48]{huybrechts-book}
\label{thm-isomorphism-associated-to-an-equivalence}
There exists an explicit full functor $f:\G_1\rightarrow \G_2$ sending the object $X$ to itself and associating 
to an equivalence 
$\Phi_\E:D^b(X)\rightarrow D^b(Y)$ with kernel $\E\in D^b(X\times Y)$ 
an isomorphism $f_\E:X\times \hat{X}\rightarrow Y\times\hat{Y}$ in $U(X\times \hat{X},Y\times\hat{Y})$.
\end{thm}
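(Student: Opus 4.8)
The plan is to construct the functor $f:\G_1\rightarrow\G_2$ by combining the action of an auto-equivalence on cohomology with the Mukai–Orlov–Polishchuk description of that action via the lattice $V$. First I would recall that an equivalence $\Phi_\E:D^b(X)\rightarrow D^b(Y)$ induces a parity-preserving isometry $\phi_\E:H^*(X,\Integers)\rightarrow H^*(Y,\Integers)$ with respect to the Mukai pairings on $S^\pm$ (this is \cite[Cor.~9.43]{huybrechts-book}), and that under the identification $H^{even}\cong S^+$, $H^{odd}\cong S^-$ this isometry lies in the image of $\Spin(V_X)\rightarrow\Aut(A_X)$ under $\tilde\mu$, by the Mukai–Polishchuk–Orlov theorem recalled at the start of Section~\ref{sec-derived-categories}. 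More precisely, the cohomological action of $\Phi_\E$ is $\tilde\mu(g_\E)$ for a unique $g_\E\in\Spin(V_X)$ composed, if necessary, with the isomorphism $\Spin(V_X)\cong\Spin(V_Y)$ coming from the underlying rational isometry $H^1(Y)\oplus H^1(\hat Y)\to H^1(X)\oplus H^1(\hat X)$. Since $\rho(g_\E)\in SO(V_X)$ preserves the symmetric bilinear pairing (\ref{eq-pairing-on-V}), and $\rho(g_\E)$ together with the chosen lattice isomorphism determines a linear isometry $H^1(Y\times\hat Y,\Integers)\to H^1(X\times\hat X,\Integers)$, the rational Hodge-theoretic integrality (the fact that $\Phi_\E$ sends the integral Hodge structure $H^1(X\times\hat X)$ to that of $H^1(Y\times\hat Y)$, by Mukai's theorem) forces this isometry to be induced by an actual isomorphism of abelian varieties $f_\E:X\times\hat X\to Y\times\hat Y$. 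This $f_\E$ lies in $U(X\times\hat X,Y\times\hat Y)$ by construction. The key steps, in order: (i) define $f$ on objects as the identity; (ii) define $f$ on morphisms by $\E\mapsto f_\E$, using the above; (iii) check functoriality, i.e. $f_{\E_2\ast\E_1}=f_{\E_2}\circ f_{\E_1}$, which follows because $\Phi_{\E_2\ast\E_1}=\Phi_{\E_2}\circ\Phi_{\E_1}$, the cohomological action is a homomorphism, and $g\mapsto\rho(g)$ is a group homomorphism $\Spin(V)\to SO(V)$; (iv) verify fullness, i.e. every $f\in U(X\times\hat X,Y\times\hat Y)$ arises from some equivalence — this is precisely \cite[Prop.~9.48]{huybrechts-book} (Orlov's theorem), which says the composite $\Aut(D^b(X))\to U(X\times\hat X,X\times\hat X)$ is surjective, and the relative version over $\Hom_{\G_1}(X,Y)$ follows by the same argument or by composing with a fixed equivalence.

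For the explicit formula I would reproduce the matrix description from \cite[Def.~9.46]{huybrechts-book}: writing $g_\E$ as a $2\times 2$ block matrix $\left(\begin{smallmatrix}\alpha&\beta\\\gamma&\delta\end{smallmatrix}\right)$ acting on $H^1(X,\Integers)\oplus H^1(\hat X,\Integers)$, the induced isomorphism $f_\E$ has the corresponding block form on $X\times\hat X$, and the isometry condition for (\ref{eq-pairing-on-V}) translates into the symplectic-type relations on the blocks that characterize $U(X\times\hat X,Y\times\hat Y)$. The existence of $f_\E$ as a genuine morphism of abelian varieties (not merely of real tori) is where one uses that $\Phi_\E$ respects Hodge structures: the complex structure on $H^1(X\times\hat X,\RealNumbers)$ is carried to that on $H^1(Y\times\hat Y,\RealNumbers)$, so the integral isometry is holomorphic.

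The main obstacle, and the only genuinely non-formal point, is step (iv), fullness: one must know that \emph{every} Hodge isometry of $V$ lifts to a derived equivalence, equivalently that the map $\Aut(D^b(X))\to\Spin(V)^{Hodge}$ is surjective. This is exactly the content of the Mukai–Orlov theorem cited in the excerpt, so in the write-up it can simply be invoked. Everything else — functoriality, integrality, the explicit block formula — is a routine unwinding of the definitions and the homomorphism property of $\rho:\Spin(V)\to SO(V)$, so I would present those briefly and refer to \cite{huybrechts-book} for the detailed verifications rather than grinding through them here.
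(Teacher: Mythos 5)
First, a point of comparison: the paper does not prove this statement at all — it is quoted from the literature (Orlov and Polishchuk, via \cite[Prop.~9.39, Exercise~9.41, Prop.~9.48]{huybrechts-book}) — so there is no internal argument to measure your proposal against. In the cited source the isomorphism $f_\E$ is constructed categorically, not cohomologically: one characterizes $X\times\hat{X}$ inside $\Aut(D^b(X))$ as the autoequivalences $t_a^*(\bullet)\otimes P_\alpha$ (equivalently, by \cite[Cor.~9.57]{huybrechts-book}, the identity component of the subgroup acting trivially on cohomology), shows that conjugation by $\Phi_\E$ preserves this subgroup, and verifies that the resulting group isomorphism is algebraic and lies in $U(X\times\hat{X},Y\times\hat{Y})$ in the matrix sense of \cite[Def.~9.46]{huybrechts-book}; the description of the cohomological action through $\Spin(V)$ is a consequence of this, not the input. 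Your route runs in the opposite direction — extract an integral Hodge isometry of $V$ from the spin action and integrate it to a morphism of abelian varieties — and your observation that an integral weight-one Hodge isometry of $H^1(X\times\hat{X},\Integers)$ is automatically induced by an isomorphism of the abelian fourfolds is correct, as is the functoriality argument and the appeal to \cite[Prop.~9.48]{huybrechts-book} for fullness.

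The genuine soft spot is the case $X\neq Y$, which is exactly the content of the theorem beyond the autoequivalence statement recalled at the start of Section \ref{sec-derived-categories}. The Mukai--Polishchuk--Orlov theorem you invoke gives a homomorphism $\Aut(D^b(X))\rightarrow\Spin(V_X)$; to treat an equivalence $\Phi_\E:D^b(X)\rightarrow D^b(Y)$ you propose to compose with ``the isomorphism $\Spin(V_X)\cong\Spin(V_Y)$ coming from the underlying rational isometry $H^1(Y)\oplus H^1(\hat{Y})\rightarrow H^1(X)\oplus H^1(\hat{X})$'' — but no such isometry exists a priori; producing one is precisely what $f_\E$ is for, so as written this step is circular. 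To repair it you would have to show directly that conjugation by $\phi_\E$ carries the embedded lattice $V_X\subset\End(H^*(X,\Integers))$ isomorphically onto $V_Y$, integrally and compatibly with the weight-one Hodge structures; this is true, but it is again a theorem of Orlov and Polishchuk for equivalences between distinct abelian varieties rather than a formal consequence of the autoequivalence case. So your text is acceptable as a reduction to the same citations the paper itself relies on, but it should not be presented as an independent proof, and the $X\neq Y$ step must be restated without the circular appeal to a pre-existing isometry.
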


Let $\G_3$ be the groupoid whose objects are 
triples $(X,w,H)$, where $X$ is an abelian surface, $w\in S^+_X$ is a primitive Mukai vector, and $H$ is a $w$-generic polarization,
such that the moduli space $\M_H(w)$ has dimension $\geq 4$. 
Morphisms in $\Hom_{\G_3}[(X_1,w_1,H_1),(X_2,w_2,H_2)]$ are pairs $(g,\gamma)$, where
$g:H^*(X_1,\Integers)\rightarrow  H^*(X_2,\Integers)$ is an isometry, with respect to the pairings (\ref{eq-Mukai-pairing}),
preserving the parity of the grading and satisfying $g(w_1)=w_2$, and $\gamma$ is a graded
ring isomorphism
\[
\gamma:H^*(\M_{H_1}(w_1),\Integers)\rightarrow H^*(\M_{H_2}(w_2),\Integers).
\]
Each pair $(g,\gamma)$ is assumed to be the composition 
$(g_k,\gamma_k)\circ (g_{k-1},\gamma_{k-1}) \circ \cdots \circ (g_1,\gamma_1)$, where $(g_i,\gamma_i)$ is of one of three types.
Type 1: $g_i:H^*(X_i,\Integers)\rightarrow H^*(X_{i+1},\Integers)$ is induced by  
 an equivalence 
$\Phi_{\E_i}:D^b(X_i)\rightarrow D^b(X_{i+1})$ of triangulated categories which maps $H_i$-stable sheaves with Mukai vector $w_i$ to
$H_{i+1}$-stable sheaves with Mukai vector $w_{i+1}$, 
and
$\gamma_i$ is induced by an isomorphism $\tilde{\gamma}_i:\M_{H_i}(w_i)\rightarrow \M_{H_{i+1}}(w_{i+1})$
of moduli spaces, which is in turn induced by $\Phi_{\E_i}$ (see Section \ref{subsection-stability-preserving-FM-transformations}).
Type 2:  $g_i:H^*(X_i,\Integers)\rightarrow H^*(X_{i+1},\Integers)$ and $\gamma_i:H^*(\M_{H_i}(w_i),\Integers)\rightarrow H^*(\M_{H_{i+1}}(w_{i+1}),\Integers)$ are parallel transport operators associated to a continuous path from a point $b_0$ to a point $b_1$ in the complex analytic base $B$ of a smooth 
family $\Pi:\M\rightarrow B$ of moduli spaces of stable sheaves corresponding
to a family $\pi:\X\rightarrow B$ of abelian surfaces, and a section $w$ of $R^{even}\pi_*\Integers$ of Hodge type,
as well as a section $h$ of $R^2\pi_*\Integers$ of Hodge type (not necessarily continuous), such that $h(b)$ is a
$w(b)$-generic polarization on the fiber $X_b$ of $\pi$ and 
the fiber $\M_b$ of $\Pi$  is a smooth and projective moduli space $\M_{h(b)}(w(b))$ of $h(b)$-stable sheaves over $X_b$, for all $b\in B$. An isomorphism is chosen between $X_i$ and $X_{b_0}$, mapping $w_1$ and $H_1$ to  $w(b_0)$ and $h(b_0)$.
An isomorphism is chosen between $X_{i+1}$ and $X_{b_1}$ with the analogous properties. The chosen isomorphisms yield  isomorphisms between $\M_{b_0}$ and $\M_{H_i}(w_i)$ and between $\M_{b_1}$ and $\M_{H_{i+1}}(w_{i+1})$.
Type \nolinebreak{3}: Analogous to type 2 but the Mukai vectors are all $(1,0,-n)$, the data $h$ is dropped, 
the family $\pi:\X\rightarrow B$ is a smooth and proper family of two dimensional complex tori, and 
each fiber $\M_b$ 
is the product  $X_b^{[n]}\times \hat{X}_b$, where the first factor is the Douady space of length $n$ subschemes.

\begin{rem}
\label{rem-morphism-of-groupoid-maps-universal-class-to-same}
Note that in each of the three types  of morphisms $(g_i,\gamma_i)$ above $g_i\otimes\gamma_i$ maps a universal class to a universal class, in the sense of
Definition \ref{def-g-gamma-maps-universal-classes-to-such}. This is obvious for types 2 and 3, and for type 1 it follows from Lemma \ref{lemma-isomorphism-of-cohomology-rings-of-moduli-spaces-is-equal-to-gamma}.
Consequently, the same holds for their composition $(g,\gamma)$, by Corollary \ref{cor-composition}.
In particular, a morphism $(g,\gamma)$ is determined already by $g$, since 
$\gamma=\gamma_g(\E_{w_1},\E_{w_2})$, by Lemma \ref{lemma-recovering-f}.  
Whenever non-empty,
$\Hom_{\G_3}[(X,s_n,H),(X,w,H)]$ is a right $\Aut_{\G_3}(X,s_n,H)$-torsor and a left $\Aut_{\G_3}(X,w,H)$-torsor.
$\Aut_{\G_3}(X,s_n,H)$ contains $\Spin(S^+_X)_{s_n}$, for $s_n:=(1,0,-n)$ and $n\geq 3$, 
by Theorem \ref{thm-monodromy-representation-mu}.
\end{rem}
The main result of \cite{yoshioka-abelian-surface} may be stated as follows.

\begin{thm}
\label{thm-Hom-G3-non-empty}
The set $\Hom_{\G_3}[(X_1,w_1,H_1),(X_2,w_2,H_2)]$ is non-empty, for every two objects $(X_i,w_i,H_i)$, $i=1,2$, of $\G_3$ with
$(w_1,w_1)_{S^+_{X_1}}=(w_2,w_2)_{S^+_{X_2}}$.
\end{thm}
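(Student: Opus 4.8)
The plan is to reduce the statement, via the functor $f:\G_1\to\G_2$ of Theorem \ref{thm-isomorphism-associated-to-an-equivalence} and the groupoid structure, to a normal‑form statement about the orbit of a single model object under compositions of morphisms of types 1, 2 and 3. First I would fix, for each $n\geq 3$, the ``standard'' object $(X,s_n,H)$ with $s_n=(1,0,-n)$ and an ample $H$ generating $\Pic(X)$, so that $\M_H(s_n)\cong X^{[n]}\times\hat X$. By Remark \ref{rem-morphism-of-groupoid-maps-universal-class-to-same} a morphism in $\G_3$ is determined by its effect $g$ on cohomology, and $g$ is a parity‑preserving isometry of $(S^+_{X_1},(\bullet,\bullet)_{S^+})$ sending $w_1$ to $w_2$; so it suffices to show: for any object $(X_2,w_2,H_2)$ with $(w_2,w_2)_{S^+_{X_2}}=(s_n,s_n)_{S^+_X}=-2n$, there is a chain of the three types of elementary morphisms realizing \emph{some} isometry $g$ with $g(s_n)=w_2$. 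Type 2 morphisms (deformations of the abelian surface and of the polarization, keeping $w$ of Hodge type) let me move freely within the connected family of smooth projective moduli spaces over the period domain of $w^\perp$; type 3 morphisms handle the passage between a moduli space and a product $X^{[n]}\times\hat X$; type 1 morphisms supply the ``Fourier–Mukai'' isometries coming from auto‑ and equi‑equivalences of derived categories.

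The key steps, in order, are as follows. \textbf{Step 1.} Reduce to the case where both $w_i$ are primitive of the same square $-2n$ and both polarizations are generic; this is the hypothesis. \textbf{Step 2.} Using type 2 deformations, reduce $(X_2,w_2,H_2)$ to an object of a convenient form: by Yoshioka's description of the period map (Theorem \ref{thm-yoshioka}) and the existence of the universal family over $\fM^0_{w^\perp}$ cited in the introduction, every such moduli space is deformation equivalent, through smooth projective moduli spaces of stable sheaves, to one in which $X_2$ is a product of elliptic curves (or otherwise ``arithmetically simple'') and $w_2$ has small rank. This is the move that lets later steps be purely arithmetic. \textbf{Step 3.} On such a model, apply the derived‑category equivalences already exhibited in the excerpt — tensorization by line bundles ($\phi_F$, Lemma \ref{lemma-tensorization-by-line-bundle-F}), the Fourier–Mukai transform $\Phi_\P$ and its dualized variant $\G_\P$ (Propositions \ref{prop-yoshioka-F-P-induces-an-isomorphism} and \ref{prop-yoshioka-G-P-induces-an-isomorphism}), and the lifts of products of two $(+2)$‑reflections of Corollary \ref{cor-reflections-in-two-line-bundles} — to move $w_2$ by the corresponding isometries of $S^+_{X_2}$. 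Together with Lemma \ref{lemma-generators-for-stabilizer-in-SO} and Lemma \ref{lemma-generators-for-stabilizer-in-G-V}, these generate a large enough subgroup of $O(S^+)$ that, combined with the Witt‑type transitivity of the orthogonal group of $U^{\oplus 4}$ on primitive vectors of a fixed square, one can carry $w_2$ onto $s_n=(1,0,-n)$. \textbf{Step 4.} Finally, use a type 3 morphism to identify $\M(s_n)$ with $X^{[n]}\times\hat X$, and chain everything together; by Corollary \ref{cor-composition} the composite is again a morphism of $\G_3$, giving a morphism from $(X_2,w_2,H_2)$ to the standard object, hence (composing with its inverse and with the standard model for $(X_1,w_1,H_1)$) a morphism between the two given objects.

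The main obstacle I expect is \textbf{Step 3}: showing that the isometries of $S^+$ actually realizable by \emph{stability‑preserving} derived equivalences (composed with deformations and the Hilbert‑scheme identification) act transitively on the set of primitive vectors of square $-2n$. The derived auto‑equivalence group maps onto $\Spin(S^+)$ only after choosing a complex structure making $V$ Hodge, and for a generic abelian surface this image is small; the point of interleaving type 2 deformations is precisely to enlarge the reachable set, but one must check carefully that at every intermediate object the polarization can be kept $w$‑generic and the moduli space smooth and projective of dimension $\geq 4$, so that the composite stays inside $\G_3$. Concretely, the argument needs: (i) for each orbit representative, an explicit equivalence realizing the required reflection or Mukai‑type isometry (this is essentially assembled from Corollary \ref{cor-reflections-in-two-line-bundles}, Proposition \ref{prop-yoshioka-F-P-induces-an-isomorphism}, Proposition \ref{prop-yoshioka-G-P-induces-an-isomorphism}); (ii) a transitivity statement for $O(U^{\oplus 4})$ on primitive vectors of fixed norm, which is standard; and (iii) a deformation argument, as in Corollary \ref{cor-composition-of-two-reflections-by-minus-two-vectors-is-a-monodromy-operator}, showing that the genericity hypotheses on $H$ can be dropped along the way. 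Once (i)–(iii) are in place the rest is bookkeeping with the composition lemmas \ref{lemma-composition-of-gamma-g-1-and-gamma-h-1} and \ref{lemma-composition-of-gamma-g-0-and-gamma-h-1}.
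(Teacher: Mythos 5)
Your overall strategy (connect an arbitrary $(X_2,w_2,H_2)$ to the standard object $(X,s_n,H)$ by a chain of deformations and stability-preserving Fourier--Mukai equivalences) is the right one, but you should be aware that the paper does not prove this statement at all: Theorem \ref{thm-Hom-G3-non-empty} is explicitly presented as a reformulation, in the groupoid language, of the main result of Yoshioka \cite[Theorem 0.2]{yoshioka-abelian-surface}, so the burden of proof is carried entirely by that citation. A genuine proof therefore has to reproduce Yoshioka's argument, and your sketch has two concrete gaps exactly where that argument does its work. First, Step 2 is not available: along a morphism of type 2 or type 3 the Mukai vector is a flat section of $R^{even}\pi_*\Integers$, so its rank, its square, and its primitivity are unchanged; deformations can never ``reduce $w_2$ to small rank'', and they certainly cannot be used to make the later steps ``purely arithmetic''. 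Rank reduction can only come from type 1 morphisms, i.e.\ from the Fourier--Mukai transforms of Propositions \ref{prop-yoshioka-F-P-induces-an-isomorphism} and \ref{prop-yoshioka-G-P-induces-an-isomorphism}, and making those applicable is what the deformations are for (deform so that $\Pic(X_2)$ is cyclic, generated by a polarization of the right degree, with $w_2=(r,c_1(H),a)$ in the normal form those propositions require).

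Second, Step 3 conflates two different things: Witt-type transitivity of $O(U^{\oplus 4})$ on primitive vectors of fixed square produces an abstract isometry $g$ with $g(s_n)=w_2$, but the theorem asks for a morphism of $\G_3$, i.e.\ an isometry realized by a chain of stability-preserving equivalences and deformations along which $c_1$ of the Mukai vector stays of type $(1,1)$, the polarization stays generic, and the moduli space stays nonempty, smooth and projective. The obstacle you flag at the end is not a technical loose end to be handled by the composition lemmas; it is the entire content of the theorem. Note also that Corollary \ref{cor-reflections-in-two-line-bundles} and Lemmas \ref{lemma-generators-for-stabilizer-in-SO}--\ref{lemma-generators-for-stabilizer-in-G-V} only produce elements of the stabilizer $\Spin(S^+)_{s_n}$, which fix $s_n$ and hence cannot move $w_2$ onto $s_n$. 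The missing ingredient is Yoshioka's descent: after a type 2 deformation to the normal form above, the Poincar\'e Fourier--Mukai transform (composed with tensorization by line bundles) strictly decreases the rank of the Mukai vector, and one iterates this Euclidean-algorithm-style until the rank is $1$, at which point a translation and a twist bring the vector to $s_n$ and a type 3 morphism identifies the moduli space with $X^{[n]}\times\hat{X}$. Without carrying out that induction (or citing it), the proposal does not prove the theorem.
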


Let $(X,w,H)$ be an object of $\G_3$, with $(w,w)_{S^+_X}=-2n$, $n\geq 3$, and denote by $e_w\in K(X\times \M_H(w))$ the class of a possibly twisted universal sheaf.
\begin{cor}
\label{cor-monodromy-representation-of-spin}
The class $\gamma_{\tau_X^{ort(g)}g,ort(g)}(e_w,e_w)\in H^{4n+4}(\M_H(w)\times\M_H(w),\Integers)$ induces a monodromy operator
$
\mon(g)\in \Aut(H^*(\M_H(w),\Integers)),
$
for every $g\in G(S^+_X)^{even}_w$. 
The resulting map 
\[
\mon:G(S^+_X)^{even}_w\rightarrow \Aut(H^*(\M_H(w),\Integers))
\]
is a group homomorphism.
The analogues of statements (\ref{thm-item-gamma-tau-g}) and (\ref{thm-item-gamma-tau-g-1}) of 
Theorem \ref{thm-monodromy-representation-mu} hold as well.
\end{cor}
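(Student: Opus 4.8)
The plan is to transport the entire structure established in Theorem \ref{thm-monodromy-representation-mu} for the moduli space $\M(s_n)$ of rank one sheaves across a morphism of the groupoid $\G_3$ to an arbitrary object $(X,w,H)$ with $(w,w)_{S^+_X}=-2n$. First I would invoke Theorem \ref{thm-Hom-G3-non-empty} to fix a morphism $(g_0,\gamma_0)\in \Hom_{\G_3}[(X,s_n,H_0),(X,w,H)]$; here $X$ already plays both roles (one uses a polarization $H_0$ making $\M_{H_0}(s_n)\cong X^{[n]}\times\hat X$), so $g_0:S^+_X\to S^+_X$ is an isometry with $g_0(s_n)=w$ and $\gamma_0:H^*(\M(s_n),\Integers)\to H^*(\M_H(w),\Integers)$ is a graded ring isomorphism which is a composition of parallel transport operators and Fourier--Mukai-induced isomorphisms, hence itself a parallel transport (monodromy) operator. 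By Remark \ref{rem-morphism-of-groupoid-maps-universal-class-to-same}, $g_0\otimes\gamma_0$ maps a universal class of $\M(s_n)$ to a universal class of $\M_H(w)$, and $\gamma_0=\gamma_{g_0}(\E_{s_n},e_w)$ by Lemma \ref{lemma-recovering-f} (applied to twisted universal classes via Remark \ref{rem-class-of-the-diagonal-in-terms-of-twisted-universal-sheaves}).

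Next I would define $\mon$ on $G(S^+_X)^{even}_w$ by conjugation: given $g\in G(S^+_X)^{even}_w$, note $g_0^{-1}g g_0\in G(S^+_X)^{even}_{s_n}$ and set $\mon(g):=\gamma_0\circ \mon_{s_n}(g_0^{-1}gg_0)\circ\gamma_0^{-1}$, where $\mon_{s_n}$ is the homomorphism of Theorem \ref{thm-monodromy-representation-mu}. Since $\gamma_0$ and $\mon_{s_n}(g_0^{-1}gg_0)$ are both monodromy operators of their respective moduli spaces and monodromy operators are closed under composition and conjugation by parallel transport operators, $\mon(g)$ is a monodromy operator of $\M_H(w)$; and $\mon$ is visibly a group homomorphism because $\mon_{s_n}$ is and conjugation by $g_0$ is a group isomorphism $G(S^+_X)^{even}_w\to G(S^+_X)^{even}_{s_n}$. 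The remaining point is to identify $\mon(g)$ with the correspondence $\gamma_{\tau_X^{ort(g)}g,\,ort(g)}(e_w,e_w)$. For this I would combine the composition lemmas: write $h:=g_0^{-1}gg_0$, so that by Theorem \ref{thm-monodromy-representation-mu}(2),(3) the pair $h\otimes\mon_{s_n}(h)$ (resp. after twisting by $\tau_X$ when $ort(h)=1$) maps a universal class to a (dual of a) universal class; composing with $g_0\otimes\gamma_0$ and its inverse on both sides, and using Corollary \ref{cor-composition}, Lemma \ref{lemma-composition-of-gamma-g-1-and-gamma-h-1}, and Lemma \ref{lemma-composition-of-gamma-g-0-and-gamma-h-1} to track how the parameter $\epsilon$ and the factors $d_X$, $\tau_X$ propagate, one gets that $g\otimes\mon(g)$ maps a universal class of $\M_H(w)$ to a (dual of a) universal class, and $ort$ is a homomorphism so $ort(h)=ort(g)$ since $g_0\in\Spin$ up to sign — more precisely $g_0$ may be chosen in $G_0(S^+_X)$, i.e. with trivial norm character, so $ort(g_0)=0$. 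Then Lemma \ref{lemma-recovering-f} forces $\mon(g)=\gamma_{\tau_X^{ort(g)}g,\,ort(g)}(e_w,e_w)$, giving the stated formula and the analogues of parts (\ref{thm-item-gamma-tau-g}) and (\ref{thm-item-gamma-tau-g-1}).

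The main obstacle I expect is the bookkeeping of the orientation parameter and the dualization operators: one must check that $ort(g_0^{-1}gg_0)=ort(g)$ (requiring that the chosen groupoid morphism $g_0$ has $ort(g_0)=0$, which should follow because all three types of morphisms in $\G_3$ — Fourier--Mukai equivalences, and parallel transports — induce isometries lying in the identity component, i.e. in $SO_+$, of the respective orthogonal groups), and that the twisting by $\tau_X$ in the $ort=1$ case is compatible under conjugation, i.e. that $\tau_X^{ort(g)}g = g_0(\tau_X^{ort(h)}h)g_0^{-1}$ up to an element acting trivially, which uses the equality of inner automorphisms induced by $d_X$ and $\tau_X$ from Equation (\ref{eq-conjugation-by-d-X-i}) together with the fact that $\tau_X\in G(S^+_X)^{even}_{s_n}$ transports correctly. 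A secondary, purely technical point is that all the cited composition lemmas were stated for honest universal sheaves; here one needs their validity for the twisted universal classes $e_w\in K(X\times\M_H(w))$, which is exactly the content of Remark \ref{rem-class-of-the-diagonal-in-terms-of-twisted-universal-sheaves}, so no new argument is needed beyond citing it.
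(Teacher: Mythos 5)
Your proposal is correct and follows essentially the same route as the paper: choose a groupoid morphism $(g_0,\gamma_0)$ via Theorem \ref{thm-Hom-G3-non-empty}, identify $\gamma_0=\gamma_{g_0}(e_{s_n},e_w)$ via Remark \ref{rem-morphism-of-groupoid-maps-universal-class-to-same}, define $\mon$ by conjugation, and use the composition lemmas (in particular Lemma \ref{lemma-composition-of-gamma-g-0-and-gamma-h-1}) together with Lemma \ref{lemma-recovering-f} to identify the conjugate with $\gamma_{\tau_X^{ort(g)}g,\,ort(g)}(e_w,e_w)$. The only superfluous worry is the requirement $ort(g_0)=0$: since $ort$ is pulled back from a character of the orthogonal group and $g_0$ acts on $S^+$ as an isometry, $ort(g_0fg_0^{-1})=ort(f)$ holds automatically under conjugation, which is exactly what the paper uses.
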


\begin{proof}
Choose a morphism $(g,\gamma)\in \Hom_{\G_3}[(X_1,s_n,H_1),(X,w,H)]$, where $n=-(w,w)_{S^+_X}/2$. Such a morphism exists by Theorem \ref{thm-Hom-G3-non-empty}.
Let $e_{s_n}$ be the class of a universal sheaf over $X_1\times \M_{H_1}(s_n)$. Then $\gamma=\gamma_g(e_{s_n},e_w)$, by Remark
\ref{rem-morphism-of-groupoid-maps-universal-class-to-same}. Now $g:S_{X_1}\rightarrow S_X$ 
is the composition of parallel transport operators and isomorphisms induced by equivalences of derived categories of abelian surfaces
and $g(s_n)=w$. Thus, $g$ conjugates $G(S^+_{X_1})^{even}_{s_n}$ to $G(S^+_X)^{even}_w$. 
Given $f\in G(S^+_{X_1})^{even}_{s_n}$, let $\mon(f)\in Mon(\M_{H_1}(s_n))$ be the monodromy operator of 
Theorem \ref{thm-monodromy-representation-mu}. Then $\mon(f)=\gamma_{\tau_{X_1}^{ort(f)}f,ort(f)}(e_{s_n},e_{s_n})$. 
The conjugate
$\gamma\circ \mon(f)\circ \gamma^{-1}$ is equal to 
$\gamma_{\tau_X^{ort(f)}gfg^{-1},ort(f)}(e_w,e_w)$, by Lemma \ref{lemma-composition-of-gamma-g-0-and-gamma-h-1}. 
The latter is just $\mon(gfg^{-1})$, since $ort(gfg^{-1})=ort(f)$.
Hence, the map $\mon$ of the current Corollary is the conjugate via $\gamma$ and $g$ of 
the homomorphism $\mon$
of Theorem \ref{thm-monodromy-representation-mu}.
\[
\mon(h)=\gamma\circ\mon(g^{-1}hg)\circ\gamma^{-1},
\]
for every $h\in G(S^+_X)^{even}_w$. It is thus a group homomorphism into the monodromy group. If $ort(h)=0$, then 
$h\otimes\mon(h)$ maps a universal class to a universal class, since $g^{-1}hg\otimes\mon(g^{-1}hg)$ and $g\otimes\gamma$ do.
The case $ort(h)=1$ is similar.
\end{proof}

Let $\G_4$ be the groupoid whose objects are two dimensional
compact complex tori $X$, and let
$\Hom_{\G_4}(X,Y)$ consist of ring isomorphisms 
$f:H^*(X\times \hat{X},\Integers)\rightarrow H^*(Y\times \hat{Y},\Integers)$, 
each of which is the composition $f_k\circ f_{k-1} \circ \cdots \circ f_1$
of a sequence of isomorphisms 
$f_i:H^*(X_i\times \hat{X}_i,\Integers)\rightarrow H^*(X_{i+1}\times \hat{X}_{i+1},\Integers)$ of one of two types.
Type 1: $X_i$ and $X_{i+1}$ are projective and $f_i$ is induced by an isomorphism
$\tilde{f}:X_i\times \hat{X}_i\rightarrow X_{i+1}\times \hat{X}_{i+1}$ in
$U(X_i\times \hat{X}_i, X_{i+1}\times \hat{X}_{i+1})$.
Type 2: $f_i$ is the parallel transport operator associated to a continuous path from a point $b_0$ to a point $b_1$ in the base $B$ of a 
smooth and proper family $\pi:\X\rightarrow B$ of two dimensional compact complex tori. Isomorphisms are chosen between 
$\X_{b_0}$ is and $X_i$ and between $\X_{b_1}$ and $X_{i+1}$. 

We define next a functor
$F:\G_3\rightarrow \G_4$ as follows. $F$ sends the object $(X,w,H)$ to $X$. $F$ sends a morphism 
$(g,\gamma):(X_1,w_1,H_1)\rightarrow (X_2,w_2,H_2)$ of type 1,
corresponding to a Fourier-Mukai transformation $\Phi_\E:D^b(X_1)\rightarrow D^b(X_2)$ with kernel $\E\in D^b(X_1\times X_2)$,
to the isomorphism $f_{\E,*}:H^*(X_1\times\hat{X}_1,\Integers)\rightarrow H^*(X_2\times\hat{X}_2,\Integers)$
induced by the isomorphism $f_\E:X_1\times\hat{X}_1\rightarrow X_2\times\hat{X}_2$ of Theorem
\ref{thm-isomorphism-associated-to-an-equivalence}. Morphisms of types 2 and 3 in $\G_3$ are associated to continuous paths in the bases of families of two dimensional complex tori $X_b$
and $F$ sends these to the associated parallel transport operators of the fourfolds $X_b\times \hat{X}_b$.

Let $\Alg$ be the category of commutative algebras with a unit. Let
$\Psi:\G_3\rightarrow \Alg$ be the functor which sends an object $(X,w,H)$ to $H^*(\M_H(w),\Integers)$.
$\Psi$ sends a morphism $(g,\gamma)$ in $\G_3$ to $\gamma$.
Let $\Sigma:\G_4\rightarrow \Alg$ be the functor, which sends $X$ to $H^*(X\times\hat{X},\Integers)$. 
$\Sigma$ sends a morphism in $\G_4$ to itself. We get a second functor $\Sigma\circ F$ from $\G_3$ to $\Alg.$

Given an object $(X,w,H)$ of $\G_3$ and a generic $H$-stable coherent sheaf $F$ on $X$ of Mukai vector $w$, we get the
embedding 
\begin{equation}
\label{eq-iota-F}
\iota_F:X\times \hat{X}\rightarrow \M_H(w)
\end{equation} 
given by  $\iota_F(x,L)=\tau_{x,*}(F)\otimes L$, where $L\in \hat{X}$, $x\in X$, 
and $\tau_x:X\rightarrow X$ 
sends $x'$ to $x+x'$. We postpone the proof that $\iota_F$ is an embedding to Lemma \ref{lemma-Gamma-v}(\ref{lemma-item-acts-faithfully}). The homomorphism 
$\iota_F^*:H^*(\M_H(w),\Integers)\rightarrow H^*(X\times\hat{X},\Integers)$ is independent of the choice of such a generic $F$,
as the data $\iota_F^*$ is discrete and depends continuously on $F$. We thus denote $\iota_F^*$ also by 
\begin{equation}
\label{eq-q-w}
q_w:H^*(\M_H(w),\Integers)\rightarrow H^*(X\times\hat{X},\Integers).
\end{equation}

\begin{prop}
\label{prop-q-is-a-natural-transformation}
The assignment $(X,w,H)\mapsto q_w$ defines a natural transformation $q$ from $\Psi$ to $\Sigma\circ F$.
\end{prop}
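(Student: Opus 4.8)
The plan is to check naturality on the generating morphisms of $\G_3$. Since the naturality square for a composite morphism commutes as soon as it commutes for each of its factors (and $\Psi$, $\Sigma\circ F$ are functors), it suffices to establish, for a single morphism $(g,\gamma)\colon (X_1,w_1,H_1)\to(X_2,w_2,H_2)$ of each of the three types in the definition of $\G_3$, the identity
\[
q_{w_2}\circ\gamma \ = \ F(g,\gamma)\circ q_{w_1}
\]
of ring homomorphisms $H^*(\M_{H_1}(w_1),\Integers)\to H^*(X_2\times\hat X_2,\Integers)$, where $q_{w_i}=\iota_{F_i}^*$ for a generic $H_i$-stable sheaf $F_i$ with Mukai vector $w_i$ and $\iota_{F_i}$ is the map \eqref{eq-iota-F}.

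For morphisms of types $2$ and $3$ I would argue by continuity. Such a $(g,\gamma)$ is the parallel transport operator along a path in the base $B$ of a smooth family $\Pi\colon\M\to B$ of moduli spaces over a family $\pi\colon\X\to B$ of two-dimensional tori, and $F(g,\gamma)$ is the parallel transport operator of the family $\X\times_B\hat\X\to B$. Over a neighbourhood of each point of the path one may choose a flat family of generic stable sheaves and hence a relative morphism $\iota\colon\X\times_B\hat\X\to\M$; the resulting homomorphism $q_{w(b)}=\iota_{F_b}^*\colon H^*(\M_b,\Integers)\to H^*(X_b\times\hat X_b,\Integers)$ varies holomorphically with $b$, hence is locally constant, hence intertwines the two parallel transport operators. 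This disposes of types $2$ and $3$.

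The substance of the proposition is the type-$1$ case. Here $(g,\gamma)$ comes from an equivalence $\Phi_\E\colon D^b(X_1)\to D^b(X_2)$ sending $H_1$-stable sheaves with Mukai vector $w_1$ to $H_2$-stable sheaves with Mukai vector $w_2$, with $\gamma=\tilde\gamma_*$ for the induced isomorphism of moduli spaces $\tilde\gamma\colon\M_{H_1}(w_1)\to\M_{H_2}(w_2)$, and $F(g,\gamma)=f_{\E,*}$ for the isomorphism $f_\E\colon X_1\times\hat X_1\to X_2\times\hat X_2$ of Theorem \ref{thm-isomorphism-associated-to-an-equivalence}. For $(x,L)\in X_i\times\hat X_i$ write $M_{(x,L)}$ for the autoequivalence $G\mapsto\tau_{x,*}(G)\otimes L$ of $D^b(X_i)$; the assignment $(x,L)\mapsto M_{(x,L)}$ is a group homomorphism, and the defining property of the functor $f$ is that $\Phi_\E$ conjugates the image of $X_1\times\hat X_1$ onto the image of $X_2\times\hat X_2$, inducing $f_\E$ on these groups (cf. \cite[Chapter 9]{huybrechts-book}, which underlies Theorem \ref{thm-isomorphism-associated-to-an-equivalence}). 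Fix a generic $H_1$-stable $F_1$ with Mukai vector $w_1$ and set $F_2:=\Phi_\E(F_1)$; by hypothesis $F_2$ is $H_2$-stable with Mukai vector $w_2$, and it is again generic since genericity is an open orbit condition preserved by $\tilde\gamma$. With $(y,M):=f_\E(x,L)$ this gives
\[
\tilde\gamma\bigl(\iota_{F_1}(x,L)\bigr) \ = \ \Phi_\E\bigl(\tau_{x,*}(F_1)\otimes L\bigr) \ = \ \tau_{y,*}(F_2)\otimes M \ = \ \iota_{F_2}\bigl(f_\E(x,L)\bigr),
\]
i.e. $\tilde\gamma\circ\iota_{F_1}=\iota_{F_2}\circ f_\E$. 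Pulling back, and using that $q_{w_i}=\iota_{F_i}^*$ does not depend on the choice of generic sheaf, one gets $\iota_{F_1}^*\circ\tilde\gamma^*=f_\E^*\circ\iota_{F_2}^*$, which upon inverting $\tilde\gamma^*$ and $f_\E^*$ becomes exactly $q_{w_2}\circ\gamma=F(g,\gamma)\circ q_{w_1}$.

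The hard part will be the bookkeeping inside the type-$1$ computation: one must match shift and twist conventions so that $\Phi_\E\circ M_{(x,L)}\circ\Phi_\E^{-1}$ is literally $M_{f_\E(x,L)}$, not that composed with a nontrivial shift, and one must check that the possibly twisted nature of $\E$ and of the universal sheaves on the $\M_{H_i}(w_i)$ affects neither the isomorphism $\tilde\gamma$ nor the embeddings $\iota_{F_i}$. Both are controlled — the shift is forced to be trivial because $\Phi_\E$ sends the sheaf $\tau_{x,*}(F_1)\otimes L$ to the sheaf $\tilde\gamma(\iota_{F_1}(x,L))$, and Brauer twists drop out of the relevant constructions (cf. Remark \ref{rem-class-of-the-diagonal-in-terms-of-twisted-universal-sheaves} and Lemma \ref{lemma-isomorphism-of-cohomology-rings-of-moduli-spaces-is-equal-to-gamma}) — but this is where the care is needed; the rest of the argument is formal.
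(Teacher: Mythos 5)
Your proof is correct and follows essentially the same route as the paper: naturality for morphisms of types 2 and 3 is the (obvious) compatibility of the orbit maps with parallel transport, and for type 1 it reduces to the fact that $\Phi_\E$ conjugates the action of $X_1\times\hat{X}_1$ by translations and tensorization to that of $X_2\times\hat{X}_2$ via $f_\E$, which is exactly the commutative diagram the paper invokes from \cite[Cor.~9.58]{huybrechts-book}. The bookkeeping worries you flag (shifts, Brauer twists) are handled by that same citation and by the independence of $\iota_F^*$ from the choice of generic $F$, just as in the paper.
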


\begin{proof}
Given an isomorphism
$\gamma:H^*(\M_{H_1}(w_1),\Integers)\rightarrow H^*(\M_{H_2}(w_2),\Integers)$
corresponding to a morphism $(g,\gamma)$ in $\Hom_{\G_3}[(X_1,w_1,H_1),(X_2,w_2,H_2)]$
and $H_i$ stable sheaves $F_i$ on $X_i$ with Mukai vectors $w_i$, $i=1,2$, we need to prove that the following diagram is commutative.
\[
\xymatrix{
H^*(\M_{H_1}(w_1),\Integers) \ar[r]^\gamma \ar[d]_{\iota_{F_1}^*}& H^*(\M_{H_2}(w_2),\Integers) \ar[d]^{\iota_{F_2}^*}
\\
H^*(X_1\times \hat{X}_1,\Integers) \ar[r]_{\Sigma(F(g,\gamma))}& H^*(X_2\times \hat{X}_2,\Integers).
}
\]
The commutativity for morphisms $(g,\gamma)$ of type 2 and 3 is obvious.
Consider next the case where $(g,\gamma)$ is of type 1, associated to a stability preserving Fourier-Mukai transformation $\Phi_\E:D^b(X_1)\rightarrow D^b(X_2)$,
which induces an isomorphism 
\[
\tilde{\gamma}:\M_{H_1}(w_1)\rightarrow \M_{H_2}(w_2) 
\]
of the two moduli spaces,
and we choose $F_2$ to be a sheaf representing the object $\Phi_\E(F_1)$. The commutativity of the above diagram follows from 
the commutativity of the diagram
\[
\xymatrix{
X_1\times \hat{X}_1\ar[r]\ar[d]_{f_\E}&\Aut(\M_{H_1}(v_1))\ar[d]_{Ad_{\tilde{\gamma}}}&g \ar[d]
\\
X_2\times\hat{X}_2\ar[r]&\Aut(\M_{H_2}(v_2))& \tilde{\gamma}g\tilde{\gamma}^{-1}.
}
\]
The commutativity of the latter diagram follows from the analogous commutativity when we regard $X_i\times \hat{X}_i$ as a subgroup of the group of autoequivalences of the derived categories $D^b(X_i)$ of $X_i$ and regard $\M_{H_i}(v_i)$ as a subset of objects in $D^b(X_i)$, for $i=1,2$, see
\cite[Cor. 9.58]{huybrechts-book}. 
\end{proof}

The group $\Spin(S_X^+)_{s_n}$ acts on $H^*(\M_H(s_n),\Integers)$ via the monodromy representation 
$\mon$ in (\ref{eq-homomorphism-gamma-from-G-S-plus-even-s-n-to-Mon}). 
Corollary \ref{cor-monodromy-representation-of-spin} implies that $\Spin(S_X^+)_{w}$ similarly acts on $H^*(\M_H(w),\Integers)$. The group
$H^1(X\times\hat{X},\Integers)$ is the representation $V_X$ of $\Spin(S^+_X)$
and so $\Spin(S_X^+)_{w}$ acts on $H^*(X\times\hat{X},\Integers)\cong \wedge^*V_X.$

\begin{cor}
\label{cor-q-w-is-Spin-w-equivariant}
The homomorphism $q_w:H^*(\M_H(w),\Integers)\rightarrow H^*(X\times\hat{X},\Integers)$, given in (\ref{eq-q-w}), is 
$\Spin(S_X^+)_{w}$ equivariant.
\end{cor}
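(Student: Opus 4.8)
The plan is to deduce the equivariance from the naturality of $q$ recorded in Proposition \ref{prop-q-is-a-natural-transformation}, together with the fact that the monodromy action of $\Spin(S^+_X)_w$ is realized inside the groupoid $\G_3$. First I would note that for $g\in\Spin(S^+_X)_w$ one has $ort(g)=0$, so by Corollary \ref{cor-monodromy-representation-of-spin} the monodromy operator $\mon(g)$ equals $\gamma_{g,0}(e_w,e_w)$, and, by the way it is built in the proofs of Theorem \ref{thm-monodromy-representation-mu} and Corollary \ref{cor-monodromy-representation-of-spin} as a composition of morphisms of types $1$, $2$ and $3$, the pair $(g,\mon(g))$ is an element of $\Aut_{\G_3}(X,w,H)$; it maps a universal class to a universal class by Remark \ref{rem-morphism-of-groupoid-maps-universal-class-to-same}. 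Evaluating the natural transformation $q:\Psi\Rightarrow\Sigma\circ F$ on this automorphism gives the commutative square
\[
q_w\circ\mon(g) \ \ = \ \ \Sigma(F(g,\mon(g)))\circ q_w .
\]
Since $F(g,\mon(g))$ depends multiplicatively on $g$ (because $F$ is a functor, $\mon$ a homomorphism by Corollary \ref{cor-monodromy-representation-of-spin}, and composition in $\G_3$ composes both slots), it now suffices to prove that the ring automorphism $\Sigma(F(g,\mon(g)))$ of $H^*(X\times\hat{X},\Integers)\cong\wedge^*V_X$ agrees with the automorphism induced by the standard action $\tilde{m}(g)|_{V_X}$ of $g$ on $V_X=H^1(X\times\hat{X},\Integers)$.

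Next I would reduce this to a computation on generators. Both automorphisms are ring homomorphisms of $\wedge^*V_X$, hence determined by their restrictions to $V_X$; and both $g\mapsto F(g,\mon(g))|_{V_X}$ and $g\mapsto\tilde{m}(g)|_{V_X}$ are homomorphisms $\Spin(S^+_X)_w\to GL(V_X)$, so it is enough to check the equality on a generating set. Choosing a morphism $(g_0,\gamma_0)\in\Hom_{\G_3}[(X_1,s_n,H_1),(X,w,H)]$, which exists by Theorem \ref{thm-Hom-G3-non-empty}, and conjugating, this reduces to the case $w=s_n$; the compatibility of the conjugating morphism is most cleanly absorbed into the more general inductive claim that $F(h,\gamma)$ acts on $V$ through $\tilde{m}$ for \emph{every} morphism $(h,\gamma)$ of $\G_3$ with $ort(h)=0$, which one proves by induction on the length of the defining composition. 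By Lemma \ref{lemma-generators-for-stabilizer-in-G-V}, $\Spin(S^+_X)_{s_n}$ is generated by $\Spin(S^+_X)_{e_1,e_2}\cong SL[H^1(X,\Integers)]$ and by products $\tilde{m}_{t_1}\tilde{m}_{t_2}$ of reflections in $+2$ vectors $t_i=(1,A_i,n)\in s_n^\perp$ with $A_i\in H^2(X,\Integers)$ primitive.

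Then I would treat the two families of generators separately. For $g\in\Spin(S^+_X)_{e_1,e_2}$: in the proof of Theorem \ref{thm-monodromy-representation-mu}, $\mon(g)$ is realized by a morphism of type $2$/type $3$ of $\G_3$ coming from the universal family over the component $\fM^0$ of marked two-dimensional complex tori, so $F(g,\mon(g))$ is the corresponding parallel transport operator on $H^*(X_b\times\hat{X}_b,\Integers)$; its restriction to $H^1$ is the tautological action of $SL[H^1(X,\Integers)]$ on $H^1(X_b)\oplus H^1(\hat{X}_b)$, which is exactly $\tilde{m}(g)|_{V_X}$ by Lemma \ref{lemma-stabilizer-of-H0-and-H4-is-SL-4}. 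For the products $\tilde{m}_{t_1}\tilde{m}_{t_2}$: by Corollaries \ref{cor-reflections-in-two-line-bundles} and \ref{cor-composition-of-two-reflections-by-minus-two-vectors-is-a-monodromy-operator} the corresponding monodromy operator is realized by the type-$1$ morphism of $\G_3$ attached to the explicit stability-preserving Fourier--Mukai equivalence of Corollary \ref{cor-reflections-in-two-line-bundles} (composed with a deformation), so $F$ sends it to the Orlov--Polishchuk isomorphism $f_\E$ of Theorem \ref{thm-isomorphism-associated-to-an-equivalence}; and Lemmas \ref{lemma-tensorization-by-line-bundle-F}, \ref{lemma-phi-P-is-Spin-Spin-equivariant}, \ref{lemma-auto-equivalence-acts-as-two-reflections} and \ref{lemma-conjugation-of-m-s-by-phi-P} compute the action of this composite equivalence on $V_X$ and identify it with $\tilde{m}(\tilde{m}_{t_1}\tilde{m}_{t_2})|_{V_X}$, i.e.\ with the composition $\rho(\tilde{m}_{t_1})\rho(\tilde{m}_{t_2})$ of the two reflections.

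The hard part will be this last step: assembling, generator by generator, the identification of $F(g,\mon(g))$ on $V_X$ with the standard spinor action, while correctly tracking the triality identification of $\Spin(V_X)$ with $\Spin(S^+_X)$ and the precise conventions for $f_\E$ in Theorem \ref{thm-isomorphism-associated-to-an-equivalence}, and while making sure the inductive reduction to $w=s_n$ is internally consistent, i.e.\ that every morphism of $\G_3$ with trivial orientation character acts on $V$ through $\tilde{m}$. All the compatibilities needed are already contained in the lemmas of Sections \ref{sec-spin-8-and-triality} and \ref{sec-derived-categories}; what remains is the bookkeeping of putting them together, not a new idea.
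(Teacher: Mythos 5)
Your proposal is correct and follows the paper's own route: the paper's entire proof consists of your first two steps, namely that the proofs of Theorem \ref{thm-monodromy-representation-mu} and Corollary \ref{cor-monodromy-representation-of-spin} place $\Spin(S^+_X)_w$ inside $\Aut_{\G_3}(X,w,H)$, and that $q_w$ is then $\Aut_{\G_3}(X,w,H)$-equivariant by Proposition \ref{prop-q-is-a-natural-transformation}. The further generator-by-generator identification of $\Sigma(F(g,\mon(g)))$ with the standard action on $\wedge^*V_X$, which you rightly flag as the bookkeeping-heavy part, is treated in the paper as built into the groupoid formalism (the Mukai--Polishchuk--Orlov compatibility underlying Theorem \ref{thm-isomorphism-associated-to-an-equivalence} for type-1 morphisms, and the tautological statement for the parallel-transport morphisms), so the paper does not carry that verification out explicitly.
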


\begin{proof}
The proof of Theorem \ref{thm-monodromy-representation-mu} exhibits the image of $\Spin(S^+_X)_{s_n}$ via  $\mon$ as a subgroup of the automorphism group
$\Aut_{\G_3}(X,s_n,H)$. Conjugating by a morphism in $\Hom_{\G_3}[(X,s_n,H),(X,w,H)]$ we get that $\Spin(S_X^+)_{w}$ is a subgroup of the automorphism group
$\Aut_{\G_3}(X,w,H)$, for every object $(X,w,H)$ of $\G_3$, by Corollary \ref{cor-monodromy-representation-of-spin}.
Now $q_w$ is $\Aut_{\G_3}(X,w,H)$-equivariant, by Proposition \ref{prop-q-is-a-natural-transformation}.
\end{proof}

Let $\bar{\theta}_1:S^-_X\rightarrow H^1(\M(w),\Integers)$ be the isomorphism given in Equation (\ref{eq-double-tilde-theta-j}).
Let $m:S^+_X\rightarrow \Hom(S^-_X,V_X)$ be the homomorphism given in Corollary \ref{cor-V-plus-S-minus-is-the-Clifford-module}.
\begin{new-lemma}
\label{lemma-q-w-circ-theta-1-is-m-w}
The composition $q_w\circ \bar{\theta}_1:S_X^-\rightarrow H^1(X\times\hat{X},\Integers)=V_X$ is equal to either $m_w$ or $-m_w$.
\end{new-lemma}

\begin{proof}
Both $q_w$ and $\bar{\theta}_1$ are $\Spin(S_X^+)_{w}$ equivariant, and thus so is their composition. 
The homomorphism $q_w$ is equivariant, by Corollary \ref{cor-q-w-is-Spin-w-equivariant}.
Equivariance of $\bar{\theta}_1$ is proven in Lemma \ref{lemma-two-theta-j-are-proportional} when $w=s_n$ and the proof goes through in the general case, once we replace Theorem \ref{thm-monodromy-representation-mu} by Corollary \ref{cor-monodromy-representation-of-spin}.
Hence, $q_w\circ \bar{\theta}_1$ is a multiple $km_w$, for some integer $k$. It remains to prove that $|k|=1$.
It suffices to prove it for $w=s_n$, by Theorem \ref{thm-Hom-G3-non-empty}.
It suffices to prove that the cardinality of $\coker(q_{s_n})$ is equal to that of the group $\Gamma_{s_n}$ of $n$-torsion points of $X$, by Remark \ref{rem-Z-w} and the fact that $\tilde{\theta}_1$ is an isomorphism. The co-kernel of $q_{s_n}$ is equal to the co-kernel of the composition
\[
H^1(\Alb(\M(s_n)),\Integers)\rightarrow H^1(\M(s_n),\Integers)\RightArrowOf{\iota_F^*}H^1(X\times\hat{X},\Integers), 
\]
since the left arrow is an isomorphism and the right is $q_{s_n}$. Now, $\Alb(\M(s_n))=X\times\hat{X}$ and the above displayed 
the composition is the pullback by the homomorphism
$X\times \hat{X}\rightarrow X\times \hat{X}$ corresponding to multiplication by $n$ on the first factor and the identity on the second, whose kernel is $\Gamma_{s_n}$, see the proof of \cite[Theorem 7]{GS}.
\end{proof}

\section{The monodromy of a generalized Kummer}
\label{sec-monodromy-of-kummers}
We prove Theorem \ref{thm-Mon-2}, about $Mon^2(Y)$ for an irreducible holomorphic symplectic manifold $Y$ deformation equivalent to a generalized Kummer, in Section \ref{subsec-translation-invariant-subring}. In Section \ref{sec-comparison-with-Verbitsky-Spin-7-representation} we relate the Lie algebra of the Zariski closure of the monodromy integral $\Spin(7)$-representation we constructed
on the cohomology of $Y$ to an action of a Lie algebra constructed by Verbitsky. We use it to show that $\Spin(7)$-invariant classes are Hodge classes (Lemma \ref{lemma-invariance-under-both-translation-and-spin-actions}).
%
\subsection{The monodromy action on the translation-invariant subring}
\label{subsec-translation-invariant-subring}
Let $\M(v):=\M_H(v)$ be a smooth and compact moduli space of $H$-stable sheaves of primitive Mukai vector $v$ of dimension $m\geq 8$ over an abelian surface $X$. The Albanese variety $\Alb^0(\M(v))$ is the connected component of the identity in the larger group
$D(\M(v))$, the Deligne cohomology group of $\M(v)$, see \cite{EZ}. They fit in the exact sequence 
\[
0\rightarrow \Alb^0(\M(v))\rightarrow D(\M(v))\rightarrow H^{m,m}(\M(v),\Integers)\rightarrow 0.
\]
Denote by $\Alb^d(\M(v))$ the connected component of Deligne cohomology mapping to $d$ times the class Poincare dual to the class of a point in  
$H^{m,m}(\M(v),\Integers)$. Let 
\[
\alb:\M(v)\rightarrow \Alb^1(\M(v))
\]
be the Albanese morphism.
The abelian fourfold $A:=X\times \Pic^0(X)$ acts on $\M(v)$.
Given a point $F\in \M(v)$ the action yields the morphisms
$\iota_F:X\times \Pic^0(X)\rightarrow \M(v)$ and $alb\circ \iota_F:A\rightarrow \Alb^1(\M(v))$.
Define the morphism $\bar{q}:A\rightarrow \Alb^0(\M(v))$ by
\[
\bar{q}(g):=
(alb\circ \iota_F)(g)-alb(F),
\]
where the difference is defined, since $\Alb^1(\M(v))$ is an $\Alb^0(\M(v))$-torsor.
Then $\bar{q}$ is a group homomorphism, since every morphism of abelian varieties mapping the identity to the identity is
a group homomorphism. The morphism $\bar{q}$ is independent of the point $F$ of $\M(v)$, since it depends on $F$ continuously and varies in a discrete group. We have
\[
(alb\circ \iota_F)(g_1+g_2)=\bar{q}(g_1+g_2)+alb(F)=\bar{q}(g_1)+[\bar{q}(g_2)+alb(F)]=\bar{q}(g_1)+(alb\circ \iota_F)(g_2).
\]
Thus, the action fits in the commutative diagram
\begin{equation}
\label{eq-commutative-diagram-of-actions}
\xymatrix{
A\times \M(v) \ar[d]_{\bar{q}\times alb}\ar[r]^-\lambda & \M(v) \ar[d]_{alb}
\\
\Alb^0(\M(v))\times \Alb^1(\M(v)) \ar[r]_-{\bar{\lambda}} & \Alb^1(\M(v)),
}
\end{equation}
where $\lambda$ and $\bar{\lambda}$ are the action morphisms. 
The {\em anti-diagonal action} of $a\in A$ on $A\times \M(v)$ sends $(b,F)$ to $(b-a,\lambda(a,F))$.

Choose a point $a\in \Alb^1(\M(v))$ and denote by $K_a(v)$ the fiber of $\alb$ over $a$. Let $\iota_a:K_a(v)\rightarrow \M(v)$ be the inclusion.
The pullback homomorphism $\iota_a^*:H^i(\M(v),\Integers)\rightarrow H^i(K_a(v),\Integers)$ factors through a homomorphism
\begin{equation}
\label{eq-h-i}
h_i:Q^i(\M(v))\rightarrow H^i(K_a(v),\Integers),
\end{equation}
for $i=2,3$, since $H^1(K_a(v),\Integers)$ vanishes. Furthermore, $h_2$ is an isomorphism, by Theorem \ref{thm-yoshioka}, and $h_3$ is injective with finite co-kernel, since $Q^3(\M(v))$ is torsion free, by Lemma \ref{lemma-tilde-theta-j-is-an-isomorphism},  and pullback by the covering map
$q:A\times K_a(v)\rightarrow \M(v)$ induces an isomorphism 
$q^*:H^3(\M(v),\RationalNumbers)\rightarrow H^3(A\times K_a(v),\RationalNumbers),$
by G\"{o}ttsche's formula for the betti numbers of $K_a(v)$ \cite[Prop. 2.4.12]{gottsche}. 

\begin{new-lemma}
\label{lemma-Gamma-v}
\begin{enumerate}
\item
\label{lemma-item-acts-faithfully}
The morphism $\iota_F$ is an embedding, for generic $F$. In particular, 
the abelian fourfold $A:=X\times \Pic^0(X)$ acts faithfully on $\M(v)$. 
\item
\label{lemma-item-kernel-is-Gamma-v}
The composition
\[
S_X^-\IsomRightArrowOf{\tilde{\theta}_1} H^1(\M(v),\Integers) \IsomRightArrowOf{(alb^*)^{-1}}H^1(\Alb^0(\M(v)),\Integers)\RightArrowOf{\bar{q}^*}
H^1(A,\Integers)\cong V
\]
 is $m_v$ or $-m_v$. 
In particular,
The kernel of the homomorphism
$\bar{q}:A\rightarrow \Alb^0(\M(v))$ is the subgroup $\Gamma_v$ of Remark \ref{rem-Z-w}. 
Consequently, $\Gamma_v$ acts on each fiber $K_a(v)$ of 
the Albanese morphism. 
\item
\label{lemma-item-M-is-quotient-by-Gamma-v}
$\M(v)$ is isomorphic to the quotient of
$A\times K_a(v)$ by the anti-diagonal action of $\Gamma_v$. 
\item
\label{lemma-item-Gamma-v-embedds-in-Mon}
$\Gamma_v$ acts trivially on $H^i(K_a(v),\Integers)$, $i=2,3$, but embeds in $Mon(K_a(v))$.
The image of $\Gamma_v$ in $Mon(K_a(v))$ is characterized as the subgroup of $Mon(K_a(v))$ acting trivially on $H^i(K_a(v),\Integers)$, $i=2,3$. 
\end{enumerate}
\end{new-lemma}
\begin{proof}
Part  (\ref{lemma-item-kernel-is-Gamma-v}) was established in Lemma \ref{lemma-q-w-circ-theta-1-is-m-w}. 
Let $s_n$ be the Mukai vector $(1,0,-n)$ of the ideal sheaf of a length $n$ subscheme of $X$.
Parts 
(\ref{lemma-item-acts-faithfully}) and (\ref{lemma-item-M-is-quotient-by-Gamma-v})
are known when $v=s_n$, see for example the proof of  \cite[Theorem 7]{GS}.
These statements follow from the case of $s_n$, whenever there exists a Fourier-Mukai equivalence $\Phi:D^b(X)\rightarrow D^b(X')$ of the derived categories mapping the Mukai vector $v$ to $s_n$ and inducing
an isomorphism between the moduli spaces $\M(v)$ and $\M(s_n)$. 
The fact that the action of $X\times \Pic^0(X)$ on $\M(v)$ conjugates to that of  $X'\times \Pic^0(X')$ on $\M(s_n)$ 
follows from Orlov's characterization of $X\times \Pic^0(X)$ 
as the connected component of the identity of the subgroup of the group of autoequivalences of $D^b(X)$, which act trivially on the cohomology of $X$ \cite[Cor. 9.57]{huybrechts-book}.
The statements 
follow for a general moduli space $\M(v)$ as above, by Yoshioka's proof that $\M(v)$ is connected to $\M(s_n)$ via a sequence of stability preserving Fourier-Mukai transformations (inducing isomorphisms of moduli spaces) and deformations of the abelian surface
(Theorem \ref{thm-Hom-G3-non-empty}).

Part (\ref{lemma-item-Gamma-v-embedds-in-Mon}) The statement again reduces to the case $v=s_n$.
When $v=s_n$,  the abelian fourfold $X\times \Pic^0(X)$ naturally acts on $\M(v)$ and the subgroup $\Gamma_X$, 
of torsion points of $X$ of order $n$, is a subgroup of the first factor, which coincides with the subgroup $\Gamma_{s_n}$ of Remark \ref{rem-Z-w}. Write $a=(x_0,L)$, $x_0\in X$ and $L\in \Pic^0(X)$. Let $\nu_a$ be the automorphism of $K_a(s_n)$ induced by 
pullback of sheaves on $X$ by the automorphism $x\mapsto 2x_0-x$ of $X$, followed by tensorization by $L^2$.
$\Gamma_X$  acts on $K_a(s_n)$ by translation and is equal to the subgroup of its automorphism group which acts 
trivially on $H^i(K_a(s_n),\Integers)$, $i=2,3$, while the subgroup of the automorphism group of $K_a(s_n)$ which acts trivially on 
$H^2(K_a(s_n),\Integers)$ is generated by $\Gamma_X$ and $\nu_a$,
by \cite[Theorem 3 and Corollary 5]{BNS}. The automorphism $\nu_a$ acts on $H^3(K_a(s_n),\RationalNumbers)$ via multiplication by $-1$ and 
$\Gamma_X$ embeds in $Mon(K_a(s_n))$, by \cite[Theorem 1.3]{oguiso}. 
The subgroup of $Mon(K_a(s_n))$ acting trivially on $H^2(K_a(s_n),\Integers)$ is known to be induced by automorphisms, by
\cite[Theorem 2.1]{hassett-tschinkel-lagrangian-planes}, and  thus contains the image of $\Gamma_X$ as an index $2$ subgroup.
\end{proof}

\begin{prop}
\label{prop-overline-mon}
There exists a unique injective homomorphism\footnote{Once Theorem \ref{thm-Mon-2} is proven it would follow that this homomorphism is in fact an isomorphism}
\[
\overline{\mon}:G(S^+_X)^{even}_{v}\rightarrow Mon(K_a(v))/\Gamma_v
\]
such that both $h_2$ and $h_3$, given in (\ref{eq-h-i}), are $G(S^+_X)^{even}_{v}$ equivariant with respect to the homomorphisms $\mon$,
given in Corollary \ref{cor-monodromy-representation-of-spin}, and $\overline{\mon}$.
\end{prop}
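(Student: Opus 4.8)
The plan is to define $\overline{\mon}(g)$, for $g\in G(S^+_X)^{even}_v$, as the class modulo $\Gamma_v$ of the monodromy operator of $K_a(v)$ obtained by restricting $\mon(g)$ to an Albanese fibre of $\M(v)$, and to verify all the required properties through the subquotients $Q^2(\M(v))$ and $Q^3(\M(v))$. First I would record the ambient structure. The operator $\mon(g)$ of Corollary \ref{cor-monodromy-representation-of-spin} is a graded ring automorphism of $H^*(\M(v),\Integers)$, hence descends to automorphisms of $Q^2(\M(v))$ and $Q^3(\M(v))$; through the isomorphism $h_2$ and the finite-index embedding $h_3$ of (\ref{eq-h-i}) these yield a pair $(\alpha_g,\beta_g)\in \Aut[H^2(K_a(v),\Integers)]\times\Aut[H^3(K_a(v),\RationalNumbers)]$. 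By Lemma \ref{lemma-Gamma-v}(\ref{lemma-item-Gamma-v-embedds-in-Mon}) the subgroup $\Gamma_v\subset Mon(K_a(v))$ is the kernel of the homomorphism $Mon(K_a(v))\to \Aut[H^2(K_a(v),\Integers)]\times\Aut[H^3(K_a(v),\RationalNumbers)]$ given by restriction; in particular $\Gamma_v$ is normal, $Mon(K_a(v))/\Gamma_v$ is a group, and this homomorphism induces an injection $Mon(K_a(v))/\Gamma_v\hookrightarrow \Aut[H^2(K_a(v),\Integers)]\times\Aut[H^3(K_a(v),\RationalNumbers)]$.

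Granting this, uniqueness and the homomorphism property are formal. If $\overline{\mon}$ exists and $h_2$, $h_3$ are equivariant, then, since $\Gamma_v$ acts trivially on $H^2$ and $H^3$, the coset $\overline{\mon}(g)$ acts as $\alpha_g$ on $H^2(K_a(v),\Integers)$ and as $\beta_g$ on $H^3(K_a(v),\RationalNumbers)$, hence is determined by $(\alpha_g,\beta_g)$ through the injection above; this gives uniqueness. Because $\mon$ is a group homomorphism and descent to $Q^j$ is functorial, $g\mapsto(\alpha_g,\beta_g)$ is a group homomorphism; once each $(\alpha_g,\beta_g)$ is known to lie in the image of $Mon(K_a(v))/\Gamma_v$, composing with the inverse of the injection produces the homomorphism $\overline{\mon}$, necessarily the only one compatible with $h_2$ and $h_3$.

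The crux is existence: for each $g$ one must produce a genuine monodromy operator $\phi_g\in Mon(K_a(v))$ with $\phi_g|_{H^2(K_a(v),\Integers)}=\alpha_g$ and $\phi_g|_{H^3(K_a(v),\RationalNumbers)}=\beta_g$; then $\overline{\mon}(g):=\phi_g\,\Gamma_v$ works. By Theorem \ref{thm-Hom-G3-non-empty} we may assume $v=s_n$, and since $g\mapsto(\alpha_g,\beta_g)$ is a group homomorphism it is enough to find such $\phi_g$ for a set of generators of $G(S^+_X)^{even}_{s_n}$ and take products: the generators of $\Spin(S^+_X)_{s_n}$ from Lemma \ref{lemma-generators-for-stabilizer-in-G-V} (the image of $SL(H^1(X,\Integers))$ and the products $\tilde m_{t_1}\tilde m_{t_2}$ with $t_i=(1,A_i,n)\in s_n^\perp$ of square $2$), together with the orientation-reversing generator from Corollary \ref{cor-an-orientation-reversing-monodromy}. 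For each of these, $\mon(g)$ was realised in the proof of Theorem \ref{thm-monodromy-representation-mu} either by an isomorphism of moduli spaces coming from a stability-preserving Fourier--Mukai equivalence (Corollaries \ref{cor-composition-of-two-reflections-by-minus-two-vectors-is-a-monodromy-operator} and \ref{cor-an-orientation-reversing-monodromy}, via Propositions \ref{prop-yoshioka-F-P-induces-an-isomorphism} and \ref{prop-yoshioka-G-P-induces-an-isomorphism}), or by parallel transport in a family of moduli spaces of sheaves on abelian surfaces (the family $\M_\X(s_n)\to\fM^0$ for $SL(H^1(X,\Integers))$, and the deformations dropping ampleness of the $F_i$). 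In each case the construction is compatible with the relative Albanese fibration: a Fourier--Mukai equivalence induces an isomorphism of the Albanese fourfolds conjugating the translation $A$-actions (Orlov's characterisation, as used in the proof of Lemma \ref{lemma-Gamma-v}), and a family $\M\to B$ of moduli of sheaves carries a relative Albanese abelian scheme over $B$ with respect to which $\M$ is a family of generalised Kummers. Restricting the isomorphism, respectively the parallel transport, to Albanese fibres and then composing with the $A$-translation carrying the target fibre back to $K_a(v)$ — an $A$-translation determined only up to $\ker(\bar q)=\Gamma_v$ — yields a well-defined element of $Mon(K_a(v))/\Gamma_v$, and since $h_i$ is by definition the pullback to the Albanese fibre, naturality of pullback gives the equality with $(\alpha_g,\beta_g)$. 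I expect the main obstacle to be exactly this point: verifying carefully that each of the three families (and each Fourier--Mukai equivalence) restricts in this way to a family, respectively an isomorphism, of generalised Kummers inducing the predicted operators on $H^2\oplus H^3$.

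Finally, injectivity. After the same reduction to $v=s_n$, suppose $\overline{\mon}(g)=1$. Then $\phi_g\in\Gamma_v$, so by the characterisation in Lemma \ref{lemma-Gamma-v}(\ref{lemma-item-Gamma-v-embedds-in-Mon}) the operator $\mon(g)$ is trivial on $Q^2(\M(s_n))$ and on $Q^3(\M(s_n))$. Feeding this into the equivariance identity (\ref{eq-monodromy-equivariance-of-theta}) for $\tilde\theta_2$ and $\tilde\theta_3$ — valid on $G(S^+_X)^{even}_{s_n}$ by Corollary \ref{cor-monodromy-representation-of-spin}, as in Corollary \ref{cor-monodromy-equivariance-of-theta} — and using that $\tilde\theta_2$ and $\tilde\theta_3$ are rational isomorphisms onto $Q^2\otimes_\Integers\RationalNumbers$ and $Q^3\otimes_\Integers\RationalNumbers$ (Lemma \ref{lemma-tilde-theta-j-is-an-isomorphism}), one finds that $g':=\tau_X^{ort(g)}g\tau_X^{ort(g)}$ acts on $S^+_X\cap s_n^\perp$ by the scalar $(-1)^{ort(g)}$ and on $S^-_X$ by $+1$. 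If $ort(g)=1$, then $g'$ still fixes $s_n$ (as $\tau_X$ does, by Example \ref{example-an-element-of-the-even-clifford-group-of-S-plus}) while acting as $-1$ on $s_n^\perp\cap S^+_X$; but $g'\in G(S^+_X)^{even}$ preserves the unimodular lattice $S^+_X$, and since $s_n$ is primitive with $(s_n,s_n)_{S^+}=-2n$ and $n\ge 3$, the transformation acting as $+1$ on $s_n$ and $-1$ on $s_n^\perp$ does not preserve $S^+_X$, a contradiction. Hence $ort(g)=0$, so $g=g'$ fixes $s_n$ and $S^+_X\cap s_n^\perp$ pointwise, whence $\rho(g)=\mathrm{id}$ on $S^+_X$; thus $g$ lies in the kernel $\{\pm 1\}$ of $G(S^+_X)^{even}\to SO(S^+_X)$, and $g=1$ because the nontrivial kernel element acts as $-\mathrm{id}$ on $S^-_X$ while $g$ fixes $S^-_X$. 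This proves $\overline{\mon}$ injective.
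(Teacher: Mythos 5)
Your proposal is correct in outline and turns on the same pivot as the paper's proof: by Lemma \ref{lemma-Gamma-v}(\ref{lemma-item-Gamma-v-embedds-in-Mon}) the coset modulo $\Gamma_v$ of a monodromy operator of $K_a(v)$ is determined by its action on $H^2(K_a(v),\Integers)$ and $H^3(K_a(v),\RationalNumbers)$, and combined with the injectivity and equivariance of $h_2,h_3$ this yields uniqueness and the homomorphism property exactly as you argue. Where you diverge is the existence step. The paper needs neither the reduction to $v=s_n$ nor generators: for an arbitrary family $\pi:\M\rightarrow T$ realizing a monodromy operator of $\M(v)$ it notes that the relative Albanese $p:\RelAlb^1_\pi\rightarrow T$ has connected fibres, so $p_*:\pi_1(\RelAlb^1_\pi,a)\rightarrow\pi_1(T,t_0)$ is surjective; lifting the loop and using the monodromy invariance of the evaluation map $p^*R\pi_*\Integers\rightarrow R\widetilde{\alb}_*\Integers$ gives the $(g,\tilde{g})$-equivariance of $\iota_a^*$, hence a canonical homomorphism $Mon(\M(v))\rightarrow Mon(K_a(v))/\Gamma_v$ in one stroke, independent of how $\mon(g)$ was constructed. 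Your generator-by-generator plan can be pushed through, but observe that for the parallel-transport generators the phrase ``restricting the parallel transport to Albanese fibres'' is precisely where this lifting is needed: transport along a loop in the base does not restrict to an Albanese fibre until you choose a lift of the loop to the relative Albanese (possible exactly because the fibres are connected), and the endpoint of that lift is what produces the translation-back ambiguity you absorb into $\Gamma_v$. Once you invoke the lifting, it applies verbatim to every element of $Mon(\M(v))$, so the case analysis over $SL(H^1(X,\Integers))$, the products $\tilde{m}_{t_1}\tilde{m}_{t_2}$, and the orientation-reversing generator — the part you yourself flag as the main obstacle — becomes unnecessary. On the other hand, your injectivity argument is more explicit than the paper's: the paper simply asserts that $G(S^+_X)^{even}_v$ acts faithfully on $\left(Q^2(\M(v))\oplus Q^3(\M(v))\right)\otimes_\Integers\RationalNumbers$, whereas you derive it from (\ref{eq-monodromy-equivariance-of-theta}), the opposite signs of $\tau_\M$ on $Q^2$ and $Q^3$, and the lattice-theoretic impossibility of an isometry of $S^+_X$ fixing the primitive vector $s_n$ while acting by $-1$ on $s_n^\perp$; that computation is sound and in effect supplies the justification for the faithfulness the paper takes for granted.
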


\begin{proof}
Let $\pi:\M\rightarrow T$ be a smooth and proper family  of K\"{a}hler manifolds with fiber $\M(v)$ over a point $t_0$ of an analytic space $T$. We get the commutative diagram of
the relative Albanese variety $\RelAlb^1_\pi$ of degree $1$
\[
\xymatrix{
K_a(v) \ar[r]^{\iota_a} \ar[d] & \M(v) \ar[d]_{\alb}\ar[r]^{\subset} & \M \ar[d]_{\widetilde{\alb}} \ar@/^4pc/[dd]_{\pi}
\\
\{a\}\ar[r]^{\subset} \ar[rd]& \Alb^1(\M(v)) \ar[r]^{\subset}\ar[d] & \RelAlb^1_\pi\ar[d]_{p}
\\
& \{t_0\} \ar[r]_{\subset} & T.
}
\]
The morphism $p$ is a fibration with connected fibers. Hence, the homomorphism $p_*:\pi_1(\RelAlb^1_\pi,a)\rightarrow \pi_1(T,t_0)$ is surjective. Let $g\in Mon(\M(v))$ be a monodromy operator corresponding to a class $\gamma\in \pi_1(T,t_0)$. Choose a class
$\tilde{\gamma}$ in $\pi_1(\RelAlb^1_\pi,a)$, such that $p_*(\tilde{\gamma})=\gamma$. Let $\tilde{g}$ be the monodromy operator of
$K_a(v)$ corresponding to $\tilde{\gamma}$. Then the pullback homomorphism
$\iota_a^*:H^*(\M(v),\Integers)\rightarrow H^*(K_a(v),\Integers)$ is $(g,\tilde{g})$-equivariant,
\[
\iota_a^*(g(x))=\tilde{g}(\iota_a^*(x)),
\] 
for all $x\in H^*(\M(v),\Integers)$, since the evaluation homomorphism
\[
p^*R\pi_*\Integers\rightarrow R\widetilde{\alb}_*\Integers
\]
is a global section, hence monodromy invariant. It follows that the homomorphisms $h_2$ and $h_3$ are 
$(g,\tilde{g})$-equivariant as well. 
Now, the image of $\tilde{g}$ in $Mon(K_a(v))/\Gamma_v$ is determined by its action on 
$H^i(K_a(v),\RationalNumbers)$, for $i=2,3$. Indeed, if $\tilde{g}_1$, $\tilde{g}_2\in Mon(K_a(v))$ and 
$\tilde{g}_1\tilde{g}_2^{-1}$ acts trivially on $H^i(K_a(v),\RationalNumbers)$, for $i=2,3$, then $\tilde{g}_1\tilde{g}_2^{-1}$ belongs to $\Gamma_v$, 
since $\Gamma_v$ is equal to the subgroup of $Mon(K_a(v))$ acting trivially on both $H^2(K_a(v),\Integers)$ and $H^3(K_a(v),\Integers)$, by Lemma \ref{lemma-Gamma-v}.
The homomorphism 
\[
h_i:Q^i(\M(v))\otimes_\Integers\RationalNumbers\rightarrow H^i(K_a(v),\RationalNumbers)
\]
are isomorphisms for $i=2,3$, as noted in the paragraph below Equation (\ref{eq-h-i}). Hence, the image of $\tilde{g}$ in $Mon(K_a(v))/\Gamma_v$ is uniquely determined by $g$.
We get a canonical homomorphism
\[
Mon(\M(v))\rightarrow Mon(K_a(v))/\Gamma_v.
\]
Define $\overline{\mon}$ as the composition of the above homomorphism with the homomorphism $\mon$
given in Corollary \ref{cor-monodromy-representation-of-spin}. The homomorphism $\overline{\mon}$ is injective, since $h_i$ is injective and $G(S^+_X)^{even}_{v}$-equivariant, for $i=2,3$,  and $G(S^+_X)^{even}_{v}$ acts faithfully on the direct sum of $Q^i(\M(v))\otimes\RationalNumbers$, for $i=2,3$.
\end{proof}

Pulling back the extension 
\[
0\rightarrow \Gamma_v\rightarrow Mon(K_a(v))\rightarrow Mon(K_a(v))/\Gamma_v\rightarrow 0
\]
via $\overline{\mon}$ we get the extension
\begin{equation}
\label{eq-extension-denining-tildeGSplus}
0\rightarrow \Gamma_v\rightarrow \tildeGSplus{v}\rightarrow G(S^+_X)^{even}_{v} \rightarrow 0,
\end{equation}
where $\tildeGSplus{v}$ is a subgroup of $Mon(K_a(v))$, by the injectivity of $\overline{\mon}$.

\begin{proof}[Proof of Theorem \ref{thm-Mon-2}]
We prove the inclusion $\W^{\det\cdot \chi}\subset \overline{\mon}^2(G(S^+_X)^{even}_{s_n})$, as
the reverse inclusion was proven by Mongardi in \cite{mongardi}.
The restriction homomorphism from $H^2(\M(s_n),\Integers)$ to $H^2(K_X(n-1),\Integers)$ factors through the isomorphism
$h_2:Q^2(\M(s_n))\rightarrow H^2(K_X(n-1),\Integers)$, given in (\ref{eq-h-i}),
by Theorem \ref{thm-yoshioka}. 
Denote by $\W^{\det\cdot \chi}$ the corresponding subgroup of $O(s_n^\perp)$ as well.
Proposition \ref{prop-overline-mon} reduces the proof to checking that the isomorphism $\bar{\theta}_2:s_n^\perp\rightarrow Q^2(\M(s_n))$, given in (\ref{eq-double-tilde-theta-j}), 
conjugates the image of $\mon(G(S^+_X)^{even}_{s_n})$ in $GL[Q^2(\M(s_n),\Integers)]$ onto
$\W^{\det\cdot \chi}$. 
The subgroup $\Spin(S^+)_{s_n}$ of $G(S^+_X)^{even}_{s_n}$ maps onto 
$SO_+(S^+)_{s_n}$, by Corollary \ref{cor-SO-plus-of-direct-sums-of-hyperbolic-plane}.
The image of $\mon(\Spin(S^+)_{s_n})$ in $GL[Q^2(\M(s_n),\Integers)]$ is conjugated onto
$SO_+(S^+)_{s_n}$ 
via $\bar{\theta}_2$, by the $\Spin(S^+)_{s_n}$-equivariance of the latter established in Lemma \ref{lemma-two-theta-j-are-proportional}. 
$\W^{\det\cdot \chi}$ is generated by $SO_+(S^+)_{s_n}$ and the involution of $s_n^\perp$ sending $(1,0,n)$ to $-(1,0,n)$ and acting as the identity on $H^2(X,\Integers)$. The involution $\tau_X$, given in (\ref{eq-tau-X}), is an element of $G(S^+_X)^{even}_{s_n}$.
The former involution of $s_n^\perp$ is precisely 
$\bar{\theta}_2^{-1}\circ\mon_{\tau_X}\circ \bar{\theta}_2$, by the Equality
\[
\mon_{\tau_X}(\tilde{\theta}_2(\lambda))=-\bar{\theta}_2(\tau_X(\lambda)),
\]
for all $\lambda\in s_n^\perp$, which is a special case of
Corollary \ref{cor-monodromy-equivariance-of-theta} and Lemma \ref{lemma-two-theta-j-are-proportional}. 

It remains to prove 
the surjectivity of the homomorphism (\ref{eq-overline-mon}).
Let $\Aut_0(K_a(s_n))$ be the subgroup of $\Aut(K_a(s_n))$ consisting of automorphisms acting trivially on $H^2(K_a(s_n),\Integers)$.
The subgroup  of $Mon(K_a(s_n))$, acting trivially on $H^2(K_a(s_n),\Integers)$, is known to be 
the image of $\Aut_0(K_a(s_n))$, by
\cite[Theorem 2.1]{hassett-tschinkel-lagrangian-planes}. 
It suffices to prove that the image of $\overline{\mon}$ 
contains the image of $\Aut_0(K_a(s_n))$, by 
the surjectivity of $\overline{\mon}^2$.
Now $\Aut_0(K_a(s_n))$ is generated by  $\Gamma_{s_n}$ and the automorphism $\nu_a$ of order $2$ in the proof of
Lemma \ref{lemma-Gamma-v} (\ref{lemma-item-Gamma-v-embedds-in-Mon}), and action of the coset $\nu_a\Gamma_{s_n}$ on $H^3(K_a(s_n),\RationalNumbers)$ is equal to the action of $\overline{\mon}(-1)$, as both act as minus the identity. 
Furthermore, $\overline{\mon}(-1)$ acts trivially 
on $H^2(K_a(s_n),\Integers)$.
It follows that the subgroup $\{1,-1\}\subset G(S^+_X)^{even}_{s_n}$  is mapped via $\overline{\mon}$
onto the image of $\Aut_0(K_a(s_n))$ in $Mon(K_a(s_n))/\Gamma_{s_n}$.
\end{proof}

\hide{
Show that we can reconstruct $\M_X(v)$ from 
$K_X(v)$ as 
$[K_X(v)\times J^3(K_X(v))]/\Gamma_X$. The construction generalizes for a
deformation $Y$ of $K_X(v)$. 

Conclude, that $Mon(K_X(v))$ is a semi-direct-product of 
$H^1(X,\Integers/n\Integers)$ and $Mon(M_X(v))$,
where $n=$ \dots
}

%
\subsection{Comparison with Verbitsky's Lie algebra representation}
\label{sec-comparison-with-Verbitsky-Spin-7-representation}
\begin{new-lemma}
\label{lemma-a-normal-subgroup}
The homomorphism $\overline{\mon}$ of Proposition \ref{prop-overline-mon} embeds $\Spin(S^+_X)_v$ as a normal subgroup of $Mon(K_a(v))/\Gamma_v$.
\end{new-lemma}

\begin{proof}
The image of $\Spin(S^+_X)_v$ has index $2$ in $Mon(K_a(v))/\Gamma_v$, by Proposition \ref{prop-overline-mon} and the surjectivity statement of Theorem
\ref{thm-Mon-2} and is thus a normal subgroup. We provide next a proof independent of the surjectivity result, so independent of the result in \cite{mongardi}. 
We may assume that $v=(1,0,-1-n)$. The homomorphism $\overline{\mon}$ is injective, by Proposition \ref{prop-overline-mon}.
The group $O_+(S^+)_v$ naturally embeds in $O_+(v^\perp)$ and the image is a normal subgroup,
by \cite[Lemma 4.10]{markman-monodromy-I} (that Lemma is stated for the Mukai lattice of a $K3$ surface, but the same proof applies to the Mukai lattice of an abelian surface). Hence, $SO_+(S^+)_v$ embeds as a normal subgroup, the image being the intersection of two normal subgroups of $O_+(v^\perp)$. The restriction homomorphism
$r:Mon(K_a(v))\rightarrow Mon^2(K_a(v))$ factors through a homomorphism
$\bar{r}:Mon(K_a(v))/\Gamma_v\rightarrow Mon^2(K_a(v))$. $Mon^2(K_a(v))$ is naturally identified with a subgroup of $O_+(v^\perp)$
and the inverse image via $\bar{r}$ of $SO_+(S^+)_v$ is thus a normal subgroup of $Mon(K_a(v))/\Gamma_v$. It remains to show that this inverse image is $\overline{\mon}(\Spin(S^+_X)_v)$. Note that $\Spin(S^+_X)_v$ surjects onto $SO_+(S^+)_v$, by Lemma
\ref{lemma-spin-surjects}, and its kernel has order two generated by an element acting via scalar multiplication by $-1$ on $V$ and $S^-$.
The group $\Gamma_v$ has index two in the kernel of $r$, by the proof of Lemma \ref{lemma-Gamma-v}(\ref{lemma-item-Gamma-v-embedds-in-Mon}).
Hence, the kernel of $\bar{r}$ has order $2$ and is thus contained in $\overline{\mon}(\Spin(S^+_X)_v)$.
\end{proof}

Let $Y$ be an irreducible holomorphic symplectic manifold of complex dimension $2n$. 
We recall next Verbitsky's construction of a Lie algebra representation on the cohomology of $Y$.
Set $b_2:=\dim H^2(Y,\RealNumbers)$.
Let $h\in \End[H^*(Y,\RealNumbers)]$
be the endomorphism acting via scalar multiplication by $i-2n$ on $H^i(Y,\RealNumbers)$.
Given a class $a\in H^2(Y,\RealNumbers)$
let $e_a\in \End[H^*(Y,\RealNumbers)]$ be given by cup product with $a$. The class $a$ is called of {\em Lefschetz type}, if there exists an endomorphism $f_a\in \End[H^*(Y,\RealNumbers)]$, satisfying the $\LieAlg{sl}_2$ commutation relations
\[
[e_a,f_a]=h, \ [h,e_a]=2e_a, \ [h,f_a]=-2f_a.
\]
Such $f_a$ is unique, if it exists. The triple $\{e_a,h,f_a\}$ is called a {\em Lefschetz triple.}

Let $\LieAlg{g}(Y)$ be the Lie subalgebra of $\End[H^*(Y,\RealNumbers)]$ generated by all Lefschetz triples.
Denote by $\LieAlg{g}_k(Y)$ its graded summand of degree $k$. 
Let $Prim^k(Y)\subset H^k(Y,\RealNumbers)$ be the subspace
annihilated by $\LieAlg{g}_{-2}(Y)$ and set $Prim(Y):=\oplus_kPrim^k(Y)$.
Let $A_2\subset H^*(Y,\RealNumbers)$ be the subring generated by $H^2(Y,\RealNumbers)$.
The following theorem was proven by Verbitsky \cite{verbitsky-announcement} and in a detailed form by Looijenga and Lunts.

\begin{thm}
\label{thm-LL}
\begin{enumerate}
\item
\label{thm-item-verbitsky-lie-algebra}
\cite[Prop. 4.5]{looijenga-lunts}
$\LieAlg{g}(Y)$ is isomorphic to $\LieAlg{so}(4,b_2-2,\RealNumbers)$ and
$\LieAlg{g}_0(Y)\cong \LieAlg{so}(H^2(Y,\RealNumbers))\oplus\RealNumbers h.$
The homomorphism $e:H^2(Y,\RealNumbers)\rightarrow \LieAlg{g}_2(Y)$, sending $a$ to $e_a$, is an isomorphism.
$\LieAlg{g}_k(Y)$ vanishes if $k$ does not belong to $\{-2,0,2\}$. 
\item
\label{thm-item-Poincare-pairing}
\cite[Prop. 1.6]{looijenga-lunts} $\LieAlg{g}(Y)$ preserves, infinitesimally, the Poincare pairing on $H^*(Y,\RealNumbers)$.
\item
\label{thm-item-Prim-k-generate}
\cite[Prop. 1.6 and Cor. 2.3]{looijenga-lunts}
$H^*(Y,\RealNumbers)$ is the orthogonal direct sum, with respect to the Poincare pairing, of the $A_2$ submodules generated by $Prim^k(Y)$, $0\leq k \leq 2n$,
\[
H^*(Y,\RealNumbers)=\oplus_{k=0}^{2n} A_2 \cdot Prim^k(Y).
\]
\item
\label{thm-item-irreducible-g-Y-submodules}
\cite[Cor. 1.13]{looijenga-lunts}
Let $W$ be an irreducible $\LieAlg{g}_0(Y)$-submodule of $Prim^k(Y)$. Then the $A_2$-submodule generated by $W$ is an irreducible
$\LieAlg{g}(Y)$-submodule. Conversely, all irreducible $\LieAlg{g}(Y)$-submodules are of this type.
\item
\label{thm-item-hodge-endomorphism}
\cite[Theorem 7.1]{verbitsky-mirror-symmetry} The Hodge endomorphism of $H^*(Y,\ComplexNumbers)$, which acts on $H^{p,q}(Y)$ via scalar multiplication by $\sqrt{-1}(p-q)$, is an element of the semisimple summand $\LieAlg{so}(H^2(Y,\ComplexNumbers))$ of
$\LieAlg{g}_0(Y)\otimes_\RealNumbers\ComplexNumbers$.
\end{enumerate}
\end{thm}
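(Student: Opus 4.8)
Since every clause of Theorem~\ref{thm-LL} restates a theorem of Verbitsky or of Looijenga--Lunts, the plan is not to reprove these statements from scratch but to indicate the shape of the arguments, so that it is visible why they apply in the generality used below. First I would fix a K\"{a}hler class $\omega$ on $Y$; Hard Lefschetz supplies the lowering operator, hence a Lefschetz triple $\{e_\omega,h,f_\omega\}$, so $\omega$ is of Lefschetz type, and since the Lefschetz condition is Zariski open a dense set of classes in $H^2(Y,\RealNumbers)$ is of Lefschetz type. The Lie algebra $\LieAlg{g}(Y)$ they generate is graded by $h$, and because cup product shifts cohomological degree by a fixed even amount while $H^*(Y)$ has bounded degree, a counting argument forces $\LieAlg{g}_k(Y)=0$ for $k\notin\{-2,0,2\}$; moreover $e\colon H^2(Y,\RealNumbers)\to\LieAlg{g}_2(Y)$ is visibly surjective and injectivity is a degree-$2$ statement. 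The substantive point, carried out in \cite[Prop.~4.5]{looijenga-lunts}, is the identification of $\LieAlg{g}_0(Y)$: the commutators $[e_a,f_b]$ of Lefschetz classes lie in $\LieAlg{g}_0(Y)$, and using the Fujiki relation together with the fact that the Beauville--Bogomolov--Fujiki form on $H^2(Y,\RealNumbers)$ governs the pairing $H^2\times H^{4n-2}\to H^{4n}$ one shows these span $\LieAlg{so}(H^2(Y,\RealNumbers))\oplus\RealNumbers h$. Reassembling the graded pieces then gives $\LieAlg{g}(Y)\cong\LieAlg{so}(4,b_2-2,\RealNumbers)$, which is Part~(\ref{thm-item-verbitsky-lie-algebra}).

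Part~(\ref{thm-item-Poincare-pairing}) is the infinitesimal form of the symmetry of the Poincar\'{e} pairing under Hard Lefschetz: each $e_a$ and $f_a$ is skew-adjoint up to sign for this pairing, so the generators of $\LieAlg{g}(Y)$, and hence all of $\LieAlg{g}(Y)$, act by infinitesimal isometries. Part~(\ref{thm-item-Prim-k-generate}) is then $\LieAlg{sl}_2$/$\LieAlg{so}$ representation theory applied to the finite-dimensional semisimple $\LieAlg{g}(Y)$-module $H^*(Y,\RealNumbers)$: the subspaces killed by $\LieAlg{g}_{-2}(Y)$ are precisely the $Prim^k(Y)$, Jacobson--Morozov together with complete reducibility yield the direct sum $H^*(Y,\RealNumbers)=\oplus_k A_2\cdot Prim^k(Y)$, and orthogonality of distinct isotypic components follows from Part~(\ref{thm-item-Poincare-pairing}). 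Part~(\ref{thm-item-irreducible-g-Y-submodules}) records that the functor $W\mapsto A_2\cdot W$ is parabolic (generalized Verma) induction from $\LieAlg{g}_0(Y)\oplus\LieAlg{g}_2(Y)$ to $\LieAlg{g}(Y)$ and that here it sends irreducibles to irreducibles and exhausts them; I would simply cite \cite[Cor.~1.13]{looijenga-lunts}.

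The part of a genuinely different flavour is Part~(\ref{thm-item-hodge-endomorphism}). Following \cite[Theorem~7.1]{verbitsky-mirror-symmetry}, the plan is to place the Hodge operator $I$ (scalar multiplication by $\sqrt{-1}(p-q)$ on $H^{p,q}$) inside $\LieAlg{g}_0(Y)\otimes_\RealNumbers\ComplexNumbers$ directly: from K\"{a}hler geometry, for a K\"{a}hler metric one has the $\LieAlg{sl}_2\times\LieAlg{sl}_2$ action in which $I$ is a commutator of Lefschetz-type operators attached to the K\"{a}hler class and to the real and imaginary parts of the holomorphic symplectic form; since commutators of elements of $\LieAlg{g}_{\pm2}(Y)$ land in $\LieAlg{g}_0(Y)$ and the $\RealNumbers h$ summand records cohomological degree rather than Hodge weight, $I$ must lie in the semisimple summand $\LieAlg{so}(H^2(Y,\ComplexNumbers))$. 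The main obstacle to making this self-contained is exactly the step of verifying that the relevant classes on a hyperk\"{a}hler $Y$ are genuinely of Lefschetz type, so that the K\"{a}hler-geometric identity for $I$ can be read off inside the abstractly defined $\LieAlg{g}(Y)$; this is where the rotation of complex structures in the twistor family, and hence the hyperk\"{a}hler structure, is essential, and it is why these results are quoted rather than proved here.
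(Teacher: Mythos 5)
Your proposal matches the paper: Theorem \ref{thm-LL} is stated there purely as a compilation of results of Verbitsky and of Looijenga--Lunts, with each part attributed to the cited reference (\cite{looijenga-lunts}, \cite{verbitsky-mirror-symmetry}) and no proof given. Your expository sketch of how those arguments run is additional colour, but since you too ultimately quote the references rather than reprove them, the approach is essentially the same.
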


Let $Y$ be an irreducible holomorphic symplectic manifold of generalized Kummer type.
Let $\Gamma$ be the subgroup of $\Aut(Y)$ acting trivially on $H^i(Y,\RationalNumbers)$, $i=2,3$. 
The $\Gamma$-action commutes with the $\LieAlg{g}(Y)$-action, since $\Gamma$ acts trivially on $H^2(Y,\RealNumbers)$ and $f_a$ is uniquely determined by $e_a$, for each Lefschetz triple $\{e_a,h,f_a\}.$
Hence, the $\Gamma$-invariant subring $H^*(Y,\RealNumbers)^\Gamma$ is a $\LieAlg{g}(Y)$-submodule of $H^*(Y,\RealNumbers)$.
Let $A_k\subset H^*(Y,\RealNumbers)^\Gamma$, $k\geq 0$, be the subalgebra generated by 
$\oplus_{i=0}^k H^i(Y,\RealNumbers)^\Gamma$. This definition of $A_2$ agrees with the one above, since $\Gamma$ acts trivially on $H^2(Y,\RealNumbers)$, by Lemma \ref{lemma-Gamma-v}(\ref{lemma-item-Gamma-v-embedds-in-Mon}).
Set $(A_k)^j:=A_k\cap H^j(Y,\RealNumbers)$.
Let $A'_k$ be the $A_2$-submodule of $H^*(Y,\RealNumbers)^\Gamma$ generated by $Prim(Y)\cap A_k$. 
Then $A'_k$ is the maximal $\LieAlg{g}(Y)$-submodule of $H^*(Y,\RealNumbers)^\Gamma$, which is contained in $A_k$,
by parts (\ref{thm-item-Prim-k-generate}) and (\ref{thm-item-irreducible-g-Y-submodules}) of Theorem \ref{thm-LL}.
The Poincare pairing restricts to a non-degenerate pairing on each irreducible $\LieAlg{g}(Y)$-submodule, by 
the second paragraph in the proof of \cite[Prop. 1.6]{looijenga-lunts}.
Hence, the Poincare pairing restricts to $A'_k$ as a non-degenerate pairing. 
Set $C_k:=H^k(Y,\RealNumbers)$, for $0\leq k\leq 3$. For $k\geq 4$, set
\[
C_k := (A'_{k-2})^\perp\cap H^k(Y,\RealNumbers)^\Gamma,
\]
where the orthogonal complement is taken with respect to the Poincare pairing. 

\begin{new-lemma}
\label{lemma-subspaces-C-k}
\begin{enumerate}
\item
\label{lemma-item-C-k-is-direct-summand}
$H^k(Y,\RealNumbers)^\Gamma$ admits a monodromy invariant and $\LieAlg{g}_0(Y)$-invariant decomposition
\begin{equation}
\label{eq-decomposition-of H-k-into-C-k}
H^k(Y,\RealNumbers)^\Gamma = (A_{k-2})^k\oplus C_k.
\end{equation}
\item
\label{lemma-item-C-k-generate}
The subspaces $C_k$, $k\geq 2$, generate $H^*(Y,\RealNumbers)^\Gamma$ as a ring.
\end{enumerate}
\end{new-lemma}

\begin{proof} (\ref{lemma-item-C-k-is-direct-summand})
$\Gamma$ is a normal subgroup of $Mon(Y)$ and so $H^*(Y,\RealNumbers)^\Gamma$ is $Mon(Y)$-invariant. Indeed, given
$\alpha\in H^*(Y,\RealNumbers)^\Gamma$, $g\in Mon(Y)$, and $\gamma\in \Gamma$, then $g^{-1}\gamma g$ belongs to $\Gamma$ and so 
$\gamma(g(\alpha))=g(g^{-1}\gamma g)(\alpha)=g(\alpha)$. Hence, $g(\alpha)$ belongs to $H^*(Y,\RealNumbers)^\Gamma$.
The $Mon(Y)$ action on $H^*(Y,\RealNumbers)^\Gamma$ factors through $Mon(Y)/\Gamma$. The proof of 
\cite[Cor. 4.6(1)]{markman-monodromy-I}
now applies to the $Mon(Y)/\Gamma$-action on $H^*(Y,\RealNumbers)^\Gamma$
(instead of the $Mon(X)$-action on $H^*(X,\RealNumbers)$ for $X$ of $K3^{[n]}$-deformation type).

(\ref{lemma-item-C-k-generate}) Follows easily by induction from part (\ref{lemma-item-C-k-is-direct-summand}).
\end{proof}

\begin{new-lemma}
\label{lemma-decomposition-of-C-2i}
\begin{enumerate}
\item
\label{lemma-item-decomposition-of-C-2i}
$C_{2i}$, $i\geq 2$, admits a $Mon(Y)/\Gamma$-invariant decomposition 
\[
C_{2i}=C_{2i}'\oplus C_{2i}''.
\]
$C_{2i}'$ either vanishes, or it is a one-dimensional representation of $Mon(Y)/\Gamma$.
$C_{2i}''$ either vanishes, or it is isomorphic to the tensor product of $H^2(Y,\RealNumbers)$ with a one-dimensional representation of 
$Mon(Y)/\Gamma$. 
\item
\label{lemma-item-decomposition-of-C-2i+1}
$C_{2i+1}$, $i\geq 1$, either vanishes, or it is an irreducible $8$-dimensional representation of $Mon(Y)/\Gamma$.
If $C_{2i+1}$ does not vanish and $Y=K_a(v)$, for $v=(1,0,\!-\!1\!-\!n)\in S^+_X$, then $C_{2i+1}$ is the spin representation for the monodromy representation of $\Spin(S^+_X)_v$ given in Proposition \ref{prop-overline-mon}.
\end{enumerate}
\end{new-lemma}

\begin{proof} 
As the decomposition (\ref{eq-decomposition-of H-k-into-C-k}) is $Mon(Y)$-invariant, we may prove the statements for $Y=K_a(v)$ and
$\Gamma=\Gamma_v$,
where $v=(1,0,\!-\!1\!-\!n)\in S^+_X$ for an abelian surface $X$.

The moduli space $\M(v)$ is the quotient of $K_a(v)\times A$ by the anti-diagonal action of $\Gamma_v$, by 
Lemma \ref{lemma-Gamma-v} (\ref{lemma-item-M-is-quotient-by-Gamma-v}). 
Denote the quotient morphism by 
$j:K_a(v)\times A\rightarrow \M(v)$.
Then the pull back homomorphism $j^*:H^*(\M(v),\RealNumbers)\rightarrow H^*(K_a(v)\times A,\RealNumbers)^{\Gamma_v}$ is surjective. Choose a point $\tilde{a}$ of $A$ over $a$ and let $\iota_{\tilde{a}}:K_a(v)\rightarrow K_a(V)\times A$ be the
natural inclusion onto $K_a(v)\times \{\tilde{a}\}$. Then $\iota_a=j\circ\iota_{\tilde{a}}$. 
The homomorphism $\iota_{\tilde{a}}^*:H^*(K_a(v)\times A,\RealNumbers)\rightarrow H^*(K_a(v),\RealNumbers)$ is surjective
and $\Gamma_v$-equivariant, since the action of $\Gamma_v$ on $H^*(A,\RealNumbers)$ is trivial.
Hence, $\iota_{\tilde{a}}^*:H^*(K_a(v)\times A,\RealNumbers)^{\Gamma_v}\rightarrow H^*(K_a(v),\RealNumbers)^{\Gamma_v}$
is surjective as well. It follows that 
\begin{equation}
\label{eq-pullback-by-iota-a-is-surjective}
\iota_a^*:H^*(\M(v),\RealNumbers)\rightarrow H^*(K_a(v),\RealNumbers)^{\Gamma_v}
\end{equation}
is the composition $\iota_{\tilde{a}}^*\circ j^*$ of two surjective homomorphisms, hence itself surjective.

Let $B_k$ be the projection to $H^k(\M(v),\RealNumbers)$ of 
the image of the homomorphism $\tilde{\theta}:S_X^k\otimes_\Integers\RealNumbers \rightarrow H^*(\M(v),\RealNumbers)$,
given in (\ref{eq-tilde-theta-homomorphism}). The subspaces $B_k$ generate the cohomology ring
$H^*(\M(v),\RealNumbers)$, by \cite[Cor. 2]{markman-diagonal}. Let $\overline{B}_k$ be the image of $B_k$ in
$H^*(K_a(v),\RealNumbers)^{\Gamma_v}$ via the restriction homomorphism $\iota_a^*$. 
The subspaces $\overline{B}_k$ generate the cohomology ring
$H^*(K_a(v),\RealNumbers)^{\Gamma_v}$, by the surjectivity of (\ref{eq-pullback-by-iota-a-is-surjective}). 
Hence, $B_k+(A_{k-2})^k=H^k(K_a(v),\RealNumbers)^{\Gamma_v}$.
Consequently, $\overline{B}_k$ surjects onto the direct summand $C_k$, for all $k$. 
We get a surjective $\Spin(S^+_X)_v$-equivariant homomorphism 
$S_X^k\otimes_\Integers\RealNumbers\rightarrow C_k$, with respect to the spin representation on $S_X$ and the monodromy representation on $C_k$. 
The rest of the proof of (\ref{lemma-item-decomposition-of-C-2i}) and (\ref{lemma-item-decomposition-of-C-2i+1})
is identical to that of \cite[Lemma 4.8]{markman-monodromy-I}, where the decomposition for even $k$ follows from the decomposition $S_X^+\otimes_\Integers\RealNumbers=\RealNumbers v\oplus (v^\perp\otimes_\Integers\RealNumbers)$.
\end{proof}

The quotient $Mon(Y)/\Gamma$ has a canonical normal subgroup $N$ obtained by conjugating the subgroup 
$\overline{\mon}(\Spin(S^+_X)_v)$ of Lemma \ref{lemma-a-normal-subgroup} via a parallel transport operator from
$H^*(K_a(v),\Integers)$ to $H^*(Y,\Integers)$.

\begin{new-lemma}
\label{lemma-compatibility-with-verbitsky}
The Lie algebra of the Zariski closure $N_\ComplexNumbers$ of $N$ in 
$GL[H^*(Y,\ComplexNumbers)^\Gamma]$ is equal to the semi-simple direct summand 
$\LieAlg{so}(H^2(Y,\ComplexNumbers))$
of the complexification 
$\LieAlg{g}_0(Y)\otimes_\RealNumbers\ComplexNumbers$ of Verbitsky's Lie algebra
$\LieAlg{g}_0(Y)$ introduced in Theorem \ref{thm-LL}(\ref{thm-item-verbitsky-lie-algebra}).
\end{new-lemma}

\begin{proof}
Verbitsky's Lie algebra $\LieAlg{g}_0(Y)$ is a monodromy invariant subalgebra of 
$\LieAlg{gl}[H^*(Y,\ComplexNumbers)^\Gamma]$, and so it suffices to prove the statement for $Y=K_a(v)$,
$v=(1,0,-1-n)$. 
Let
\[
\nu:\Spin(H^2(K_a(v),\ComplexNumbers))\rightarrow GL(H^*(K_a(v),\ComplexNumbers)^{\Gamma_v})
\]
be the integration of the infinitesimal action of the semisimple part of Verbitsy's Lie algebra $\LieAlg{g}_0(K_a(v))$ in Theorem 
\ref{thm-LL}(\ref{thm-item-verbitsky-lie-algebra}) to the group action of the corresponding simply connected group. 
Under the identification of $v^\perp\otimes_\Integers\ComplexNumbers$ with $H^2(K_a(v),\ComplexNumbers)$
we may view $\Spin(S^+_X)_v$ as a Zariski dense arithmetic subgroup of $\Spin(H^2(K_a(v),\ComplexNumbers))$.
We claim that the $\overline{\mon}$ representation of $\Spin(S^+_X)_v$, given in Proposition \ref{prop-overline-mon},
extends to a representation of $\Spin(H^2(K_a(v),\ComplexNumbers))$, which we again denote by $\overline{\mon}$.
The proof is identical to that of \cite[Lemma 4.11(3)]{markman-monodromy-I}, and uses the fact the the subspaces $C_k$ generate $H^*(K_a(v),\ComplexNumbers)^{\Gamma_v}$, by Lemma \ref{lemma-subspaces-C-k}(\ref{lemma-item-C-k-generate}), and each of the representations  $C_k\otimes_\RealNumbers\ComplexNumbers$ of $\Spin(S^+_X)_v$ is induced from a representation  of $\Spin(H^2(K_a(v),\ComplexNumbers))$, by Lemma \ref{lemma-decomposition-of-C-2i}.
(Contrast this with the congruence representation in Lemma \ref{lemma-stabilizer-in-Spin-V-maps-to-GL-4}).

We adapt next the proof of \cite[Lemma 4.13]{markman-monodromy-I} to our set-up.
The monodromy equivariance of Verbisky's representation $\nu$
yields the equality
\begin{equation}
\label{eq-monodromy-equivariance-of-nu}
\overline{\mon}(g)\nu(f)\overline{\mon}(g)^{-1}=\nu(gfg^{-1}),
\end{equation}
for all $f\in \Spin(H^2(K_a(v),\ComplexNumbers))$ and all $g\in \Spin(S^+_X)_v$.  The equality holds  also for all $g$ in
$\Spin(H^2(K_a(v),\ComplexNumbers))$, by the density of $\Spin(S^+_X)_v$. Let
\[
\eta:\Spin(H^2(K_a(v),\ComplexNumbers))\rightarrow GL\left(H^*(K_a(v),\ComplexNumbers)^{\Gamma_v}\right)
\]
be given by $\eta(g):=\nu(g)^{-1}\overline{\mon}(g).$ We have
\[
\eta(g)\nu(f)\eta(g)^{-1}=\nu(g)^{-1}\overline{\mon}(g)\nu(f)\overline{\mon}(g)^{-1}\nu(g)
\stackrel{(\ref{eq-monodromy-equivariance-of-nu})}{=}
\nu(g)^{-1}\nu(gfg^{-1})\nu(g)=\nu(f).
\]
Hence, $\nu(f)$ commutes with $\eta(g)$, for all $f,g\in\Spin(H^2(K_a(v),\ComplexNumbers))$.
We get
\[
\eta(fg)=\nu(fg)^{-1}\overline{\mon}(fg)=\nu(g)^{-1}\eta(f)\overline{\mon}(g)=\eta(f)\eta(g).
\]
Hence, $\eta$ is a representation of $\Spin(H^2(K_a(v),\ComplexNumbers))$.

Taking $g=f$ the commutation of $\nu(g)$ and $\eta(g)$ yields:
\[
\nu(g)^{-1}\overline{\mon}(g)\nu(g)=\eta(g)\nu(g)=\nu(g)\eta(g)=\overline{\mon}(g).
\]
Hence, $\nu(g)$ commutes with $\overline{\mon}(g)$, for all $g\in \Spin(H^2(K_a(v),\ComplexNumbers))$.
The subspaces $C_k$ are $\Spin(H^2(K_a(v),\ComplexNumbers))$-invariant with respect to both $\nu$ and $\overline{\mon}$,
by Lemma \ref{lemma-subspaces-C-k}. The irreducible monodromy subrepresentations $C'_{2i}$ and $C''_{2i}$ are non-isomorphic, if they do not vanish, by Lemma \ref{lemma-decomposition-of-C-2i}. Hence, each is also $\nu$-invariant, by the latter commutativity.

We claim that each of $C_{2i+1}$, $C_{2i}'$, and $C_{2i}''$ is an irreducible $\nu$ representation as well, if it does not vanish.
The statement is clear for the one-dimensional $C_{2i}'$.
Note that each non-trivial representation of $\Spin(H^2(K_a(v),\ComplexNumbers))$ of dimension 
$7$,  is necessarily irreducible. Similarly, each
 $8$-dimensional representation, which does not contain a trivial subrepresentation, is necessarily irreducible. 
The Hodge structure of each $\LieAlg{g}_0(K_a(v))$-submodule in $H^*(K_a(v),\ComplexNumbers)$ is determined by the 
$\LieAlg{g}_0(K_a(v))$-action, by Theorem \ref{thm-LL}(\ref{thm-item-irreducible-g-Y-submodules}).
Each $\nu$-subrepresentation of $H^{2p}(K_a(v),\ComplexNumbers)$, which is not of Hodge type $(p,p)$, is thus a non-trivial $\nu$ representation and each odd-degree subrepresentation is non-trivial. Irreducibility of $C_{2i+1}$ follows, if it does not vanish, as it is $8$-dimensional in that case. If non-zero, $C_{2i}''$ is $7$-dimensional  with Hodge numbers 
$(h^{i+1,i-1},h^{i,i},h^{i-1,i+1})=(1,5,1)$, since the surjective homomorphism $S^k_X\otimes_\Integers\RealNumbers\rightarrow C^k$
constructed in the proof of Lemma \ref{lemma-decomposition-of-C-2i} was a Hodge homomorphism. Hence, if non-zero,
$C''_{2i}$ is an irreducible $\nu$-representation. 

We have seen that $\nu(f)$ commutes with $\eta(g)$, for all $f,g\in\Spin(H^2(K_a(v),\ComplexNumbers))$.
Hence, $\eta$ must act on $C'_{2i}$ and $C_{2i}''$ via scalar multiplication, as they are irreducible subrepresentations 
of $\nu$, which appear with multiplicity one in the $\eta$ invariant $\nu$-representation $C_{2i}$. Similarly, $\eta$ acts on $C_k$, for odd $k$, via scalar multiplication. But $\Spin(H^2(K_a(v),\ComplexNumbers))$ does not have any non-trivial one-dimensional representations. Hence, each $C_k$  is a trivial $\eta$-representation, and hence, so is the subring $H^*(K_a(v),\ComplexNumbers)^{\Gamma_v}$ they generate (Lemma \ref{lemma-subspaces-C-k}(\ref{lemma-item-C-k-generate})). We conclude the equality $\nu=\overline{\mon}$ and so
\[
N_\ComplexNumbers:=\overline{\mon}\left(\Spin(H^2(K_a(v),\ComplexNumbers))\right)=\nu\left(\Spin(H^2(K_a(v),\ComplexNumbers))\right).
\]
\end{proof}

\begin{new-lemma}
\label{lemma-invariance-under-both-translation-and-spin-actions}
Every class in $H^{2p}(Y,\RationalNumbers)^\Gamma$, which is $N$-invariant, is of Hodge type $(p,p)$.
\end{new-lemma}

\begin{proof}
An $N$-invariant class $\alpha\in H^{2p}(Y,\RealNumbers)^\Gamma$ is annihilated by the Lie algebra of the identity component of the Zariski closure $N_\ComplexNumbers$ of $N$ in $GL[H^*(Y,\ComplexNumbers)^\Gamma]$. The latter Lie algebra is equal to 
the semisimple summand of the complexification $\LieAlg{g}_0(Y)\otimes_\RealNumbers\ComplexNumbers$, by Lemma
\ref{lemma-compatibility-with-verbitsky}. Hence, $\alpha$ is annihilated by the Hodge endomorphism, 
by Theorem \ref{thm-LL}(\ref{thm-item-hodge-endomorphism}). 
\end{proof}
%
\section{The Cayley class as a characteristic class}
\label{sec-cayley-class}
We prove Theorem \ref{thm-introduction-kappa-class-is-non-zero-and-spin-7-invariant} in this  section
exhibiting the $\Spin(V)_w$-invariant Cayley class $c_w\in H^4(X\times\hat{X},\Integers)$ as the second Chern class of the pull back
of a sheaf $\SheafEnd(E_F)$ on $\M_H(w)$ (Proposition \ref{prop-equation-for-Cayley-class}) such that
$c_2(\SheafEnd(E_F))$ is $\Spin(V)_w$-invariant (Theorem \ref{thm-kappa-class-is-non-zero-and-spin-7-invariant}).

Let $\M(w):=\M_H(w)$ be a smooth and compact moduli space of $H$-stable sheaves with a primitive Mukai vector $w$ over an abelian surface $X$. Assume\footnote{Theorem \ref{thm-monodromy-representation-mu} should hold, more generally, for moduli spaces of dimension $\geq 4$.  Once verified, Corollary \ref{cor-monodromy-representation-of-spin} would then follow for moduli spaces of dimension $\geq 4$ as well. The results of this section would then extend to moduli spaces of dimension $\geq 4$.} 
that the dimension $m$ of $\M(w)$ is greater than or equal $8$.
Set $\hat{X}:=\Pic^0(X)$.
Given a point of $\M(w)$ representing the isomorphism class of a sheaf $F$, let
$\iota_F:X\times \hat{X}\rightarrow \M(w)$ be the morphism 
given in Equation (\ref{eq-iota-F}).
Assume $F$ is generic, so that the morphism $\iota_F$ is an embedding, by Lemma \ref{lemma-Gamma-v}.
Let $\U$ be a universal sheaf over $X\times \M(w)$, possibly twisted by a Brauer class 
$\theta\in H^2_{an}(\M(w),\StructureSheaf{\M(w)}^*)$. Let $pr_i$ be the  projection from $X\times \M(w)$ onto the $i$-th factor, $i=1,2$,  and let
\[
\Phi_\U:D^b(X)\rightarrow D^b(\M(w),\theta)
\]
be the integral functor $Rpr_{2,*}(Lpr_1^*(\bullet)\stackrel{L}{\otimes} \U)$
with kernel $\U$. Let $E_F$ be the first sheaf cohomology
$\H^1(\Phi_\U(F^\vee))$, where $F^\vee:=R\SheafHom(F,\StructureSheaf{X})$ is the object derived dual to $F$. Then $E_F$ is the relative extension sheaf
$\SheafExt^1_{pr_2}(Lpr_1^*F,\U)$. Under the identification $V\cong H^1(X\times \hat{X},\Integers)$ we see that
$\wedge^4V\cong H^4(X\times \hat{X},\Integers)$ is a $\Spin(S^+_X)$ representation and so restricts to a $\Spin(S^+_X)_w$ representation.
The $\Spin(S^+_X)_w$ invariant subgroup of $\wedge^4V$ has rank $1$
\cite[Sec. 2.1]{munoz}. We will refer to either one of its integral generators as ``the'' {\em Cayley class} of $\Spin(S^+_X)_w$ 
(which belongs to $H^4(X\times \hat{X},\Integers)$ and depends on $w$).
The Cayley class is algebraic, by the following result.

\begin{thm}
\label{thm-kappa-class-is-non-zero-and-spin-7-invariant}
\begin{enumerate}
\item
\label{thm-item-c2-End-E-is-monodromy-invariant}
The sheaf $E$, given in (\ref{eq-modular-sheaf}), is a reflexive sheaf of rank $m-2$, which is locally free away from the diagonal in $\M(w)\times\M(w)$.
The class $c_2(\SheafEnd(E))\in H^4(\M(w)\times\M(w),\Integers)$ 
is $\Spin(S^+_X)_w$-invariant, with respect to the diagonal monodromy representation of Corollary
\ref{cor-monodromy-representation-of-spin}.
\item
\label{thm-item-E-F-is-reflexive}
$E_F$ is a reflexive sheaf of rank $m-2$, which is locally free over $\M(w)\setminus\{F\}$.
\item
\label{thm-item-c-2-is-monodromy-invariant}
The class $c_2(\SheafEnd(E_F))$ is $\Spin(S^+_X)_w$ invariant, with respect to the monodromy representation of Corollary
\ref{cor-monodromy-representation-of-spin}.
\item
\label{thm-Cayley-class-is-kappa-class}
The class $\iota_F^*(c_2(\SheafEnd(E_F)))$ in $H^4(X\times \hat{X},\Integers)$ is non-zero and
$\Spin(S^+_X)_w$ invariant.
\end{enumerate}
\end{thm}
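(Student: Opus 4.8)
The plan is to prove the four parts of Theorem \ref{thm-kappa-class-is-non-zero-and-spin-7-invariant} in the order stated, since each later part builds on the earlier ones, and then to deduce the final statement as the conjunction of parts (\ref{thm-item-c-2-is-monodromy-invariant}) and (\ref{thm-Cayley-class-is-kappa-class}) together with the rank-one statement about the $\Spin(S^+_X)_w$-invariant subgroup of $\wedge^4 V$. First I would treat part (\ref{thm-item-c2-End-E-is-monodromy-invariant}). The assertion that $E$ is reflexive of rank $m-2$, locally free away from the diagonal, is \cite[Rem. 4.6]{markman-hodge}, quoted in the introduction; the only new content is the $\Spin(S^+_X)_w$-invariance of $c_2(\SheafEnd(E))$. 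Here I would use Theorem \ref{thm-diagonal}(\ref{thm-item-class-of-diagonal}): the diagonal $\Delta\subset \M(w)\times\M(w)$ has Chow class $c_m\left[-\pi_{13_!}\left(\pi_{12}^*(\E_1)^\vee\stackrel{L}{\otimes}\pi_{23}^*(\E_2)\right)\right]$, and Grothendieck--Riemann--Roch (as in (\ref{eq-gamma-id-translated-by-GRR})) identifies the $K$-theory class $\pi_{13_!}\left(\pi_{12}^*(\E_1)^\vee\stackrel{L}{\otimes}\pi_{23}^*(\E_2)\right)$ with $\gamma_{id}(\E_1,\E_2)$ up to the usual twist by $\exp(c_g)$. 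Since for $g\in\Spin(S^+_X)_w$ the operator $g\otimes\mon(g)$ maps a universal class to a universal class (Theorem \ref{thm-monodromy-representation-mu}(\ref{thm-item-gamma-tau-g}), extended to general $w$ by Corollary \ref{cor-monodromy-representation-of-spin}), the diagonal monodromy action of $\mon(g)$ fixes $\gamma_{id}(\E,\E)$ up to tensorization by a line bundle pulled back from each factor; such twisting does not affect $c_2$ of the associated endomorphism sheaf $\SheafEnd(E)$, which is insensitive to tensoring $E$ by a line bundle. This gives the invariance of $c_2(\SheafEnd(E))$.

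Next, parts (\ref{thm-item-E-F-is-reflexive}) and (\ref{thm-item-c-2-is-monodromy-invariant}): $E_F$ is the restriction of $E$ to $\{F\}\times\M(w)$, so reflexivity and the rank and local-freeness statements away from $F$ follow from part (\ref{thm-item-c2-End-E-is-monodromy-invariant}) by restriction to a generic slice (one must check that $\{F\}\times\M(w)$ meets the diagonal transversally in codimension large enough that reflexivity is preserved, which is standard since $E$ is reflexive and $\{F\}\times\M(w)$ is a complete intersection of Cartier divisors avoiding the singular locus in codimension $\geq 2$). For the invariance of $c_2(\SheafEnd(E_F))$, the point is that the restriction map $H^4(\M(w)\times\M(w),\Integers)\to H^4(\{F\}\times\M(w),\Integers)=H^4(\M(w),\Integers)$ is not $\mon(g)$-equivariant on the nose, because $g$ moves the point $F$. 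However, the family of embeddings $\{F\}\times\M(w)\hookrightarrow\M(w)\times\M(w)$ as $F$ varies over $\M(w)$ is connected, and $c_2(\SheafEnd(E))$ being a single monodromy-invariant class, its restriction to the slice is independent of which slice we choose (the class $\iota_F^*c_2(\SheafEnd(E))$, analogously to $q_w=\iota_F^*$ in (\ref{eq-q-w}), is discrete and depends continuously on $F$, hence constant). One then checks directly that $\mon(g)$ intertwines the restriction to the slice through $F$ with the restriction to the slice through the point determined by $g$, and combines this with slice-independence to conclude $\mon(g)$ fixes $c_2(\SheafEnd(E_F))$.

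For part (\ref{thm-Cayley-class-is-kappa-class}), the $\Spin(S^+_X)_w$-invariance of $\iota_F^*(c_2(\SheafEnd(E_F)))$ is immediate: $\iota_F^*:H^*(\M(w),\Integers)\to H^*(X\times\hat X,\Integers)$ is $\Spin(S^+_X)_w$-equivariant by Corollary \ref{cor-q-w-is-Spin-w-equivariant}, and it carries the invariant class $c_2(\SheafEnd(E_F))$ of part (\ref{thm-item-c-2-is-monodromy-invariant}) to an invariant class. The substantive point is non-vanishing. Since the $\Spin(S^+_X)_w$-invariant subgroup of $\wedge^4 V\cong H^4(X\times\hat X,\Integers)$ is of rank one by \cite[Sec. 2.1]{munoz}, $\iota_F^*(c_2(\SheafEnd(E_F)))$ is a rational multiple of the Cayley class, and I must show the multiple is nonzero. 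I would compute $\iota_F^*c_2(\SheafEnd(E_F))$ by a Chern-character calculation: $\iota_F^*E_F$ is identified, via $\iota_F(x,L)=\tau_{x,*}F\otimes L$, with a specific sheaf on $X\times\hat X$ built from $F$ (for $F$ a rank-one sheaf with $w=s_{n+1}$, $\iota_F^*E_F$ is computed from $\SheafExt^1$ of $F$ with its translates, essentially a Fourier--Mukai-type sheaf), and one extracts $\mathrm{ch}_2(\SheafEnd)=2\,\mathrm{rk}\cdot\mathrm{ch}_2-\mathrm{ch}_1^2$ and then $c_2$. The main obstacle is precisely this explicit identification and the verification that the resulting degree-$4$ class has a nonzero component along the Cayley line; I expect to reduce to the universal case $w=s_{n+1}$ using Theorem \ref{thm-Hom-G3-non-empty} (parallel transport along $\G_3$-morphisms preserves both the monodromy-invariance and non-vanishing, being an isometry of lattices), and there to exploit the second description of $c_w$ in Proposition \ref{prop-equation-for-Cayley-class} — indeed the cleanest route is to invoke the equality $c_w=\iota_{F'}^*c_2(\SheafEnd(E_F))$ asserted there, whose left side $c_w=-(n+1)^2\alpha^2+4(n+1)^3\beta+4(n+1)\gamma$ is visibly nonzero. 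Thus the proof of part (\ref{thm-Cayley-class-is-kappa-class}) is completed by citing Proposition \ref{prop-equation-for-Cayley-class} for the explicit formula, which simultaneously pins down the class and certifies it is nonzero.
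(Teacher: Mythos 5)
Your overall route coincides with the paper's: invariance of $c_2(\SheafEnd(E_F))$ plus the $\Spin(S^+_X)_w$-equivariance of $\iota_F^*$ from Corollary \ref{cor-q-w-is-Spin-w-equivariant}, with non-vanishing delegated to the explicit computation of Proposition \ref{prop-equation-for-Cayley-class} for $w=s_n$ and transported to general $w$ via the groupoid $\G_3$ (Theorem \ref{thm-Hom-G3-non-empty} together with Remark \ref{rem-morphism-of-groupoid-maps-universal-class-to-same}). However, your treatment of the invariance statements has concrete gaps. In part (\ref{thm-item-c2-End-E-is-monodromy-invariant}) you argue from $\gamma_{id}(\E,\E)$, but that is the single degree-$2m$ class $c_m$ of the relative Ext complex (the class of the diagonal), and its invariance — which holds for any ring automorphism compatible with Poincar\'e duality — carries no information about $c_2(\SheafEnd(E))$. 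What is needed, and what the paper proves, is that the \emph{full Chern character} of $\F:=R\pi_{13,*}R\SheafHom(\pi_{12}^*\E,\pi_{23}^*\E)$ satisfies $(\mon_g\otimes\mon_g)(ch(\F))=ch(\F)\,\pi_1^*\exp(-c_g)\,\pi_2^*\exp(c_g)$; this requires knowing how the \emph{dual} universal class transforms (it transforms via $\tau g\tau$, Lemma \ref{lemma-dual-of-a-universal-class-to-dual-of-a-universal-class}), the $(\tau g\tau)\otimes g$-invariance of the Mukai-pairing kernel used in the fiber integration, and finally the identity $ch(\F)=ch(\StructureSheaf{\Delta_{\M(w)}})-ch(E)$ with $ch_i(\StructureSheaf{\Delta_{\M(w)}})=0$ for $i<m$, so that the diagonal contribution does not pollute degree $4$. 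None of these steps is in your sketch, and "$\exp(c_g)$ from GRR" is not a meaningful shortcut for them.

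For part (\ref{thm-item-E-F-is-reflexive}), reflexivity of $E_F$ is not a formal consequence of reflexivity of $E$ by slicing: the only delicate point is exactly the point $F$ where the slice meets the diagonal, and restriction of a reflexive sheaf to a non-generic slice through its non-locally-free locus need not be reflexive. The paper instead quotes \cite[Prop. 4.1 and Rem. 4.6]{markman-hodge}, where $E_F=\SheafExt^1_{pr_2}(Lpr_1^*F,\U)$ is analyzed directly; you should do the same rather than appeal to transversality. In part (\ref{thm-item-c-2-is-monodromy-invariant}) your phrase "the slice through the point determined by $g$" has no meaning: $\mon(g)$ is a cohomological monodromy operator, not induced by an automorphism of $\M(w)$. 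Your slice-independence idea can be repaired — restriction to any slice $\{F\}\times\M(w)$ factors through the $H^0\otimes H^4$ K\"unneth component, which is independent of $F$ and on which $\mon_g\otimes\mon_g$ acts as $1\otimes\mon_g$, so invariance of $c_2(\SheafEnd(E))$ does descend (up to corrections supported at the point $(F,F)$, which lie in codimension $\geq 4$ and do not affect $c_2$) — but the paper's route is more direct: $ch(\Phi_\U(F^\vee))=\tilde{\theta}(ch(F))=\tilde{\theta}(w)$, and Equation (\ref{eq-equivariance-of-tilde-theta}) together with $g(w)=w$ gives $\mon_g(\tilde{\theta}(w))=\tilde{\theta}(w)\exp(c_g)$, whence the invariance of $c_2$ of the endomorphism class. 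Part (\ref{thm-Cayley-class-is-kappa-class}) as you state it agrees with the paper, provided you make explicit that the reduction to $w=s_n$ uses not merely that $\G_3$-morphisms are isometries but that they map universal classes to universal classes, so that the modular classes $c_2(\SheafEnd(E_F))$ correspond under the induced ring isomorphisms.
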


The Theorem is proved at the end of this section.
We will need to treat first the case $w=s_n:=(1,0,-n)$. 
Let $\P$ be the Poincare line bundle over $X\times \hat{X}$, normalized so that its restriction to $\{0\}\times \hat{X}$ is trivial.
Denote by $[pt_X]\in H^4(X,\Integers)$ the class Poincare-dual to a point and define $[pt_{\hat{X}}]\in 
H^4(\hat{X},\Integers)$ similarly. Let $\pi_i$ be the projection from $X\times\hat{X}$ onto the $i$-th factor, $i=1,2$.

\begin{prop}
\label{prop-equation-for-Cayley-class}
When $w=s_n$, the following equality holds:
\begin{equation}
\label{eq-cayley-class}
\iota^*_F(c_2(\SheafEnd(E_F)))=-n^2c_1(\P)^2+4n^3\pi_1^*[pt_X]+4n\pi_2^*[pt_{\hat{X}}]
\end{equation}
and the above class is $\Spin(S^+_X)_{s_n}$ invariant.
\end{prop}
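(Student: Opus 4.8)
The strategy is to reduce the identity to an explicit computation of the Chern class of the relative extension sheaf $E_{s_n}$ on the $A$-orbit $\iota_F(X\times\hat X)$, and then to recognize the resulting degree-four class as a multiple of the Cayley class by checking its invariance and matching it against the known generator of $(\wedge^4 V)^{\Spin(S^+_X)_{s_n}}$. First I would take $F=I_Z$ to be the ideal sheaf of a length $n$ subscheme $Z\subset X$ supported, say, at the origin (or more conveniently a generic such $Z$ so that $\iota_F$ is an embedding, as guaranteed by Lemma \ref{lemma-Gamma-v}(\ref{lemma-item-acts-faithfully})). Along the orbit $\iota_F(x,L)=\tau_{x,*}(I_Z)\otimes L$, the restriction $E_F|_{\iota_F(X\times\hat X)}$ is, by base change, the relative $\SheafExt^1$ sheaf computing $\Ext^1$ between $I_Z$ and its translates-twists. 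One computes this via the exact triangle $I_Z\to\StructureSheaf{X}\to\StructureSheaf{Z}$: the $\Ext$-sheaf decomposes, up to a contribution supported on the diagonal, into a piece built from $\Ext^*(\StructureSheaf{X},\tau_{x,*}\StructureSheaf{X}\otimes L)$ — which is governed by the Poincar\'e bundle $\P$ on $X\times\hat X$ — and a piece built from the skyscraper $\StructureSheaf{Z}$, contributing terms of the form $n\cdot\pi_1^*[pt_X]$ and $n\cdot\pi_2^*[pt_{\hat X}]$. Assembling $ch(E_F)$ on the orbit and using $c_2(\SheafEnd(E))= 2\,\mathrm{rank}(E)\,c_2(E) - (\mathrm{rank}(E)-1)c_1(E)^2$ for a sheaf of rank $m-2 = 2n$ (here $m=2n+2$), one extracts the coefficients $-n^2$, $4n^3$, $4n$ of $c_1(\P)^2$, $\pi_1^*[pt_X]$, $\pi_2^*[pt_{\hat X}]$ respectively. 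This bookkeeping with $\sqrt{td}$ and Grothendieck--Riemann--Roch along $\pi_2:X\times\hat X\to\hat X$ is the most laborious part but is essentially routine once the triangle is set up; one should be careful that the diagonal-supported correction to $E$ (Theorem \ref{thm-kappa-class-is-non-zero-and-spin-7-invariant}(\ref{thm-item-E-F-is-reflexive}), $E_F$ locally free away from $F$) does not affect the orbit class, since the orbit meets $\{F\}$ in a single point and contributes nothing in degree four.

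Having produced the explicit class $c_w := -n^2 c_1(\P)^2+4n^3\pi_1^*[pt_X]+4n\pi_2^*[pt_{\hat X}]$, the second step is to identify it with the translate $c_{s_n}$ of the paper's introduction, i.e.\ to translate the Poincar\'e-bundle description into the exterior-algebra description $-n^2\alpha^2+4n^3\beta+4n\gamma\in\wedge^4 V$. Under the identification $V\cong H^1(X\times\hat X,\Integers)=H^1(X,\Integers)\oplus H^1(\hat X,\Integers)$ and the canonical isomorphism $H^*(X\times\hat X,\Integers)\cong\wedge^* V$, one has $c_1(\P)=\sum_i e_i\wedge e_i^* = \alpha$ (with the chosen compatible bases), $\pi_1^*[pt_X]=e_1\wedge e_2\wedge e_3\wedge e_4=\beta$, and $\pi_2^*[pt_{\hat X}]=e_1^*\wedge e_2^*\wedge e_3^*\wedge e_4^*=\gamma$; the only subtlety is a sign/normalization check on $c_1(\P)$ coming from the conventions for the Poincar\'e bundle and the identification $H^1(\hat X,\Integers)\cong H^1(X,\Integers)^*$, which was already pinned down in Lemma \ref{lemma-two-Poincare-dualities}. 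This yields $\iota_F^*c_2(\SheafEnd(E_F))=c_{s_n}$.

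Finally, for the invariance claim I would invoke the general machinery already in place rather than re-prove it. By Theorem \ref{thm-kappa-class-is-non-zero-and-spin-7-invariant}(\ref{thm-item-c-2-is-monodromy-invariant}) the class $c_2(\SheafEnd(E_F))\in H^4(\M(s_n),\Integers)$ is $\Spin(S^+_X)_{s_n}$-invariant for the monodromy representation of Corollary \ref{cor-monodromy-representation-of-spin}, and by Corollary \ref{cor-q-w-is-Spin-w-equivariant} the homomorphism $\iota_F^*=q_{s_n}:H^*(\M(s_n),\Integers)\to H^*(X\times\hat X,\Integers)$ is $\Spin(S^+_X)_{s_n}$-equivariant; hence $\iota_F^*c_2(\SheafEnd(E_F))$ is $\Spin(S^+_X)_{s_n}$-invariant in $H^4(X\times\hat X,\Integers)=\wedge^4 V$. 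Since the invariant subgroup of $\wedge^4 V$ has rank one and $c_{s_n}$ is manifestly nonzero (its component $4n^3\beta$ alone is nonzero and $n\geq 2$), it is a generator, i.e.\ the Cayley class, up to sign. The main obstacle is really just organizing the Chern-class computation of the extension sheaf along the orbit cleanly enough to read off the three coefficients without error; once that is done, the identification with $\wedge^4 V$ and the invariance are immediate consequences of results proved earlier in the paper.
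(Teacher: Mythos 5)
Your computational core is the same as the paper's: restrict along the $A$-orbit through $F=I_Z$, decompose the ideal sheaf K-theoretically (your triangle $I_Z\to\StructureSheaf{X}\to\StructureSheaf{Z}$ is the paper's $[F^\vee]=[\StructureSheaf{X}]-n[\ComplexNumbers_0]$ together with $[\iota_F^*\U]=[p_{13}^*\P]-n[\delta_*\P]$), transform each piece through the Poincar\'e kernel, and read off $c_2(\SheafEnd)$ of the resulting class; the further translation into $\wedge^4V$ via $\alpha,\beta,\gamma$ is not needed for the statement. Two points, however, need repair. First, for the invariance you cite Theorem \ref{thm-kappa-class-is-non-zero-and-spin-7-invariant}(\ref{thm-item-c-2-is-monodromy-invariant}); in the paper that part is proved \emph{after} and \emph{by means of} the argument of Proposition \ref{prop-equation-for-Cayley-class}, so quoting it here is circular. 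The non-circular route is direct: $ch(\Phi_\U(F^\vee))=\tilde\theta(ch(F))=\tilde\theta(s_n)$, and Equation (\ref{eq-equivariance-of-tilde-theta}) gives $\mon_g(\tilde\theta(s_n))=\tilde\theta(s_n)\exp(c_g)$ for $g\in\Spin(S^+_X)_{s_n}$ because $g(s_n)=s_n$; the twist $\exp(c_g)$ cancels in $ch\otimes ch^\vee$, so $ch\left([\Phi_\U(F^\vee)]\otimes[\Phi_\U(F^\vee)]^*\right)$ is monodromy invariant, and the equivariance of $\iota_F^*$ (Corollary \ref{cor-q-w-is-Spin-w-equivariant}), which you do invoke, finishes the claim.

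Second, your justification for ignoring the locus where $E_F$ is not locally free --- ``the orbit meets $\{F\}$ in a single point and contributes nothing in degree four'' --- does not work on the orbit: that point has codimension four there, and a point-supported sheaf on an abelian fourfold has nonzero $ch_2$. The correction must be discarded upstairs: the second sheaf cohomology $\H^2(\Phi_\U(F^\vee))$ is supported at the single point $[F]$ of $\M(s_n)$, whose codimension is $2n+2\geq 8$, so it cannot affect $c_1,c_2$ in $H^*(\M(s_n),\Integers)$; hence $c_2(\SheafEnd(E_F))=c_2\left(\SheafEnd(-[\Phi_\U(F^\vee)])\right)$ there, and one computes $\iota_F^*$ of the latter by derived pullback, using flatness of $\U$ over $\M(s_n)$ (so $L\iota_F^*\U\cong\iota_F^*\U$) and base change to identify it with $c_2\left(\SheafEnd(-[\Phi_{\iota_F^*\U}(F^\vee)])\right)$, which is then evaluated by your explicit Fourier--Mukai bookkeeping. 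With these two adjustments your outline coincides with the paper's proof.
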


\begin{proof}
$\M(s_n)$ is isomorphic to $X^{[n]}\times \hat{X}$. Let $\pi_{ij}$ be the projection from
$X\times X^{[n]}\times \hat{X}$ onto the product of the $i$-th and $j$-th factors.
Let $\Z\subset X\times X^{[n]}$ be the universal subscheme and $I_\Z$ its ideal sheaf.
Then $\U:= \pi_{12}^*I_\Z\otimes\pi_{13}^*\P$ is a universal sheaf. Set $F:=I_Z$, where $Z$ is a reduced length $n$ subscheme supported on the set
$\{z_i\ : \ 1\leq i\leq n\}$, consisting of $n$ distinct points. 
Let $\tau_x:X\rightarrow X$ be the translation $\tau_x(x')=x+x'$ by $x\in X$.
The image of
$\iota_F:X\times \hat{X}\rightarrow \M(s_n)$ is the subset
\[
\{I_{\tau_x(Z)}\otimes L \ : \ x\in X, \ L\in \hat{X}\}.
\] 
If $\tau_{x_1}(Z)=\tau_{x_2}(Z)$ then $n(x_2-x_1)=0$ and translation by $x_2-x_1$ permutes the support of $Z$.
Assume that $n(z_i-z_j)\neq 0$, for some pair $i,j$, so that $\iota_F$ is an embedding. 
Let $\Delta_i\subset X\times X$ be the translate of the diagonal $\Delta$ by $(z_i,0)$, $1\leq i\leq n$.
Let $p_{ij}$ be the projection from $X\times X\times\hat{X}$ onto the product of the $i$-th and $j$-th factors.
The pullback $\iota_F^*\U$ of $\U$ to $X\times (X\times \hat{X})$
via $id_X\times\iota_F$ is thus $p_{12}^*I_{\cup_{i=1}^n \Delta_i}\otimes p_{13}^*\P.$
Furthermore, $\iota_F^*\U$ is isomorphic to the derived pullback $L\iota_F^*\U$, as $\U$ is flat over $\M(s_n)$. 
Let $\delta:X\times \hat{X}\rightarrow X\times X\times \hat{X}$ be the diagonal embedding
$(x,L)\mapsto (x,x,L)$. Then the class $[\iota_F^*\U]$ of $\iota_F^*\U$ in the topological $K$-group of 
$X\times X\times \hat{X}$ is equal to 
$[p_{13}^*\P]-n[\delta_*\P]$.
\[
[\iota_F^*\U]=[p_{13}^*\P]-n[\delta_*\P].
\]
The class $[F^\vee]$ of $F^\vee$ in the topological $K$-group of $X$ is equal to $[F]$, hence to $[\StructureSheaf{X}]-n[\ComplexNumbers_0]$, where $\ComplexNumbers_0$ is the sky-scraper sheaf at the origin.
Consider the cartesian diagram
\[
\xymatrix{
X\times X\times \hat{X} \ar[r]^-{p_{13}} \ar[d]_{p_{23}} & X\times \hat{X} \ar[r]^-{\pi_1} \ar[d]^{\pi_2} & X
\\
X\times \hat{X}\ar[r]_{\pi_2}& \hat{X}.
}
\]
We have an isomorphism of functors
$Rp_{23,*}\circ Lp_{13}^*\cong L\pi_2^*\circ R\pi_{2,*}$, by cohomology and base change. 
Hence, 
\[
\Phi_{p_{13}^*\P}(\bullet):=Rp_{23,*}\circ Lp_{13}^*(L\pi_1^*(\bullet)\otimes\P)\cong 
L\pi_2^*\circ R\pi_{2,*}(L\pi_1^*(\bullet)\otimes\P)\cong L\pi_2^*\circ \Phi_\P(\bullet).
\]
We get an isomorphism of integral functors
$\Phi_{p_{13}^*\P}\cong L\pi_2^*\circ \Phi_\P$.
Now, $\Phi_\P(\ComplexNumbers_0)\cong\StructureSheaf{\hat{X}}$ and 
$\Phi_\P(\StructureSheaf{X})\cong \ComplexNumbers_{\hat{0}}[-2]$, where $\hat{0}$ is the origin of $\hat{X}$. 
Hence,
$[\Phi_{p_{13}^*\P}(F^\vee)]=-n[\StructureSheaf{X\times\hat{X}}]+\pi_2^![\ComplexNumbers_{\hat{0}}]$.

The integral functor $\Phi_{\delta_*\P}:D^b(X)\rightarrow D^b(X\times \hat{X})$ is just the composition
\[
\xymatrix{
D^b(X)\ar[r]^-{L\pi_1^*} & D^b(X\times\hat{X}) \ar[r]^{\otimes\P} & D^b(X\times\hat{X})
}
\]
of derived pullback and tensorization by $\P$, since $p_{23}\circ\delta:X\times\hat{X}\rightarrow X\times\hat{X}$ is the identity morphism.
Hence, $\Phi_{\delta_*\P}(\ComplexNumbers_0)\cong L\pi_1^*\ComplexNumbers_0$,  
$\Phi_{\delta_*\P}(\StructureSheaf{X})\cong\P$, and
\[
[\Phi_{\delta_*\P}(F^\vee)]=
[\P]-n\pi_1^![\ComplexNumbers_0].
\]
We conclude that
\[
[\Phi_{\iota_F^*\U}(F^\vee)]=-n[\StructureSheaf{X\times\hat{X}}]+\pi_2^![\ComplexNumbers_{\hat{0}}]
-n[\P]+n^2\pi_1^![\ComplexNumbers_0].
\]
The first three terms of its Chern character are thus
\[
-ch[\Phi_{\iota_F^*\U}(F^\vee)]=
2n+nc_1(\P)+(n/2)c_1(\P)^2-n^2\pi_1^*[pt_X]-\pi_2^*[pt_{\hat{X}}]+\dots.
\]
Given a class $\alpha$ in the topological $K$ group, of non-zero rank $r$, set
\[
\kappa(\alpha):=ch(\alpha)\exp(-c_1(\alpha)/r).
\]
So the graded summand of degree $4$ of 
the class 
\[
\kappa(-[\Phi_{\iota_F^*\U}(F^\vee)]):=ch[-\Phi_{\iota_F^*\U}(F^\vee)] \left(1-(1/2)c_1(\P)+(1/8)c_1(\P)^2+\dots \right)
\]
is 
\[
\kappa_2(-[\Phi_{\iota_F^*\U}(F^\vee)])=(n/4)c_1(\P)^2-n^2\pi_1^*[pt_X]-\pi_2^*[pt_{\hat{X}}].
\]
If we set $b:=[-\Phi_{\iota_F^*\U}(F^\vee)]$, then
$c_2(b\otimes b^*)=-ch_2(b\otimes b^*)=-4n\kappa_2(b),$ which is equal to the right hand side of (\ref{eq-cayley-class}).
Finally, $\H^i(\Phi_\U(F^\vee))$ vanishes, if $i\not\in \{1,2\}$, and $\H^2(\Phi_\U(F^\vee))$ is supported on the point
of $\M(w)$ representing $F$, and so on a subvariety of co-dimension $\geq 4$. Hence,
$c_i(E_F)=c_i(\H^1(\Phi_\U(F^\vee))=c_i(-\Phi_\U(F^\vee))$, for $i=1,2$, and so
$
\iota_F^*c_2\left(\SheafEnd(E_F)\right)=\iota_F^*c_2\left([\Phi_\U(F^\vee)]\otimes [\Phi_\U(F^\vee)]^*\right)=
c_2\left(L\iota_F^*\left([\Phi_\U(F^\vee)]\otimes [\Phi_\U(F^\vee)]^*\right)\right)=
c_2\left(L\iota_F^*[\Phi_\U(F^\vee)]\otimes L\iota_F^*[\Phi_\U(F^\vee)]^*\right)=
c_2\left([\Phi_{\iota_F^*\U}(F^\vee)]\otimes [\Phi_{\iota_F^*\U}(F^\vee)]^*\right),
$
where the last equality follows from cohomology and base change and the isomorphism $L\iota_F^*\U\cong\iota_F^*\U$ observed above.
Equality (\ref{eq-cayley-class}) thus follows.

The invariance of the class (\ref{eq-cayley-class}) would follow once we show that 
\begin{equation}
\label{eq-monodromy-invariant-chern-class-on-moduli}
ch\left([\Phi_\U(F^\vee)]\otimes [\Phi_\U(F^\vee)]^*\right)
\end{equation}
is invariant with respect to the monodromy action of $\Spin(S^+_X)_{s_n}$ via $\mon$ in 
Equation (\ref{eq-homomorphism-gamma-from-G-S-plus-even-s-n-to-Mon}), since the homomorphism 
$\iota_F^*:H^4(\M(s_n),\Integers)\rightarrow H^4(X\times\hat{X},\Integers)$ is 
$\Spin(S^+_X)_{s_n}$ equivariant, by Corollary \ref{cor-q-w-is-Spin-w-equivariant}. 
The Chern character $ch(\Phi_\U(F^\vee))$ is equal to 
$\tilde{\theta}(ch(F))$, where $\tilde{\theta}$ is given in (\ref{eq-tilde-theta-homomorphism}).
Now, $\mon_g(\tilde{\theta}(g^{-1}(ch(F))))=\tilde{\theta}(ch(F))\exp(c_g)$, for $g\in \Spin(S^+_X)_{s_n}$,
by Equation (\ref{eq-equivariance-of-tilde-theta}). The invariance of the class (\ref{eq-monodromy-invariant-chern-class-on-moduli})
follows, since $ch(F)=s_n$, and so $g^{-1}(ch(F))=ch(F)$.
%
\end{proof}

\begin{proof}[Proof of Theorem \ref{thm-kappa-class-is-non-zero-and-spin-7-invariant}]
Part \ref{thm-item-c2-End-E-is-monodromy-invariant}) The reflexivity and rank statements are proven in \cite[Prop. 4.1 and Rem. 4.6]{markman-hodge}. We prove the $\Spin(S^+_X)_w$-invariance of $c_2(\SheafEnd(E))$.
Given $g\in \Spin(S^+_X)_w$, we get that
$(g\otimes\mon_g)(ch(\E))=ch(\E)\exp(c_g),$ by Corollary \ref{cor-monodromy-representation-of-spin}, where $c_g$ is given in Equation (\ref{eq-c-g}). Hence, $(\tau g\tau\otimes \mon_g)(ch(\E)^\vee)=ch(\E)^\vee\exp(-c_g)$, by Lemma \ref{lemma-dual-of-a-universal-class-to-dual-of-a-universal-class}. 
The chern character of the object $\F:=R\pi_{13,*}R\SheafHom\left(\pi_{12}^*\E,\pi_{23}^*\E\right)$ in
$D^b(\M(w)\times\M(w))$ is obtained by contracting the tensor product
$ch(\E)^\vee\otimes ch(\E)$ in $H^*(X\times\M(w)\times X\times \M(w))$ with the class in $H^*(X)\otimes H^*(X)$
corresponding to minus the Mukai pairing (\ref{eq-Mukai-pairing}). The latter class is invariant under $(\tau g\tau)\otimes g$. We get the equality 
\begin{eqnarray*}
(\mon_g\otimes\mon_g)(ch(\F))&=&
ch(\F)\pi_1^*\exp(-c_g)\pi_2^*\exp(c_g).
\end{eqnarray*}
The $\Spin(S^+_X)_w$-invariance of $ch(R\SheafHom(\F,\F))$ follows.
Now, $ch(\F)=ch(\StructureSheaf{\Delta_{\M(w)}})-ch(E)$, since the first sheaf cohomology of $\F$ is $E$ and the second is $\StructureSheaf{\Delta_{\M(w)}}$ and all other sheaf cohomologies vanish, by 
\cite[Prop. 4.1 and Rem. 4.6]{markman-hodge}. The statement follows, since the classes $ch_i(\StructureSheaf{\Delta_{\M(w)}})$ vanish for $i<m$.

Part (\ref{thm-item-E-F-is-reflexive}) of the Theorem is proven in \cite[Prop. 4.1 and Rem. 4.6]{markman-hodge}.
Part (\ref{thm-item-c-2-is-monodromy-invariant}) 
The proof of the $\Spin(S^+_X)_w$-invariance of $c_2(\SheafEnd(E_F))$ with respect to the monodromy representation of 
Corollary \ref{cor-monodromy-representation-of-spin} follows from Equation (\ref{eq-equivariance-of-tilde-theta}) by the same argument used in Proposition \ref{prop-equation-for-Cayley-class} to prove the invariance when $w=s_n$.
Part (\ref{thm-Cayley-class-is-kappa-class})
The $\Spin(S^+_X)_w$ equivariance of $\iota^*_F$ is established in Corollary \ref{cor-q-w-is-Spin-w-equivariant}.
The non-vanishing of the pulled back class $\iota^*_Fc_2(\SheafEnd(E_F))$ is checked for $w=s_n$ in Proposition
\ref{prop-equation-for-Cayley-class}. It follows for all $w$ by Remark \ref{rem-morphism-of-groupoid-maps-universal-class-to-same} and Theorem \ref{thm-Hom-G3-non-empty}.
\end{proof}

\section{Period domains}
\label{sec-period-maps}

In Section \ref{sec-two-isomorphic-period-domains} we construct the universal torus $\T$, given in Equation (\ref{eq-universal-torus}),  over the period domain $\Omega_{w^\perp}$ of 
irreducible holomorphic symplectic manifolds of generalized Kummer deformation type. 
In Section \ref{sec-polarization-map} we prove Proposition \ref{prop-introduction-complex-multiplication};
we construct the polarization $\Theta_h$ and the complex multiplication for the complex tori with periods in the $4$-dimensional subloci $\Omega_{\{w,h\}^\perp}$ in the $5$-dimensional period domain $\Omega_{w^\perp}$.
In Section \ref{sec-diagonal-twistor-lines} we construct a hyperk\"{a}hler structure on the complex torus $T_\ell$ associated with a K\"{a}hler class on an irreducible holomorphic symplectic manifold  with period $\ell$ (Proposition \ref{prop-Theta-h-is-a-Kahler-form}). In Section \ref{subsection-abelian-fourfolds-of-Weil-type} we prove that the subloci $\Omega_{\{w,h\}^\perp}$ parametrize abelian fourfolds of Weil-type of discriminant $1$. In Section \ref{subsection-a-universal-deformation-of-a-moduli-space-of-sheaves} we construct the universal deformation $\pi:\M\rightarrow \fM^0_{w^\perp}$ of the moduli space of sheaves over the moduli space of marked irreducible holomorphic symplectic manifolds of generalized Kummer deformation type.
In Section \ref{sec-third-intermediate-jacobians} we prove that the torus $T_\ell$ is isogenous to the $3$-rd intermediate jacobian of the irreducible holomorphic symplectic manifold of generalized Kummer deformation type with period $\ell$.

%
\subsection{Two isomorphic period domains}
\label{sec-two-isomorphic-period-domains}
Keep the notation of Section \ref{sec-Clifford-groups}. Set $S^+_\ComplexNumbers:=S^+\otimes_\Integers\ComplexNumbers$ and define
$S^-_\ComplexNumbers$ and $V_\ComplexNumbers$ similarly.
Let $\ell$ be an isotropic line in $S^+_\ComplexNumbers$.
Clifford multiplication $S^+_\ComplexNumbers\otimes V_\ComplexNumbers\rightarrow S^-_\ComplexNumbers$
restricts to $\ell\otimes V_\ComplexNumbers$ as a homomorphism of rank $4$, whose kernel is $\ell\otimes Z_\ell$, for a maximal isotropic subspace $Z_\ell$ of $V_\ComplexNumbers$,
by \cite[III.1.4 and IV.1.1]{chevalley}. 
The image of $\ell\otimes V_\ComplexNumbers$ is a maximal isotropic subspace of $S^-_\ComplexNumbers$. Conversely, $\ell$ is the kernel of the homomorphism
$S^+_\ComplexNumbers\rightarrow \Hom(Z_\ell,S^-_\ComplexNumbers)$, induced by Clifford multiplication. 
We get an isomorphism between the quadric in $\PP(S^+_\ComplexNumbers)$ of isotropic lines and 
a connected component $IG^+(4,V_\ComplexNumbers)$
of the grassmannian $IG(4,V_\ComplexNumbers)$ of maximal isotropic subspaces of $V_\ComplexNumbers$. Similarly,
interchanging the roles of $S^+_\ComplexNumbers$ and $S^-_\ComplexNumbers$ we get that the quadric of
isotropic lines in $S^-_\ComplexNumbers$ is isomorphic to the other connected component  $IG^-(4,V_\ComplexNumbers)$ of the grassmannian of maximal isotropic subspaces of $V_\ComplexNumbers$  \cite[III.1.6]{chevalley}. A maximal isotropic subspace of $V_\ComplexNumbers$ associated to an isotropic line of $S^+_\ComplexNumbers$ is called {\em even}. It is called {\em odd}, if it is associated to 
an isotropic line in $S^-_\ComplexNumbers$. 

Set 
\[
\Omega_{S^+} := \{\ell\in \PP(S^+_\ComplexNumbers) \ : \ (\ell,\ell)=0, (\ell,\bar{\ell})<0\},
\]
where the pairing is associated to the pairing $(\bullet,\bullet)_{S}$ given in (\ref{eq-Mukai-pairing}).
$\Omega_{S^+}$ is a connected open analytic subset of the quadric hyperplane of isotropic lines.
The discussion above yields a $\Spin(S^+_\RealNumbers)$-equivariant embedding
\begin{equation}
\label{eq-embeding-zeta-of-period-domains}
\zeta \ : \ \Omega_{S^+} \ \ \rightarrow \ \ IG^+(4,V_\ComplexNumbers)
\end{equation}
of $\Omega_{S^+}$ as an open subset of $IG^+(4,V_\ComplexNumbers)$ in the analytic topology. 
$\Spin(S^+_\RealNumbers)$ acts transitively 
on $\Omega_{S^+}$ (see \cite[Sec. 4.1]{huybrechts-period-domains}), and so the image of $\zeta$ is an open
$\Spin(S^+_\RealNumbers)$-orbit.
The two maximal isotropic subspaces $Z_\ell$ and 
$Z_{\bar{\ell}}$ of $V_\ComplexNumbers$ are transversal. Indeed, their intersection is even dimensional, by
\cite[III.1.10]{chevalley}, it is not $4$-dimensional, since $\ell\neq \bar{\ell}$, and it is not $2$-dimensional, by
\cite[III.1.12]{chevalley}, since the two dimensional subspace $\ell+\bar{\ell}$ is not isotropic.
$Z_{\bar{\ell}}$ is the complex conjugate of $Z_\ell$ as the map $\zeta$ is defined over $\RealNumbers$, since Clifford multiplication was defined over $\Integers$.
Let $J_\ell:V_\ComplexNumbers\rightarrow V_\ComplexNumbers$ be the endomorphism acting on $Z_\ell$
by $i$ and on $Z_{\bar{\ell}}$ by $-i$. Then $V_\RealNumbers$ is $J_\ell$-invariant and $J_\ell$ induces a complex structure on $V_\RealNumbers$. So the choice of $\ell\in \Omega_{S^+}$ endows $S^+$ with an integral weight $2$ Hodge structure,
such that $(S^+_\ComplexNumbers)^{2,0}=\ell$, 
and it endows $V$ with an integral weight $1$ Hodge structure, such that $V^{1,0}=Z_\ell$. 

Consider the real plane
\begin{equation}
\label{eq-P-ell}
P_\ell:=[\ell+\bar{\ell}]\cap S^+_\RealNumbers. 
\end{equation}
Let $\{e_1,e_2\}$ be an orthogonal basis of $P_\ell$, satisfying 
$
(e_1,e_1)_{S^+}=(e_2,e_2)_{S^+}=-2,
$
such that $\ell$ is spanned by the isotropic vector $e_1-ie_2$.
Let 
\[
m:C(S^+)\rightarrow \End(S^-\oplus V) 
\]
be the homomorphism of 
Corollary \ref{cor-V-plus-S-minus-is-the-Clifford-module} and denote by $m$ also its extension to the corresponding complex vector spaces. Recall the equality $\tilde{m}(\Spin(S^+))=\tilde{\mu}(\Spin(V))$, established in (\ref{eq-Spin-V-equals-Spin-S-plus}).
It identifies $\Spin(S^+)$ with $\Spin(V)$.

\begin{new-lemma}
\label{lemma-J-ell-as-an-element-of-spin-V}
The complex structure $J_\ell$ is the element of $SO(V_\RealNumbers)$, which is the image of the element 
$m_{e_1}\circ m_{e_2}$ of $\Spin(S^+_\RealNumbers)$.
\end{new-lemma}

\begin{proof}
$m_{e_1-ie_2}$ is a nilpotent element of square zero, and $Z_\ell=m_{e_1-ie_2}(S^-_\ComplexNumbers)\subset V_\ComplexNumbers$. Now, 
\[
(m_{e_1}\circ m_{e_2})\circ m_{e_1-ie_2}=im_{e_1-ie_2}.
\]
Hence, $(m_{e_1}\circ m_{e_2})$ acts on $Z_\ell$ via multiplication by $i$.
Similarly, $Z_{\bar{\ell}}=m_{e_1+ie_2}(S^-_\ComplexNumbers)$ 
and $(m_{e_1}\circ m_{e_2})$ acts on $Z_{\bar{\ell}}$ by $-i$. Hence, $(m_{e_1}\circ m_{e_2})$ is a lift of $J_\ell$ to an element of $\Spin(S^+_\RealNumbers)$. 
\end{proof}

Note that $m_{e_1}\circ m_{e_2}$ acts on $S^+_\RealNumbers$
with $P_\ell$  as the $-1$ eigenspace and $P_\ell^\perp$ as the $1$ eigenspace. 
Let $\Spin(V_\RealNumbers)_\ell$ be the subgroup of elements acting as the identity on the plane $P_\ell$
and let $\Spin(V_\RealNumbers)_{\ell^\perp}$ be the subgroup of elements acting as the identity on the orthogonal complement $P_\ell^\perp$.
Then $\Spin(V_\RealNumbers)_{\ell^\perp}$ is isomorphic to $U(1)$ and contains $m_{e_1}\circ m_{e_2}$.
It consists of elements of the form $a+bm_{e_1}\circ m_{e_2}$, $a,b\in\RealNumbers$, $a^2+b^2=1$.
Clearly, $J_\ell$ commutes with
the action of the subgroup $\Spin(V_\RealNumbers)_\ell\times \Spin(V_\RealNumbers)_{\ell^\perp}$ of $\Spin(V_\RealNumbers)$.
$\Spin(V_\RealNumbers)_\ell$ acts on $Z_\ell$ and  $Z_{\bar{\ell}}$ and the two representations are dual with respect to the bilinear pairing of $V_\ComplexNumbers$. Elements of $\Spin(V_\RealNumbers)_{\ell^\perp}$ act on $Z_\ell$ via scalar product by 
$e^{i\theta}$, and on $Z_{\bar{\ell}}$ by $e^{-i\theta}$, for some $\theta\in\RealNumbers$.

Given a class $w\in S^+$, with $(w,w)<0$, let
\begin{equation}
\label{eq-Omega-w-perp}
\Omega_{w^\perp}:= \{\ell\in \Omega_{S^+} \ : \ (\ell,w)=0\}
\end{equation}
be the corresponding hyperplane section of $\Omega_{S^+}$. The space $\Omega_{w^\perp}$ is connected as well.
$\Omega_{w^\perp}$ is the period domain of irreducible holomorphic symplectic manifolds deformation equivalent to generalized Kummers of dimension $2n$, if $(w,w)=-2n-2$ and $n\geq 2$, by Theorem \ref{thm-yoshioka}. Given $\ell\in \Omega_{w^\perp}$, the integral weight $1$ Hodge structure $(V,J_\ell)$
has the additional property that $\wedge^4V$ admits the integral $\Spin(S^+)_w$-invariant Cayley class, recalled in Section \ref{sec-cayley-class}, which is of Hodge type $(2,2)$, by the following Lemma.
Given a class $h\in w^\perp$, let $\Spin(S^+)_{w,h}$ be the subgroup of $\Spin(S^+)$ stabilizing both $w$ and $h$.

\begin{new-lemma}
\label{lemma-sub-hodge-structures}
\begin{enumerate}
\item
\label{lemma-item-spin-7-invariant}
Any $\Spin(S^+)_w$ invariant class in $\wedge^{2p}V_\ComplexNumbers$
is of Hodge-type $(p,p)$ with respect to $J_\ell$, for all $\ell\in \Omega_{w^\perp}$.
\item
\label{lemma-item-spin-6-invariant}
Any $\Spin(S^+)_{w,h}$ invariant class in $\wedge^{2p}V_\ComplexNumbers$
is of Hodge-type $(p,p)$ with respect to $J_\ell$, for all $\ell\in \Omega_{w^\perp}$, such that $(h,\ell)=0$.
\end{enumerate}
\end{new-lemma}

\begin{proof}
(\ref{lemma-item-spin-7-invariant})
The Zariski closure of the image of $\Spin(S^+)_w$ in $GL(\wedge^{2p}V_\ComplexNumbers)$ contains the image of 
$\Spin(S^+_\RealNumbers)_w$,
and hence also that of $\Spin(V_\RealNumbers)_{\ell^\perp}$, for all $\ell\in \Omega_{w^\perp}$. A class in $\wedge^{2p}V_\ComplexNumbers$ is of Hodge-type $(p,p)$ with respect to $J_\ell$, if and only if it is 
$\Spin(V_\RealNumbers)_{\ell^\perp}$-invariant.

(\ref{lemma-item-spin-6-invariant}) 
$\Spin(V_\RealNumbers)_{\ell^\perp}$ is contained in $\Spin(V_\RealNumbers)_{w,h}$. Hence, the Zariski closure of the image of
$\Spin(S^+)_{w,h}$ contains that of $\Spin(V_\RealNumbers)_{\ell^\perp}$.
\end{proof}

Given a negative definite three dimensional subspace $W$ of $w^\perp_\RealNumbers$ we get a subgroup $\Spin(S^+_\RealNumbers)_{W^\perp}$ of 
$\Spin(S^+_\RealNumbers)_w$, isomorphic\footnote{$\Spin(W)$ is isomorphic to $SU(2)$ and the even Clifford algebra $C(W)^{even}$ is the quaternion algebra $\HH$, by \cite[Ch. V, Example 1.5(3) and Cor. 2.10]{lam}.} 
to $SU(2)$, consisting of elements of $\Spin(S^+_\RealNumbers)$ acting as the identity on 
$W^\perp$.
It fits in the following cartesian diagram
\[
\xymatrix{
\Spin(S^+_\RealNumbers)_{W^\perp} \ar[r]^{\subset} \ar[d]_{2:1} & \Spin(S^+_\RealNumbers)_w\ar[d]^{2:1}
\\
SO(W)\ar[r]_{\subset} & SO(S^+_\RealNumbers)_w,
}
\]
where the bottom horizontal homomorphism extends an isometry of $W$ to an isometry of $S^+_\RealNumbers$ acting as the identity on $W^\perp$.
When $W$ is spanned by the real and imaginary parts of an element of $\ell$ and a K\"{a}hler class on a marked irreducible holomorphic symplectic manifold $Y$ with period $\ell$, then $W$ corresponds to a hyperk\"{a}hler structure on $Y$ \cite[1.17]{huybrects-basic-results}. We show in sections 
\ref{sec-polarization-map} and \ref{sec-diagonal-twistor-lines} that the subgroup $\Spin(S^+_\RealNumbers)_{W^\perp}$ associated to $W$ 
determines a hyperk\"{a}hler structure on the complex torus $T_\ell:=V_\ComplexNumbers/[Z_\ell+V]$ and $\Spin(S^+_\RealNumbers)_{W^\perp}$ acts on $V_\ComplexNumbers$ as the group of unit quaternions (Proposition \ref{prop-Theta-h-is-a-Kahler-form} below). 

\subsection{The polarization map $w^\perp\rightarrow \wedge^2V^*$} 
\label{sec-polarization-map}
The construction in this section is inspired by 
O'Grady's recent observation that the third intermediate Jacobians,  of projective irreducible holomorphic symplectic varieties of generalized Kummer type, are abelian fourfolds of Weil type \cite{ogrady}.
\label{sec-polarizations}
Let $w\in S^+$ be a primitive class satisfying $(w,w)_{S^+}=-2n$, where $n$ is a positive integer.
Let $w^\perp$ be the sublattice of $S^+$ orthogonal to $w$.
Given a class $h\in S^+$ we get the endomorphism $m_h$ of $S^-\oplus V$, 
given in Corollary \ref{cor-V-plus-S-minus-is-the-Clifford-module}.  It maps $V$ to $S^-$ and $S^-$ to $V$. 
The multiplication by $h$ in $A_X$ leaves the direct summand $S^-\oplus V$ invariant and restricts to $m_h$, by definition. 

Let 
\begin{equation}
\label{eq-Theta-prime}
\Theta':w^\perp\rightarrow \Hom(V,V)
\end{equation}
send $h$ to the restriction $\Theta'_h$ of $m_{w}\circ m_h$ to $V$.
Note that $m_{w}\circ m_h+m_h\circ m_{w}$ restricts to $V$ as $(w,h)_{S^+}\cdot id_V$, by Equation (\ref{eq-composition-of-m-y-1-and-m-y-2}). The latter scalar endomorphism vanishes due to the fact that $h$ is in $w^\perp$. 
Furthermore, $m_h\circ m_{w}$ restricts to $V$ as the adjoint of the restriction of $m_{w}\circ m_h$ with respect to the pairing $(\bullet,\bullet)_V$, by definition of the multiplication in $A_X$. 
Hence, $\Theta'_h$ is anti-self-dual with respect to $(\bullet,\bullet)_V$. 
The isomorphism $V\rightarrow V^*$, given by $x\mapsto (x,\bullet)_V$,
induces an isomorphism $a:\Hom(V,V)\rightarrow V^*\otimes V^*$, given by 
\[ 
a(f)(x,y):=(f(x),y)_V,
\]
for all $x,y\in V$.
An anti-self-dual homomorphism is sent by $a$ to $\wedge^2 V^*$. Hence, we get the composite homomorphism
\begin{equation}
\label{eq-Theta}
\Theta:=a\circ \Theta':w^\perp\rightarrow \wedge^2 V^*,
\end{equation}
sending $h$ to $\Theta_h$, where $\Theta_h(x,y):=(\Theta'_h(x),y)_V$.
More equivariantly, 
$\Theta$ is the composition of the embedding $w^\perp\rightarrow \wedge^2S^+$, sending $h$ to $w\wedge h$, with a $\Spin(V)$-modules isomorphism
$\wedge^2S^+\cong\wedge^2V^*$ (see the 5-th displayed formula on page 96 of \cite[Sec. II.4]{chevalley} for the latter).

Let $ort_{S^+}:G(S^+)^{even}\rightarrow \{\pm 1\}$ be the character (\ref{eq-ort-S+}), except that here it will be convenient to 
have it take values in the multiplicative group $\{\pm 1\}$ rather than in $\Integers/2\Integers$.
An element $g$ of $G(S^+)^{even}$ acts on $V$ as an isometry, if $ort_{S^+}(g)=1$, and it reverses the sign of $(\bullet,\bullet)_V$,
if $ort_{S^+}(g)=-1$. 

\hide{
Let $G(S^+)^{even}_{w^\perp}$ be the subgroup of $G(S^+)^{even}$, which leaves the rank one sublattice $\Integers w$ 
of $S^+$ invariant and acts on it via the character $ort_{S^+}$.
In other words, an element $g$ of $G(S^+)^{even}_{w^\perp}$ sends $w$ to $ort_{S^+}(g)w$. 
The subgroup $G(S^+)^{even}_{w^\perp}$ is generated by the stabilizer $\Spin(V)_{w}$ of $w$ and the element
$\tilde{m}_{s_1\cdot s_{-1}}$ used above in (\ref{eq-tilde-m-s-1-s-minus-1}) to describe $G(S^+)^{even}$.
Note that $G(S^+)^{even}_{w^\perp}$ is the image of the stabilizer $G(S^+)^{even}_{w}$ of $w$ via an automorphism of 
$G(S^+)^{even}$, as explained in Section \ref{sec-automorphism-of-G-S-plus}.
}

\begin{new-lemma}
\label{lemma-Theta-spans}
$\Theta$ spans a $G(S^+)^{even}_{w}$-invariant saturated rank $1$ subgroup in $\Hom(w^\perp,\wedge^2V^*)$ 
and $G(S^+)^{even}_{w}$ acts on it via the character $ort_{S^+}$. $\Theta$ spans also the $\Spin(V)_{w}$-invariant subgroup
of $\Hom(w^\perp,\wedge^2V^*)$.
\end{new-lemma}

\begin{proof}
The homomorphism $\Theta'':S^+\rightarrow \Hom(V,S^-)$, $h\mapsto m_h$,
 is $\Spin(V)$-equivariant, since $\Spin(V)$ acts by algebra automorphisms of $A_X$.
The homomorphism $m_{w}:S^-\rightarrow V$ is $\Spin(V)_{w}$-equivariant, since $\Spin(V)_{w}$ acts on $A_X$
by algebra automorphisms fixing $w$. Invariance of $\Theta'$ follows.
Invariance of $\Theta$ with respect to $\Spin(V)_{w}$ follows from that of $\Theta'$ 
and the invariance of the bilinear pairing $(\bullet,\bullet)_{V}$. If $h\in w^\perp$ satisfies $(h.h)_{S^+}=\pm 2$,
then $m_h$ restricts to an isomorphism from $V$ to $S^-$, so $\Theta$ is indivisible and spans a saturated subgroup
of $\Hom(w^\perp,\wedge^2V^*)$.

We claim that the homomorphism $\Theta''$ spans a rank one $G(S^+)^{even}_w$-invariant sublattice of 
$\Hom(S^+,\Hom(V,S^-))$ and $G(S^+)^{even}_w$ acts on it via the character $ort_{S^+}$. 
$\Spin(V)_w$ is a normal subgroup of $G(S^+)^{even}_w$, hence the latter maps any $\Spin(V)_w$-invariant submodule to a
$\Spin(V)_w$-invariant submodule. Once we prove that there exists a unique rank one $\Spin(V)_w$-invariant submodule in
$(S^+)^*\otimes V^*\otimes S^-$, it would follow that it is necessarily  $G(S^+)^{even}_w$-invariant and equal to 
${\rm span}_\Integers\{\Theta''\}$. We then need to show 
that $G(S^+)^{even}_w$ acts on it via the character $ort_{S^+}$. 
The bilinear pairing of $S^+$ is $G(S^+)$-invariant, so that $S^+$ is a self-dual $G(S^+)^{even}$-module.  $V^*$ is isomorphic to $V\otimes ort_{S^+}$ as
a $G(S^+)^{even}$-module. Hence, it suffices to prove that $S^+\otimes V\otimes S^-$ 
contains a unique rank one $\Spin(S^+)_w$-invariant submodule, which is the trivial character of  $G(S^+)^{even}_w$. 
Now $G(S^+)^{even}=JG(V)^{even}J^{-1}$ and
$J^{-1}\otimes J^{-1}\otimes J^{-1}$ maps a $G(S^+)^{even}_w$-invariant element of 
$S^+\otimes V\otimes S^-$ to a $G(V)^{even}_{J^{-1}(w)}$-invariant element of  $V\otimes S^-\otimes S^+$.
The Principle of Triality thus reduces the verification of the above claim to 
the statement that $\Spin(V)_{J^{-1}(w)}$ 
has a unique invariant submodule in $V\otimes S^-\otimes S^+$, which is the trivial character of $G(V)^{even}_{J^{-1}(w)}$.
Now $V$ is a self-dual $G(V)^{even}$-module and 
$S^-_\RationalNumbers\otimes S^+_\RationalNumbers$ decomposes as a direct sum of the representations
$V_\RationalNumbers$ and $\wedge^3V_\RationalNumbers$ as a $G(V_\RationalNumbers)^{even}$-representation,
by the third displayed formula on page 96 of \cite[Sec. 3.4]{chevalley}. 
$V_\RationalNumbers$ contains a one-dimensional trivial $\Spin(V)_{J^{-1}(w)}$-submodule, the one spanned by $J^{-1}(w)$,
which is also a trivial $G(V)^{even}_{J^{-1}(w)}$-module.
The $\Spin(V)_{J^{-1}(w)}$-invariant submodule of $\wedge^3V_\RationalNumbers$ 
vanishes\footnote{Set $u:=J^{-1}(w)$. The $28$-dimensional representation 
$\wedge^2V_\RationalNumbers$ of $\Spin(V_\RationalNumbers)_{u}\cong \Spin((u^\perp)_\RationalNumbers)$ 
 decomposes as the direct sum of the $21$-dimensional adjoint representation 
 $\LieAlg{so}((u^\perp)_\RationalNumbers)\cong\LieAlg{so}(7)$ and the $7$-dimensional fundamental representation $(u^\perp)_\RationalNumbers$,
  both irreducible. Hence, the $\Spin(V)_{u}$-invariant submodule of $\Hom((u^\perp)^*,\wedge^2V)$ has rank $1$, it is contained in the image of $\Sym^2(u^\perp)_\RationalNumbers\otimes u$, hence its image in $\wedge^3V$ vanishes.}. 
Hence, the statement about $\Theta''$ is proven.

The image $m_{w}$ of $w$ in $\Hom(V,S^-)$ via $\Theta''$ spans a one-dimensional $G(S^+)^{even}_{w}$-module
isomorphic to the restriction of $ort_{S^+}$, since $w$ spans an invariant $G(S^+)^{even}_{w}$-module in $S^+$ and $\Theta''$ 
spans a character isomorphic to $ort_{S^+}$, by the previous paragraph. A similar argument shows that $m_{w}\in \Hom(S^-,V)$
spans a one-dimensional $G(S^+)^{even}_{w}$-module
isomorphic to the restriction of $ort_{S^+}$. The inverse of the bilinear pairing $(\bullet,\bullet)_V$ spans a one-dimensional 
$G(S^+)^{even}_{w}$-submodule of $\Hom(V^*,V)$
isomorphic to the restriction of $ort_{S^+}$. The homomorphism
\[
(S^+)^*\otimes V^*\otimes S^-\rightarrow (w^\perp)^*\otimes V\otimes V,
\]
induced by the restriction on the first factor, by the inverse of the bilinear pairing $(\bullet,\bullet)_V$ on the second factor,
and by $m_{w}$ on the third factor, is thus $G(S^+)^{even}_{w}$-equivariant. The above displayed homomorphism maps $\Theta''$ to $\Theta$. Hence, $G(S^+)^{even}_{w}$ acts via the character $ort_{S^+}$ on the rank one submodule spanned by $\Theta$.
\end{proof}

\begin{rem}
The $\Spin(V)_w$-equivariant composition
\[
\Sym^2(w^\perp)\RightArrowOf{\Sym^2\Theta} \Sym^2(\wedge^2(V^*))\rightarrow \wedge^4(V^*)
\]
is injective and maps the 
rank $1$ trivial submodule of $\Sym^2(w^\perp)$ to the submodule spanned by the Cayley class (see \cite[Sec. 2.1]{munoz}).
\end{rem}

\hide{
\begin{rem}
\label{rem-modified-action-on-H-2-via-character-ort}
$G(S^+)^{even}$ maps to $SO(S^+)$. Hence, if $ort_{S^+}(g)=-1$, for $g\in G(S^+)^{even}$, then 
$g$ reverses the orientation of both the positive and negative cones in $S^+_\RealNumbers$.
Otherwise, it preserves both orientations.
Now $(w,w)_{S^+}=-2n<0$ and so the signature of $w^\perp$ with respect to $(\bullet,\bullet)_{S^+}$ is $(4,3)$
and $-id_{w^\perp}$ reverses the orientation of the negative cone in $(w^\perp)_\RealNumbers$. 
The Mukai pairing is $-(\bullet,\bullet)_{S^+}$, and so  
if we let $g\in G(S^+)^{even}_{w}$ acts on $w^\perp$ via $x\mapsto ort_{S^+}(g)(g(x))$ then this modified action
preserves the positive cone of $w^\perp$ with respect to the restriction of the Mukai pairing. 
\end{rem}

\begin{rem}
The skew-symmetric form $\Theta_h$ is non-degenerate, when $(h,h)_{S^+}\neq 0$, since $m_h$ induces an automorphism of 
$(V\oplus S^-)_{\RationalNumbers}$. 
The action of $G(S^+)^{even}_{w}$ on $\wedge^2V^*$ is induced by its action on $V$. 
The elementary divisors of non-degenerate skew-symmetric forms in $\wedge^2V^*$ are invariants of
the $GL(V)$-orbits in $\wedge^2V^*$ (see 
\cite[Ch. 2, Sec 6, Lemma on page 304]{GH}). In particular, they are invariants of the $G(S^+)^{even}_{w}$-orbits.
\end{rem}
}

\begin{new-lemma}
\label{lemma-complex-multiplication}
The endomorphism $\Theta'_h:=m_{w}\circ m_h:V\rightarrow V$ 
satisfies
\[
(\Theta'_h)^2=\frac{-(w,w)(h,h)}{4}id_{V}=\frac{n(h,h)}{2}id_{V}.
\]
Given a period $\ell\in\Omega_{w^\perp}$,
the endomorphism $\Theta'_h\in \End(V)$ is a Hodge endomorphism of the integral Hodge structure determined by $\ell$, whenever 
$h$ belongs to $\{\ell,w\}^\perp\cap S^+$. 
\end{new-lemma}
\begin{proof}
We have 
$
(m_{w}\circ m_h)^2=-(m_{w}\circ m_{w})\circ (m_h\circ m_h)=
\frac{n(h,h)}{2}id_{V},
$
where the first equality is due to the
identity $m_{w}\circ m_h=-m_h\circ m_{w}$ observed above.
It remains to prove that 
the endomorphism $\Theta'_h\in \End(V)$ is a Hodge endomorphism of the integral Hodge structure determined by $\ell$, whenever 
$h$ belongs to $\{\ell,w\}^\perp\cap S^+$.
Indeed, such $h$ is of Hodge type, the two-form $\Theta_h\in\wedge^2V^*$ is of Hodge type, since $\Theta$ is an integral homomorphism of Hodge structures, and $\Theta'_h$ is obtained from $\Theta_h$ via pairing with the class $(\bullet,\bullet)_V$, which is of Hodge type as $Z_\ell$ is isotropic with respect to $(\bullet,\bullet)_V$.
\end{proof}

\hide{
%
\subsubsection{Miscellaneous} 
In Section (???) we will identify $s_n^\perp$ with $H^2(K_X(n\!-\!1),\Integers)$  and
$V$ (??? or $V\otimes ort_{S^+}$ ???) with $H^3(K_X(n\!-\!1),\Integers)$  and show that the $G(S^+)^{even}_{s_n}$-action is a monodromy action, so 
that the homomorphism $\Theta$ is  invariant with respect to this subgroup of the monodromy group (where the action on $H^2(K_X(n\!-\!1),\Integers)$ corresponds to that in Remark \ref{rem-modified-action-on-H-2-via-character-ort}). 
Furthermore, any saturated submodule of $\Hom(s_n^\perp,\wedge^2V^*)$ invariant under some finite index subgroup of
$\Spin(V)_{s_n}$ is of rank one and is thus spanned by
$\Theta$.
Hence,  the homomorphism
\begin{eqnarray*}
H^2(K_X(n\!-\!1),\Integers)\otimes \wedge^2H^3(K_X(n\!-\!1),\Integers)&\rightarrow& \Integers
\\
h\otimes (\alpha\wedge\beta) & \mapsto & \int_{K_X(n\!-\!1)}(h\cup\alpha\cup\beta)\cup c.
\end{eqnarray*}
corresponds to an integer multiple of $\Theta$, for every monodromy invariant class $c$ in $H^{4n-12}(K_X(n\!-\!1),\Integers)$.

}

%
\subsection{Diagonal twistor lines}
\label{sec-diagonal-twistor-lines}
We have the $\Spin(V)_{w}$-equivariant injective homomorphism
$\Theta:w^\perp\rightarrow \wedge^2V^*$, given in (\ref{eq-Theta}). The explicit construction in terms of $w$ and the symmetric pairing of $V$
exhibits $\Theta$ as an integral homomorphism of Hodge structures, since $w$ and the pairing are both of Hodge type.
Given a class $h\in w^\perp\cap S^+$, satisfying $(h,h)_{S^+}<0$,
let 
\begin{equation}
\label{eq-four-dimentional-period-domain}
\Omega_{\{w,h\}^\perp} 
\end{equation}
be the hyperplane section of $\Omega_{w^\perp}$ consisting of periods orthogonal to both $w$ and $h$.
Given a period
$\ell\in \Omega_{\{w,h\}^\perp}$ we get the $(1,1)$-form $\Theta_h$ in $\wedge^2V^*_\RealNumbers$ given in (\ref{eq-Theta}).
We fix an orientation of the negative cone of $w^\perp_\RealNumbers$, which determines an orientation for
every negative definite three dimensional subspace of  $w^\perp_\RealNumbers$ (see, for example, \cite[Lemma 4.1]{markman-torelli}).
The real plane $P_\ell$, given in (\ref{eq-P-ell}),  is naturally oriented by its isomorphism with the complex line $\ell$. We will always choose the sign of $h$, so that
given a basis $\{e_1, e_2\}$ of $P_\ell$, compatible with its orientation, the basis $\{e_1, e_2, h\}$ is compatible with the orientation of the negative definite three dimensional subspace $P_\ell+\RealNumbers h\subset w^\perp_\RealNumbers$. 
The complex torus $T_\ell$ is an abelian variety, for every $\ell\in\Omega_{\{w,h\}^\perp}$, by the following result.

\begin{prop}
\label{prop-Theta-h-is-a-Kahler-form}
\begin{enumerate}
\item
\label{lemma-item-negative-definite}
$\Theta_h$ is a $(1,1)$-form on the complex torus 
\[
T_\ell:= V_\ComplexNumbers/[Z_\ell+V],
\]
whose associated symmetric pairing $g(x,y):=\Theta_h(J_\ell(x),y)$ is definite (and so one of $\Theta_h$ or $\Theta_{-h}$ is 
a K\"{a}hler form). We may choose the orientation of the negative cone of $w^\perp_\RealNumbers$, so that $\Theta_h$ is 
a K\"{a}hler form.
\item
\label{lemma-item-hyperkahler-structure}
Let $W$ be a negative definite three dimensional subspace of $w^\perp_\RealNumbers$ and $\ell, \ell'$ two points of 
$\Omega_{w^\perp}$,
such that the planes $P_\ell$ and $P_{\ell'}$ are both contained in $W$. Let $h\in P_\ell^\perp\cap W$ and
$h'\in P_{\ell'}^\perp\cap W$ be classes satisfying $(h',h')=(h,h)$ and such that the pairs $(h,\ell)$ and $(h',\ell')$  are both compatible
with the orientation of $W$. 
Then $g'(x,y):=\Theta_{h'}(J_{\ell'}(x),y)$ is the same K\"{a}hler metric on $V_\RealNumbers$ as the metric $g(x,y):=\Theta_h(J_\ell(x),y)$.
If $(h,h')=0$, then $J_\ell$ and $J_{\ell'}$
satisfy $J_\ell J_{\ell'}=-J_{\ell'}J_\ell$. 
\end{enumerate}
\end{prop}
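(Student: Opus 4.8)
\textbf{Proof proposal for Proposition \ref{prop-Theta-h-is-a-Kahler-form}.}

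The plan is to work entirely inside the Clifford-algebraic model $A_X = V\oplus S^-\oplus S^+$, translating each geometric assertion about $T_\ell$ into a statement about the operators $m_a$ on $V$. For Part (\ref{lemma-item-negative-definite}), I would first verify that $\Theta_h$ is of type $(1,1)$ with respect to $J_\ell$: since $\ell\in\Omega_{\{w,h\}^\perp}$ means $h\in\{\ell,w\}^\perp\cap S^+$, Lemma \ref{lemma-complex-multiplication} already gives that $\Theta'_h$ is a Hodge endomorphism of $(V,J_\ell)$, hence $\Theta_h\in\wedge^2V^*$ is of Hodge type $(1,1)$. Then I would show the associated symmetric form $g(x,y)=\Theta_h(J_\ell(x),y)$ is definite. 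Writing $\{e_1,e_2\}$ for the orthogonal basis of $P_\ell$ with $(e_i,e_i)_{S^+}=-2$ as in Lemma \ref{lemma-J-ell-as-an-element-of-spin-V}, so that $J_\ell = m_{e_1}\circ m_{e_2}$ on $V_\RealNumbers$, I compute $g(x,y) = (m_w\circ m_h\circ m_{e_1}\circ m_{e_2}(x),y)_V$. The key observation is that $\{e_1,e_2,h\}$ (after the chosen sign of $h$) spans a negative definite $3$-space, and $\{w\}\cup\{e_1,e_2,h\}$ together span a negative definite $4$-space $W'$ inside $S^+_\RealNumbers$ (recall $(w,w)_{S^+}=-2n<0$, and $w\perp h$, $w\perp\ell$ forces $w\perp P_\ell$). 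Inside the even Clifford algebra $C(W')^{even}$ the product $m_w m_h m_{e_1} m_{e_2}$ is, up to a positive scalar, a unit of the quaternion algebra, and I would check it acts on $V_\RealNumbers$ as $\pm$ the identity times a positive scalar on each of the two real forms — the sign being governed precisely by the orientation of $W'$. Pinning down that $\Theta_h$ (rather than $\Theta_{-h}$) is the Kähler form then amounts to a single global orientation choice for the negative cone of $w^\perp_\RealNumbers$, which I would make so that the positivity holds; because $\Omega_{w^\perp}$ is connected and the sign of $g$ is locally constant along $\ell$, one consistent choice works uniformly.

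For Part (\ref{lemma-item-hyperkahler-structure}), the strategy is to recognize that all the data $(P_\ell, h)$ with $P_\ell\subset W$, $h\in P_\ell^\perp\cap W$, $(h,h)=(h_0,h_0)$ fixed, and orientation compatible, parametrize a $2$-sphere of complex structures on $V_\RealNumbers$ — exactly the twistor sphere attached to the subgroup $\Spin(S^+_\RealNumbers)_{W^\perp}\cong SU(2)$ introduced at the end of Section \ref{sec-two-isomorphic-period-domains}. Concretely, fix an oriented orthogonal basis $\{f_1,f_2,f_3\}$ of the negative definite $3$-space $W$ with $(f_i,f_i)_{S^+}$ all equal to the common value, rescale to unit quaternion generators, and observe via Lemma \ref{lemma-J-ell-as-an-element-of-spin-V} that for a period $\ell$ with $P_\ell=\mathrm{span}\{e_1,e_2\}$ inside $W$ one has $J_\ell = $ (the image in $SO(V_\RealNumbers)$ of) $m_{e_1} m_{e_2}$, while $h$ is the remaining unit vector; the three operators $m_{f_i} m_{f_j}$ ($i<j$ cyclic) satisfy the quaternion relations in $\Spin(S^+_\RealNumbers)_{W^\perp}$. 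The metric $g(x,y)=\Theta_h(J_\ell x,y) = (m_w m_h m_{e_1} m_{e_2} x, y)_V$ then only depends on $w$ and on the oriented volume form of $W$ (since $m_h m_{e_1} m_{e_2}$ is, up to sign fixed by orientation, the "volume element" $m_{f_1} m_{f_2} m_{f_3}$ of $C(W)$, independent of which orthogonal frame of $W$ we picked): this is the content of "$g'=g$". Finally, the anticommutation $J_\ell J_{\ell'} = -J_{\ell'}J_\ell$ when $(h,h')=0$: here $P_\ell\perp P_{\ell'}$ would follow from $h\in P_{\ell'}$ and $h'\in P_\ell$ (each plane lies in the orthogonal complement in $W$ of the third unit vector), so $J_\ell$ and $J_{\ell'}$ are two of the three standard quaternion units $i,j$ in $\Spin(S^+_\RealNumbers)_{W^\perp}$, which anticommute; I would make this precise by noting $J_\ell\cdot J_{\ell'} = \pm$ the third unit $= \mp J_{\ell'}\cdot J_\ell$ using $m_a m_b = -m_b m_a + (a,b)_{S^+}$ repeatedly.

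The main obstacle I anticipate is the bookkeeping of signs and orientations — verifying that the orientation of the negative cone of $w^\perp_\RealNumbers$ chosen in Part (\ref{lemma-item-negative-definite}) is globally consistent and simultaneously yields Kähler forms for every $\ell$, and then matching it against the orientation hypothesis on $(h,\ell)$ and $(h',\ell')$ in Part (\ref{lemma-item-hyperkahler-structure}). In particular one must check that the "volume element" argument ($m_h m_{e_1} m_{e_2}$ depends only on the oriented $W$) respects the sign conventions so that $g' = g$ on the nose rather than $g' = \pm g$; I expect this to come down to the fact that, for two positively oriented orthogonal frames of the same oriented Euclidean $3$-space, the Clifford volume elements agree (a standard fact in $C(W)^{even}\cong\HH$), combined with the compatibility of the orientations of $P_\ell$ (complex) and of $W$ (fixed) that is built into the chosen sign of $h$. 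The anticommutation claim is then essentially immediate, and the Hodge-type and non-degeneracy statements follow cleanly from Lemma \ref{lemma-complex-multiplication} and the definiteness computation. I would organize the write-up so that the quaternionic structure is established once, and both parts are read off from it.
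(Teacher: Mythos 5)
Your framework is the same as the paper's (the identity $g(x,y)=(m_w\circ m_h\circ m_{e_1}\circ m_{e_2}(x),y)_V$ from Lemma \ref{lemma-J-ell-as-an-element-of-spin-V}, the volume-element argument for Part (\ref{lemma-item-hyperkahler-structure}), and the quaternion-unit anticommutation), but there is a genuine gap at the crux of Part (\ref{lemma-item-negative-definite}). Knowing that the normalized product of the four orthogonal vectors $\{w,h,e_1,e_2\}$ acts on $V_\RealNumbers$ as an involution, i.e.\ as $+1$ on one $4$-dimensional summand and $-1$ on the other ("$\pm$ the identity times a positive scalar on each of the two real forms"), does \emph{not} by itself give definiteness of $g$: since the pairing $(\bullet,\bullet)_V$ has signature $(4,4)$, one gets $g=(\bullet,\bullet)_V$ on the $+1$-eigenspace and $-(\bullet,\bullet)_V$ on the $-1$-eigenspace, and if $(\bullet,\bullet)_V$ restricted to an eigenspace were indefinite, $g$ would be indefinite too. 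So the whole content of Part (\ref{lemma-item-negative-definite}) is precisely that the eigenspace decomposition is adapted to the signature (positive on one eigenspace, negative on the other), and your sketch never addresses this. The paper settles it by using transitivity of $\Spin(S^+_\RealNumbers)$ on negative definite $4$-subspaces and the triality identification of Corollary \ref{cor-V-plus-S-minus-is-the-Clifford-module} to reduce to one explicit model ($f_i=v_i-\theta_i$ with $\{v_i\}$ an oriented basis of $H^1(X,\Integers)$), where the form $(\gamma(\bullet),\bullet)$ is computed to be positive definite by hand. If you want to stay with your quaternionic language, you can close the gap as follows: the two eigenspaces are the half-spin modules of the compact group $\Spin(\Sigma)\cong SU(2)\times SU(2)$, where $\Sigma$ is the negative definite $4$-space spanned by $w,h,e_1,e_2$; each is an irreducible real module of quaternionic type, so its space of invariant symmetric forms is one-dimensional and any nonzero invariant form on it is definite; since the eigenspaces are $(\bullet,\bullet)_V$-orthogonal and the total signature is $(4,4)$, the restrictions are definite of opposite signs, whence $g$ is definite. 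Either way, this step must be supplied; as written your argument would equally well allow $g$ of signature $(4,4)$.

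Part (\ref{lemma-item-hyperkahler-structure}) of your plan is essentially the paper's proof: the identity $g'=g$ comes from the fact that $m_h\circ m_{e_1}\circ m_{e_2}$ (equivalently $m_w\circ m_h\circ m_{e_1}\circ m_{e_2}$ in the paper) depends only on the oriented volume element $h\wedge e_1\wedge e_2=h'\wedge e_1'\wedge e_2'$ of $W$ — note this uses the hypothesis $(h,h)=(h',h')$ and $(e_i,e_i)=(e_i',e_i')=-2$, not just the orientation — and the anticommutation comes from writing lifts of $J_\ell$ and $J_{\ell'}$ as $m_{h'}\circ m_f$ and $m_f\circ m_h$ for a common third vector $f\in\{h,h'\}^\perp\cap W$ and using the Clifford relations, exactly as you propose. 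One slip to correct: your parenthetical claim that $(h,h')=0$ forces $P_\ell\perp P_{\ell'}$ is false — the two planes intersect in the line $\RealNumbers f$ — but it is also unnecessary: what makes $J_\ell$ and $J_{\ell'}$ anticommuting quaternion units is the orthogonality $h\perp h'$ (they correspond, up to sign, to the unit normals of $P_\ell$ and $P_{\ell'}$ inside $W$), which is exactly the computation with $m_{h'}m_f$ and $m_fm_h$ via (\ref{eq-composition-of-m-y-1-and-m-y-2}); remember also to observe that the ambiguity in the lifts is by $-1\in\Spin(S^+)$, which acts by $-1$ on $V$, so the anticommutation descends to $J_\ell,J_{\ell'}$ themselves.
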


\begin{proof}
Part (\ref{lemma-item-negative-definite}) 
We first express the bilinear form $g$ in terms of the Clifford action using the description of $J_\ell$ provided by 
Lemma \ref{lemma-J-ell-as-an-element-of-spin-V}:
\begin{equation}
\label{eq-g-in-terms-of-Clifford-product}
g(x,y)=\Theta_h(J_\ell(x),y)=(\Theta'_h(J_\ell(x)),y)_V=(m_{w}\circ m_h\circ m_{e_1}\circ m_{e_2}(x),y)_V,
\end{equation}
where $\Theta'_h$ is given in (\ref{eq-Theta-prime}).
The ordered set $\{w,h,e_1,e_2\}$ is an orthogonal basis of a negative definite subspace of $S^+_\RealNumbers$.
All negative definite subspaces of $S^+_\RealNumbers$ belong to a single $\Spin(S^+_\RealNumbers)$ orbit.
It suffices to prove the analogous statement for some orthogonal basis of a negative definite subspace of $S^+_\RealNumbers$.
The latter statement translates via the commutative diagram in Corollary \ref{cor-V-plus-S-minus-is-the-Clifford-module}
to the statement that the bilinear pairing
\[
(m_{f_1}\circ m_{f_2}\circ m_{f_3}\circ m_{f_4}(x),y)_{S^-}
\]
on $S^-_\RealNumbers$ is definite, for some orthogonal basis $\{f_1,f_2,f_3,f_4\}$ of a negative definite subspace of 
$V_\RealNumbers$.
Let $\{v_1,v_2,v_3,v_4\}$ be a basis of $H^1(X,\Integers)$ satisfying
$\int_X v_1\wedge v_2\wedge v_3\wedge v_4=1$, $\{\theta_1,\theta_2,\theta_3,\theta_4\}$ the dual basis of $H^1(X,\Integers)^*$ and set
$f_i:=v_i-\theta_i$, $1\leq i\leq 4$.
Then $(f_i,f_i)=-2$ and so $m_{f_i}^2=-1$.
Set $\gamma:=m_{f_1}\circ m_{f_2}\circ m_{f_3}\circ m_{f_4}$. Then $\gamma^2=1$. 
Being an isometry, the adjoint of $\gamma$ is $\gamma^{-1}$, and so $\gamma$ is self adjoint and the pairing 
$(\gamma(\bullet),\bullet)_{S^-}$ is symmetric. Regarding $H^1(X,\Integers)$ as a subgroup of $S^-$ and considering the action of $\gamma$ on $S^-$,
we have
\[
\gamma(v_4)=m_{f_1}\circ m_{f_2}\circ m_{f_3}(m_{f_4}(v_4))=m_{f_1}\circ m_{f_2}\circ m_{f_3}(-1)=-v_1\wedge v_2\wedge v_3,
\]
so that $\tau(\gamma(v_4))=-\gamma(v_4)$ and $(\gamma(v_4),v_4)_{S^-}=-\int_X \gamma(v_4)\wedge v_4=1$.
Given a permutation $\sigma$ of $\{1,2,3,4\}$ we similarly have
\[
\mbox{sgn}(\sigma)\gamma(v_{\sigma(4)})=
m_{f_{\sigma(1)}}\circ m_{f_{\sigma(2)}}\circ m_{f_{\sigma(3)}}\circ m_{f_{\sigma(4)}}(v_{\sigma(4)})=
-v_{\sigma(1)}\wedge v_{\sigma(2)}\wedge v_{\sigma(3)},
\]
so that $\mbox{sgn}(\sigma)\gamma(v_{\sigma(4)})\wedge v_{\sigma(4)}=-v_{\sigma(1)}\wedge v_{\sigma(2)}\wedge v_{\sigma(3)}\wedge v_{\sigma(4)}=-\mbox{sgn}(\sigma)v_1\wedge v_2\wedge v_3\wedge v_4$. We conclude that
\[
(\gamma(v_i),v_i)_{S^-}=\int_X v_1\wedge v_2\wedge v_3\wedge v_4=1,
\]
for $1\leq i\leq 4$. Hence, $(\gamma(\gamma(v_i)),\gamma(v_i))=1$, since $\gamma$ acts as an isometry.
But $\{v_i\}_{i=1}^4\cup \{\gamma(v_i)\}_{i=1}^4$ is a basis of $S^-$. Hence, the bilinear form $(\gamma(\bullet),\bullet)_{S^-}$
is positive definite.

Part (\ref{lemma-item-hyperkahler-structure}) Given a negative definite $4$-dimensional subspace $\Sigma$ of $S^+_\RealNumbers$
and an orthogonal basis $\{f_1,f_2,f_3,f_4\}$ of $\Sigma$, the element 
$m_{f_1}\circ m_{f_2}\circ m_{f_3}\circ m_{f_4}$ of $C(S^+_\RealNumbers)$
depends only on the element $f_1\wedge f_2\wedge f_3\wedge f_4$ of the line $\wedge^4\Sigma$. 
Indeed, set $\tilde{f}_i:=f_i/\sqrt{-Q(f_i)}$, so that
$m_{f_i}=\sqrt{-Q(f_i)}m_{\tilde{f}_i}$. Then
$
m_{f_1}\circ m_{f_2}\circ m_{f_3}\circ m_{f_4}=\sqrt{\prod_{i=1}^4Q(f_i)}m_{\tilde{f}_1}\circ m_{\tilde{f}_2}\circ m_{\tilde{f}_3}\circ m_{\tilde{f}_4},
$
where $m_{\tilde{f}_1}\circ m_{\tilde{f}_2}\circ m_{\tilde{f}_3}\circ m_{\tilde{f}_4}$ is an element of $\Spin(S^+_\RealNumbers)$ which 
acts on $\Sigma$ by $-1$ and it acts on the orthogonal complement $\Sigma^\perp$ in $S^+_\RealNumbers$ 
by $1$. This determines $m_{\tilde{f}_1}\circ m_{\tilde{f}_2}\circ m_{\tilde{f}_3}\circ m_{\tilde{f}_4}$ up to sign, and
the sign depends on the orientation of the basis
$\{\tilde{f}_1,\tilde{f}_2,\tilde{f}_3,\tilde{f}_4\}$.
Let $\{e_1',e_2'\}$
be an orthogonal basis of $P_{\ell'}$ satisfying $(e_i',e_i')=-2$, $i=1,2$, and such that $\ell'$ is spanned by 
$e_1'-ie_2'$. We get a second orthogonal basis $\{w, h', e_1', e_2'\}$ of the negative definite subspace $\Sigma:=W+\RealNumbers w$.
The two elements $h'\wedge e_1'\wedge e_2'$ and $h\wedge e_1\wedge e_2$ of $\wedge^3W$ are equal.
Consequently, $w\wedge h\wedge e_1\wedge e_2=w\wedge h'\wedge e_1'\wedge e_2'$ and so
\[
m_w\circ m_{h'} \circ m_{e_1'}\circ m_{e_2'}=m_{w}\circ m_h\circ m_{e_1}\circ m_{e_2}.
\]
The equality of the metrics $g$ and $g'$ follows from Equation (\ref{eq-g-in-terms-of-Clifford-product}). 

It remains to prove the equality $J_\ell J_{\ell'}=-J_{\ell'}J_\ell$ when $(h,h')=0$. 
Assume, possibly after rescaling by a positive real factor, that $(h,h)_{S^+}=(h',h')_{S^+}=-2$.
Let $f$ be an element of $\{h,h'\}^\perp\cap W$ satisfying $(f,f)=-2$ such that the ordered basis 
$\{h,h',f\}$ corresponds to the orientation of $W$. Then $\{h',f\}$ is a basis of $P_\ell$, 
$\{h,f\}$ is a basis of $P_{\ell'}$, 
$J_\ell$ or $-J_\ell$ lifts to the element
$m_{h'}\circ m_f$ of $\Spin(S^+_\RealNumbers)$ and 
$J_{\ell'}$ or $-J_{\ell'}$ lifts to $m_f\circ m_h$, by Lemma \ref{lemma-J-ell-as-an-element-of-spin-V}. We have
\[
(m_{h'}\circ m_f)\circ (m_f\circ m_h)=-m_{h'}\circ m_h=-(m_f\circ m_h)\circ (m_{h'}\circ m_f).
\]
Now, $-1\in \Spin(S^+)$ acts on $V$ via multiplication by $-1$.
\end{proof}

Let $\pi:\T\rightarrow \Omega_{w^\perp}$ be the pullback of the universal torus over $IG^+(4,V_\ComplexNumbers)$ via the restriction to $\Omega_{w^\perp}$ of the embedding $\zeta$ given in Equation (\ref{eq-embeding-zeta-of-period-domains}). 
Given a three dimensional negative definite subspace $W$
of $w^\perp_\RealNumbers$, let $\PP_W$ be the smooth conic of isotropic lines in $W_\ComplexNumbers$. 
We will refer to $\PP_W$ as a {\em twistor line}, denote by $\pi_W:\T_W\rightarrow \PP_W$ the pulled back family, and refer to it as the
{\em twistor family} associated to $W$. Proposition \ref{prop-Theta-h-is-a-Kahler-form} (\ref{lemma-item-hyperkahler-structure})
verifies that the metric $g_W(x,y):=\Theta_h(J_\ell(x),y)$, $\ell\in \PP_W$, $h\in W\cap P_\ell^\perp$, $(\ell,h)$ compatible with the orientation of $W$, and $(h,h)=-2$, is independent of $\ell$ and is indeed a hyperk\"{a}hler metric, and the twistor family 
$\pi_W$ is the one associated to this metric.
Given a point $\ell\in \PP_W$ we get the commutative diagram:
\begin{equation}
\label{diagram-twistor-family-of-torus}
\xymatrix{
T_\ell \ar[r]^\subset \ar[d] & \T_W \ar[r]^\subset \ar[d]_{\pi_W} & \T \ar[d]^\pi
\\
\{\ell\}\ar[r]_{\subset} & \PP_W \ar[r]_{\subset} & \Omega_{w^\perp}.
}
\end{equation}

\begin{rem}
Any two points $\ell$, $\ell'$ in the period domain $\Omega_{w^\perp}$ are connected by a {\em twistor path},
namely by a sequence $\ell_0, W_1,\ell_1,  W_2, \dots, \ell_{k-1}, W_k, \ell_k$, such that 
$\ell=\ell_0$, $\ell'=\ell_k$, $W_i$ is a negative definite three dimensional subspace, and both $\ell_i$ and $\ell_{i+1}$ belong to $\PP_{W_i}$, for $0\leq i\leq k$ (see \cite[Lemma 8.4]{huybrects-basic-results}).
\end{rem}

\begin{rem}
When $(w,w)=-2n$, $n\geq 3$, then $\Omega_{w^\perp}$ is the period domain of generalized Kummers of dimension $2n-2$. 
For all $n\geq 2$, the family $\pi:\T\rightarrow \Omega_{w^\perp}$ should be related to generalized (not necessarily commutative) deformations of the derived categories of coherent sheaves over abelian surfaces, in a sense similar to \cite{markman-mehrotra-generalized-deformations}. The four dimensional compact complex torus should be thought of as the identity component of the subgroup of the group of autoequevalences of the deformed triangulated category, which acts trivially on its numerical $K$-group.
\end{rem}
%
\subsection{Abelian fourfolds of Weil type}
\label{subsection-abelian-fourfolds-of-Weil-type}
The following Corollary asserts that $(T_\ell,\Theta_h)$ is a polarized abelian variety of Weil type according to \cite[Def. 4.9]{van-Geemen}.
This was first observed by O'Grady for the isogeneous intermediate Jacobians of projective irreducible holomorphic symplectic manifolds of generalized Kummer deformation type \cite{ogrady}. Given a positive integer $d$, let the {\em norm} map $Nm:\RationalNumbers[\sqrt{-d}]\rightarrow \RationalNumbers$ be given by 
$Nm(a+b\sqrt{-d}):=(a+b\sqrt{-d})(a-b\sqrt{-d})=a^2+b^2d$. Let $n\geq 1$ be an integer and $w$ a primitive element of $S^+$
satisfying $(w,w)=-2n$.

\begin{cor}
\label{cor-weil-type}
Let $h\in w^\perp$ be an integral class and $\ell\in \Omega_{w^\perp}$, such that the pair $(h,\ell)$  satisfies the assumptions of Proposition \ref{prop-Theta-h-is-a-Kahler-form}.
Then $d:=-n(h,h)/2$ is a positive integer, $T_\ell$ is an abelian variety, and the ring $\Integers[\sqrt{-d}]$ acts on $T_\ell$ via 
integral Hodge endomorphisms, such that $
\lambda^*(\Theta_h)=Nm(\lambda)\Theta_h$, for all $\lambda\in \Integers[\sqrt{-d}]$.
\end{cor}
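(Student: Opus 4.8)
The plan is to assemble the statement from the structural results already in hand: Proposition~\ref{prop-Theta-h-is-a-Kahler-form} gives that $\Theta_h$ (after the sign normalization) is a K\"ahler form on the complex torus $T_\ell=V_\ComplexNumbers/[Z_\ell+V]$, hence $T_\ell$ is a projective abelian variety; Lemma~\ref{lemma-complex-multiplication} gives the quadratic relation $(\Theta'_h)^2=\frac{n(h,h)}{2}\,\mathrm{id}_V$ together with the fact that $\Theta'_h$ is a Hodge endomorphism whenever $h\in\{\ell,w\}^\perp\cap S^+$. First I would set $d:=-n(h,h)/2$ and observe that, since the hypotheses of Proposition~\ref{prop-Theta-h-is-a-Kahler-form} include $(h,h)_{S^+}<0$ (and $n\geq 1$), $d$ is a positive integer; moreover $h$ lies in $\{\ell,w\}^\perp\cap S^+$ by the standing assumption that $\ell\in\Omega_{\{w,h\}^\perp}$, so Lemma~\ref{lemma-complex-multiplication} applies verbatim. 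Then $(\Theta'_h)^2=-d\cdot\mathrm{id}_V$, so the ring homomorphism $\Integers[\sqrt{-d}]\to\End(V)$ sending $\sqrt{-d}\mapsto\Theta'_h$ is well defined; its image consists of integral Hodge endomorphisms of the weight-$1$ Hodge structure determined by $\ell$, hence of integral endomorphisms of $T_\ell$. (Faithfulness is automatic since $\Theta'_h\neq\pm\sqrt{d}\,\mathrm{id}$, as $\Theta'_h$ is anti-self-dual and nonzero.)

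Second, the compatibility with the polarization. For $\lambda=a+b\sqrt{-d}\in\Integers[\sqrt{-d}]$ the induced endomorphism of $V$ is $a\cdot\mathrm{id}_V+b\,\Theta'_h$, and I must compute the pullback of the alternating form $\Theta_h\in\wedge^2V^*$, i.e. the form $(x,y)\mapsto\Theta_h\big((a+b\Theta'_h)x,\,(a+b\Theta'_h)y\big)$. Expanding and using $\Theta_h(x,y)=(\Theta'_h x,y)_V$ this is
\[
a^2\,\Theta_h(x,y)+ab\,\big[(\Theta_h(\Theta'_h x,y)+\Theta_h(x,\Theta'_h y)\big]+b^2\,(\Theta'_h(\Theta'_h x),\Theta'_h y)_V.
\]
The middle term vanishes because $\Theta'_h$ is anti-self-dual with respect to $(\bullet,\bullet)_V$ (noted just after \eqref{eq-Theta-prime}), so $(\Theta'_h\Theta'_h x,y)_V=-(\Theta'_h x,\Theta'_h y)_V=-(\Theta'_h x,\Theta'_h y)_V$ forces cancellation; and the last term equals $b^2(\Theta'_h{}^2 x,\Theta'_h y)_V=-db^2(\Theta'_h y,x)_V\cdot(-1)=db^2(\Theta'_h x,y)_V = db^2\Theta_h(x,y)$, again using anti-self-duality and $(\Theta'_h)^2=-d$. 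Hence the pullback is $(a^2+db^2)\Theta_h=Nm(\lambda)\Theta_h$, as claimed. I would present this as a short direct computation rather than invoking anything further.

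The only genuinely delicate point — and the step I expect to cost the most care — is bookkeeping the signs and the normalization of $h$. Proposition~\ref{prop-Theta-h-is-a-Kahler-form} commits to a sign of $h$ so that $\Theta_h$ (not $\Theta_{-h}$) is K\"ahler, and Lemma~\ref{lemma-complex-multiplication} is sign-insensitive in the $\Theta'_h{}^2$ formula but the anti-self-duality sign and the passage between $\Theta_h$ and $\Theta'_h$ via the pairing must be tracked consistently; I would fix conventions once at the start (using $\Theta_h(x,y)=(\Theta'_h x,y)_V$ and anti-self-duality $(\Theta'_h x,y)_V=-(x,\Theta'_h y)_V$) and keep them throughout. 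It remains only to note that $T_\ell$ is an abelian variety: this is exactly Proposition~\ref{prop-Theta-h-is-a-Kahler-form}(\ref{lemma-item-negative-definite}), since a complex torus carrying a K\"ahler class in $H^2(\cdot,\Integers)$ (which $\Theta_h$ is, being integral and of type $(1,1)$) is projective by Kodaira. Assembling these three ingredients — $d\in\Integers_{>0}$, the ring map $\Integers[\sqrt{-d}]\to\End(T_\ell)$ landing in integral Hodge endomorphisms, and the Weil-type polarization identity — yields the Corollary.
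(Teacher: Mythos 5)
Your proposal is correct and takes essentially the same route as the paper's proof: abelianity of $T_\ell$ from the ampleness of $\Theta_h$ in Proposition \ref{prop-Theta-h-is-a-Kahler-form}(\ref{lemma-item-negative-definite}), the ring action $\sqrt{-d}\mapsto\Theta'_h$ via Lemma \ref{lemma-complex-multiplication}, and the same direct expansion of $\lambda^*\Theta_h$ in which anti-self-duality of $\Theta'_h$ kills the cross term and gives $\Theta_h(\Theta'_h x,\Theta'_h y)=d\,\Theta_h(x,y)$. The one detail you gloss over is why $d$ is an \emph{integer} rather than merely positive: this needs $(h,h)_{S^+}$ to be even, which holds because the lattice is even, as the paper notes.
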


\begin{proof}
 If $h$ is integral, then $\Theta_h$ is an ample class, by Proposition \ref{prop-Theta-h-is-a-Kahler-form} (\ref{lemma-item-negative-definite}), and so $T_\ell$ is an abelian variety. Integrality of $d$ is due to the fact that the lattice $V$ is even.
$\Integers[\sqrt{-d}]$ acts, by sending $\sqrt{-d}$ to the endomorphism $\Theta'_h$, which satisfies 
$(\Theta'_h)^2=(-d) id_V$, by Lemma \ref{lemma-complex-multiplication}. Finally, we compute
\[
\Theta_h(\Theta'_h(x),\Theta'_h(y))=((\Theta'_h)^2(x),\Theta'_h(y))=-d(x,\Theta'_h(y))=d(\Theta'_h(x),y)=d\Theta_h(x,y),
\]
where the third equality follows from the anti-self-duality of $\Theta'_h$. Set $\lambda:=a+b\sqrt{-d}$. We get
\begin{eqnarray*}
(\lambda^*\Theta_h)(x,y)&=&\Theta_h(ax+b\Theta'_h(x),ay+b\Theta'_h(y))
\\
&=&
(a^2+b^2d)\Theta_h(x,y)+ab[\Theta_h(x,\Theta'_h(y))+\Theta_h(\Theta'_h(x),y)]
\\
&=&(a^2+b^2d)\Theta_h(x,y)=Nm(\lambda)\Theta_h(x,y).
\end{eqnarray*}
\end{proof}

We recall next a discrete isogeny invariant of abelian varieties of Weil type.
Set $K:=\RationalNumbers[\sqrt{-d}]$.
Consider the map
$
H:V_\RationalNumbers\otimes V_\RationalNumbers\rightarrow K,
$
given by
\begin{equation}
\label{eq-Hermitian-form}
H(x,y):=\Theta_h(x,\Theta'_h(y))+\sqrt{-d}\Theta_h(x,y)=d(x,y)+\sqrt{-d}(\Theta'_h(x),y).
\end{equation}
$H$ is a non-degenerate Hermitian form on the $4$-dimensional $K$-vector space $V_\RationalNumbers$, by
\cite[Lemma 5.2]{van-Geemen}. Choose a $K$-basis $\beta:=\left\{x_1,x_2,x_3,x_4\right\}$ of $V_\RationalNumbers$ and denote by 
$\Psi:=\left(H(x_i,x_j)\right)$ the Hermitian matrix 
of $H$ with respect to $\beta$. 
\begin{defi}
\label{def-discriminant}
The {\em discriminant}
$
\det H
$
of $H$ is  the image of $\det(\Psi)$ in $\RationalNumbers^*/Nm(K^*)$. 
\end{defi}
The discriminant $\det H$ is independent of the choice of $\beta$,
by \cite[Lemma 5.2(3)]{van-Geemen}. 

\begin{new-lemma}
\label{lemma-trivial-discriminant}
The  Hermitian forms of the abelian fourfolds of Weil type in Corollary \ref{cor-weil-type} all have trivial discriminants.
\end{new-lemma}
\begin{proof}
Let $U$ be the rank $2$ even unimodular lattice with Gram matrix $\left(\begin{array}{cc}0&1\\1&0\end{array}\right)$.
The isometry group of the orthogonal direct sum of three or more copies of $U$ acts transitively on the set of primitive elements with a fixed self intersection, by \cite[Theorem 1.14.4]{nikulin}. $S^+$ is isometric to $U^{\oplus 4}$. Hence, 
the sublattice spanned by $\{w,h\}$ is contained in a sublattice of $S^+$ isometric to $U\oplus U$.
Consequently, the  orthogonal sublattice $\{w,h\}^\perp$ contains a sublattice $U_1\oplus U_2$ of $S^+$ isometric to $U\oplus U$.
Here and below the notation $(\bullet)^\perp$ is with respect to the bilinear parings $(\bullet,\bullet)_{S^+}$ or
$(\bullet,\bullet)_{V}$, but not with respect to $H$. In this proof $(\bullet,\bullet)_V$ will be denoted by $(\bullet,\bullet)$.
Let $e_i,f_i\in U_i$ be elements satisfying 
\[
(e_i,e_i)=2, \ (f_i,f_i)=-2, \ (e_i,f_i)=0,
\]
$i=1,2$.
We get the four isotropic classes $z_1:=e_1-f_1$, $z_2:=e_1+f_1$, 
$y_1:=e_2-f_2$, and $y_2:=e_2+f_2$.
The elements $\eta_i:=m_{e_i}\circ m_{f_i}\in G(S^+)^{even}$ commute with $\Theta'_h$ (and so are $\Integers[\sqrt{-d}]$-module automorphisms) and satisfy $\eta_i^2=1\in C(S^+)$ and
\begin{equation}
\label{eq-eta-i-reverses-the-sgn-of-pairing-on-V}
(\eta_i(x),\eta_i(x))=-(x,x), \ \forall x\in V.
\end{equation}
Let $L_{z_i}$ and $L_{y_i}$ be the $4$-dimensional isotropic subspaces of $V_\RationalNumbers$ associated to the isotropic vectors 
$z_i$ and $y_i$, $i=1,2$, by \cite[IV.1.1]{chevalley}. $L_{z_1}$ and $L_{z_2}$ are transversal, by 
\cite[III.1.10 and III.1.12]{chevalley}. 
The automorphism $\eta_i$ acts on $U_i$ via multiplication by $-1$ and on $U_i^\perp$ as the identity. The four isotropic lines, and hence also the four isotropic subspaces $L_{z_i}$ and $L_{y_i}$, $i=1,2$, are each invariant with respect to both $\eta_1$ and $\eta_2$.
The action of $\eta_1$ on $L_{z_i}$ commutes with that of the subgroup $\Spin(S^+)_{e_1,f_1}$  of $\Spin(S^+)$ stabilizing both $e_1$ and $f_1$. $L_{z_i}$ is an irreducible representation of $\Spin(S^+)_{e_1,f_1}$. Hence, $\eta_1$ acts on each $L_{z_i}$ via multiplication by a scalar, which is $1$ or $-1$, since $\eta_1^2=1$. 
The automorphism $\eta_1$ acts on one of $L_{z_1}$ or $L_{z_2}$ via $-1$ and on the other  as the identity, by Equation
(\ref{eq-eta-i-reverses-the-sgn-of-pairing-on-V}) and the transversality of $L_{z_1}$ and $L_{z_2}$.
Similarly, $\eta_2$ acts on one of $L_{y_1}$ or $L_{y_2}$ via $-1$ and on the other as the identity. 
The subspaces $L_{z_i,y_j}:=L_{z_i}\cap L_{y_j}$, $i,j\in\{1,2\}$, are two dimensional, by \cite[III.1.12]{chevalley},
since the subspace spanned by
$\{z_i,y_j\}$ is isotropic.
We conclude that each of $L_{z_i,y_j}:=L_{z_i}\cap L_{y_j}$, $i,j\in\{1,2\}$, is the direct sum of two copies of the same character of the group $G$ generated by $\eta_1$ and $\eta_2$ and the four characters are distinct. It follows that $\Theta'_h$ leaves each 
$L_{z_i,y_j}$ invariant, since it commutes with $G$. Hence, each of $L_{z_i,y_j}$ is a one-dimensional $K$ subspace of $V_\RationalNumbers$. Let $\hat{}$ be the transposition permutation of $\{1,2\}$.
Then 
$
L_{z_i,y_j}^\perp=L_{z_i,y_j}+L_{z_{\hat{i}},y_j}+L_{z_i,y_{\hat{j}}}.
$
Being $\Theta'_h$-invariant, the right hand side is also the $H$-orthogonal $K$-subspace to $L_{z_i,y_j}$. 
Furthermore, both $(\bullet,\bullet)_V$ and $H$ induce a non-degenerate bilinear pairing between $L_{z_i,y_j}$ and
$L_{z_{\hat{i}},y_{\hat{j}}}$. 

Let $a\in L_{z_1,y_1}$ and $b\in L_{z_2,y_2}$ be elements satisfying
\[
 (a,b)\not=0 \ \mbox{and}  \ (a,\Theta'_h(b))=0.
\]
Note that $(a,a)=0=(b,b)$.
Set $x_1:=a+b$ and $x_2=a-b$. Then $(x_1,x_2)=0$, $(x_1,x_1)=2(a,b)=-(x_2,x_2)$, and 
\[
H(x_1,x_2)=d(x_1,x_2)+\sqrt{-d}(\Theta'_h(x_1),x_2)=-2\sqrt{-d}(a,\Theta'_h(b))=0.
\]
Choose $a'\in L_{z_1,y_2}$ and $b'\in L_{z_2,y_1}$ satisfying
\[
(a',b')\not=0 \ \mbox{and}  \ (a',\Theta'_h(b'))=0.
\]
Then $(x_3,x_3)=2(a',b')=-(x_4,x_4)$ and $H(x_3,x_4)=0$. 
We conclude that $\beta:=\{x_1,x_2,x_3,x_4\}$ is an $H$-orthogonal $K$-basis for $V_\RationalNumbers$ and the $\beta$-matrix $\Psi$ of $H$ satisfies
\[
\det(\Psi)=\prod_{i=1}^4H(x_i,x_i)=d^4\prod_{i=1}^4(x_i,x_i)=d^4(x_1,x_1)^2(x_3,x_3)^2\in (\RationalNumbers^*)^2.
\]
The discriminant is trivial, since $(\RationalNumbers^*)^2$ is contained in $Nm(K^*)$.
\end{proof}

\begin{new-lemma}
\label{lemma-spin-w-h-preserves-H}
The subgroup $\Spin(S^+)_{w,H}$ of $\Spin(S^+)_w$ leaving invariant the Hermitian form $H$ given in (\ref{eq-Hermitian-form}) is equal to the subgroup
$\Spin(S^+)_{w,h}$ stabilizing both $w$ and $h$.
\end{new-lemma}

\begin{proof}
$\Spin(S^+)_{w,h}$ preserves the bilinear pairing $(\bullet,\bullet)_V$, 
acting as a subgroup of $\Spin(V)$ via the identification (\ref{eq-Spin-V-equals-Spin-S-plus}), and
$\Spin(S^+)_{w,h}$ commutes with the endomorphism $\Theta'_h:=m_w\circ m_h$ of $V$. Hence, $\Spin(S^+)_{w,h}$ leaves
the Hermitian form $H$ invariant. Conversely, the subgroup $\Spin(S^+)_{w,H}$ consists of elements of $\Spin(S^+)_w$, which commute with $\Theta'_h$, by definition of the Hermitian form  $H$. But $\Theta'_h$ is the element corresponding to $h$ 
in an irreducible $\Spin(S^+)_w$-subrepresentation of $\Hom(V,V)$ isomorphic to $w^\perp$. Hence,
$\Spin(S^+)_{w,H}$ is contained in $\Spin(S^+)_{w,h}$.
\end{proof}
%
\subsection{A universal deformation of a moduli space of sheaves}
\label{subsection-a-universal-deformation-of-a-moduli-space-of-sheaves}
Let $\M(w):=\M_H(w)$ be a smooth and compact moduli space of $H$-stable sheaves of primitive Mukai vector $w$ of dimension $\geq 8$ over an abelian surface $X$. Let 
$
\alb:\M(w)\rightarrow \Alb^1(\M(w))
$
be the Albanese morphism to the Albanese variety of degree $1$.
Choose a point $a\in \Alb^1(\M(w))$ and denote by $K_a(w)$ the fiber of $\alb$ over $a$. Let $\iota_a:K_a(w)\rightarrow \M(w)$ be the inclusion.
A {\em $\Lambda$-marking} for an irreducible holomorphic symplectic manifold $M$ is an isometry
$\eta:H^2(M,\Integers)\rightarrow \Lambda$ with a fixed lattice $\Lambda$. There exists a moduli space
$\fM_\Lambda$ of $\Lambda$-marked irreducible holomorphic symplectic manifolds, which is a non-Hausdorff complex manifold \cite{huybrects-basic-results}.
Let $\eta_0:H^2(K_a(w),\Integers)\rightarrow w^\perp$ be the isometry of Theorem \ref{thm-yoshioka}, with respect to the Beauville-Bogomolov-Fujiki pairing on $H^2(K_a(w),\Integers)$ and the Mukai pairing $-(\bullet,\bullet)_{S^+}$ on $S^+$.
Below we will continue to work with the pairing  $(\bullet,\bullet)_{S^+}$ rather than the Mukai pairing. 
Let $t_0\in \fM_{w^\perp}$ be the point representing the isomorphism class  of $(K_a(w),\eta_0)$ 
in the moduli space $\fM_{w^\perp}$ of $w^\perp$-marked irreducible holomorphic symplectic manifolds 
and let
$\fM^0_{w^\perp}$ be the connected component of  $\fM_{w^\perp}$ containing $t_0$. 
Denote by 
$Per:\fM^0_{w^\perp}\rightarrow \Omega_{w^\perp}$ the period map, sending a marked pair $(Y,\eta)$ to $\eta(H^{2,0}(Y))$.

Let $\underline{w}^\perp$ be the trivial local system over $\fM^0_{w^\perp}$ with fiber $w^\perp$. 
There exists a universal family 
\begin{equation}
\label{eq-p-universal-family-of-generalized-kummer-type}
p:\Y\rightarrow \fM^0_{w^\perp}
\end{equation} 
and a trivialization $\eta:R^2p_*\Integers\rightarrow \underline{w}^\perp$
with value $\eta_0$ at $t_0$, by \cite[Theorem 1.1]{markman-universal-family}. 
The  groups of  automorphisms of the fibers of $p$, which act trivially on the second cohomology, form a trivial local system
$\Aut_0(p)$ over $\fM^0_{w^\perp}$,  by \cite[Theorem 1.1]{markman-universal-family}.
The local subsystem $\Z$ of $\Aut_0(p)$, of subgroups which act trivially on the third cohomology as well, is thus a trivial local system.
We may thus extend the isomorphism of the fiber of $\Z$ over $t_0$ with the group $\Gamma_w$, given in Lemma \ref{lemma-Gamma-v},
to a trivialization $\psi:\Z\rightarrow \underline{\Gamma_w}$, where $\underline{\Gamma_w}$ is the trivial local system with fiber $\Gamma_w$. 

Let $Per^*(\pi):Per^*\T\rightarrow \fM^0_{w^\perp}$ be the pullback via the period map of the universal torus $\pi:\T\rightarrow \Omega_{w^\perp}$ given in Diagram (\ref{diagram-twistor-family-of-torus}). Ignoring the complex structure, $\pi$ is a differentiably trivial fibration with fiber the compact torus $V_\RealNumbers/V$. Hence, the local system $\underline{\Gamma_w}$ embeds naturally as a subsystem of torsion subgroups of $Per^*\T$. Let
\[
\M := Per^*\T\times_{\underline{\Gamma_w}}\Y
\]
be the quotient of the fiber product of $Per^*\T$ and $\Y$ over $\fM^0_{w^\perp}$ by the anti-diagonal action of $\underline{\Gamma_w}$ (this action is defined below Diagram \ref{eq-commutative-diagram-of-actions}).
Denote by 
\begin{equation}
\label{eq-universal-deformation-of-a-moduli-space-of-sheaves}
\Pi:\M\rightarrow \fM^0_{w^\perp}
\end{equation}
the natural projection and let $\M_t$ be the fiber of $\Pi$ over $t\in \fM^0_{w^\perp}$.
The fiber $\M_{t_0}$  is naturally isomorphic to $\M(w)$, by Lemma \ref{lemma-Gamma-v}. 
The relative Albanese map is then
\[
alb:\M\rightarrow (Per^*\T)/\underline{\Gamma_w}.
\] 

Let $(Y_t,\eta_t)$ be a fiber of $\Y$ over $t\in\fM^0_{w^\perp}$ endowed with the marking determined by $\eta$. 
Set $\ell:=Per(Y_t,\eta_t)$.
Let $\kappa$ be a K\"{a}hler class on $Y_t$ and set $h:=\eta_t(\kappa)$.
Let $W$ be the subspace of $(w^\perp)_\RealNumbers$ spanned by the negative definite plane $P_\ell$ and $h$.
Then $W$ is negative definite with respect to $(\bullet,\bullet)_{S^+}$. Again denote by $\PP_W$ the conic of isotropic lines in $W_\ComplexNumbers$.
We get a twistor family $p_W:\Y_W\rightarrow \PP_W$ of deformations of $Y_t$ \cite[1.17]{huybrects-basic-results}.
The marking $\eta_t$ extends to a trivialization $\eta_W$ of $R^2p_{W,*}\Integers$, since $\PP_W$ is simply connected. 
The pair $(\Y_W,\eta_W)$ determines an embedding $\iota_W:\PP_W\rightarrow \fM^0_{w^\perp}$, such that
$Per\circ \iota_W$ is the inclusion of $\PP_W$ in $\Omega_{w^\perp}$.
The image $\widetilde{\PP}_W:=\iota_W(\PP_W)$ is called {\em the twistor line through the point $(Y_t,\eta_t)$
associated to the K\"{a}hler class $\kappa$}. 

The twistor family $p_W:\Y_W\rightarrow \PP_W$ admits a differential geometric construction, which we now recall following 
\cite{beauville-varieties-with-zero-c-1}. Let $M$ be an irreducible holomorphic symplectic manifold, $I$ its complex structure, and $\kappa$ a K\"{a}hler class. There exists a unique Ricci-flat K\"{a}hler metric $g$ such that $\omega_I(\bullet,\bullet):=g(I(\bullet),\bullet)$ is a K\"{a}hler form with class $\kappa$. Furthermore, there exists an action of the quaternion algebra $\HH:=\RealNumbers+\RealNumbers I+\RealNumbers J+\RealNumbers K$ on the real tangent bundle of $M$ via parallel endomorphisms, such that 
$\omega_J+i\omega_K$ is a non-degenerate holomorphic $2$-form and $I$ is the original complex structure. We will refer to $g$ as the {\em hyperk\"{a}hler metric associated to the K\"{a}hler form $\kappa$}. 
Every purely imaginary unit quaternion $\lambda:=aI+bJ+cK$, $a^2+b^2+c^2=1$, yields a complex structure on $M$ and a  
K\"{a}hler form $\omega_\lambda:=g(\lambda(\bullet),\bullet)$ with class in the positive definite three dimensional subspace $W:=\RealNumbers\kappa+[H^{2,0}(M)+H^{0,2}(M)]\cap H^2(M,\RealNumbers)$, such that the line $\ell_\lambda:=H^{2,0}(M,\lambda)$ is one of the two isotropic lines in $W_\ComplexNumbers$ orthogonal to $\omega_\lambda$. The sphere of unit purely imaginary quaternions gets identified with the complex plane conic $\PP_W\subset\PP(W_\ComplexNumbers)$ of isotropic lines with respect to the Beauville-Bogomolov-Fujiki pairing, by sending $\lambda$ to $\ell_\lambda$. One gets a complex structure on the differentiable manifold $\PP_W\times M$, such that the first projection $p_W$ is holomorphic and the fiber over $\ell_\lambda\in \PP_W$ is endowed with the complex structure $\lambda$. The resulting family $p_W:\PP_W\times M\rightarrow \PP_W$
is the twistor family $p_W:\Y_W\rightarrow \PP_W$, if we let $M=Y_t$. 

Given a marked pair $(Y,\eta)\in\fM^0_{w^\perp}$ and a K\"{a}hler class $\kappa$ on $Y$, we get the negative definite $3$-dimensional subspace $W$ of $w^\perp_\RealNumbers$ containing $h:=\eta(\kappa)$ and such that $W_\ComplexNumbers$ contains $\ell:=Per(Y,\eta)$.
We get the hyperk\"{a}hler metric $g_W$ on $Y$, associated to the K\"{a}hler form $\kappa$, and
the metric $g$ on $T_\ell$ associated to K\"{a}hler form $\Theta_h$ in Proposition \ref{prop-Theta-h-is-a-Kahler-form}.
Hence, we get a hyperk\"{a}hler metric on the product $T_\ell\times Y$, which we call the {\em product hyperk\"{a}hler metric
associated to $(Y,\eta,\kappa)$}. The assignment $\lambda\mapsto [\omega_\lambda]\in W$, associating to a purely imaginary unit quaternion $\lambda$ the class in $W$ of the K\"{a}hler form, identifies the sphere in $W$ of self intersection $-(\kappa,\kappa)$ with 
the sphere of complex structures on $Y$ associated to $\kappa$. Similarly, the sphere in $W$ of self intersection $-2$ was identified with
the sphere of complex structures on $T_\ell$ associated to $\Theta_h$. Rescaling $\kappa$ so that $(\kappa,\kappa)=2$, we get an identification of the sphere of complex structures on $Y$ and $T_\ell$ and so an action of $\HH$ on the real tangent bundle of
$T_\ell\times Y$, so that the purely imaginary unit quaternions act via parallel complex structures. 

\begin{defi}
\label{def-product-hyperkahler-structure}
\begin{enumerate}
\item
The {\em product hyperk\"{a}hler  structure on $T_\ell\times Y$, associated to a marked pair $(Y,\eta)\in\fM^0_{w^\perp}$ with period $\ell$ and a K\"{a}hler class $\kappa$ on $Y$,} is the data consisting of the product hyperk\"{a}hler metric  associated to $(Y,\eta,\kappa)$ and the above action of the quaternion algebra $\HH$.
\item
\label{def-item-natural-hyperkahler-structure}
The above product hyperk\"{a}hler  structure on $T_\ell\times Y$ is equivariant with respect to the anti-diagonal action of $\Gamma_w$
and it thus descends to a hyperk\"{a}hler  structure on the quotient $[T_\ell\times Y]/\Gamma_w$, which we call the
{\em natural hyperk\"{a}hler  structure on $[T_\ell\times Y]/\Gamma_w$ associated to a marked pair $(Y,\eta)\in\fM^0_{w^\perp}$ with period $\ell$ and a K\"{a}hler class $\kappa$ on $Y$.}
\end{enumerate}
\end{defi}

Denote by
\begin{equation}
\label{eq-twistor-family-of-natural-hyperkahler-structure}
\Pi_W:\M_W\rightarrow \widetilde{\PP}_W
\end{equation}
the restriction of $\Pi:\M\rightarrow \fM^0_{w^\perp}$
to the twistor line $\widetilde{\PP}_W$ through $(Y_t,\eta_t)$ associated to the K\"{a}hler class $\kappa$.
Note that $\M_W$ is the quotient by the anti-diagonal action of $\Gamma_w$ on the fiber product of the two twistor families $\pi_W:\T_W\rightarrow \PP_W$, given in 
(\ref{diagram-twistor-family-of-torus}), and $p_W:\Y_W\rightarrow \PP_W$ over the same twistor line $\PP_W$ in $\Omega_{w^\perp}$. The fiber product is itself a twistor deformation
with respect to the product hyperk\"{a}hler structure on $T_\ell\times Y_t$. Similarly,
the twistor family  $\Pi_W$ displayed above is the one associated to the natural hyperk\"{a}hler structure on $\M_t:=[T_\ell\times Y_t]/\Gamma_w$.

Let $(Y,\eta)$ be a marked pair in $\fM^0_\Lambda$ and set $\ell:=Per(Y,\eta)$. 
Let $H^{2p}(Y\times T_\ell,\RationalNumbers)^{\Gamma_w}$ be the subspace invariant under the anti-diagonal action of
$\Gamma_w$. Note the isomorphism
$H^*(Y\times T_\ell,\RationalNumbers)^{\Gamma_w}\cong 
H^*(Y,\RationalNumbers)^{\Gamma_w}\otimes H^*(T_\ell,\RationalNumbers)$. 
Recall that the quotient $Mon(Y)/\Gamma_w$ has a canonical normal subgroup $N$, obtained by conjugating the image of
$\Spin(S^+)_w$ in $Mon(K_a(w))/\Gamma_w$ via a parallel transport operator (Lemma \ref{lemma-compatibility-with-verbitsky}).
Every local system over $\fM^0_{w^\perp}$ is trivial, by \cite[Lemma 2.1]{markman-universal-family}. 
Hence, we get an identification of $N$ with $\Spin(S^+)_w$.

\begin{new-lemma}
\label{lemma-invariance-under-diagonal-monodromy-action}
Any class in $H^{2p}(Y\times T_\ell,\RationalNumbers)^{\Gamma_w}$, which is invariant under the
diagonal $\Spin(S^+)_w$ monodromy action, is of Hodge type $(p,p)$.
\end{new-lemma}

\begin{proof}
If the Hodge operator belongs to the Lie algebra of the identity component of the Zariski closure of a group acting on the cohomology rings of two compact K\"{a}hler manifolds $M_1$ and $M_2$, then it belongs to Lie algebra of the identity component of the Zariski closure of its diagonal action on the cartesian product $M_1\times M_2$, see for example the proof of  \cite[Lemma 3.2]{markman-hodge}. Hence, the statement follows from Lemmas  \ref{lemma-invariance-under-both-translation-and-spin-actions} and \ref{lemma-sub-hodge-structures}.
\end{proof}

%
\subsection{Third intermediate Jacobians of generalized Kummers}
\label{sec-third-intermediate-jacobians}
The third Betti number of a generalized Kummer is $8$ \cite[page 50]{gottsche}.
The Hodge group $H^{0,3}(\Y_t)$ vanishes, for the fiber $\Y_t$ of $p$ over $t\in \fM^0_{w^\perp}$, 
and so the third intermediate Jacobian
$H^{1,2}(\Y_t)/H^3(\Y_t,\Integers)$ is an abelian fourfold, whenever $\Y_t$ is projective, by the Hodge-Riemann bilinear relations. 

\begin{new-lemma}
\label{lemma-intermediate-jacobians}
There exists a global  isogeny between the family of third intermediate Jacobians of
$p:\Y\rightarrow \fM^0_{w^\perp}$ and the family $Per^*\T$ over 
$\fM^0_{w^\perp}$. 
\end{new-lemma}

\begin{proof}
Let $\underline{V}$ be the trivial local system with fiber $V$ over $\fM^0_{w^\perp}$.
It is isomorphic to the weight $1$ variation of integral Hodge structures of the family $Per^*\T$.
It suffices to construct a global $\Spin(V)_w$-equivariant isogeny between the integral local systems
$\underline{V}$ and $R^3p_*\Integers$, by Lemma \ref{lemma-invariance-under-diagonal-monodromy-action}.

Let $\underline{S}^-$ be the trivial local system with fiber $S^-$ over $\fM^0_{w^\perp}$.
Clifford multiplication $m_w:V\rightarrow S^-$ by $w$ induces a $\Spin(V)_w$-equivariant isogeny 
$
m_w:\underline{V}\rightarrow \underline{S}^-.
$
It remains to construct a $\Spin(V)_w$-equivariant isogeny from $\underline{S}^-$ to $R^3p_*\Integers$.
Let $\Pi:\M\rightarrow \fM^0_{w^\perp}$ be the universal deformation of $\M(w)$ given in 
(\ref{eq-universal-deformation-of-a-moduli-space-of-sheaves}). Let $\underline{Q}^3$ be the quotient of $R^3\Pi_*\Integers$ by the cup product image of 
$R^1\Pi_*\Integers\otimes R^2\Pi_*\Integers$. 
Recall that the fiber $\Y_{t_0}$ was the Albanese fiber of the moduli space $\M(w)$, by construction.
$\underline{Q}^3$ has rank $8$, since $\M(w)=[\Y_{t_0}\times X\times \hat{X}]/\Gamma_w$ 
and $\Gamma_w$ acts trivially on $H^i(\Y_{t_0}\times X\times \hat{X},\RationalNumbers)$, for $i\leq 3$.
The homomorphism $\bar{\theta}_3:S^-\rightarrow Q^3(\M(w))$, given in (\ref{eq-double-tilde-theta-j}), is a $\Spin(V)_w$-equivariant 
isomorphism, by Lemmas \ref{lemma-tilde-theta-j-is-an-isomorphism} and \ref{lemma-two-theta-j-are-proportional} (extended to general Mukai vector $w$ via Theorem \ref{thm-Hom-G3-non-empty}).
The restriction homomorphism $H^3(\M(w),\Integers)\rightarrow H^3(\Y_{t_0},\Integers)$ factors through an 
injective\footnote{That restriction homomorphism is known to be surjective for generalized Kummer fourfolds, by \cite[Th. 6.33]{kapfer-menet}.}
$\Spin(V)_w$-equivariant homomorphism $Q^3(\M(w))\rightarrow H^3(\Y_{t_0},\Integers)$, since $H^1(\Y_{t_0},\Integers)=0.$
Composing the latter with $\bar{\theta}_3$ we get a $\Spin(V)_w$-equivariant isogeny 
$S^-\rightarrow H^3(\Y_{t_0},\Integers)$. Every local system over $\fM^0_{w^\perp}$ is trivial, by 
\cite[Lemma 2.1]{markman-universal-family}. We get a $\Spin(V)_w$-equivariant isogeny 
from $\underline{S}^-$ to $R^3p_*\Integers$.
\end{proof}

\begin{rem}
{\rm
Note that for fourfolds $Y$ of generalized Kummer type the polarization map $\Theta:w^\perp\rightarrow \wedge^2V^*$,
given in Equation (\ref{eq-Theta}), is conjugated via Mukai's isometry $H^2(Y,\Integers)\cong w^\perp$ and the
isogeny between $H^3(Y,\Integers)$ and $V$ of the above Lemma, to a map proportional to 
\[
H^2(Y,\Integers)\rightarrow \wedge^2H^3(Y,\Integers)^*,
\]
given by $h\mapsto \int_Y h\cup x\cup y$, for $x,y\in H^3(Y,\Integers)$, as both belong to the rank $1$ invariant  subgroup under the $\Spin(S^+)_w$ monodromy action on $\Hom(H^2(Y,\Integers),\wedge^2H^3(Y,\Integers)^*)$ (see Lemma \ref{lemma-Theta-spans}). O'Grady used the latter map to construct the polarization on
the intermediate Jacobians, and the positivity of the metric in Proposition \ref{prop-Theta-h-is-a-Kahler-form}(\ref{lemma-item-negative-definite}) follows in this case by the Hodge-Riemann bilinear relations. When $Y$ is of generalized Kummer type of dimension $2n\geq 6$, 
O'Grady integrates the product $h\cup x\cup y\cup\beta^{n-2}$, where $\beta\in H^{2,2}(Y,\RationalNumbers)$ is the Beauville-Bogomolov-Fujiki class \cite{ogrady}. 
}
\end{rem}

\hide{
%
\subsubsection{Miscellaneous}
Let $Y$ be an irreducible holomorphic symplectic manifold deformation equivalent to a generalized Kummer variety.
Describe the relation between the Hodge structures
$H^2(Y,\Integers)$, $H^3(Y,\Integers)$, and the quotient
$Q^4(Y,\Integers)$ of $H^4(Y,\Integers)$. 
Conclude that the period domains are isomorphic.

Fix a copy of $\Spin(V)_{s_n}$ in 
$SO[H^3(Y,\Integers)]\times SO[Q^4(Y,\Integers)]$. 
Identify $V_Y$ as a sublattice in $H^3(Y,\Integers)$
(the ``image'' of (\ref{eq-Clifford-multiplication-by-s-n})). 
Recover the Clifford multiplications  
\begin{eqnarray*}
V_Y \otimes H^2(Y,\Integers) & \longrightarrow &
H^3(Y,\Integers),
\\
V_Y \otimes Q^4(Y,\Integers) & \longrightarrow &
H^3(Y,\Integers). 
\end{eqnarray*}
Up to sign, each is the unique non-vanishing and primitive
$\Spin(V)_{s_n}$-equivariant homomorphism. 
Then the image of $V_Y\otimes H^{2,0}(Y)$ in $H^3(Y,\ComplexNumbers)$
is $H^{2,1}(Y)$. 

Let $IG(4,H^3(Y,\ComplexNumbers))$ be the grassmannian of 
maximal isotropic subspaces in $H^3(Y,\Integers)$ with respect to the
quadratic form. It has two connected components \cite[III.1.6]{chevalley}. 
The period domain of the weight $3$ Hodge structure is an open analytic
subset of a ``hyperplane section'' of one of the component of 
$IG(4,H^3(Y,\ComplexNumbers))$. 
Elements in $\widetilde{O}[H^3(Y,\Integers)]$, 
with determinant equal to $-1$, interchange the two components of 
$IG(4,H^3(Y,\ComplexNumbers))$. Conclude that $Mon^3(Y)$ 
is contained in $S\widetilde{O}[H^3(Y,\ComplexNumbers)]$.

The component of $IG(4,H^3(Y,\ComplexNumbers))$ 
containing $H^{2,1}(Y,\ComplexNumbers)$ 
is isomorphic to the quadric hypersurface 
$I\PP$ in 
$\PP Q^4(Y,\ComplexNumbers)$ of isotropic lines. 
The isomorphism is given by Clifford multiplication:
\begin{eqnarray*}
I\PP & \hookrightarrow & IG(4,H^3(Y,\ComplexNumbers))
\\
\ell & \mapsto & \ell\cdot V_Y \subset H^3(Y,\ComplexNumbers).
\end{eqnarray*}
$\ell\cdot V$
The period domain of $Q^4(Y,\ComplexNumbers)$ and $H^2(Y,\ComplexNumbers)$
is an open analytic subset of the hyperplane section 
$I\PP\cap \bar{c}^2(Y)^\perp$. 

}

%
\section{Hyperholomorphic sheaves}
\label{sec-hyperholomorphic-sheaves}

We prove in this section Theorem \ref{thm-intro-hodge-classes-of-weil-type-are-algebraic}
about the algebraicity of the Hodge-Weil classes on abelian fourfolds of Weil-type of discriminant $1$.
Let $\M(w):=\M_H(w)$ be a smooth and compact moduli space of $H$-stable sheaves of primitive Mukai vector $w$ of dimension $\geq 8$ over an abelian surface $X$. Given a class $\lambda\in S^+_X:=H^{even}(X,\Integers)$, denote by $\lambda_i$ its projection to
$H^i(X,\Integers)$.
\begin{new-lemma}
\label{lemma-order-of-brauer-class-divisible-by-g-w}
The Brauer class $\alpha\in H^2_{an}(\M(w),\StructureSheaf{\M(w)}^*)$ of the universal sheaf has order divisible by
$
g_w:=\gcd\{(w,\lambda) \ : \ \lambda\in S^+_X, \ \lambda_2\in H^{1,1}(X,\Integers) \}.
$
\end{new-lemma}

\begin{proof}
The fiber $K_a(w)$ of $\M(w)$ over $a\in \Alb^1(\M(w))$ has dimension $\geq 4$, by assumption, and so 
$H^2(K_a(w),\Integers)$ is Hodge isometric to $w^\perp$, by Yoshioka's Theorem \ref{thm-yoshioka}.
It suffices to prove that $g_w$ divides the order of the restriction of $\alpha$ to $K_a(w)$.
The proof of the latter fact is identical to that of \cite[Lemma 7.5 (2)]{markman-hodge}.
\end{proof}

Let $r$ be an even integer satisfying $r\geq 6$. 
Let $X$ be an abelian surface with a cyclic Picard group generated by an ample class $H$ with $h:=c_1(H)$ satisfying 
$(h,h)_{S^+_X}=-(2r^2+r)$ (so  $\int_X h^2=2r^2+r$). Set $w:=(r,h,r)$. Then $(w,w)_{S^+_X}=-r$ and $g_w=r$. 

\begin{new-lemma}
\label{lemma-maximally-twisted}
\begin{enumerate}
\item
\label{lemma-item-E-F-is-hyperholomorphic}
The sheaf $E_F$ over $\M(w)$ in Theorem 
\ref{thm-kappa-class-is-non-zero-and-spin-7-invariant} is $\alpha$-twisted by a Brauer class  $\alpha$ of order equal to the rank $r$ of $E_F$. Consequently, the sheaf $E_F$ does not have any non-trivial subsheaf of lower rank and
$\SheafEnd(E_F)$ is $\kappa$-slope-polystable with respect to every K\"{a}hler class $\kappa$ on $\M(w)$.
Furthermore, the first Chern class of every direct summand of $\SheafEnd(E_F)$ vanishes.
\item
The sheaf $E$ of Theorem  \ref{thm-kappa-class-is-non-zero-and-spin-7-invariant}(\ref{thm-item-c2-End-E-is-monodromy-invariant}) is $\pi_1^*(\alpha^{-1})\pi_2^*\alpha$-twisted, where $\alpha$ is a Brauer class  of order equal to the rank $r$ of $E$. Consequently, $E$ is $(\pi_1^*\kappa+\pi_2^*\kappa)$-slope-polystable with respect to every K\"{a}hler class $\kappa$ on $\M(w)$. Furthermore, the first Chern class of every direct summand of $\SheafEnd(E)$ vanishes.
\end{enumerate}
\end{new-lemma}

\begin{proof} The proofs of the two parts are identical. We prove part \ref{lemma-item-E-F-is-hyperholomorphic}.
The order of the Brauer class necessarily divides the rank of the sheaf. In our case the rank $r$ divides the order of $\alpha$ by Lemma \ref{lemma-order-of-brauer-class-divisible-by-g-w}. Hence, they are equal.
The polystability of $\SheafEnd(E)$ is proven for any torsion free reflexive sheaf $E$ twisted by a Brauer class of order equal to its 
rank in \cite[Prop. 6.6]{markman-hodge}. The vanishing of the first Chern classes of the direct summands is proven in \cite[Lemma 7.2]{markman-hodge}.
\end{proof}

\begin{thm}
\label{thm-deformability}
The sheaf $E_F$ deforms with $\M(w)$ to a reflexive sheaf, locally free on the complement of a point, over every fiber 
of the universal family (\ref{eq-universal-deformation-of-a-moduli-space-of-sheaves}). 
The sheaf $E$ deforms with $\M(w)\times \M(w)$ to a reflexive sheaf, locally free away from the diagonal, over the cartesian square of  every fiber 
of the universal family (\ref{eq-universal-deformation-of-a-moduli-space-of-sheaves}). 
\end{thm}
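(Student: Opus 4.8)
The plan is to invoke Verbitsky's deformability theorem for slope-stable reflexive (twisted) sheaves over hyperk\"{a}hler manifolds with monodromy-invariant first two Chern classes, along the lines of the analogous argument for $K3^{[n]}$ moduli spaces in \cite{markman-hodge}. The key inputs are already in place: by Lemma \ref{lemma-maximally-twisted}, the sheaf $E_F$ (resp.\ $E$) is maximally twisted, hence $\SheafEnd(E_F)$ (resp.\ $\SheafEnd(E)$) is slope-polystable with respect to every K\"{a}hler class, with each direct summand having vanishing first Chern class; and by Theorem \ref{thm-kappa-class-is-non-zero-and-spin-7-invariant}, the classes $c_2(\SheafEnd(E_F))$ and $c_2(\SheafEnd(E))$ are $\Spin(S^+_X)_w$-invariant under the diagonal monodromy action of Corollary \ref{cor-monodromy-representation-of-spin}.

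First I would treat the case of $E_F$ over a single twistor line. Fix a marked pair $(Y_t,\eta_t)\in\fM^0_{w^\perp}$ and a K\"{a}hler class $\kappa$ on $Y_t$; set $\ell:=Per(Y_t,\eta_t)$ and $h:=\eta_t(\kappa)$, and let $W$ be the negative definite three-dimensional subspace of $w^\perp_\RealNumbers$ spanned by $P_\ell$ and $h$. Proposition \ref{prop-Theta-h-is-a-Kahler-form} shows that the hyperk\"{a}hler structure on $Y_t$ associated to $\kappa$ and the one on $T_\ell$ associated to $\Theta_h$ correspond to the same twistor line $\PP_W\subset\Omega_{w^\perp}$, so the restriction $\Pi_W:\M_W\rightarrow\widetilde{\PP}_W$ of the universal family to this twistor line is a twistor deformation of the fiber $\M_t=[T_\ell\times Y_t]/\Gamma_w$ with respect to the natural hyperk\"{a}hler structure of Definition \ref{def-product-hyperkahler-structure}. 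By Lemma \ref{lemma-maximally-twisted} the bundle $\SheafEnd(E_F)$ is slope-polystable with respect to the K\"{a}hler class underlying this hyperk\"{a}hler structure, and its Chern classes $c_1=0$, $c_2$ monodromy-invariant remain of type $(p,p)$ along $\widetilde{\PP}_W$ by Lemma \ref{lemma-invariance-under-diagonal-monodromy-action}; passing to the twisted picture, $E_F$ itself has monodromy-invariant twisted Chern character components in degrees $\leq 2$. Verbitsky's theorem (the twisted version, as used in \cite{markman-hodge}, \cite{kaledin-verbitski-book}) then produces a twisted hyperholomorphic deformation $E_F^W$ of $E_F$ over $\M_W$, reflexive on each fiber and locally free away from the codimension-$\geq 4$ locus where $E_F$ fails to be locally free (the single point $\{F\}$).

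Next I would globalize over $\fM^0_{w^\perp}$ by a standard twistor-path and Hausdorff-reduction argument: since any two points of $\Omega_{w^\perp}$ are joined by a twistor path and $\fM^0_{w^\perp}$ carries the universal family $\Pi$, the fiberwise deformations obtained on successive twistor lines glue (the moduli functor of reflexive hyperholomorphic sheaves is separated, and the deformations are rigid once $c_1,c_2$ are fixed), yielding a deformation $E_t$ of $E_F$ over every fiber $\M_t$. The argument for $E$ over the cartesian squares $\M_t\times\M_t$ is word-for-word the same, using the $(\pi_1^*\kappa+\pi_2^*\kappa)$-slope-polystability of $\SheafEnd(E)$ and the monodromy-invariance of $c_2(\SheafEnd(E))$ from Theorem \ref{thm-kappa-class-is-non-zero-and-spin-7-invariant}(\ref{thm-item-c2-End-E-is-monodromy-invariant}), noting that $\M_t\times\M_t$ is itself the fiber over $t$ of the fiber-square of $\Pi$ and inherits a product hyperk\"{a}hler structure; the non-locally-free locus is the diagonal, of codimension equal to $\dim\M_t\geq 4$. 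The main obstacle is verifying that Verbitsky's theorem applies in the \emph{twisted} setting with the precise regularity (reflexive, locally free in the right codimension) claimed here, and that the fiberwise deformations are canonical enough to glue into a family over all of $\fM^0_{w^\perp}$; both points are handled exactly as in \cite[Sections 6--7]{markman-hodge}, to which I would refer for the technical details rather than reproduce them.
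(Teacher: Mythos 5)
Your proposal is correct and follows essentially the same route as the paper: maximal twisting via Lemma \ref{lemma-maximally-twisted}, monodromy invariance of $c_2(\SheafEnd(E_F))$ and $c_2(\SheafEnd(E))$ via Theorem \ref{thm-kappa-class-is-non-zero-and-spin-7-invariant} together with Lemma \ref{lemma-invariance-under-diagonal-monodromy-action}, the identification of the twistor family (\ref{eq-twistor-family-of-natural-hyperkahler-structure}) through Proposition \ref{prop-Theta-h-is-a-Kahler-form}, the twisted version of Verbitsky's hyperholomorphic deformation theorem, and propagation along (generic) twistor paths as in \cite{markman-hodge}. The only cosmetic difference is that you phrase the globalization as gluing into a family over $\fM^0_{w^\perp}$, whereas the paper only needs deformation fiber by fiber along generic twistor paths, citing \cite[Prop. 6.17]{markman-hodge} and Verbitsky's connectivity result.
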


\begin{proof}
The sheaf $E_F$ is $\kappa$-slope-stable with respect to every K\"{a}hler class $\kappa$ on $\M(w)$, by Lemma
\ref{lemma-maximally-twisted}. The class $c_2(\SheafEnd(E_F))$ is $\Spin(S^+_X)_w$-invariant with respect to the 
monodromy representation of Theorem \ref{thm-monodromy-representation-mu}, 
by Theorem \ref{thm-kappa-class-is-non-zero-and-spin-7-invariant}. Hence, $c_2(\SheafEnd(E_F))$ remains of Hodge type $(2,2)$ 
along any flat deformation to every fiber of the family $\Pi$ in Equation (\ref{eq-universal-deformation-of-a-moduli-space-of-sheaves}),
by Lemma \ref{lemma-invariance-under-diagonal-monodromy-action}.
Let $\eta:H^2(K_a(w),\Integers)\rightarrow w^\perp$ 
be the inverse of Mukai's Hodge isometry.
It follows that the sheaf $E_F$ deforms as a twisted sheaf along the twistor family 
(\ref{eq-twistor-family-of-natural-hyperkahler-structure}) of the natural hyper-K\"{a}hler structure on $\M(w)$ 
(Definition \ref{def-product-hyperkahler-structure}(\ref{def-item-natural-hyperkahler-structure})) associated
to any K\"{a}hler class $\kappa$ on the generalized Kummer $K_a(w)$ and the marking $\eta$,
by \cite[Theorem 3.19]{kaledin-verbitski-book}, which is generalized to the case of twisted sheaves in \cite[Cor. 6.12]{markman-hodge}.
The sheaf $E_F$ deforms, furthermore, along every generic twistor path in $\fM_{w^\perp}^0$, by  
\cite[Prop. 6.17]{markman-hodge}. The statement follows from the fact that every point in $\fM_{w^\perp}^0$ is connected to $(\M(w),\eta)$ via a generic twistor path, by
\cite[Theorems 3.2 and 5.2.e]{verbitsky-announcement}.
The proof of the statement for the sheaf $E$ is identical.
\end{proof}

Let $T_\ell$, $\ell\in \Omega_{w^\perp},$ be an abelian fourfold of Weil type with ample class $\Theta_h$, $h\in w^\perp$, as in Corollary \ref{cor-weil-type}. It admits complex multiplication by $K:=\RationalNumbers[\sqrt{-d}]$, where $d=(w,w)(h,h)/4$.
Let $\Spin(S^+)_{w,h}$ be the subgroup of $\Spin(S^+)$ stabilizing both $w$ and $h$. The group $\Spin(S^+)_{w,h}$ is an arithmetic subgroup of  $\Spin(S^+_\RealNumbers)_{w,h}\cong\Spin(4,2,\RealNumbers)$ and $\Spin(4,2,\RealNumbers)$ is isomorphic to $SU(2,2)$, see \cite[IX.4.3 B (vi)]{helgason}. $SU(2,2)$ is the special Mumford-Tate group of polarized abelian fourfolds of Weil type
\cite[Theorem 6.11]{weil,van-Geemen}.

\begin{thm}
\label{thm-hodge-classes-of-weil-type-are-algebraic}
The subspace $H^4(T_\ell,\RationalNumbers)^{\Spin(S^+)_{w,h}}$, consisting of classes invariant under $\Spin(S^+)_{w,h}$,  is three dimensional consisting of algebraic classes.
\end{thm}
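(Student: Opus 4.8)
\textbf{Proof proposal for Theorem \ref{thm-hodge-classes-of-weil-type-are-algebraic}.}

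The plan is to first identify the three-dimensional space of $\Spin(S^+)_{w,h}$-invariants in $H^4(T_\ell,\RationalNumbers)$ abstractly as a representation-theoretic computation, then to exhibit an explicit algebraic class generating one of its summands, and finally to spread it out using the $K$-action via rational (hence algebraic) correspondences. For the dimension count, recall $H^4(T_\ell,\RationalNumbers)\cong \wedge^4 V_\RationalNumbers$. The subgroup $\Spin(S^+)_{w,h}$ stabilizes both $w$ and $h$, and by Lemma \ref{lemma-spin-w-h-preserves-H} it is exactly the subgroup preserving the Hermitian form $H$ of $(T_\ell,\Theta_h,K)$; this makes it the derived group of the special Mumford-Tate group $SU(2,2)$ of the generic polarized abelian fourfold of Weil type. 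So $H^{2,2}(T_\ell,\RationalNumbers)^{\Spin(S^+)_{w,h}}$ is the full space of Hodge-Weil classes, which is known to be three-dimensional by \cite[Theorem 6.11]{weil,van-Geemen}; equivalently one computes directly that $(\wedge^4 V_\RationalNumbers)^{\Spin(S^+_\RealNumbers)_{w,h}}$ is three-dimensional (it is spanned by $\Theta_h^2$ and the two ``half'' Weil classes $\wedge^4_K$-pieces). The point that these invariants are automatically of Hodge type $(2,2)$ is exactly Lemma \ref{lemma-sub-hodge-structures}(\ref{lemma-item-spin-6-invariant}), applied with the present $h$ (and any $\ell\in\Omega_{\{w,h\}^\perp}$).

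Next I would produce the algebraic generators. The $\Spin(S^+)_w$-invariant line in $\wedge^4 V_\RationalNumbers$ is spanned by the Cayley class $c_w$, which is algebraic on $T_\ell$: by Theorem \ref{thm-kappa-class-is-non-zero-and-spin-7-invariant}(\ref{thm-Cayley-class-is-kappa-class}) it equals $\iota_F^*c_2(\SheafEnd(E_F))$ up to scale, and Theorem \ref{thm-deformability} deforms $E_F$ to a reflexive (twisted) sheaf $E_\ell$ on (the cartesian square of) the fiber $\M_t$ of the universal family whose period is $\ell$; restricting to the torus factor $T_\ell\subset\M_t$ (using that $\M_t=[T_\ell\times Y_t]/\Gamma_w$) gives an algebraic class $c_2(\SheafEnd(E_\ell))$ that stays of Hodge type $(2,2)$ over all of $\fM^0_{w^\perp}$ by Lemma \ref{lemma-invariance-under-diagonal-monodromy-action}, and which specializes to $c_w$. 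Hence the line $H^4(T_\ell,\RationalNumbers)^{\Spin(S^+)_w}$ consists of algebraic classes. Separately $\Theta_h^2$ is algebraic (a power of a polarization). It remains to move $c_w$ around inside the three-dimensional invariant space. For this I use that $\Integers[\sqrt{-d}]$ acts on $T_\ell$ by integral Hodge endomorphisms (Corollary \ref{cor-weil-type}), hence by algebraic self-correspondences; so $(\sqrt{-d})^*c_w$, and more generally $\lambda^*c_w$ for $\lambda\in K$, are algebraic. The claim I would verify is that $\{\Theta_h^2,\ c_w,\ (\sqrt{-d})^*c_w\}$ spans the three-dimensional space $H^4(T_\ell,\RationalNumbers)^{\Spin(S^+)_{w,h}}$ over $\RationalNumbers$; equivalently, that $c_w$ together with its $K$-translates spans the two-dimensional complement of $\RationalNumbers\Theta_h^2$. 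This is a linear-algebra statement about the $SU(2,2)$-decomposition of $\wedge^4 V_\RationalNumbers$: the ``Weil'' summand $\wedge^4_K H^1$ is a two-dimensional $\RationalNumbers$-space carrying the $K$-action, and $c_w$ is a nonzero vector in it not fixed by $\sqrt{-d}$, so its $K$-span is the whole plane. Since the correspondences realizing $\lambda^*$ are algebraic and $c_w$ is algebraic, each $\lambda^*c_w$ is algebraic, and together with $\Theta_h^2$ these span the whole invariant three-space over $\RationalNumbers$.

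The main obstacle is the last linear-algebra verification carried out with $\RationalNumbers$-coefficients rather than $\RealNumbers$-coefficients: one must check that the nonzero $\Spin(S^+)_w$-invariant rational class $c_w$ genuinely lands outside the line $\RationalNumbers\Theta_h^2$ and has a two-dimensional $K$-orbit-span, i.e. that $c_w$ is not an eigenvector for $\sqrt{-d}^*$. Equivalently, one needs $c_w$ to have nonzero component in the Weil plane $\wedge^4_K H^1(T_\ell,\RationalNumbers)$; this should follow from the explicit formula $c_w = -(n{+}1)^2\alpha^2 + 4(n{+}1)^3\beta + 4(n{+}1)\gamma$ (up to the appropriate normalization of $w$) of Proposition \ref{prop-equation-for-Cayley-class}, since $\beta$ and $\gamma$ are precisely the generators of $\wedge^4 H^1(X,\Integers)$ and $\wedge^4 H^1(\hat X,\Integers)$, whose combination is the Weil-class part, while $\alpha^2$ is proportional to $\Theta_h^2$. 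So the computation reduces to matching this explicit expression against the eigenspace decomposition of $\sqrt{-d}^*=\Theta'_h$ on $\wedge^4 V_\RationalNumbers$, which is elementary but must be done over $\RationalNumbers$ to conclude algebraicity rather than merely Hodge-theoretic invariance.
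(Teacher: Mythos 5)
Your overall strategy coincides with the paper's: the three-dimensionality and the $(2,2)$-type of the invariants (Lemma \ref{lemma-sub-hodge-structures}(\ref{lemma-item-spin-6-invariant})), the algebraicity of the Cayley class via Theorem \ref{thm-kappa-class-is-non-zero-and-spin-7-invariant} and Theorem \ref{thm-deformability} restricted to the torus fiber of $\M_t\rightarrow \Y_t/\Gamma_w$, the algebraicity of $\Theta_h^2$, and the idea of sweeping out the rest of the invariant space by the algebraic action of $K$. However, your final spanning step contains a genuine error. The operator $\varphi_4(\sqrt{-d})=\wedge^4\Theta'_h$ acts on $\wedge^4_KH^1(T_\ell,\RationalNumbers)$ by the scalar $(\sqrt{-d})^4=d^2$ and on $\Theta_h^2$ by $Nm(\sqrt{-d})^2=d^2$, so it is multiplication by $d^2$ on the \emph{entire} three-dimensional invariant space. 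In particular $(\sqrt{-d})^*c_w=d^2c_w$, your proposed set $\{\Theta_h^2,\,c_w,\,(\sqrt{-d})^*c_w\}$ spans at most a plane, and the statement that ``$c_w$ is not an eigenvector for $\sqrt{-d}^*$'' is false: every invariant class is. Moreover $c_w$ need not lie in the Weil plane $\wedge^4_KH^1$ at all; under the restriction from $\Spin(S^+)_w$ to $\Spin(S^+)_{w,h}$ the Cayley class decomposes with a nonzero $\Theta_h^2$-component as well as a Weil-plane component (the analogue of $\Phi=\tfrac12\omega^2+\mathrm{Re}\,\Omega$), and your proposed identification of the Weil plane with $\mathrm{span}\{\beta,\gamma\}$ and of $\alpha^2$ with a multiple of $\Theta_h^2$ via Proposition \ref{prop-equation-for-Cayley-class} is not correct in general, since $Z_1,Z_2$, and hence $\wedge^4_KH^1$, depend on $h$.

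The paper repairs exactly these points. First, it needs only the linear independence of $c_w$ and $\Theta_h^2$, which is \cite[Prop. 2]{munoz} (no explicit computation of the Weil-plane component is attempted); by a dimension count inside the three-dimensional invariant space, $\mathrm{span}_\RationalNumbers\{c_w,\Theta_h^2\}$ meets $\wedge^4_KH^1(T_\ell,\RationalNumbers)$ in a line, and a nonzero class $\alpha$ in that intersection is algebraic because $c_w$ and $\Theta_h^2$ are. Second, to generate the Weil plane from $\alpha$ one must use translates $\varphi_4(\lambda)$ with $\lambda^4\notin\RationalNumbers$: since $\varphi_4(\lambda)$ acts on the Weil plane as $K$-scalar multiplication by $\lambda^4$ and the fourth powers span $K$ over $\RationalNumbers$, the $\varphi_4(K)$-translates of $\alpha$ span the two-dimensional Weil plane, which together with $\Theta_h^2$ gives the whole invariant space (your $\lambda=\sqrt{-d}$ is precisely the choice for which this fails). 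With these two corrections — replacing $c_w$ by a nonzero element of $\mathrm{span}\{c_w,\Theta_h^2\}\cap\wedge^4_KH^1$, and using general $\lambda\in K$ rather than $\sqrt{-d}$ — your argument becomes the paper's proof; also note that the inclusion $\wedge^4_KH^1\subset H^4(T_\ell,\RationalNumbers)^{\Spin(S^+)_{w,h}}$, which you take from the Mumford--Tate description, is itself justified in the paper by a genericity-plus-closedness argument (or directly via the isotropic subspaces $Z_1,Z_2$), not merely asserted.
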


\begin{proof}
The complexification $\Spin(S^+_\ComplexNumbers)_{w,h}$ of $\Spin(S^+)_{w,h}$ is isomorphic to $SL(4,\ComplexNumbers)$. 
The invariant subspace $H^4(T_\ell,\RationalNumbers)^{\Spin(S^+)_{w,h}}$ is three dimensional, by \cite[Prop. 2]{munoz}, and it consists of Hodge type $(2,2)$ classes, by Lemma
\ref{lemma-sub-hodge-structures}(\ref{lemma-item-spin-6-invariant}). 
This agrees with Weil's observation that $H^{2,2}(A,\RationalNumbers)$ is three dimensional 
for the general polarized abelian fourfold $A$ of Weil type with
complex multiplication by the field $K=\RationalNumbers[\sqrt{-d}]$ in each complete family \cite[Theorems 4.11 and 6.12]{weil,van-Geemen}. Regarding $H^1(A,\RationalNumbers)$ as a $4$-dimensional $K$ vector space, we get that 
$\wedge^4_KH^1(A,\RationalNumbers)$ is a one-dimensional $K$ vector space, which is a $2$-dimensional $\RationalNumbers$-subspace of $\wedge^4_{\RationalNumbers}H^1(A,\RationalNumbers)$.
Weil proved that $H^{2,2}(A,\RationalNumbers)$ contains 
$\wedge^4_KH^1(A,\RationalNumbers)$ 
and for a generic $A$ of Weil type the equality 
\[
H^{2,2}(A,\RationalNumbers)=\mbox{span}_\RationalNumbers\{\Theta_h^2\}+\wedge^4_KH^1(A,\RationalNumbers)
\]
holds \cite[Theorems 4.11 and 6.12]{weil,van-Geemen}.
It follows that $\wedge^4_KH^1(T_\ell,\RationalNumbers)$ is contained in $H^4(T_\ell,\RationalNumbers)^{\Spin(S^+)_{w,h}}$,
for a generic $\ell\in\Omega_{w^\perp}$, such that $(\ell,h)=0$. The inclusion
\begin{equation}
\label{eq-inclusion-of-wedge-4-K-in-invariant-subspace}
\wedge^4_KH^1(T_\ell,\RationalNumbers) \subset H^4(T_\ell,\RationalNumbers)^{\Spin(S^+)_{w,h}}
\end{equation}
must thus hold for all $\ell\in\Omega_{w^\perp}$, such that $(\ell,h)=0$, as it is a closed condition.

A class $\alpha\in \wedge^4_{\RationalNumbers}H^1(T_\ell,\RationalNumbers)$ belongs to $\wedge^4_KH^1(T_\ell,\RationalNumbers)$,
if and only if 
\begin{equation}
\label{eq-membership-in-wedge-4-K}
(\alpha,\lambda(v_1)\wedge v_2\wedge v_3 \wedge v_4)=(\alpha,v_1\wedge \cdots \wedge\lambda(v_i)\wedge \cdots \wedge v_4),
\end{equation}
for $2\leq i\leq 4$, for all $v_i\in H_1(T_\ell,\RationalNumbers)$, and for all $\lambda\in K$. The structure of a one-dimensional $K$-vector space on $\wedge^4_KH^1(T_\ell,\RationalNumbers)$
is given by
\[
(\lambda\alpha,v_1\wedge v_2\wedge v_3 \wedge v_4):=(\alpha,\lambda(v_1)\wedge v_2\wedge v_3 \wedge v_4).
\]

The inclusion (\ref{eq-inclusion-of-wedge-4-K-in-invariant-subspace}) can be seen more directly using
the following description of the subspace $\wedge^4_KH^1(T_\ell,\RationalNumbers)$ of 
$H^4(T_\ell,\RationalNumbers)^{\Spin(S^+)_{w,h}}$. 
Let $Z_1$ and $Z_2$ be the two maximal isotropic subspace of $H^1(T_\ell,\ComplexNumbers)\cong V_\ComplexNumbers$ corresponding to the two isotropic lines in the plane $\mbox{span}_\ComplexNumbers\{w,h\}$ in $S^+_\ComplexNumbers$.
Explicitly, $Z_i$ is the kernel of $m_{\lambda_i}:V_\ComplexNumbers\rightarrow S^-_\ComplexNumbers$, where
$\lambda_i=w\pm \frac{2\sqrt{-d}}{(h,h)}h$. 
Note that $Z_i$ is defined over $K$. The $\Spin(S^+_\ComplexNumbers)_{w,h}$ action on $Z_i$ factors through $SL(Z_i)$
and so it acts trivially on $\wedge^4Z_i$. Each of $\wedge^4Z_i$, $i=1,2$, is defined over $K$ and the non-trivial element in $Gal(K/\RationalNumbers)$ interchanges the two, so their direct sum is defined over $\RationalNumbers$. 
Now, $Z_1$ and $Z_2$ are the two eigenspaces of the endomorphism $\Theta'_h$ of $V_\ComplexNumbers$ in 
Lemma \ref{lemma-complex-multiplication}.
If $\{z_1,z_2,z_3,z_4\}$ is a basis for $Z_i$, then
\[
z_1\wedge \cdots \wedge (a+b\Theta'_h)(z_i)\wedge \cdots \wedge z_4 = 
(a+bc_i)(z_1\wedge z_2\wedge z_3\wedge z_4),
\]
for all $a,b\in\RationalNumbers$,
where the eigenvalue $c_i$ of $\Theta'_h$ is $\pm\sqrt{-d}$. Hence, Equation (\ref{eq-membership-in-wedge-4-K}) holds
for $\alpha:=z_1\wedge z_2\wedge z_3\wedge z_4$ and for $v_i\in V^*\otimes_\RationalNumbers K.$
The subspace $\wedge^4_KH^1(T_\ell,\RationalNumbers)$ thus contains 
the intersection of $\wedge^4_{\RationalNumbers}H^1(T_\ell,\RationalNumbers)$ with the two dimensional complex subspace  $\wedge^4Z_1+\wedge^4Z_2$ of 
$\wedge^4_{\ComplexNumbers}H^1(T_\ell,\RationalNumbers)$. Both subspaces are two dimensional over $\RationalNumbers$, hence equal.

Let $\varphi:K\rightarrow \End_\RationalNumbers(H^1(T_\ell,\RationalNumbers))$ be the homomorphism sending $\sqrt{-d}$ to 
the endomorphism $\Theta'_h$ given in (\ref{eq-Theta-prime}). 
We get the degree $4$ polynomial map $\varphi_4:K\rightarrow \End_\RationalNumbers(\wedge^4_{\RationalNumbers}H^1(T_\ell,\RationalNumbers))$ sending $\lambda$ to $\wedge^4\varphi(\lambda)$ and the latter restricts to the one-dimensional $K$-vector space
$\wedge^4_KH^1(T_\ell,\RationalNumbers)$ as scalar multiplication by $\lambda^4\in K$.
The image of the map $\lambda\mapsto\lambda^4$, from $K$ to $K$, spans $K$ as a $\RationalNumbers$-vector space. 
Hence, given any non-zero element $\alpha\in \wedge^4_KH^1(T_\ell,\RationalNumbers)$,
the set $\varphi_4(\lambda)(\alpha)$, $\lambda\in K$, spans $\wedge^4_KH^1(T_\ell,\RationalNumbers)$.
In contrast, $\varphi_4(\lambda)(\Theta_h^2)=Nm(\lambda)^2\Theta_h^2$, by Corollary \ref{cor-weil-type}, and so 
the one-dimensional subspace $\mbox{span}_\RationalNumbers\{\Theta_h^2\}$
is invariant under $\varphi_4(K)$.

The Cayley class $C$, associated to the $\Spin(S^+)_w$-action on $H^4(T_\ell,\Integers)$, and $\Theta_h^2$ are linearly independent, by \cite[Prop. 2]{munoz}. Hence, the two-dimensional 
$\RationalNumbers$-subspaces 
$\mbox{span}_\RationalNumbers\{C,\Theta_h^2\}$ and $\wedge^4_KH^1(T_\ell,\RationalNumbers)$
of the three-dimensional $H^4(T_\ell,\RationalNumbers)^{\Spin(S^+)_{w,h}}$
intersect non-trivially along a one-dimensional $\RationalNumbers$-subspace. Choose a non-zero class $\alpha$
in their intersection. 
It follows that 
$H^4(T_\ell,\RationalNumbers)^{\Spin(S^+)_{w,h}}$
is spanned  by $\Theta_h^2$ and the two-dimensional $\RationalNumbers$-subspace spanned by the $\varphi_4(K)$-translates of 
$\alpha$

Let $\M_t$, $t\in \fM^0_{w^\perp}$, be any fiber of  the universal family $\Pi:\M\rightarrow \fM^0_{w^\perp}$ given in Equation 
(\ref{eq-universal-deformation-of-a-moduli-space-of-sheaves}) and let $(\Y_t,\eta_t)$ be the marked fiber of the universal family $p:\Y\rightarrow \fM^0_{w^\perp}$ of generalized Kummer type
given in Equation (\ref{eq-p-universal-family-of-generalized-kummer-type}). Let $\ell$ be the period of $(\Y_t,\eta_t)$.
Let
$\iota:T_\ell\rightarrow \M_t$ be the inclusion of a general fiber of $\M_t\rightarrow \Y_t/\Gamma_w$.
Let $E_t$ be a deformation of $E_F$ to the fiber $\M_t$ as in  Theorem \ref{thm-deformability}. 
Then $c_2(\SheafEnd(E_t))$ restricts to $T_\ell$  as a non-zero 
$\Spin(S^+)_w$ invariant class $\iota^*c_2(\SheafEnd(E_t))$, by Theorem \ref{thm-kappa-class-is-non-zero-and-spin-7-invariant}, hence the restriction is a non-zero integral multiple of the Cayley class. 
The Cayley class is thus algebraic. Hence, so is the class $\alpha$ above.
 The ring $\Integers[\sqrt{-d}]$ acts on $T_\ell$ via holomorphic group endomorphisms, which are necessarily algebraic, by Corollary
 \ref{cor-weil-type}. This algebraic action induces the cohomological action on $H^4(T_\ell,\RationalNumbers)$ by
 $\varphi_4(\Integers[\sqrt{-d}])$.
Hence, the two dimensional subspace spanned by the $\varphi_4(K)$-translates of 
$\alpha$
consists of algebraic classes as well.
\end{proof}

\begin{proof} (of Theorem \ref{thm-intro-hodge-classes-of-weil-type-are-algebraic}).
The two discrete invariants $K$ and the discriminant, of a polarized abelian fourfold of Weil type $(A,K,h)$, determine a four dimensional connected period domain of all polarized abelian fourfold of Weil type with these two invariants, up to an isogeny compatible with the subspaces of Hodge-Weil classes, by \cite[Lemma 4, Sec. 6, and Sec. 7]{schoen}.
Every polarized abelian fourfold of Weil type $(A,K,h)$ with discriminant $1$ and imaginary quadratic field 
$K:=\RationalNumbers[\sqrt{-d}]$
is thus isogenous to $T_\ell$, for some period $\ell$ in the period domain $\Omega_{\{w,h'\}^\perp}$ given in 
(\ref{eq-four-dimentional-period-domain}), for some integral
classes $w\in S^+$ and  $h'\in w^\perp$ of negative self-intersection, such that $(h',h')(w,w)/4=d$, by 
Corollary \ref{cor-weil-type} and Lemma \ref{lemma-trivial-discriminant}.
 The push forward of an algebraic class, via an isogeny of abelian varieties, is algebraic. 
Theorem  \ref{thm-intro-hodge-classes-of-weil-type-are-algebraic} thus follows from Theorem \ref{thm-hodge-classes-of-weil-type-are-algebraic}.
\end{proof}

%
\section{The generalized Hodge conjecture for co-dimension $2$ cycles on IHSM's of kummer type.}
\label{sec-generalized-Hodge-conjecture}
We prove Theorem \ref{thm-generalized-Hodge} in this section verifying the generalized Hodge conjecture for codimension $2$ algebraic cycles homologous to $0$ on every projective irreducible holomorphic symplectic manifold of generalized Kummer deformation type. 
We will need a few preparatory results. 
Let $X$ be an abelian surface, $H$ a polarization on $X$, $w\in H^{even}(X,\Integers)$ a primitive Mukai vector, and assume that the moduli space  $\M$ of $H$-stable sheaves on $X$ with Chern character $w$ is smooth and projective of dimension $\geq 8$. Assume that there exists a universal sheaf $\U$ over $X\times \M$ (untwisted). The latter assumption is equivalent to the equality
$\gcd\{(w,\lambda) \ : \ \lambda\in S^+_X, \ \lambda_2\in H^{1,1}(X,\Integers)\}=1$, by \cite[Appendix 2]{mukai-hodge}.
Let $\pi_{ij}$ be the projection from $\M\times X\times \M$ onto the product of the $i$-th and $j$-th factors. Let 
$
E:=\SheafExt^1_{\pi_{13}}\left(\pi_{12}^*\U,\pi_{23}^*\U\right)
$
be the relative first extension sheaf over $\M\times \M$. 
Let $alb:\M\rightarrow Alb(\M)$ be the Albanese map. 
Denote by $K_t(w)$, $t\in Alb(\M)$,  the fiber $alb^{-1}(t)$. Let $e_t:K_t(w)\hookrightarrow \M$ be the inclusion. 
Set $k:=\frac{1}{2}\dim_\ComplexNumbers(K_t(w))$. 


Given $F\in \M$, let $E_F$ be the restriction of $E$ to $\{F\}\times \M$. 
Fix $F_0\in \M$ and consider the map
\begin{equation}
\label{eq-AJ-E}
AJ_E: \M \rightarrow J^2(K_0(w))
\end{equation}
sending $F$ to the Abel-Jacobi image of an algebraic cycle representing  the Chow class
$e_t^*\left[c_2(E_F^\vee\stackrel{L}{\otimes}E_F)-c_2(E_{F_0}^\vee\stackrel{L}{\otimes}E_{F_0})\right].$
The latter class is the same as the one 
given in Equation (\ref{eq-AJ-t}). In the introduction the sheaf $E$ depended on a parameter $b$ and $AJ_{E_b}$ was denoted by $AJ_b$ for short. The  proof of the surjectivity of $AJ_E$ requires a few lemmas.



Set $\hat{X}:=\Pic^0(X)$. The group $X\times \hat{X}$ acts on $\M$ via
$(x,L)F=\tau_{x,*}(F)\otimes L$, where $\tau_x:X\rightarrow X$ is the translation by the point $x\in X$.
Given $g:=(x,L)\in X\times \hat{X}$, denote by $\tilde{g}:\M\rightarrow \M$ the automorphism of $\M$. 

\begin{new-lemma}
The isomorphism $E_{\tilde{g}(F)}\cong \tilde{g}_*(E_F)$ holds for all $F\in \M$ and $g\in X\times \hat{X}$.
\end{new-lemma}
\begin{proof}
The sheaf $E$ is homogeneous with respect to the diagonal action of $X\times \hat{X}$ on $\M\times \M$, i.e.,  we have an isomorphism $(\tilde{g}\times\tilde{g})^*(E)\cong E$, for every $g\in X\times \hat{X}$.
Indeed, given $x\in X$ 
set $\tilde{\tau}_x:=\tau_x\times id_\M:X\times \M\rightarrow X\times \M$ and observe that 
for every $L\in \Pic^0(X)$ 
we have the natural isomorphism
\[
\SheafExt^1_{\pi_{13}}(\pi_2^*L\otimes\pi_{12}^*\tilde{\tau}_{x,*}\U,\pi_2^*L\otimes\pi_{23}^*\tilde{\tau}_{x,*}\U)
\cong \SheafExt^1_{\pi_{13}}(\pi_{12}^*\tilde{\tau}_{x,*}\U,\pi_{23}^*\tilde{\tau}_{x,*}\U)
\RightArrowOf{\tau_x^*} \SheafExt^1_{\pi_{13}}(\pi_{12}^*\U,\pi_{23}^*\U).
\]
The isomorphism $(\tilde{g}\times\tilde{g})^*(E)\cong E$ yields
$(\tilde{g}\times id_\M)^*(E)\cong (id_\M\times \tilde{g})_*(E)$ explaining the second isomorphism below: 
\begin{equation}
\label{eq-E-is-homogeneous}
E_{\tilde{g}(F)}\cong ((\tilde{g}\times id_\M)^*(E))_F\cong ((id_\M\times \tilde{g})_*(E))_F \cong \tilde{g}_*(E_F).
\end{equation}
\end{proof}

Set $\tilde{\M}:=X\times \hat{X}\times K_0(w)$. Let 
\begin{equation}
\label{eq-a}
a: \tilde{\M} \rightarrow \M
\end{equation}
be the restriction of the action morphism $X\times \hat{X}\times \M \rightarrow \M$, given by $a(x,L,F)=\tau_{x,*}(F)\otimes L$.
Then $a$ is a surjective \'{e}tale morphism. Denote by $\U_0$ the pullback of $\U$ to $X\times K_0(w)$ via $id_X\times e_0$. 
Let $\pi_{ij}$ be the projection from $X\times X\times \hat{X}\times K_0(w)$ onto the product of the $i$-th and $j$-th factors. Set $\tilde{\U}:=(id_X\times a)^*\U$.
The restriction of $\tilde{\U}$ to $X\times\{(x,L,F)\}$ is isomorphic to $\tau_{x,*}(F)\otimes L$.
The restriction of $\pi_{14}^*\U_0$ to $X\times\{(x,L,F)\}$ is isomorphic to $F$. 
Define $\eta:X\times X \rightarrow X\times X$ by $\eta(x_1,x_2)=(x_1+x_2,x_2)$. 
Let $\tilde{\eta}$ be the automorphism of $X\times X \times\hat{X}\times K_0(w)$ given by
$\eta\times id_X\times id_{K_0(w)}$. We conclude that there is a line bundle $N$ over $X\times \hat{X}\times K_0(w)$ and an isomorphism
\begin{equation}
\label{eq-factorization-of-U-tilde}
\tilde{\U}\cong \pi_{234}^*N\otimes \tilde{\eta}^*\pi_{14}^*\U_0\otimes \pi_{13}^*\P.
\end{equation}
Let $p_{ij}$ be the projection from $\tilde{\M}=X\times \hat{X}\times K_0(w)$ onto the product of the $i$-th and $j$-th factors.
Let $\ComplexNumbers_0$ be the sky-scraper sheaf supported on the origin in $X$.
Let $\Phi_{\tilde{\U}}:D^b(X)\rightarrow D^b(\tilde{\M})$ be the integral functor with kernel $\tilde{\U}$.

\begin{new-lemma}
\label{lemma-Fourier-Mukai-of-sky-scraper-sheaf-of-the-origin-of-X}
$\Phi_{\tilde{\U}}(\ComplexNumbers_0)\cong N\otimes p_{13}^*\U_0.$
\end{new-lemma}
\begin{proof}
Note that $\eta$ restricts to $\{0\}\times X$ as the diagonal embedding of $X$  in $X\times X$.
Hence, the restriction of  $\pi_{14}\circ \tilde{\eta}$  to $\{0\}\times X\times \hat{X}\times K_0(w)$ is equal to that of  $\pi_{24}$.
The restriction of $\pi_{13}^*\P$ to $\{0\}\times \tilde{\M}$ is the trivial line-bundle. 
The statement follows from Equation (\ref{eq-factorization-of-U-tilde}).
\end{proof}

Let $\gamma:X\times K_0(w)\rightarrow \tilde{\M}$ be given by 
$\gamma(x,F)=(x,\hat{0},F)$, where $\hat{0}\in \hat{X}$ represents  $\StructureSheaf{X}$.
Let $\pi_{K_0(w)}$ be the projection from $X\times K_0(w)$ to $K_0(w)$.

\begin{new-lemma}
\label{lemma-Fourier-Mukai-of-structure-sheaf}
$L\gamma^*\Phi_{\tilde{\U}}(\StructureSheaf{X})\cong 
(\gamma^*N)\otimes L\pi_{K_0(w)}^*R\pi_{K_0(w),*}\U_0$.
\end{new-lemma}

\begin{proof}
Let $q_{ij}$ be the projection from $X\times X\times K_0(w)$ to the product of the $i$-th and $j$-th factors.
Set $\tilde{\gamma}:=(id_X\times \gamma):X\times X\times K_0(w)\rightarrow X\times \tilde{\M}$.
We have $\Phi_{\tilde{\U}}(\StructureSheaf{X})\cong
R\pi_{234,*}\left[\pi_{234}^*N\otimes \tilde{\eta}^*\pi_{14}^*\U_0\otimes\pi_{13}^*\P\right]
\cong
N\otimes R\pi_{234,*}\left[\tilde{\eta}^*\pi_{14}^*\U_0\otimes\pi_{13}^*\P\right]$.
Hence, $L\gamma^*\Phi_{\tilde{\U}}(\StructureSheaf{X})$ is isomorphic to 
$\gamma^*N\otimes Rq_{23,*}\left[L\tilde{\gamma}^*\left\{\tilde{\eta}^*\pi_{14}^*\U_0\right\}\right]$, 
by the triviality of $(\pi_{13}\circ\tilde{\gamma})^*\P$ and 
cohomology and base change for the right square in the cartesian diagram below.
\[
\xymatrix{
X\times K_0(w) \ar[d]_{\pi_{K_0(w)}} & 
X\times X\times K_0(w) \ar[r]^{\tilde{\gamma}} \ar[l]_{q_{13}} \ar[d]_{q_{23}} & 
X\times X\times \hat{X}\times K_0(w)  \ar[d]_{\pi_{234}}
\\
K_0(w) & 
X\times K_0(w) \ar[r]_{\gamma} \ar[l]^{\pi_{K_0(w)} }& 
X\times \hat{X}\times K_0(w).
}
\]
Set $\hat{\eta}:=\eta\times id_{K_0(w)}$.
The isomorphism
$
L\gamma^*\Phi_{\tilde{\U}}(\StructureSheaf{X})\cong 
(\gamma^*N)\otimes Rq_{23,*}[\hat{\eta}^*q_{13}^*\U_0]
$
thus follow from the equality $\pi_{14}\circ\tilde{\eta}\circ \tilde{\gamma}=q_{13}\circ\hat{\eta}$.
The isomorphism
\[
L\gamma^*\Phi_{\tilde{\U}}(\StructureSheaf{X})\cong 
(\gamma^*N)\otimes Rq_{23,*}[q_{13}^*\U_0]
\]
follows from the equality
$q_{23}=q_{23}\circ \hat{\eta}$ and the fact that $R\hat{\eta}_*L\hat{\eta}^*$ is the identity.
The statement follows by cohomology and base change with respect to the left square in the above diagram.
\end{proof}

\begin{proof}[Proof of Theorem \ref{thm-generalized-Hodge}]
Up to translation, the morphism $\overline{AJ}_b$, given in (\ref{eq-overline-AJ}),  
is determined by the homomorphism 
$\overline{AJ}_{b,*}:H_1(\Alb(\M_b),\Integers)\rightarrow H_1(J^2(Y_b),\Integers)$. The latter depends continuously on $b$ and $E_b$. Any two points $b_1, b_2$ of $\fM^0_{\omega^\perp}$, with projective $Y_{b_1}$ and $Y_{b_2}$, can be connected by a subfamily of $\Pi$ with projective fibers. 
It thus suffices to prove Theorem \ref{thm-generalized-Hodge} for one point in $\fM^0_{\omega^\perp}$.
We will prove it for a moduli space of sheaves $\M$ as in Equation (\ref{eq-AJ-E}).

Let $\CH^i(\M)$ be the group of codimension $i$ algebraic cycles in $\M$ and $\CH^i(\M)_0$ its subgroup of cycles homologous to zero. 
Given a point $[F]$ in $\M$ representing the isomorphism class of a sheaf F, let
$
\iota_F:X\times \hat{X}\rightarrow \M
$
be the map onto the orbit of $[F]$ under the $X\times\hat{X}$-action. 
Set $J^2(\M):=H^3(\M,\ComplexNumbers)/[F^2H^3(\M,\ComplexNumbers)+H^3(\M,\Integers)]$,
where $F^2H^3(\M,\ComplexNumbers):=H^{3,0}(\M)\oplus H^{2,1}(\M)$ is the second subspace in the Hodge filtration.
We have the commutative diagram of Abel-Jacobi maps
\[
\xymatrix{
X\times \hat{X}\ar[r]^{\iota_{F_0}} &\M\ar[r]^{\psi} \ar[dr] & \CH^2(\M)_0 \ar[r]^{AJ_\M} \ar[d]_{e_0^*} & J^2(\M) \ar[d]^{r}
\\
& &\CH^2(K_0(w))_0\ar[r] _{AJ_K} & J^2(K_0(w)),
}
\]
where the horizontal map $\psi$ sends $F$ to 
$c_2(E^\vee_F\stackrel{L}{\otimes} E_F)-c_2(E^\vee_{F_0}\stackrel{L}{\otimes} E_{F_0})$, 
with both the dual $E^\vee_F$ and the tensor product taken in the derived category, and
the right vertical homomorphism $r$ is induced by the restriction homomorphism
$e_0^*:H^3(\M,\ComplexNumbers)\rightarrow H^3(K_0(w),\ComplexNumbers).$

It suffices to prove the surjectivity of $r\circ AJ_\M\circ \psi\circ \iota_{F_0}$,  as $r\circ AJ_\M\circ \psi$ is equal to $AJ_E$ given in 
(\ref{eq-AJ-E}). Being a morphism of complex tori, the composition is induced by a linear homomorphism 
(its differential) from $H_1(X\times \hat{X},\RealNumbers)\cong V^*_\RealNumbers$ to $H^3(K_0(w),\RealNumbers)$.
Both are irreducible $\Spin(V)_w$-representations. Hence, it suffices to prove that
the differential of $r\circ AJ_\M\circ \psi\circ \iota_{F_0}$ is $\Spin(V)_w$-equivariant and it does not vanish.

Let $Z_0$ be an algebraic cycle representing the Chow class $c_2(E_{F_0}^\vee\stackrel{L}{\otimes}E_{F_0})$. Then
$\tilde{g}_*(Z_0)$ represents $c_2(E_{\tilde{g}(F_0)}^\vee\stackrel{L}{\otimes}E_{\tilde{g}(F_0)})$, by
Equation (\ref{eq-E-is-homogeneous}). Given a smooth path $\gamma$ from $0$ to $g_1\in X\times \hat{X}$
we get the co-chain $\Gamma:=\cup_{g\in \gamma}\tilde{g}(Z_0)$ with boundary $\tilde{g}_1(Z_0)-Z_0$. The point  $(AJ_\M\circ\psi)(\tilde{g}_1(F_0))$ is the projection to
$J^2(\M)$ of the class in $H^{1,2}(\M)\oplus H^{0,3}(\M)$, which corresponds to the linear functional  sending a class $\phi$ in $H^{2k+3,2k+2}(\M)\oplus H^{2k+4,2k+1}(\M)$ to
$
\int_\Gamma \phi.
$
Let $\xi$ be a tangent vector to $\gamma$ at $0$. 
Let 
\[
da:T_0[X\times\hat{X}]\rightarrow H^0(T\M)
\] 
be the homomorphism induced by the action of $X\times\hat{X}$ on $\M$. 
Then the restriction of $da(\xi)$  to $Z_0$ maps to a global section of the real normal bundle of $Z_0$ in $\Gamma$.
The differential of $(AJ_\M\circ\psi)$ maps $da(\xi)$ to 
$H^{1,2}(\M)\oplus H^{0,3}(\M)$, hence to a linear functional on
$H^{2k+3,2k+2}(\M)\oplus H^{2k+4,2k+1}(\M)$, whose value at a cohomology class $\phi$
is equal to 
$\int_{Z_0}(\phi,da(\xi))$,
where $(\bullet,da(\xi)):H^q(\M,\Omega^p_\M)\rightarrow H^q(\M,\Omega^{p-1}_\M)$ is induced by contraction with $da(\xi)$ (see \cite[Lecture 6]{green}).
The differential of $AJ_\M\circ \psi\circ \iota_{F_0}$ at $0\in X\times \hat{X}$ thus 
maps $\xi$ to the linear functional
\[
\phi \mapsto \int_{\M}(\phi,da(\xi))\cup c_2(E_{F_0}^\vee\stackrel{L}{\otimes}E_{F_0}),
\]
since the homology class of $Z_0$ is Poincar\'{e} dual to the cohomology class of  
$c_2(E_{F_0}^\vee\stackrel{L}{\otimes}E_{F_0})$.
The value at $\xi\otimes \phi$ of the differential of $AJ_E\circ \iota_{F_0}$ at $0$ is thus given by 
\begin{equation}
\label{eq-differential-of-AJ-E}
d_0(AJ_E\circ \iota_{F_0})(\xi\otimes \phi)=\int_{\M}(e_{0,*}(\phi),da(\xi))\cup c_2(E_{F_0}^\vee\stackrel{L}{\otimes}E_{F_0}),
\end{equation}
where $e_{0,*}:H^{4k-3}(K_0(w),\ComplexNumbers)\rightarrow H^{4k+5}(\M,\ComplexNumbers)$
is the Gysin homomorphism, since $dr^*$ is induced by $(e_0^*)^*=e_{0,*}$.

Set $\tilde{\M}:= X\times \hat{X}\times K_0(w)$. 
Let $[pt]\in H^8(X\times\hat{X},\Integers)$ be the class Poincar\'{e} dual to a point in $X\times \hat{X}$.
We prove next that the following equality holds: 
\begin{equation}
\label{eq-push-forward-commutes-with-contraction}
(e_{0,*}(\phi),da(\xi))=a_*(\phi\boxtimes ([pt],\xi)), 
\end{equation}
for all $\phi\in H^{4k-3}(K_0(w),\ComplexNumbers)$, where $\boxtimes$ denotes the outer product
and we consider $H^{4k-3}(K_0(w),\ComplexNumbers)\otimes H^7(X\times\hat{X},\ComplexNumbers)$ as a subspace of $H^{4k+4}(\tilde{\M},\ComplexNumbers)$ via the K\"{u}nneth decomposition.
Let $\tilde{e}_0:K_0(w)\rightarrow \tilde{\M}$ be given by $t\mapsto (0,0,t)$.
We have $e_0=a\circ \tilde{e}_0$, where $a$ is given in (\ref{eq-a}), and so the Gysin map $e_{0,*}$ is the composition 
$a_*\circ \tilde{e}_{0,*}$. Let $\Gamma_w$ be the Galois group of $a:\tilde{\M}\rightarrow \M$. A class $\beta$ in $H^*(\tilde{\M},\ComplexNumbers)$  decomposes as $\beta=a^*(\beta')+\beta''$, where $\beta''$ belongs to the direct sum of non-trivial $\Gamma_w$-representations. Then
\[
\int_\M \alpha\cup a_*(\beta)=\int_{\tilde{\M}}a^*(\alpha)\cup\beta=\int_{\tilde{\M}}a^*(\alpha\cup\beta')=\deg(a)\int_\M\alpha\cup\beta',
\]
for all $\alpha\in H^*(\M)$. Hence, $a_*(\beta)=\deg(a)\beta'$. Now, $\tilde{e}_{0,*}(\phi)=\phi\boxtimes [pt]$, 
the outer product of $\phi$ with $[pt]$. 
The group $\Gamma_w$ acts on $X\times \hat{X}\times K_0(w)$ via its translation action on $X\times \hat{X}$ and its action on $K_0(w)$ as automorphisms acting trivially on $H^i(K_0(w),\ComplexNumbers)$, for $i\leq 3$ and so also for $i\geq 4k-3$, by Lemma \ref{lemma-Gamma-v}(\ref{lemma-item-Gamma-v-embedds-in-Mon}).
Hence, given $\phi\in H^{2k-1,2k-2}(K_0(w))$, the class $\phi\boxtimes [pt]$
is $\Gamma_w$ invariant and is equal to $a^*\phi'$, for some $\phi'\in H^{2k+3,2k+2}(\M)$, and $e_{0,*}(\phi)=\deg(a)\phi'=(k+1)^4\phi'$.  Let $\tilde{\xi}$ be the global tangent vector of 
$\tilde{\M}=X\times \hat{X}\times K_0(w)$ corresponding to $\xi$ via the natural isomorphism $H^0(T[X\times \hat{X}])=H^0(T\tilde{\M})$. Then $a^*(da(\xi))=\tilde{\xi}$ via the isomorphism $a^*T\M\cong T\tilde{\M}$. We have 
\[
\frac{1}{\deg(a)}a^*(e_{0,*}(\phi),da(\xi))=a^*(\phi',da(\xi))=(a^*\phi',\tilde{\xi})=(\phi\boxtimes[pt],\tilde{\xi})=\phi\boxtimes ([pt],\xi).
\] 
Applying $a_*$ to both sides in the latter displayed equation
we get Equation (\ref{eq-push-forward-commutes-with-contraction}).

Combining Equations 
(\ref{eq-differential-of-AJ-E}) and (\ref{eq-push-forward-commutes-with-contraction}) we get
\[
d_0(AJ_E\circ \iota_{F_0})(\xi\otimes \phi)=\int_{\tilde{\M}}(\phi\boxtimes ([pt],\xi))\cup a^*c_2(E_{F_0}^\vee\stackrel{L}{\otimes}E_{F_0}).
\]
The differential $d_0(AJ_E\circ \iota_{F_0})$ is $\Spin(V)_w$-equivariant, by the 
$\Spin(V)_w$-invariance of $c_2(E_{F_0}^\vee\stackrel{L}{\otimes}E_{F_0})$ established in Theorem 
\ref{thm-kappa-class-is-non-zero-and-spin-7-invariant}. 
The right hand side in the  above displayed equation does not vanish, for some $\xi\otimes\phi\in H^0(T[X\times\hat{X}])\otimes H^{2k-1,2k-2}(K_0(w))$, if and only if the K\"{u}nneth direct summand of $a^*c_2(E_{F_0}^\vee\stackrel{L}{\otimes}E_{F_0})$ in 
$H^1(X\times\hat{X})\otimes H^3(K_0(w))$ does not vanish.
This is the case, if and only if the K\"{u}nneth direct summand of $a^*c_2(E_{F_0})$ in 
$H^1(X\times\hat{X})\otimes H^3(K_0(w))$ does not vanish, since the corresponding direct summand of $a^*c_1(E_{F_0})^2$ vanishes, as $H^2(X\times\hat{X}\times K_0(w))$ decomposes as
$H^2(X\times\hat{X})\oplus H^2(K_0(w))$. For the same reason, the direct summand of $a^*c_2(E_{F_0})$ in 
$H^1(X\times\hat{X})\otimes H^3(K_0(w))$ does not vanish, if and only if that of $a^*ch_2(E_{F_0})$ does not vanish.

Assume next that $w$ is the Mukai vector $(1,0,-1-k)$ of the ideal sheaf of a length $k+1$ subscheme. 
The class $a^*ch_2(E_{F_0})$ is equal to $-ch_2(\Phi_{\tilde{\U}}(F_0^\vee))$, as $a^*E_{F_0}$ is the first sheaf cohomology of
$\Phi_{\tilde{\U}}(F_0^\vee)$, the second sheaf cohomology is the direct sum of the sky-scraper sheaves of the points of $\tilde{\M}$ over $[F_0]$, and all other sheaf cohomologies vanish. We have
\[
ch_2[\Phi_{\tilde{\U}}(F_0^\vee)]=ch_2[\Phi_{\tilde{\U}}(\StructureSheaf{X})]-
(k+1)ch_2[\Phi_{\tilde{\U}}(\ComplexNumbers_0)].
\]
Let $\hat{0}$ be the point of $\hat{X}$ representing the isomorphism class of the trivial line bundle.
It suffices to prove that the K\"{u}nneth direct summand of $ch_2[\Phi_{\tilde{\U}}(\ComplexNumbers_0)]$
in $H^1(X\times\hat{X})\otimes H^3(K_0(w))$ restricts non-trivially to $X\times\{\hat{0}\}\times K_0(w)$, while 
the K\"{u}nneth direct summand of $ch_2[\Phi_{\tilde{\U}}(\StructureSheaf{X})]$
in $H^1(X\times\hat{X})\otimes H^3(K_0(w))$ restricts to zero
in $X\times\{\hat{0}\}\times K_0(w)$. 

The object
$(id_X\times \tilde{e}_0)^*\Phi_{\tilde{\U}}(\ComplexNumbers_0)$  is isomorphic to the tensor product of a line bundle with $\U_0$, by Lemma \ref{lemma-Fourier-Mukai-of-sky-scraper-sheaf-of-the-origin-of-X}. Hence, the K\"{u}nneth component of $(id_X\times \tilde{e}_0)^*ch_2(\Phi_{\tilde{\U}}(\ComplexNumbers_0))$ in $H^1(X)\otimes H^3(K_0(w))$ is equal to that of $ch_2(\U_0)$.
We have the equality
\begin{equation}
\label{two-correspondences-and-projection-formula}
e_0^*\pi_{\M,*}[\pi_X^*(\lambda)\cup ch(\U)]=\pi_{K_o(w),*}[\pi_X^*(\lambda)\cup ch(\U_0)],
\end{equation}
by the projection formula applied to the cartesian diagram
\[
\xymatrix{
X\times K_0(w) \ar[r]^{id_X\times e_0} \ar[d]_{\pi_{K_0(w)}}&
X\times \M \ar[d]^{\pi_\M}
\\
K_0(w)\ar[r]_{e_0} & \M.
}
\]
The only graded summand of $ch(\U_0)$ (resp. $ch(\U)$), which contributes to the homomorphism 
 (\ref{two-correspondences-and-projection-formula}) from
$H^3(X)$ to $H^3(K_0(w))$ (resp. to $H^3(\M)$), is
$ch_2(\U_0)$ (resp. $ch_2(\U)$). 
The left hand side of (\ref{two-correspondences-and-projection-formula}) induces an  isomomorphism from $H^{odd}(X,\RationalNumbers)$ onto $H^3(K_0(w),\RationalNumbers)$, by Lemma  \ref{lemma-tilde-theta-j-is-an-isomorphism} and the surjectivity of $h_3$, given in Equation (\ref{eq-h-i}),  established in the paragraph preceding Lemma \ref{lemma-Gamma-v}. Hence, so does the right hand side, and the K\"{u}nneth direct summand of $ch_2[\Phi_{\tilde{\U}}(\ComplexNumbers_0)]$
in $H^1(X\times\hat{X})\otimes H^3(K_0(w))$ restricts non-trivially to $X\times\{\hat{0}\}\times K_0(w)$.

The K\"{u}nneth component in $H^1(X)\otimes H^3(K_0(w))$ of 
$\gamma^*ch_2[\Phi_{\tilde{\U}}(\StructureSheaf{X})]$ is equal to that of 
$(\gamma^*N)\otimes L\pi_{K_0(w)}^*R\pi_{K_0(w),*}\U_0$, 
by Lemma \ref{lemma-Fourier-Mukai-of-structure-sheaf}, and hence also to that of 
$L\pi_{K_0(w)}^*R\pi_{K_0(w),*}\U_0$, as observed above.
The K\"{u}nneth component  in $H^1(X)\otimes H^3(K_0(w))$ of the latter clearly vanishes.
This completes the proof of Theorem \ref{thm-generalized-Hodge}.
\end{proof}

\hide{
%
\section{Characters and automorphisms of $G(S^+)^{even}_{s_n}$}

The center of $G(S^+)^{even}_{s_n}$ is generated by $\tilde{\alpha}$,
given in (\ref{eq-central-element-tilde-alpha}), and is 
contained in the kernel $\Spin(V)_{s_n}$ of 
the orientation character (\ref{eq-ort}) (??? need to use instead $ort_{S^+}$ as in Section \ref{sec-polarizations}
since $G(S^+)^{even}_{s_n}$ is not contained in the image of $G(V)$ in $GL(A_X)$ ???) $G(S^+)^{even}_{s_n}$. 
Consider the following automorphism of $G(V)^{even}_{s_n}$:
\begin{equation}
\label{eq-automorphism-of-G-V-s-n}
g \ \ \ \mapsto \ \ \ \tilde{\alpha}^{ort(g)}\cdot g.
\end{equation}
Note that $ort$ is invariant under this automorphism. 

The orientation character of $G(S^+)^{even}_{s_n}$
is related to the composition of the 
determinant character with the homomorphism 
(\ref{eq-homomorphism-from-stabilizer-in-G-S-plus-even-to-GL}) (??? why). 
The orientation character (\ref{eq-ort})
of $G(S^+)^{even}_{s_n}$ is the pullback of that of $SO[H^2(Y,\Integers)]$.
Note, however, that the center of 
$SO[H^2(Y,\Integers)]$ is not contained in the kernel of its
orientation character. The homomorphism of $SO[H^2(Y,\Integers)]$, given by
\begin{equation}
\label{eq-automorphism-of-SO-H-2}
g \ \ \ \mapsto \ \ \ (-1)^{ort^2(g)}\cdot g,
\end{equation}
has a kernel generated by $-1$. This homomorphism maps 
$SO[H^2(Y,\Integers)]$ onto
$SO_+[H^2(Y,\Integers)]$. 
The image of $G(S^+)^{even}_{s_n}$ in $SO[H^2(Y,\Integers)]$ 
is the reflection group $\Reflection:=\Reflection(s_n^\perp)$, 
consisting of words of even length
in reflections by $+2$ and $-2$ vectors in $s_n^\perp$. 
The homomorphism (\ref{eq-automorphism-of-SO-H-2})
maps the image $\Reflection$ 
of $G(S^+)^{even}_{s_n}$ onto $\W^{\det\cdot \chi}$. 
The homomorphism (\ref{eq-automorphism-of-SO-H-2}) pulls back the 
character $\det$ (or equivalently $\chi$) of $\W^{\det\cdot \chi}$
to the orientation character of  $\Reflection$.

(??? work with $\widetilde{O}(\bullet)$) 
Let $ort^3$ be the orientation character of $O[H^3(Y,\Integers)]$ 
associated to the positive cone in $H^3(Y,\RealNumbers)$. If we choose the
negative cone instead, we get a different character, but the
two restrict to the same character of $SO[H^3(Y,\Integers)]$, and hence of
$Mon^3(Y)$. 
The pullback of  $ort^3$ to $G(S^+)^{even}_{s_n}$ 
is equal to the orientation character (\ref{eq-ort}) of the latter.
Consider the  automorphism of $SO[H^3(Y,\Integers)]$
analogous to (\ref{eq-automorphism-of-G-V-s-n})
\begin{equation}
g \ \ \ \mapsto \ \ \ (-1)^{ort^3(g)}\cdot g.
\end{equation}
Note that $ort^3(-1)=1$, so $ort^3$ is invariant under this automorphism. 


\section{Arithmetic constraints on the monodromy groups}
\label{sec-arithmetic-constraints}

\subsection{Constraints on $Mon(\M(s_n))$}

Define $Q^i(\M(s_n),\Integers)$ and prove the isomorphisms\footnote{
The second isomorphism holds for $n=3$, by the paper of Kapfer-Menet. For $n\geq 4$ use instead that the composition
$S^-_X\rightarrow Q^3(\M(s_n),\Integers)\rightarrow H^3(K_X(n-1),\Integers)$ is injective and its image is monodromy invariant of finite index.
}
\[
Q^2(\M(s_n),\Integers) \ \cong \ H^2(K_X(n\!-\!1),\Integers) \ \cong \ s_n^\perp
\subset S^+_X := H^{even}(X,\Integers).
\]
\[
Q^3(\M(s_n),\Integers) \ \cong \ H^3(K_X(n\!-\!1),\Integers)  \ \cong \ 
S^-_X := H^{odd}(X,\Integers).
\]

\begin{new-lemma}
\begin{enumerate}
\item
$Q^3(\M(s_n),\Integers)$ is an irreducible $Mon^3(\M(s_n))$ representation.
\item
$Q^3(\M(s_n),\Integers)$ admits a natural, up to sign, unimodular
symmetric bilinear form, with respect to which the isomorphism
\[
\theta \ : \ H^{odd}(X,\Integers) \ \ \ \longrightarrow \ \ \ 
Q^3(\M(s_n),\Integers),
\]
induced by the universal sheaf, is an isometry.
\end{enumerate}
\end{new-lemma}

\begin{proof}
Use the invariance of the universal sheaf under $\Spin(V_X)_{s_n}$.
\end{proof}

\medskip
Let $Mon^i(\M(s_n))$ be the image of $Mon(\M(s_n))$ in 
$\Aut[Q^i(\M(s_n),\Integers)]$ and let $K^i(\M(s_n))$ be the kernel. 
\[
0 \rightarrow K^i(\M(s_n)) \rightarrow Mon(\M(s_n)) \rightarrow 
Mon^i(\M(s_n)) \rightarrow 0.
\]

\begin{new-lemma}
The monodromy representation 
(\ref{eq-homomorphism-gamma-from-G-S-plus-even-s-n-to-Mon}) 
of $G(S^+)^{even}_{s_n}$ 
surjects (??? impossible\footnote{$G(S^+)^{even}_{s_n}$ leaves invariant the homomorphism $\Theta:Q^2(\M(s_n),\Integers)\rightarrow \wedge^2Q^3(\M(s_n),\Integers)$, where $\Theta$ is given in (\ref{eq-Theta}). } ???) onto $S\widetilde{O}Q^3(\M(s_n),\Integers)$ and is hence equal to
$Mon^3(\M(s_n))$.
\end{new-lemma}

Let $Mon^{2,3}(\M(s_n))$ be the image on $Mon(\M(s_n))$ in 
$\Aut[Q^2(\M(s_n),\Integers)\oplus Q^3(\M(s_n),\Integers)]$.
Let $K^{2,3}(\M(s_n))$ be the kernel.

\begin{new-lemma}
\label{lemma-Mon-of-moduli-splits}
$K^3(\M(s_n))$ is contained in the center of $Mon(\M(s_n))$.
Consequently, the image of $G(S^+)^{even}_{s_n}$
under the monodromy representation
(\ref{eq-homomorphism-gamma-from-G-S-plus-even-s-n-to-Mon})
is a normal subgroup of $Mon(\M(s_n))$ and 
$Mon(\M(s_n))$ is isomorphic to the product 
\[
Mon(\M(s_n)) \ \ \ \cong \ \ \ 
K^{2,3}(\M(s_n))\times G(S^+)^{even}_{s_n}.
\]
\end{new-lemma}

\begin{proof}
Use the fact that the images of $H^*(X,\Integers)$ in 
$H^i(\M(s_n),\Integers)$ generate the cohomology and the 
compatibility of the monodromy representation 
(\ref{eq-homomorphism-gamma-from-G-S-plus-even-s-n-to-Mon}) with Verbitsky's 
representation of $\Spin(S^+)_{s_n}$ on $H^*(K_X(n\!-\!1))$. 
In order to apply Verbitsky's result, use the fact that
$Q^i(\M(s_n),\RationalNumbers)$ inject into 
$Q^i(K_X(n\!-\!1),\RationalNumbers)$, for $i\geq 2$ (???),
and the image is monodromy invariant.
\end{proof}

\begin{cor}
The monodromy representation
(\ref{eq-homomorphism-gamma-from-G-S-plus-even-s-n-to-Mon}) 
of $G(S^+)^{even}_{s_n}$ surjects onto $Mon^2(\M(s_n))$. 
\end{cor}

\begin{proof}
The centralizer of the image of $G(S^+)^{even}_{s_n}$ in
$O[Q^2(\M(s_n),\Integers)]$ is trivial.
\end{proof}

\medskip
Lemma \ref{lemma-Mon-of-moduli-splits} implies, that any representation of 
$G(S^+)^{even}_{s_n}$ (and hence of $G(S^+)^{even}$)
pulls back to a representation of $Mon(\M(s_n))$. 
Set $Q_-^3:=Q^3(\M(s_n),\Integers)$ and let 
$Q_+^3$ be the pullback of the other
integral half-spin representation of $G(S^+)^{even}$.
(In terms of $G(S^+)^{even}$ these are $S^-$ and $V_X$, however it is not
clear which among these two should be considered $Q_-^3$).

\begin{new-lemma}
There exists a $Mon(\M(s_n))$-invariant pair
$\{e,-e\}$ of primitive integral isometric embeddings (up to sign) of
$Q^2(\M(s_n),\Integers)$ in $Q_+^3\otimes Q_-^3$. 
\end{new-lemma}

\begin{cor}
Clifford multiplication by $s_n$
\[
s_n \ : \ Q_+^3 \ \ \longrightarrow \ \  Q_-^3
\]
is $Mon(\M(s_n))$-equivariant.
\end{cor}

\medskip
We conclude, that we have a representation of $Mon(\M(s_n))$ in the
co-kernel of $s_n$. We get also a $Mon(\M(s_n))$-invariant subgroup 
$\Gamma'$ of 
$Q^3(\M(s_n),\Integers/n\Integers)$. 

\begin{new-lemma}
$\Gamma'$ is $Mon(K_X(n\!-\!1))$ invariant as well.
\end{new-lemma}

As a corollary we can construct a deformation of $\M(s_n)$ from any
deformation $\Y\rightarrow B$ of $K_X(n\!-\!1)$ as follows.
Take the quotient, of the fiber product over $B$ of 
$\Y$ and the relative third intermediate jacobian of $\Y$, 
by the diagonal action of the group scheme of relative $\Gamma'$s.

\begin{cor}
There is a surjective homomorphism
\[
Mon(K_X(n\!-\!1)) \ \ \ \longrightarrow \ \ \ Mon(\M(s_n)), 
\]
for $n\geq 3$.
\end{cor}

\subsection{Constraints on $Mon(K_X(n\!-\!1))$}

Characterize the group $\W$ in equation (\ref{eq-W})
as the group of orientation preserving isometries of $s_n^\perp$,
which extend to the Mukai lattice $S^+$. 

Two possible proofs that $Mon^2(Y)$ is contained in $\W$.
One uses $Q^4(Y,\Integers)$, as in the case of Hilbert schemes of 
zero-dimensional subschemes of a $K3$ surface.

The second proof uses the fact, that the spin representation 
$H^3(Y,\Integers)$ of $SO[H^2(Y,\Integers)]$ is irreducible, and is the 
restriction of the half-spin representation
$S^-$ of $SO(V)$ (or $SO(S^+)$). 

\bigskip
Prove, that $Mon^2(Y)$ is contained in the kernel of the character 
$\det\cdot \chi$ of $\W$. The group $\W^{\det\cdot \chi}$ 
is the image of the
subgroup $G(S^+)^{even}_{s_n}$, while $G(S^+)_{s_n}$ surjects onto $\W$.
Use the fact, that $G(S^+)_{s_n}$ does not have a representation,
which restricts to  $G(H^2(X,\Integers))^{even}$ as the standard
rank $4$ representation $H^2(X,\Integers/(n+1)\Integers)$ of the latter,
modulo $n+1$.

}

%

{\bf Acknowledgements:}
This work was partially supported by a grant  from the Simons Foundation (\#427110). 
I am grateful to Kieran O'Grady for sharing with me an early draft of his insightful paper \cite{ogrady}, the insight it provided was crucial in the proof of 
Theorem \ref{thm-intro-hodge-classes-of-weil-type-are-algebraic}. I thank Misha Verbitsky for reference \cite{munoz}. I am grateful to Claire Voisin for her suggestion of Theorem \ref{thm-generalized-Hodge}. I thank the referees for their insightful comments and suggestions.

\section{Glossary of Notation}
\label{sec-glossary-of-notation}
\begin{longtable}{l l l}
\\
$V$ & the lattice $H^1(X,\Integers)\oplus H^1(\hat{X},\Integers)$ & Eq. (\ref{eq-pairing-on-V})
\\
$\Spin(V)$ & the spin  group of a lattice or a vector space $V$ & Eq. (\ref{eq-Spin-Pin-and-G})
\\
$\Spin(V)_w$ & the stabilizer of $w$ in $\Spin(V)$ & Sec. \ref{sec-monodromy-intro}
\\
$\Pin(V)$ & the pin group of a lattice or a vector space $V$ & Eq. (\ref{eq-Spin-Pin-and-G})
\\
$G(V)$ & one of the Clifford groups & Eq. (\ref{eq-Spin-Pin-and-G})
\\
$G_0(V)$ & one of the Clifford groups & Eq. (\ref{eq-Spin-Pin-and-G})
\\
$G(V)^{even}$ &the even Clifford group & Eq. (\ref{eq-Spin-Pin-and-G})
\\
$G(V)^{even}_{w}$ &the stabilizer of $w$ in $G(V)^{even}$ & Sec. \ref{sec-monodromy-intro}
\\
$C(V)$ & the Clifford algebra of a lattice or a vector space $V$ & Sec. \ref{sec-Clifford-groups}
\\
$C(V)^{even}$ &  the even  direct summand of $C(V)$ & Sec. \ref{sec-Clifford-groups}
\\
$C(V)^{odd}$ &  the odd direct summand of $C(V)$ & Sec. \ref{sec-Clifford-groups}
\\
$S$  & the cohomology $H^*(X,\Integers)$  as the spin representation & Sec. \ref{sec-Clifford-groups}
\\
$S^+$ & $H^{even}(X,\Integers)$  as the half spin representation & Sec. \ref{sec-Clifford-groups}
\\
$S^-$ &  $H^{odd}(X,\Integers)$  as the half spin representation & Sec. \ref{sec-Clifford-groups}
\\
$A_X$ & the algebra $V\oplus S^+\oplus S^-$ & Sec. \ref{sec-triality}
\\
$m$ & spin representation of the Clifford algebra  $C(V)$ & Eq. (\ref{eq-m-from-C-V}), 
\\
& or $C(S^+)$ & Cor. \ref{cor-V-plus-S-minus-is-the-Clifford-module}
\\
$m_w$ & the value of $m$ on $w\in S^+$ & Eq. (\ref{eq-composition-of-m-y-1-and-m-y-2})
\\
$m$ & spin representation of the Clifford group &  Eq. (\ref{eq-Cl})
\\
$\tilde{m}$ & embedding of $G(S^+)$ in $GL(A_X)$ & Eq. (\ref{eq-tilde-m})
\\
$\widetilde{O}(S^+)$ & subgroup of $GL(S^+)$ preserving the pairing up to sign & Sec. \ref{sec-Clifford-groups}
\\
$S\widetilde{O}(S^+)$ & subgroup of $SL(S^+)$ preserving the pairing up to sign & Sec. \ref{sec-Clifford-groups}
\\
$(\bullet,\bullet)_V$ & the pairing on $V$ & Eq. (\ref{eq-pairing-on-V-introduction})
\\
$\langle\bullet,\bullet\rangle$ & the Mukai pairing on $H^{even}(X,\Integers)$ & Eq. (\ref{eq-Mukai-pairing-with-sign-as-in-Mukai})
\\
$(\bullet,\bullet)_S$ & the pairing on $S$ & Eq. (\ref{eq-Mukai-pairing})
\\
$\tau$ & main anti-automorphism of $C(V)$ & Eq. (\ref{eq-tau})
\\
$\tilde{\tau}$ & lift of $\tau$ to $G(S^+)^{even}$ & Eq. (\ref{eq-tau-is-in-G-S-plus-even})
\\
$\tau_X$ & the automorphism of $A_X$ induced by $\tilde{\tau}$ & Eq. (\ref{eq-tau-X})
\\
$\alpha$ & main involution of $C(V)$ & Eq. (\ref{eq-main-involution})
\\
$\tilde{\alpha}$ & element in the center of $\Spin(V)$ & Eq. (\ref{eq-central-element-tilde-alpha})
\\
$\rho$ & the homomorphism $G(V)\rightarrow O(V)$ & Eq. (\ref{eq-standard-representation-of-G-V})
\\
$\tilde{\mu}$ & the homomorphism $G(V)\rightarrow GL(A_X)$ & Eq. (\ref{eq-representation-of-G-V-on-A-X})
\\
$ort$ & the orientation character & Eq. (\ref{eq-orientation-character}), (\ref{eq-ort-S+})
\\
$SO_+(V)$ & the kernel of the orientation character in $SO(V)$ & Sec. \ref{sec-Clifford-groups}
\\
$O_+(V)$ & the kernel of the orientation character in $O(V)$ & Sec. \ref{sec-Clifford-groups}
\\
$L_w$ & the endomorphism $w\wedge\bullet:H^*(X,\Integers)\rightarrow H^*(X,\Integers)$ & Eq. (\ref{eq-left-wedge-by-w}) 
\\
$D_\theta$ & endomorphism of $H^*(X,\Integers)$ of contraction with $\theta$  & Sec. \ref{sec-Clifford-groups}
\\
$PD$ & Poincare Duality homomorphism & Eq. (\ref{eq-Poincare-Duality})
\\
$\hat{X}$ & $\Pic^0(X)$ of an abelian surface $X$ & Sec. \ref{sec-notation}
\\
$X^{[n]}$ & Hilbert scheme of length $n$ subschemes of a surface $X$ & Sec. \ref{sec-monodromy-intro}
\\
$X^{(n)}$ & the $n$-th symmetric product of an abelian surface $X$ & Sec. \ref{sec-monodromy-intro}
\\
$K_X(m)$ & generalized kummer variety of an abelian surface $X$ & Sec. \ref{sec-monodromy-intro}
\\
$\M_H(v)$ & moduli space of stable sheaves with Mukai vector $v$ & Sec. \ref{subsection-mukai-lattice}
\\
$K_a(v)$ & fiber of the albanese map $\M_H(v)\rightarrow X\times \hat{X}$ over $a$ & Sec. \ref{subsection-mukai-lattice}
\\
$Mon(Y)$ & monodromy group of a compact K\"{a}hler manifold $Y$ & Def. \ref{def-monodromy}
\\
$\mon$ & the monodromy representation on $H^*(\M(w),\Integers)$ & 
Eq. (\ref{eq-homomorphism-gamma-from-G-S-plus-even-s-n-to-Mon})
\\
$\overline{\mon}$ & the monodromy representation 
on $H^*(K_a(v),\Integers)$ & 
Prop. \ref{prop-overline-mon}
\\
$\Gamma_X$ & group of points of order $n$ on the abelian surface $X$ & Sec. \ref{sec-monodromy-intro}
\\
$\Gamma_w$ & subgroup of torsion points in the torus $V_\RealNumbers/V$ & Rem. \ref{rem-Z-w}
\\
$\gamma_{g,\epsilon}(\E_1,\E_2)$ & correspondence in $H^{2m}(\M(w_1)\times \M(w_2),\RationalNumbers)$ & Eq. (\ref{eq-gamma-delta})
\\
$D_M$ & homomorphism acting by $(-1)^i$ on $H^{2i}(M)$ & Sec. \ref{sec-equivariance-of-the-universal-sheaf}
\\
$d_X$, $d_{\M(w)}$ & a choice of factorization $D_{X\times \M(w)}=d_X\otimes d_{\M(w)}$ & Eq. (\ref{eq-factorization-of-D})
\\
$\tilde{\theta}$ & homomorphism $S\rightarrow H^*(\M(s_n),\RationalNumbers)$ & Eq. (\ref{eq-tilde-theta-homomorphism})
\\
$Q^d(\M(w))$ & quotient of $H^d(\M(w),\Integers)$ & Sec. \ref{sec-half-spin}
\\
$S^j_X$ & $\left\{\begin{array}{ccl}
S^+_X & \mbox{if} & j \ \mbox{is even}, \ j\neq 2,
\\
S^+_X\cap s_n^{\perp} & \mbox{if}& j=2,
\\
S^-_X & \mbox{if} & j \ \mbox{is odd}
\end{array}\right.$ 
&Eq. (\ref{eq-tilde-theta-j-rational})

\\
$\tilde{\theta}_j$ & homomorphism $S^j_X\rightarrow Q^j(\M(w))\otimes_\Integers\RationalNumbers$ & Eq. (\ref{eq-tilde-theta-j-rational})
\\
$h_i$ & homomorphism $Q^i(\M(w))\otimes_\Integers\RationalNumbers\rightarrow H^i(K_a(v),\RationalNumbers)$ & Eq. (\ref{eq-h-i})
\\
$\iota_F$ & embedding of $X\times \hat{X}$ in the orbit of $F$ in $\M_H(w)$ & Eq. (\ref{eq-iota-F})
\\
$q_w$ & $\iota_F^*:H^*(\M_H(w),\Integers)\rightarrow H^*(X\times \hat{X},\Integers)$  & Eq. (\ref{eq-q-w})
\\
$\Omega_{w^\perp}$ & period domain & Eq. (\ref{eq-Omega-w-perp})
\\
$\Omega_{\{w,h\}^\perp}$ & period domain & Eq. (\ref{eq-four-dimentional-period-domain})
\\
$\fM_{w^\perp}$ & moduli space of marked hyperk\"{a}hler manifolds & Sec. \ref{subsection-a-universal-deformation-of-a-moduli-space-of-sheaves}
\\
$\fM_{w^\perp}^0$ & a connected component of $\fM_{w^\perp}$ & Sec. \ref{subsection-a-universal-deformation-of-a-moduli-space-of-sheaves}
\\
$\Theta'$ & homomorphism $w^\perp\rightarrow \Hom(V,V)$ & Eq. (\ref{eq-Theta-prime})
\\
$\Theta$ & homomorphism $w^\perp\rightarrow \wedge^2V^*$ & Eq. (\ref{eq-Theta})
\\
$J_\ell$ & complex structure on $V_\ComplexNumbers$ associated to $\ell\in \Omega_{w^\perp}$ & Sec. \ref{sec-two-isomorphic-period-domains}
\\
$T_\ell$ & a complex torus associated to a period $\ell\in \Omega_{w^\perp}$ & Sec. \ref{sec-two-isomorphic-period-domains}
\\
$Per$ & the period map   $\fM_{w^\perp}\rightarrow \Omega_{w^\perp}$ & Sec. \ref{subsection-a-universal-deformation-of-a-moduli-space-of-sheaves}
\end{longtable}




\end{document}